\documentclass[12pt]{amsart}
\usepackage[a4paper,margin=1in]{geometry}
\usepackage{amsmath,amsthm,amssymb,mathrsfs}
 \usepackage[normalem]{ulem}

 \usepackage{xcolor}

\usepackage{bm}
\usepackage{bbm} 
\usepackage{xcolor}
\usepackage{graphicx}
\usepackage{multirow}
\usepackage{xfrac}
\usepackage{tikz}
\usetikzlibrary{arrows.meta, positioning}

\usepackage{enumitem}

\usepackage[hyphens]{url}
\usepackage[pdftex,breaklinks]{hyperref}

\setlist[enumerate,2]{
  label=(\alph*),
  ref=\theenumi(\alph*)
}

\makeatletter
\newcommand{\safeparagraph}[1]{%
  \par\smallskip
  \noindent\textbf{#1}\,%
}
\makeatother

\newcounter{AssumpA}

\newenvironment{assump}{
  \begin{enumerate}[
    label={\normalfont\textbf{(A\arabic*)}}, 
    ref={\normalfont(A\arabic*)},
    font=\normalfont
  ]
    \setcounter{enumi}{\value{AssumpA}}
}{
    \setcounter{AssumpA}{\value{enumi}}
  \end{enumerate}
}

\newcounter{AssumpB}

\newenvironment{assumpB}{
  \begin{enumerate}[
    label={\normalfont\textbf{(B\arabic*)}}, 
    ref={\normalfont(B\arabic*)},
    font=\normalfont
  ]
    \setcounter{enumi}{\value{AssumpB}}
}{
    \setcounter{AssumpB}{\value{enumi}}
  \end{enumerate}
}

\newcounter{AssumpBprime}
\newenvironment{assumpBprime}{
  \begin{enumerate}[
    label={\normalfont\textbf{(B\arabic*')}}, 
    ref={\normalfont(B\arabic*')},
    font=\normalfont 
  ]
    \setcounter{enumi}{\value{AssumpBprime}}
}{
    \setcounter{AssumpBprime}{\value{enumi}}
  \end{enumerate}
}

\newcounter{AssumpC}

\newenvironment{assumpC}{
  \begin{enumerate}[
    label={\normalfont\textbf{(C\arabic*)}}, 
    ref={\normalfont(C\arabic*)},
    font=\normalfont
  ]
    \setcounter{enumi}{\value{AssumpC}}
}{
    \setcounter{AssumpC}{\value{enumi}}
  \end{enumerate}
}

\newcounter{AssumpI}

\newcounter{AssumpU}

\renewcommand{\epsilon}{\varepsilon}

\newtheorem{theorem}{Theorem}[section]
\newtheorem{proposition}[theorem]{Proposition}
\newtheorem{lemma}[theorem]{Lemma}

\newtheorem{corollary}[theorem]{Corollary}

\newtheorem{maintheorem}{Theorem}

\newtheorem{maincorollary}[maintheorem]{Corollary}   

\theoremstyle{definition}
\newtheorem{example}[theorem]{Example}

\newtheorem{remark}[theorem]{Remark}

\DeclareMathOperator*{\esssup}{ess\,sup}

\DeclareMathOperator{\lip}{Lip}

\DeclareMathOperator{\diam}{diam}
\newcommand{\qand}{\quad\text{and}\quad}

\numberwithin{equation}{section}

\begin{document}

\title[Linear response for skew-product maps with contracting fibres]{Linear response for skew-product maps\\ with contracting fibres}
\author[J. F. Alves]{Jos\'{e} F. Alves}
\address{Jos\'{e} F. Alves\\ Departamento de Matem\'{a}tica\\ Faculdade de Ci\^encias da Universidade do Porto\\ Rua do Campo Alegre 687\\ 4169-007 Porto\\ Portugal}
\email{jfalves@fc.up.pt} \urladdr{http://www.fc.up.pt/cmup/jfalves}

\author[W. Bahsoun]{Wael Bahsoun}
\address{Wael Bahsoun, Department of Mathematical Sciences, Loughborough University, Loughborough, Leicestershire, LE11 3TU, UK}
\email{W.Bahsoun@lboro.ac.uk}
 \urladdr{https://www.lboro.ac.uk/departments/maths/staff/wael-bahsoun}


\date{\today}

\thanks{JFA was partially supported by The Royal Society Wolfson Visiting Fellowship RSWVF$\backslash$25$\backslash$R1$\backslash$1006 at the Department of Mathematical Sciences, Loughborough University hosted by WB; and by CMUP, member of LASI, which is financed by national funds through FCT -- Funda\c{c}\~{a}o para a Ci\^encia e a Tecnologia, I.P., under project UID/00144/2025 (DOI: \url{https://doi.org/10.54499/UID/00144/2025}) and PTDC/MAT-PUR/4048/2021.  
}

\keywords{Linear Response, Bernoulli Convolution, Solenoid with Intermittency}
\subjclass[2020]{37A05, 37C40, 37C75}

\maketitle
\begin{abstract} We study linear response for families of skew-product dynamical systems with contracting fibres.  Our approach is based on a sectional transfer operator acting on families of probability measures along the fibres. The operator allows to describe invariant measures of the skew-product in terms of sample measures over the base dynamics, regardless of invertibility or non-invertibility of the base map.  Under general assumptions, we establish existence and uniqueness of invariant sample measures and prove their differentiability, with respect to system parameters, in suitable topologies. As an application we obtain linear response for \emph{Bernoulli convolutions}, which are of prime importance in the study of  number theoretic problems and fractals. Another application of our results yields linear response for physical measures of \emph{solenoidal attractors with intermittency}, an example of a hyperbolic system which cannot be handled by traditional transfer operator techniques.
\end{abstract}

\setcounter{tocdepth}{3} 
\addtocontents{toc}{\protect\setcounter{tocdepth}{\value{tocdepth}}}

\tableofcontents

\section{Introduction}
Ergodic theory is a field of mathematics that is concerned with studying dynamical systems from probabilistic point of view. It has deep connections with many areas of mathematics such as analysis, geometry and number theory. Starting with the work of Sinai, Ruelle and Bowen   in the 70's, 
significant advances have been made in the study of uniformly hyperbolic systems from statistical point of view  \cite{S72, BR75}. However, unlike their uniformly hyperbolic counterparts, nonuniformly and partially hyperbolic systems are less understood despite several landmark results, including \cite{ABV00,ADL17,BDL18, D04', Y98}. These systems represent a broader and more complicated class of dynamical systems. When analysing probabilistic aspects of hyperbolic dynamical systems, it is important to understand how stable dynamical invariants, such as SRB measures \cite{AV02, CL22, G18}, mixing rates \cite{FMT07} and entropy \cite{KP89} are. In this domain, a far-reaching direction studies differentiability of such quantities with respect to parameters of the system \cite{KP89}. 

More precisely, given a family  of dynamical systems $T_{\alpha}:M\to M$, with $\alpha$ taking values in an interval $J\subset\mathbb R$, it is natural to ask whether dynamical quantities are differentiable, or not, with respect to the parameter $\alpha$ in a suitable topology. In particular, if for each $\alpha$ the system $T_\alpha$ admits a unique physical measure $\mu_\alpha$, it is natural to ask whether $\alpha\mapsto\mu_\alpha$ is differentiable in a suitable topology. When differentiability holds, it is customary to say that the system has linear response. Such a property was first proved by Ruelle in the framework of uniformly hyperbolic systems \cite{R97} followed by Dolgopyat in the setting of partially hyperbolic systems \cite{D04}. Since then many authors studied such a phenomena and higher order differentiability in a smooth uniformly hyperbolic setting, most notably by Gou\"ezel and Liverani \cite{GL06}, Butterley and Liverani \cite{BL07}, among others. However, beyond uniform hyperbolicity, the situation is more subtle. For instance, even for simple piecewise smooth expanding maps if perturbations (or parameters) are transversal to the topological class, linear response can fail to hold, see the work of Baladi \cite{Ba07, Ba10}, Baladi and Smania \cite{BS08} and related work on unimodal maps \cite{BBS15, BS12,BS21, DS18}.  On the other hand, there are other recent examples where linear response holds despite the perturbation being transversal to the topological class \cite{BG24, Can24}. The situation in smooth \emph{partially hyperbolic} setting is not better. Despite the positive results of Dolgopyat \cite{D04}, Zhang \cite{Z18} studied a class of partially hyperbolic skew products where it was shown that linear response fails and that the map $\alpha\mapsto\mu_\alpha$ is only H\"older for certain partially hyperbolic systems. 

Motivated by examples of partially hyperbolic systems that do not fall within the classes considered by \cite{D04} and \cite{Z18}, we consider in this work skew product maps with uniformly contacting fibres. Our approach is based on a sectional transfer operator acting on families of probability measures along the fibres. The operator allows to describe invariant measures of the skew-product in terms of sample measures over the base dynamics, regardless of invertibility or non-invertibility of the base map. Hence, this approach turns out to be not only fruitful to study linear response for certain partially hyperbolic systems but also to study linear response for measures, called Bernoulli convolutions, used in fractal geometry to analyse number theoretic questions  posed by  Erd{\H o}s \cite{E39, S95}. Under general assumptions, we establish existence and uniqueness of invariant sample measures and prove their differentiability, with respect to system parameters, in suitable topologies. In one of our applications we obtain strong linear response for Bernoulli convolutions covering also the biased case addressed in \cite{PS98}. Another application of our results yields linear response for physical measures of solenoidal attractors with intermittency, an example of a hyperbolic system which cannot be handled by traditional transfer operator techniques \cite{BBC24}.\subsection{General setup and  abstract results}\label{se.general}

Let $ \Omega , X $   be compact Riemannian manifolds (possibly with boundary), and $d_\Omega,d_X$ the respective Riemannian distances.  Let also $m$ denote  the Lebesgue (volume) measure defined on the Borel sets of $\Omega$.  Throughout the paper, we use $\diam(X)$ to denote the diameter of $X$.
Given an interval $J\subset\mathbb R$, we consider a family of   skew-product maps 
$T_\alpha:\Omega\times X \to \Omega\times X$, with  $\alpha\in J$, defined for $(\omega,x)\in\Omega\times X$ by
 \begin{equation} \label{eq.skewmap}
T_\alpha(\omega,x) = \big(f_\alpha(\omega), g_\alpha(\omega,x)\big).
\end{equation}
For each $\omega\in \Omega$, we write $g_{\alpha,\omega}:X\to X$ for the fibre map 
\[
g_{\alpha,\omega}(x) = g_\alpha(\omega,x).
\]
We   introduce below a set of general assumptions that provide an abstract framework for our main results.

\begin{assump}
\item \label{as.contraction}  
\em
There exists a constant $0 < \lambda < 1$ such that 
$$d_X(g_{\alpha,\omega}(x),g_{\alpha,\omega}(y)) \le \lambda d_X(x,y),$$ 
for all $x,y\in X$, all $\omega \in \Omega$  and all $\alpha \in J$.
\end{assump}
 
%

\begin{assump}
\item \label{as.base}
\em
For each base map $f_\alpha:\Omega \to \Omega$, the following hold:
\begin{enumerate}
\em
  \item \em \label{as.a1}
  there exists a countable $m$ modulo zero partition of $\Omega$ into open sets on each of which $f_\alpha$ is a $C^1$ diffeomorphism onto its image;
\em
  \item \em  \label{as.a2}
  the map $f_\alpha$ has an invariant probability measure $\eta_\alpha$ that is absolutely continuous with respect to $m$, with density $\rho_\alpha\in L^1(m)$.
\end{enumerate}
\end{assump}

It is well known (see e.g. \cite{A98}) that for every probability measure $\mu$ invariant under the skew-product map $T_\alpha$, whose marginal on $\Omega$ is $\eta_\alpha$, there exists an essentially unique family of sample measures $\{\nu_{\alpha,\omega}\}_{\omega\in \Omega}$ on $X$ such that
\begin{equation}\label{eq.fullsample}
\mu_\alpha = \int_\Omega \nu_{\alpha,\omega} \, d\eta_\alpha(\omega).
\end{equation}

Under assumptions \ref{as.contraction}-\ref{as.base} we are able to construct these sample measures explicitly, 
in a manner that facilitates the analysis of their differentiability in some  families of skew-product maps. 
To this end, let $\mathcal M_1(X)$ denote the space of Borel probability measures on~$X$. Considering in $\mathcal M_1(X)$   the  Wasserstein-1 distance  $W_1$, it becomes a complete metric space, and since we assume $X$ a compact set, then $\mathcal M_1(X)$ is bounded with  the distance~$W_1$; see Appendix~\ref{se.appendix} for details.
Define the space of probability-valued sections on $\Omega$,
\[
\mathbf{P}
= \big\{
      \boldsymbol{\nu} : \Omega \to \mathcal{M}_1(X)
      \;\big|\;
      \boldsymbol{\nu} \text{ is measurable} 
  \big\},
\]
with the identification of sections that coincide $\eta_\alpha$-almost everywhere.
We consider $\mathbf{P}$ equipped with the essential supremum metric. 
Since $(\mathcal{M}_1(X), W_1)$ is bounded, this metric is well defined, and 
$\mathbf{P}$ is in fact a complete metric space.

We now introduce an operator that provides a natural framework to study the evolution 
of the \emph{sample measures} $\nu_\omega$ under the skew-product dynamics. 
For a given $\boldsymbol{\nu} \in \mathbf{P}$ and for $\eta_\alpha$-almost every $\omega \in \Omega$, 
the \emph{sectional transfer operator} $\mathcal{K}_\alpha$ is defined by
\begin{equation}\label{eq.kalpha} 
(\mathcal{K}_\alpha \boldsymbol{\nu})_{\omega} 
= \sum_{\theta \in f_\alpha^{-1}(\{\omega\})} 
\frac{\rho_\alpha(\theta)}{|\det D f_\alpha(\theta)| \, \rho_\alpha(\omega)} \, (g_{\alpha,\theta})_* \nu_{\theta}.
\end{equation}
Since the invariant density of the base map is preserved under its transfer operator, 
the contributions from all preimages of any point $\omega\in\Omega$ sum to one. 
In this sense, $\mathcal K_\alpha$ acts by pushing forward measures along the preimages, 
weighted according to a \emph{reverse Markov kernel}.
By the measurability of all functions involved in its definition, $\mathcal K_\alpha$ defines a map from~$\mathbf P$ into itself.
It  can be interpreted  as the transfer operator 
associated with the skew-product map~$T_\alpha$ and its definition is motivated by the duality relation  
\[
\langle\!\langle \mathcal K_\alpha \bm\nu, \Phi \rangle\!\rangle
=
\langle\!\langle \bm\nu, \Phi \circ T_\alpha \rangle\!\rangle,
\]
which holds for any bounded measurable function $\Phi:\Omega \times X \to \mathbb R$, 
where the pairing $\langle\!\langle \cdot,\cdot \rangle\!\rangle$ integrates the fiberwise action of $\nu_\omega$ 
against $\Phi(\omega,\cdot)$ with respect to $\eta_\alpha$; see Lemma~\ref{le.duality.general}.

Under the assumptions  above, we obtain the following simple result, which serves primarily as a stepping stone and motivation for the more substantial developments that follow.

\begin{maintheorem}\label{th.mainA}
Assume that   \ref{as.contraction}--\ref{as.base} hold. Then $\mathcal K_\alpha$ defines a contraction on $\mathbf P$, and therefore admits a unique fixed point $\boldsymbol{\nu}_\alpha\in\mathbf P$. Moreover,
\[
\mu_\alpha = \int_\Omega \nu_{\alpha,\omega}\, d\eta_\alpha(\omega)
\]
defines a $T_\alpha$-invariant probability measure on $\Omega\times X$.
\end{maintheorem}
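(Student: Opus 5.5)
The plan is to show that $\mathcal K_\alpha$ is a $\lambda$-contraction on $(\mathbf P, d_{\mathbf P})$, where $d_{\mathbf P}(\bm\nu,\bm\nu')=\esssup_\omega W_1(\nu_\omega,\nu'_\omega)$. Then Banach's fixed point theorem gives the unique fixed point $\bm\nu_\alpha$. Invariance of $\mu_\alpha$ will follow from the duality relation of Lemma~\ref{le.duality.general}.

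\textbf{Step 1 (well-definedness).} Write the weights as $p_\alpha(\theta,\omega)=\rho_\alpha(\theta)/(|\det Df_\alpha(\theta)|\rho_\alpha(\omega))$ for $\theta\in f_\alpha^{-1}(\omega)$. Since $\rho_\alpha$ is fixed by the Perron--Frobenius operator of $f_\alpha$, we have $\sum_\theta p_\alpha(\theta,\omega)=1$ for $m$-a.e.\ $\omega$ with $\rho_\alpha(\omega)>0$, that is, for $\eta_\alpha$-a.e.\ $\omega$. Hence $(\mathcal K_\alpha\bm\nu)_\omega$ is a countable convex combination of probability measures, so it is again a probability measure. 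It is also measurable in $\omega$: use the countable partition in \ref{as.a1} to write the sum over inverse branches.

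We must also check that $\mathcal K_\alpha$ respects $\eta_\alpha$-a.e.\ identification. If $\bm\nu=\bm\nu'$ off an $\eta_\alpha$-null set $N$, then $f_\alpha(N\cap\{\rho_\alpha>0\})$ is $m$-null, because $f_\alpha$ is $C^1$ on each of countably many branches. Therefore, for $\eta_\alpha$-a.e.\ $\omega$, no preimage $\theta$ carrying positive weight lies in $N$.

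\textbf{Step 2 (contraction).} Use the Kantorovich--Rubinstein duality $W_1(\mu,\mu')=\sup_{\lip(\varphi)\le1}\int\varphi\,d(\mu-\mu')$ from Appendix~\ref{se.appendix}. By \ref{as.contraction}, for each $\varphi$ with $\lip(\varphi)\le1$, the function $\varphi\circ g_{\alpha,\theta}$ has Lipschitz constant at most $\lambda$. Hence $W_1((g_{\alpha,\theta})_*\nu,(g_{\alpha,\theta})_*\nu')\le\lambda W_1(\nu,\nu')$. Moreover, $W_1$ is jointly convex under countable convex combinations: take the supremum over $\varphi$ of the sum, and bound it by the sum of suprema. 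This gives
\[
W_1\big((\mathcal K_\alpha\bm\nu)_\omega,(\mathcal K_\alpha\bm\nu')_\omega\big)\le \sum_\theta p_\alpha(\theta,\omega)\,\lambda\,W_1(\nu_\theta,\nu'_\theta)\le \lambda\, d_{\mathbf P}(\bm\nu,\bm\nu'),
\]
where the last inequality uses the null-set remark from Step 1 to replace each $W_1(\nu_\theta,\nu'_\theta)$ by the essential supremum. Taking the essential supremum over $\omega$ yields $d_{\mathbf P}(\mathcal K_\alpha\bm\nu,\mathcal K_\alpha\bm\nu')\le\lambda d_{\mathbf P}(\bm\nu,\bm\nu')$. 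Since $\mathbf P$ is complete, the fixed point exists and is unique.

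\textbf{Step 3 (invariance).} Define $\mu_\alpha(\Phi)=\int_\Omega\int_X\Phi(\omega,x)\,d\nu_{\alpha,\omega}(x)\,d\eta_\alpha(\omega)=\langle\!\langle\bm\nu_\alpha,\Phi\rangle\!\rangle$. This is a probability measure, since measurability of $\omega\mapsto\nu_{\alpha,\omega}$ makes it well defined. By Lemma~\ref{le.duality.general} and the fixed point property, for every bounded measurable $\Phi$,
\[
\mu_\alpha(\Phi\circ T_\alpha)=\langle\!\langle\bm\nu_\alpha,\Phi\circ T_\alpha\rangle\!\rangle=\langle\!\langle\mathcal K_\alpha\bm\nu_\alpha,\Phi\rangle\!\rangle=\mu_\alpha(\Phi).
\]
So $\mu_\alpha$ is $T_\alpha$-invariant. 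If the duality lemma had to be proved here, it would follow from the change of variables $\int\sum_{\theta\in f^{-1}\omega}\psi(\theta)/|\det Df(\theta)|\,dm(\omega)=\int\psi\,dm$ on each branch, applied to $\psi(\theta)=\rho_\alpha(\theta)\int\Phi(f_\alpha\theta,g_{\alpha,\theta}x)\,d\nu_\theta(x)$.

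The main obstacle is the measure-theoretic bookkeeping in Step 1: handling the set where $\rho_\alpha=0$, and verifying that $\mathcal K_\alpha$ is well defined on equivalence classes. The latter needs the observation that images of $m$-null sets under the countably many $C^1$ branches are $m$-null; the contraction estimate itself is routine.
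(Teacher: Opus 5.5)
Your proposal is correct and follows the paper's proof essentially step by step. The paper likewise shows that $\mathcal K_\alpha$ is a $\lambda$-contraction on $(\mathbf P,d_{\mathrm P})$, using the Kantorovich--Rubinstein form of $W_1$ (Lemma~\ref{lem:push-contract}), convexity of $W_1$ and the Perron--Frobenius identity for $\rho_\alpha$ (Lemma~\ref{le.contractinp}), applies Banach's fixed point theorem, and deduces invariance of $\mu_\alpha$ from the duality Lemma~\ref{le.duality.general}; your Step~1 usefully makes explicit the null-set bookkeeping the paper leaves implicit, namely that $\mathcal K_\alpha$ is well defined on $\eta_\alpha$-classes so that each $W_1(\nu_\theta,\nu'_\theta)$ may be bounded by the essential supremum.
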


We now introduce an additional set of assumptions under which linear response for the invariant measure $\mu_\alpha$   will be established. 
Our strategy is to first prove the differentiability of $\boldsymbol{\nu}_\alpha$  with respect to the parameter $\alpha$ in some suitable Banach space. 
This derivative   cannot be taken directly in  $\mathbf P$, since the space of probability measures lacks a vector space structure.
We therefore make use of   continuous inclusions 
\begin{equation}\label{eq.inclusions}
\mathcal M_1(X) 
\subset \lip(X)^*\subset C^k(X)^*, 
\end{equation}
for $  k\ge 1$, with respect to the natural norms on these dual spaces; see Appendix~\ref{se.fulbranch} for details. 
We also consider the corresponding subspaces
\begin{equation}\label{eq.inclusionsub}
\lip_0(X)^*\subset C^k_0(X)^*,
\end{equation}
consisting of functionals on the respective dual that vanish on constant functions.
In order to study the linear response of the skew-product measure, we begin by
requiring some regularity for the fixed point $\bm\nu_\alpha$ of the sectional
transfer operator. To begin with, we rewuire continuity with respect
to the base point, which naturally leads us to consider the space of continuous
probability-valued sections
\[
\mathbf P_0
= \bigl\{
      \bm{\nu} : \Omega \to \mathcal M_1(X)
      \;\big|\;
      \bm{\nu} \text{ is } C^0
  \bigr\}.
\]
Under very general conditions, this space is invariant under the action of the
sectional transfer operator.
Given $\bm\nu_\alpha, \bm\nu_{\alpha_0} \in \mathbf P_0$, the difference
quotient
\begin{equation}\label{eq.difquotient}
\frac{\bm\nu_\alpha - \bm\nu_{\alpha_0}}{\alpha - \alpha_0}
\end{equation}
 takes values in the vector space
\begin{equation}\label{eq.spaceE0}
\mathbf{E}_{\mathrm L}^0
=
\bigl\{\,\bm{\nu}:\Omega \to \lip_0(X)^*\;\big|\;
      \bm{\nu} \text{ is $C^0$} \bigr\}.
\end{equation}
which, when equipped with the supremum norm $\|\cdot\|_{\mathrm L}$, is a Banach space.
Aiming for differentiability of the map $\alpha\mapsto \bm\nu_\alpha$ in the Banach space $\mathbf{E}_{\mathrm L}^0$ would however be
too ambitious, given the weak regularity of the associated test functions.
For this reason,  we introduce for~$k \ge 1$ the larger spaces
\begin{equation}\label{eq.spaceEk}
\mathbf{E}_{k}^0
=
\bigl\{\,\bm{\nu}:\Omega \to C_0^k(X)^* \;\big|\; \bm{\nu} \text{ is } C^0\bigr\},
\end{equation}
which are   Banach spaces when endowed with the supremum norm $\|\cdot\|_k$.
Note that, due to the continuous embeddings in~\eqref{eq.inclusions}, the sapce~$\mathbf{E}_{\mathrm L}^0$ is continuously embedded  in $\mathbf{E}_{k}^0$, for every
$k \ge 1$. 

The introduction of these   spaces allows for weaker norms, under
which the verification of the next conditions  becomes
feasible. 
We remark that the expression used to define the sectional transfer operator in \eqref{eq.kalpha} is still valid when applied to sections in these spaces; see equation \eqref{eq.push} in Appendix~\ref{se.appendix} for the general definition of the push-forward.
\begin{assump}
\item \label{as.space}  \em  For all $\alpha\in J$, the operator $\mathcal K_\alpha$ acts as a bounded linear endomorphism of  some~$\mathbf{E}_{k}^0$. Moreover,
\begin{enumerate}
 \em
  \item \label{as.spacea}
   \em
there exists $0<\lambda<1$ such that    $\| \mathcal K_\alpha\|\le \lambda$,  for all $\alpha\in J$;  \em
 \item \label{as.spaceb}
   \em
   $\mathcal K_\alpha$ maps the vector subspace $\mathbf{E}_{\mathrm L}^0$ of $\mathbf{E}_{k}^0$ into itself;
 \em
  \item \label{as.spacec}
   \em
  the map $\alpha \mapsto \mathcal K_\alpha   \bm\nu$ is continuous in $\mathbf{E}_{k}^0$ at $\alpha_0$, for any $\bm\nu\in \mathbf{E}_{\mathrm L}^0$;
 \em
  \item \label{as.spaced}
  \em
  the map $\alpha \mapsto \mathcal K_\alpha \bm\nu_{\alpha_0}$ is differentiable in $\mathbf{E}_{k}^0$ at $\alpha_0$.
\end{enumerate}
\end{assump}

As a by-product of our approach, we obtain the following linear response result for the sample measures.
We emphasise that the specific nature of the operator $\mathcal K_\alpha$ and of the spaces~$\mathbf{E}_{\mathrm L}^0,\mathbf{E}_{k}^0$ is immaterial for this theorem.

\begin{maintheorem}\label{th.mainB}
Assume that  \ref{as.contraction}--\ref{as.space} hold. Then,
the map 
$\alpha\mapsto \bm\nu_\alpha$
is differentiable in the space $\mathbf{E}_{k}^0$ at $\alpha_0$.
\end{maintheorem}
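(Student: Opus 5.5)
The argument is the standard perturbation of a fixed point of a uniformly contracting linear map; the concrete form of $\mathcal K_\alpha$ plays no role. Theorem~\ref{th.mainA} gives the fixed points $\bm\nu_\alpha=\mathcal K_\alpha\bm\nu_\alpha$. I would first check that $\bm\nu_\alpha$ and $\bm\nu_{\alpha_0}$ lie in $\mathbf P_0$. This membership is implicit in the statement, since the difference quotient~\eqref{eq.difquotient} is taken in $\mathbf E^0_{\mathrm L}$, and I would take it as given as in the setup. Then $\bm\nu_\alpha-\bm\nu_{\alpha_0}\in\mathbf E^0_{\mathrm L}\subset\mathbf E^0_k$, because the difference of two probability measures vanishes on constants. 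Subtracting the two fixed-point identities gives
\[
(I-\mathcal K_{\alpha})(\bm\nu_\alpha-\bm\nu_{\alpha_0}) = \mathcal K_\alpha\bm\nu_{\alpha_0}-\mathcal K_{\alpha_0}\bm\nu_{\alpha_0}.
\]
By \ref{as.spacea}, $\|\mathcal K_\alpha\|\le\lambda<1$ on $\mathbf E^0_k$ uniformly in $\alpha$. Hence $I-\mathcal K_\alpha$ is invertible via the Neumann series, with $\|(I-\mathcal K_\alpha)^{-1}\|\le (1-\lambda)^{-1}$ uniformly in $\alpha\in J$.

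Set $\bm\xi=\partial_\alpha(\mathcal K_\alpha\bm\nu_{\alpha_0})|_{\alpha_0}\in\mathbf E^0_k$, which exists by \ref{as.spaced}. The candidate derivative is $\hat{\bm\nu}=(I-\mathcal K_{\alpha_0})^{-1}\bm\xi$. Dividing the identity above by $\alpha-\alpha_0$ gives
\[
\frac{\bm\nu_\alpha-\bm\nu_{\alpha_0}}{\alpha-\alpha_0}-\hat{\bm\nu}
=(I-\mathcal K_\alpha)^{-1}\Big(\frac{\mathcal K_\alpha\bm\nu_{\alpha_0}-\mathcal K_{\alpha_0}\bm\nu_{\alpha_0}}{\alpha-\alpha_0}-\bm\xi\Big)
+\big[(I-\mathcal K_\alpha)^{-1}-(I-\mathcal K_{\alpha_0})^{-1}\big]\bm\xi .
\]
The first term tends to $0$ in $\|\cdot\|_k$, by \ref{as.spaced} together with the uniform bound on the inverses.

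The second term is the main obstacle, because \ref{as.spacec} gives only strong continuity of $\alpha\mapsto\mathcal K_\alpha$, and only on vectors of $\mathbf E^0_{\mathrm L}$; it does not give continuity in operator norm. Moreover, $\bm\xi$ need not lie in $\mathbf E^0_{\mathrm L}$. I would use the resolvent identity
\[
(I-\mathcal K_\alpha)^{-1}-(I-\mathcal K_{\alpha_0})^{-1}=(I-\mathcal K_\alpha)^{-1}(\mathcal K_\alpha-\mathcal K_{\alpha_0})(I-\mathcal K_{\alpha_0})^{-1},
\]
so it suffices to show $(\mathcal K_\alpha-\mathcal K_{\alpha_0})\hat{\bm\nu}\to0$ in $\mathbf E^0_k$.

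To obtain this, I would use the fact that $\mathbf E^0_{\mathrm L}$ is dense in $\mathbf E^0_k$, or at least that $\hat{\bm\nu}$ lies in the closure of $\mathbf E^0_{\mathrm L}$. Given $\epsilon>0$, pick $\bm\zeta\in\mathbf E^0_{\mathrm L}$ with $\|\hat{\bm\nu}-\bm\zeta\|_k<\epsilon$. Then
\[
\|(\mathcal K_\alpha-\mathcal K_{\alpha_0})\hat{\bm\nu}\|_k\le 2\lambda\epsilon+\|(\mathcal K_\alpha-\mathcal K_{\alpha_0})\bm\zeta\|_k ,
\]
and the last term tends to $0$ by \ref{as.spacec}.

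For the closure claim I would argue as follows. Difference quotients of $\mathcal K_\alpha\bm\nu_{\alpha_0}$ lie in $\mathbf E^0_{\mathrm L}$ by \ref{as.spaceb}, since $\mathcal K_\alpha$ maps $\mathbf P_0$ into $\mathbf P_0$. So $\bm\xi$, as their $\|\cdot\|_k$-limit, lies in the $\|\cdot\|_k$-closure $\overline{\mathbf E^0_{\mathrm L}}$. This closure is invariant under $\mathcal K_{\alpha_0}$, by \ref{as.spaceb} and boundedness. Consequently the Neumann series places $\hat{\bm\nu}$ in $\overline{\mathbf E^0_{\mathrm L}}$ as well. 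With this, both terms vanish as $\alpha\to\alpha_0$, and the map $\alpha\mapsto\bm\nu_\alpha$ is differentiable in $\mathbf E^0_k$ at $\alpha_0$ with derivative $\hat{\bm\nu}$.
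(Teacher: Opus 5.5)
Your proposal is correct and follows the paper's proof essentially step for step. It uses the same uniform Neumann-series bound $\|(I-\mathcal K_\alpha)^{-1}\|\le(1-\lambda)^{-1}$, the same split into $(I-\mathcal K_\alpha)^{-1}$ applied to the error of the difference quotient of $\mathcal K_\alpha\bm\nu_{\alpha_0}$ plus a resolvent difference handled by the second resolvent identity, and the same argument placing $(I-\mathcal K_{\alpha_0})^{-1}\bm\xi$ in the $\|\cdot\|_k$-closure of $\mathbf E^0_{\mathrm L}$, so that \ref{as.spacec} can be applied to an approximant.

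One small tidy-up: you justify that the difference quotients of $\mathcal K_\alpha\bm\nu_{\alpha_0}$ lie in $\mathbf E^0_{\mathrm L}$ by invoking invariance of $\mathbf P_0$ under $\mathcal K_\alpha$, which is not among the hypotheses. That is unnecessary, because membership follows directly from \ref{as.spaceb} and your identity $\mathcal K_\alpha\bm\nu_{\alpha_0}-\mathcal K_{\alpha_0}\bm\nu_{\alpha_0}=(I-\mathcal K_\alpha)(\bm\nu_\alpha-\bm\nu_{\alpha_0})$.
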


For the linear response of the skew-product measure, we need to impose 
additional regularity conditions both on the density $\rho_\alpha$ of the absolutely continuous 
invariant probability of the base map and on the fixed point $\bm\nu_\alpha$ of the operator 
$\mathcal K_\alpha$. 

\begin{assump}
\item \label{as.response}\em
The map 
\(
\alpha \mapsto \rho_\alpha 
\)
is differentiable in $L^1(m)$ at $\alpha_0$.
\end{assump}

\begin{assump}
\item  \label{as.continuous}
\em For every $\alpha \in J$, we have $\bm\nu_\alpha\in\mathbf P_0$ and 
$
\|\bm\nu_\alpha-\bm\nu_{\alpha_0}\|_k \to 0$ as $\alpha \to \alpha_0.
$
\end{assump}

Under these additional assumptions, we obtain our first general linear response result for the skew-product measure provided by Theorem~\ref{th.mainA}. The order of differentiability $C^k$ of the observables appearing in the subsequent results is implicitly tied to the index $k$ of the space $\mathbf{E}_{k}^0$ for which properties~\ref{as.space} and~\ref{as.continuous} hold.

\begin{maintheorem}\label{co.mainC}
Assume that   \ref{as.contraction}--\ref{as.continuous} hold. 
Then, for every $C^k$  function $\Phi : \Omega \times X \to \mathbb{R}$, the map
\[
\alpha \longmapsto \int_{\Omega}\int_X \Phi(\omega,x)d\nu_{\alpha,\omega}(x)\, d\eta_\alpha(\omega)
\]
is differentiable at $\alpha_0$.
\end{maintheorem}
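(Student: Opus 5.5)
We write $\Psi_\alpha(\omega)=\int_X \Phi(\omega,x)\,d\nu_{\alpha,\omega}(x)$, so the quantity of interest is $I(\alpha)=\int_\Omega \Psi_\alpha(\omega)\rho_\alpha(\omega)\,dm(\omega)$. The plan is to split the difference quotient into a ``fibre'' part and a ``base'' part:
\[
\frac{I(\alpha)-I(\alpha_0)}{\alpha-\alpha_0}
=\int_\Omega \Big\langle \tfrac{\nu_{\alpha,\omega}-\nu_{\alpha_0,\omega}}{\alpha-\alpha_0},\Phi(\omega,\cdot)\Big\rangle\rho_{\alpha}(\omega)\,dm
+\int_\Omega \Psi_{\alpha_0}(\omega)\,\frac{\rho_\alpha(\omega)-\rho_{\alpha_0}(\omega)}{\alpha-\alpha_0}\,dm .
\]
The second term is the easy one. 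Since $\Phi$ is continuous on the compact set $\Omega\times X$, $\Psi_{\alpha_0}$ is bounded by $\|\Phi\|_\infty$ (and measurable, indeed continuous since $\bm\nu_{\alpha_0}\in\mathbf P_0$ by \ref{as.continuous}). Assumption \ref{as.response} then gives convergence to $\int_\Omega\Psi_{\alpha_0}\,\partial_\alpha\rho_{\alpha_0}\,dm$, because pairing with a bounded function is continuous on $L^1(m)$.

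For the first term, note that the quotient $\bm\delta_\alpha=(\bm\nu_\alpha-\bm\nu_{\alpha_0})/(\alpha-\alpha_0)$ takes values in $C^k_0(X)^*$. We therefore pair it with $\Phi(\omega,\cdot)\in C^k(X)$, whose $C^k$ norm is bounded uniformly in $\omega$ by $\|\Phi\|_{C^k}$. Theorem~\ref{th.mainB} gives $\bm\delta_\alpha\to\bm\nu'$ in $\mathbf E_k^0$, i.e.\ in sup norm. Hence
\[
\big|\langle \delta_{\alpha,\omega}-\nu'_\omega,\Phi(\omega,\cdot)\rangle\big|\le \|\bm\delta_\alpha-\bm\nu'\|_k\,\|\Phi\|_{C^k}\to0
\]
uniformly in $\omega$. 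Moreover $|\langle\delta_{\alpha,\omega},\Phi(\omega,\cdot)\rangle|$ is uniformly bounded for $\alpha$ near $\alpha_0$.

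Next, $\rho_\alpha\to\rho_{\alpha_0}$ in $L^1(m)$, since differentiability at $\alpha_0$ implies continuity there. Writing $\rho_\alpha=\rho_{\alpha_0}+(\rho_\alpha-\rho_{\alpha_0})$, the first term therefore converges to $\int_\Omega\langle\nu'_\omega,\Phi(\omega,\cdot)\rangle\rho_{\alpha_0}\,dm$. The error is bounded by the uniform bound on the pairing times $\|\rho_\alpha-\rho_{\alpha_0}\|_{L^1}\to0$. Measurability of $\omega\mapsto\langle\nu'_\omega,\Phi(\omega,\cdot)\rangle$ follows from the continuity of $\bm\nu'$ together with the continuity of $\omega\mapsto\Phi(\omega,\cdot)$ in $C^k(X)$. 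Here we interpret $\Phi\in C^k$ as being $C^k$ in $x$ with fibre-derivatives up to order $k$ continuous jointly; this gives the needed continuity by uniform continuity on a compact set.

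The main subtlety is to make sure the pairing between $C_0^k(X)^*$ and $C^k(X)$ is legitimate. Functionals in $C_0^k(X)^*$ vanish on constants, while $\Phi(\omega,\cdot)$ need not. Since $\nu_{\alpha,\omega}-\nu_{\alpha_0,\omega}$ kills constants, we can replace $\Phi(\omega,\cdot)$ by $\Phi(\omega,\cdot)-\Phi(\omega,x_0)$ without changing the pairing, with $C^k$ norm still controlled by $\|\Phi\|_{C^k}$. One should check this is consistent with the norm convention of Appendix~\ref{se.fulbranch}. Summing the two limits gives the derivative
\[
I'(\alpha_0)=\int_\Omega\langle\nu'_\omega,\Phi(\omega,\cdot)\rangle\rho_{\alpha_0}\,dm+\int_\Omega\Psi_{\alpha_0}\,\partial_\alpha\rho_{\alpha_0}\,dm .
\]
Assumption \ref{as.continuous} serves only to guarantee that the $\bm\nu_\alpha$ are in $\mathbf P_0$, so that the quotient lives in $\mathbf E_k^0$ as required by Theorem~\ref{th.mainB}.
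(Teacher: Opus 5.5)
Your argument is correct. It is organised differently from the paper's, which obtains this theorem by combining Theorem~\ref{th.mainB} with Theorem~\ref{th.mainC}, using that sup-norm convergence in $\mathbf E_k^0$ implies convergence in $\mathbf L_k^0$. The proof of Theorem~\ref{th.mainC} splits the increment the other way round: $\int(\Psi_\alpha-\Psi_{\alpha_0})\,d\eta_{\alpha_0}+\int\Psi_\alpha\,d(\eta_\alpha-\eta_{\alpha_0})$, where your $\Psi_\alpha$ is the paper's $\psi_\alpha$. The two splittings differ exactly by the second-order cross term
\[
\frac{1}{\alpha-\alpha_0}\int_\Omega(\Psi_\alpha-\Psi_{\alpha_0})(\rho_\alpha-\rho_{\alpha_0})\,dm ,
\]
and the substantive difference is how this term is killed. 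You control it as a uniform $O(1)$ bound on the fibre quotient $\bm\delta_\alpha$ times $\|\rho_\alpha-\rho_{\alpha_0}\|_{L^1(m)}\to0$. That bound is available only because Theorem~\ref{th.mainB} gives differentiability in the sup-norm space $\mathbf E_k^0$. The paper controls it as $\|\Psi_\alpha-\Psi_{\alpha_0}\|_{C^0}\to0$ (Lemma~\ref{le.varphicont}, i.e.\ the second half of \ref{as.continuous}) times the $L^1(m)$ bound on the density quotient coming from \ref{as.response}. In the present setting your route is slightly leaner: it bypasses the uniform convergence in Lemma~\ref{le.varphicont}, and the second half of \ref{as.continuous}, which in any case follows from differentiability in $\mathbf E_k^0$. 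The paper's route buys generality. It needs the fibre quotient only in $L^1(\eta_{\alpha_0})$, where no uniform bound on $\bm\delta_\alpha$ exists, and this is exactly the situation of the intermittent solenoid: via Theorem~\ref{thm:inducedLR}, its sample measures are only shown to be differentiable in $\mathbf L_3^0$. Finally, your concern about pairing $C^k_0(X)^*$ with non-centred test functions is unnecessary. $C^k_0(X)^*$ is a subspace of $C^k(X)^*$ with the same dual norm, so $|\langle\nu,\phi\rangle|\le\|\nu\|_{k^*}\|\phi\|_{C^k}$ applies directly.
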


In order to allow for applications   in which the sample measures cannot be directly derived via Theorem~\ref{th.mainB}, we introduce a kind of minimal space in which differentiability of the sample measures implies differentiability of the skew-product measure. This added flexibility will be crucial for the application to systems whose base map has an inducing scheme, particularly, the intermittent solenoid presented in Subsection~\ref{sub.solenoid}.
To this end, we define for   $k\ge 1$
\begin{equation}\label{eq.spaceE*}
\mathbf L_k^0 = \Big\{
\boldsymbol{\nu} : \Omega \to C^k_0(X)^* \;\big\vert\;
\boldsymbol{\nu} \text{ is measurable and }
\int_\Omega \|\nu_\omega\|_{k^*} \, d\eta_{\alpha_0} < \infty
\Big\},
\end{equation}
and set for each $\boldsymbol{\nu}\in\mathbf L_k^0$,  
\begin{equation}\label{eq.norm*}
\|\boldsymbol{\nu}\|_{\mathbf L_k}
= \int_\Omega \|\nu_\omega\|_{k^*} \, d\eta_{\alpha_0}(\omega),
\end{equation}
where $\|\cdot\|_{k^*}$ denotes the standard norm in $C^k(X)^*$; see Appendix~\ref{se.appendix}.
This  defines a norm on~$\mathbf L_k^0$ which makes it a Banach space. In fact, $\mathbf L_k^0$  coincides with the space of measurable 
functions from $\Omega$ to the Banach space $C_0^k(X)^*$ which are Bochner integrable  with
respect to the measure $\eta_{\alpha_0}$. Although we  do not make this explicit in the notation,  it should be noted that the space $\mathbf{L}_k$ is defined using the measure $\eta_{\alpha_0}$ corresponding to the reference  parameter~$\alpha_0$, the one at which we aim to study differentiability. 

\begin{maintheorem}\label{th.mainC}
Assume that \ref{as.contraction}--\ref{as.base} and \ref{as.response}--\ref{as.continuous} hold, and that the map
\(
\alpha \mapsto \bm{\nu}_\alpha
\)
is differentiable in $\mathbf L_k^0$ at $\alpha_0$.
Then, for every $C^k$ observable $\Phi \colon \Omega \times X \to \mathbb{R}$, the map
\[
\alpha \longmapsto \int_{\Omega}\int_X \Phi(\omega,x)\, d\nu_{\alpha,\omega}(x)\, d\eta_\alpha(\omega)
\]
is differentiable at $\alpha_0$.
\end{maintheorem}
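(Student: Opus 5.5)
We prove Theorem~\ref{th.mainC} by writing the integral as $I(\alpha)=\int_\Omega \phi_\alpha(\omega)\rho_\alpha(\omega)\,dm(\omega)$, where $\phi_\alpha(\omega)=\int_X\Phi(\omega,x)\,d\nu_{\alpha,\omega}(x)$, and splitting the increment as
\[
I(\alpha)-I(\alpha_0)=\int_\Omega \bigl(\phi_\alpha-\phi_{\alpha_0}\bigr)\rho_{\alpha_0}\,dm+\int_\Omega \phi_{\alpha_0}\bigl(\rho_\alpha-\rho_{\alpha_0}\bigr)dm+\int_\Omega \bigl(\phi_\alpha-\phi_{\alpha_0}\bigr)\bigl(\rho_\alpha-\rho_{\alpha_0}\bigr)dm .
\]
We then divide by $\alpha-\alpha_0$ and treat each term separately. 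The candidate derivative is
\[
\int_\Omega \partial\nu_{\omega}\bigl(\Phi(\omega,\cdot)\bigr)\,d\eta_{\alpha_0}(\omega)+\int_\Omega \phi_{\alpha_0}\,\partial\rho\,dm ,
\]
where $\partial\bm\nu\in\mathbf L_k^0$ is the derivative of $\alpha\mapsto\bm\nu_\alpha$ and $\partial\rho\in L^1(m)$ is the derivative from \ref{as.response}.

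\textbf{First term.} It equals $\int_\Omega (\nu_{\alpha,\omega}-\nu_{\alpha_0,\omega})(\Phi(\omega,\cdot))\,d\eta_{\alpha_0}$. For each $\omega$, $\Phi_\omega=\Phi(\omega,\cdot)$ is $C^k$ on $X$ with $\|\Phi_\omega\|_{C^k}\le\|\Phi\|_{C^k}$, uniformly in $\omega$. Since $\nu_{\alpha,\omega}-\nu_{\alpha_0,\omega}$ vanishes on constants, it lies in $C^k_0(X)^*$ and its pairing with $\Phi_\omega$ is bounded by $\|\cdot\|_{k^*}\|\Phi\|_{C^k}$. Subtracting $(\alpha-\alpha_0)\partial\nu_\omega(\Phi_\omega)$ and integrating, the error is at most $\|\Phi\|_{C^k}$ times the $\mathbf L_k$-norm of the remainder of the difference quotient. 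This tends to zero by the assumed differentiability in $\mathbf L_k^0$. A minor point to check is measurability of $\omega\mapsto\partial\nu_\omega(\Phi_\omega)$. It follows because $\omega\mapsto\Phi_\omega$ is continuous into $C^k(X)$ (a uniform continuity argument needs $\Phi$ to be $C^k$ jointly; if only $C^k$ in $x$ with continuity in $\omega$, continuity still holds). The pairing of a Bochner measurable map with a continuous one is measurable.

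\textbf{Second term.} The function $\phi_{\alpha_0}$ is bounded by $\sup|\Phi|$ since $\nu_{\alpha_0,\omega}$ are probabilities. It is measurable, and in fact continuous by \ref{as.continuous}. Hence \ref{as.response} directly gives $\frac{1}{\alpha-\alpha_0}\int\phi_{\alpha_0}(\rho_\alpha-\rho_{\alpha_0})\,dm\to\int\phi_{\alpha_0}\,\partial\rho\,dm$.

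\textbf{Cross term (main obstacle).} We bound it by $\sup_\omega|\phi_\alpha(\omega)-\phi_{\alpha_0}(\omega)|\cdot\|\rho_\alpha-\rho_{\alpha_0}\|_{L^1}/|\alpha-\alpha_0|$. The second factor stays bounded by \ref{as.response}. For the first, $|\phi_\alpha(\omega)-\phi_{\alpha_0}(\omega)|\le\|\Phi\|_{C^k}\|\nu_{\alpha,\omega}-\nu_{\alpha_0,\omega}\|_{k^*}\le\|\Phi\|_{C^k}\|\bm\nu_\alpha-\bm\nu_{\alpha_0}\|_k$, and this tends to $0$ by \ref{as.continuous}. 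This is exactly why the sup-norm continuity \ref{as.continuous} is needed: the $\mathbf L_k$ convergence is only with respect to $\eta_{\alpha_0}$ and cannot control integration against the signed densities $\rho_\alpha-\rho_{\alpha_0}$, which need not be absolutely continuous with respect to $\eta_{\alpha_0}$ in a controlled way.

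Combining the three limits gives differentiability of $I$ at $\alpha_0$ with the stated derivative.
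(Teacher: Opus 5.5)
Your proposal is correct and follows essentially the same argument as the paper: the $\eta_{\alpha_0}$-part is controlled by the $\mathbf L_k^0$ differentiability of $\alpha\mapsto\bm\nu_\alpha$, and the density part by the $L^1$ differentiability in \ref{as.response} together with the uniform convergence $\sup_\omega|\phi_\alpha(\omega)-\phi_{\alpha_0}(\omega)|\to0$ coming from \ref{as.continuous}. The only difference is in the bookkeeping. The paper keeps $\int\phi_\alpha\,d(\eta_\alpha-\eta_{\alpha_0})$ as a single term and adds and subtracts $\int\phi_\alpha\dot\rho_{\alpha_0}\,dm$. You instead isolate a cross term and bound it by the sup-norm difference times the bounded $L^1$ difference quotient, which is equivalent.
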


Since convergence in the space $\mathbf{E}_{k}^0$      implies convergence in the space~$\mathbf L_k^0$, Theorem~\ref{co.mainC} is a direct consequence of Theorem~\ref{th.mainB} and Theorem~\ref{th.mainC}.
The proof of Theorem~\ref{th.mainB}  will be presented  in Subsection~\ref{sub.sample}.
The proof of Theorem~\ref{th.mainC} will be presented in Subsection~\ref{sub.skewresponse}.

\begin{remark} \label{re.justice}
Theorem~\ref{th.mainC} is  our result of most general applicability  concerning linear response for skew-product maps.   
The assumption of harder verification being in general  the differentiability of the map
\(
\alpha \mapsto \bm{\nu}_\alpha,
\)
even if only  in the space $\mathbf L_k^0$. 
 Recall that~\ref{as.space}  is used in Theorem~\ref{th.mainB} to obtain this conclusion  in the space $\bm{\mathrm E}_{k}^{0}$, and for the general class of systems considered in Subsection~\ref{sub.mapsfull}, this differentiability follows indeed  from Theorem~\ref{th.mainB}. However, this approach cannot be applied to the intermittent solenoid considered in Subsection~\ref{sub.solenoid}, essentially because Theorem~\ref{th.mainB} requires the base dynamics to be at least of class $C^2$, a regularity assumption that is not satisfied by those systems.

As an alternative, in Subsection~\ref{sub.basinducing} we present an approach for skew-product systems whose base map admits an inducing scheme. This  allows us to deduce the differentiability of the map
\(
\alpha \mapsto \bm{\nu}_\alpha
\)
in the space $\mathbf L_k^0$ from the differentiability of the sample invariant measures of the induced skew-product in $\mathbf E_k^0$. For the induced system, this latter property can be established by applying Theorem~\ref{th.mainB}.
We   develop first in Subsection~\ref{sub.mapsfull} a theory for skew-product maps whose base dynamics are given by piecewise expanding full-branch maps.
\end{remark}

In Table~\ref{tab:spaces} below, we summarise  the section spaces and norms used in the analysis throughout this work; see Appendix~\ref{se.appendix} for the norms in the dual spaces used in the fibres.

\begin{table}[h!]
\centering
\renewcommand{\arraystretch}{1.4}
\begin{tabular}{| c | l | l |}
\hline
\multicolumn{1}{|c|}{\textbf{Space}} &
\multicolumn{1}{c|}{\textbf{Definition}} &
\multicolumn{1}{c|}{\textbf{Metric}} \\
\hline

$\mathbf{P}$ 
& $\{\bm\nu: \Omega \to \mathcal{M}_1(X) \mid \bm\nu \text{ is measurable} \}$ 
& \multirow{2}{*}{$d_{\mathrm{P}}(\bm\mu,\bm\nu)
= \displaystyle\esssup_{\omega\in\Omega} \|\mu_\omega-\nu_\omega\|_{\mathrm{Lip}^*}$} \\
\cline{1-2}

$\mathbf{P}_0$ 
& $\{\bm\nu: \Omega \to \mathcal{M}_1(X) \mid \bm\nu \text{ is $C^0$} \}$ 
& \\

\hline

$\mathbf{E}_{\mathrm L}^0$ 
& $\{\bm\nu: \Omega \to \mathrm{Lip}_0(X)^* \mid \bm\nu \text{ is $C^0$} \}$ 
& \multirow{2}{*}{$\|\bm\nu\|_{\mathrm L}
= \displaystyle\sup_{\omega\in\Omega} \|\nu_\omega\|_{\mathrm{Lip}^*}$} \\
\cline{1-2}

$\mathbf{E}_{\mathrm L}$ 
& $\{\bm\nu: \Omega \to \mathrm{Lip}(X)^* \mid \bm\nu \text{ is $C^0$} \}$ 
& \\

\hline

$\mathbf{E}_{k}^0$ 
& $\{\bm\nu: \Omega \to C^k_0(X)^* \mid \bm\nu \text{ is $C^0$} \}$ 
& \multirow{2}{*}{$\|\bm\nu\|_{k}
=\displaystyle \sup_{\omega\in\Omega} \|\nu_\omega\|_{k^*}$} \\
\cline{1-2}

$\mathbf{E}_k$ 
& $\{\bm\nu: \Omega \to C^k(X)^* \mid \bm\nu \text{ is $C^0$} \}$ 
& \\

\hline

$\mathbf{E}_k'$ 
& $\{\bm\nu: \Omega \to C^k(X)^* \mid \bm\nu \text{ is $C^1$} \}$ 
& $\|\bm\nu\|_{k}' = \|\bm\nu\|_{k} + \|\bm\nu'\|_{k}$ \\
\hline

$\mathbf{L}_{k}^0$ 
& $\{\bm\nu: \Omega \to C^k_0(X)^* \mid \bm\nu \in L^1(\eta_{\alpha_0}) \}$ 
& $\|\bm\nu\|_{\mathbf L_k}
=  \int_\Omega\|\nu_\omega\|_{k^*} \, d\eta_{\alpha_0}(\omega)$ \\
\hline

\end{tabular}
\caption{\small Spaces of sections and metrics.}
\label{tab:spaces}
\end{table}

In the two applications  of our results, we  do not consider values of~\(k\) beyond \({k=3}\) for the spaces \(\mathbf{E}_{k}^0\) and beyond \(k=2\) for the spaces \(\mathbf{E}_k\).
The spaces \(\mathbf{E}_{\mathrm L}\),~\(\mathbf{E}_k\), and~\(\mathbf{E}_k'\) have not appeared so far; they will be introduced later as auxiliary spaces in the statements and proofs of some of our results.
The distinction between \(\mathbf{E}_{\mathrm L}\),~\(\mathbf{E}_k\) and \(\mathbf{E}_{\mathrm L}^0\),~\(\mathbf{E}_{k}^0\) is subtle, though essential: the latter consist of those sections whose values are, fibrewise, functionals vanishing on constant functions.
This choice reflects two complementary requirements:
on the one hand, the operator \(\mathcal K_\alpha\) must act as a contraction on the spaces~\(\mathbf{E}_{\mathrm L}^0\),~\(\mathbf{E}_{k}^0\); on the other hand, the spaces \(\mathbf{E}_k\),\(\mathbf{E}_k'\) must be sufficiently large to intersect \(\mathbf P\) and to contain the fixed point of \(\mathcal K_\alpha\) in \(\mathbf P\). Finally, we emphasise that the use of the Lipschitz norm in the definition of the spaces \(\mathbf{E}_{\mathrm L}^0\),\(\mathbf{E}_{\mathrm L}\), rather than the \(C^0\) norm, is essential: the operator~\(\mathcal K_\alpha\) acts fibrewise as a contraction in the corresponding \(\lip_0(X)^*\) norm, a property which generally fails in the \(C^0_0(X)^*\) setting.

\subsection{Base maps with full   branches}\label{sub.mapsfull} 
This section is devoted to skew-product systems whose base dynamics are given by piecewise smooth full branch maps. This setting will be particularly relevant for the applications presented in Section~\ref{se.applications}. Our goal is to introduce verifiable criteria for families of skew-product maps
\[
T_\alpha:\Omega \times X \to \Omega \times X, \qquad \alpha \in J,
\]
of the form~\eqref{eq.skewmap}, under which several of the assumptions of Section~\ref{se.general} can be checked more directly.
For simplicity, we assume throughout that $\Omega$ is a compact interval in $\mathbb{R}$ and that $X$ is a compact, convex subset of $\mathbb{R}^d$ for some $d \ge 1$. 
While this framework admits natural generalisations, the present setting is sufficient for all the applications considered in this paper.

We assume there exists an at most countable family  $\{\Omega_{\alpha,i}\}_{i \in \mathbb I}$  of smoothness domains for each base map $f_\alpha \colon \Omega \to \Omega$. We now introduce a collection of additional assumptions on the family $\{T_\alpha\}_{\alpha \in J}$.

\begin{assumpB}
\item \label{as.full} \em
For each $i \in \mathbb I$ and $\alpha\in J$, the restriction $f_\alpha|_{\Omega_{\alpha,i}} \colon \Omega_{\alpha,i} \to \Omega$ is onto, and its local inverse branch has a $C^1$ extension $\theta_{\alpha,i} \colon \Omega \to \Omega_{\alpha,i}$  satisfying  $|\theta_{\alpha,i}'|\le 1$.
\item
\label{as.theta} 
For each $i\in\mathbb I$, the map $  \alpha \mapsto  \theta_{\alpha,i}\in C^1(\Omega,\Omega)$ is  continuous,     and there exists $C_\theta>0$ such that 
$$  
\sum_{i\in\mathbb I} \sup_{\alpha\in J} \big\| \theta_{\alpha,i} \big\|_{C^1}\le C_\theta.$$
 \em
\end{assumpB}


\begin{assumpB}
\item \label{as.g} \em 
For each $\alpha\in J$ and $\omega\in \Omega$, we have $g_{\alpha,\omega}\in C^3(X)$. Moreover,
\begin{enumerate}
\em \item \em for each $i\in\mathbb I$,  the map $\alpha \mapsto g_{\alpha,\theta_{\alpha,i}} \in C^1( \Omega\times X, X)$ is continuous;
\em \item \em  there exists $\lambda<1$  such that defining for $i\in \mathbb I$,  $\alpha\in J$ and  $k=1,2,3$ \label{as.gb}   
$$L_k=L_k(\alpha,i)=\sup_{\omega\in\Omega}\sup_{x\in X}\|D^kg_{\alpha,\theta_{\alpha,i}(\omega)}(x) \|,
$$
one has
$$\max\left\{ L_1 + L_2 + L_3, \, L_1^2 + 3L_1L_2  \right\}\le \lambda;$$ 
\em \item \em   there exists $C_g>0$ such that 
$$\sum_{i\in\mathbb I} \sup_{ \alpha\in J }  \| g_{\alpha,\theta_{\alpha,i}} \|_{C^1}  \le C_g.$$
\end{enumerate}  
\end{assumpB}

For each $i \in \mathbb{I}$ and $\alpha \in J$, we introduce the \emph{weight function} $p_{\alpha,i}$, defined  for $\eta_\alpha$-almost every $\omega \in \Omega$ by
\begin{equation}\label{eq.pweights}
p_{\alpha,i}(\omega)
=
\frac{\rho_\alpha\bigl(\theta_{\alpha,i}(\omega)\bigr)}
{|  f_\alpha'\bigl(\theta_{\alpha,i}(\omega)\bigr)|\, \rho_\alpha(\omega)}.
\end{equation}
With this notation, the sectional transfer operator $\mathcal K_\alpha$ defined in~\eqref{eq.kalpha} admits the explicit representation
\begin{equation}\label{eq.kalpha2}
(\mathcal K_\alpha \bm{\nu})_\omega
=
\sum_{i \in \mathbb{I}}
p_{\alpha,i}(\omega)\,
\bigl(g_{\alpha,\theta_{\alpha,i}(\omega)}\bigr)_*
\nu_{\alpha,\theta_{\alpha,i}(\omega)} .
\end{equation}
The \emph{reverse Markov kernel} property translates in this setting to the simple relation
\begin{equation}\label{eq.RMK}
\sum_{i \in \mathbb{I}} p_{\alpha,i}(\omega) = 1.
\end{equation}
Our final assumption of this part concerns the continuity and summability properties of these weight functions. Throughout the text, we use a dot over a function to denote its partial derivative with respect to the parameter $\alpha \in J$.
  \begin{assumpB}
\item \label{as.p} \em For each $i\in\mathbb I$,  the map $  \alpha \mapsto   p_{\alpha,i}\in C^1(  \Omega,\mathbb R)$ is   continuous,   and there exists $C_p>0$ such that 
$$ 
\sum_{i\in\mathbb I} \sup_{\alpha\in J}\|  p_{\alpha,i} \|_{C^1}\le C_p\qand \sum_{i\in\mathbb I} \sup_{\alpha\in J}\| \dot p_{\alpha,i} \|_{C^0}\le C_p.
$$
 \end{assumpB}

We refer the reader to Remark~\ref{re.bell} for a detailed explanation of the role played by the constant
$0<\lambda<1$   in~\ref{as.gb}, and for the motivation behind its   choice. 
Note that if the maps $g_{\alpha,\omega}$  are affine then \ref{as.gb} means that $g_{\alpha,\omega}$ are uniform smooth contractions  in $X$.
Clearly \ref{as.gb}    implies \ref{as.contraction} in this setting.
The constant $\lambda<1$ is crucial in establishing the contraction properties of the sectional transfer operator
on suitably chosen Banach spaces.
In the present setting, we take $\mathbf{E}_3$ as the main reference space where the linear response of the sample measures is obtained.

 \begin{maintheorem}\label{th.mainD}
Assume that  \ref{as.base} and \ref{as.full}--\ref{as.p} hold. Then,   
\begin{enumerate}
\item $\mathcal K_\alpha$
has a  unique fixed point $\bm\nu_\alpha$ in $\mathbf P$;
\item   $\bm\nu_\alpha\in\mathbf P_0$ and 
$
\|\bm\nu_\alpha-\bm\nu_{\alpha_0}\|_3 \to 0$ as $\alpha \to \alpha_0;
$
\item $\bm\nu_\alpha:\Omega\to C^2(X)^*$ is a $C^1$ function;
\item the map 
 $\alpha\mapsto\bm\nu_\alpha$
 is differentiable in $\mathbf{E}_3^0$ at $\alpha_0$.
\end{enumerate}
\end{maintheorem}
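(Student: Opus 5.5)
The plan is to reduce everything to Theorem~\ref{th.mainA} and Theorem~\ref{th.mainB}, so the work consists of checking \ref{as.contraction} and \ref{as.space} from the concrete hypotheses \ref{as.full}--\ref{as.p}, plus the continuity and regularity statements.

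\textbf{Item (1).} Assumption \ref{as.gb} gives $L_1\le\lambda<1$. Since $X$ is convex, the mean value inequality turns this into \ref{as.contraction}. Assumption \ref{as.full} provides the countable partition required in \ref{as.base}. Theorem~\ref{th.mainA} then gives a unique fixed point $\bm\nu_\alpha\in\mathbf P$.

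\textbf{Contraction estimates.} All the remaining items rest on fibrewise estimates for push-forwards. For $\phi\in C^k(X)$ and a map $g$ with derivative bounds $L_1,L_2,L_3$, the chain rule gives:
\begin{itemize}
\item $\|D(\phi\circ g)\|_\infty\le L_1\|D\phi\|_\infty$;
\item $\|D^2(\phi\circ g)\|_\infty\le L_1^2\|D^2\phi\|_\infty+L_2\|D\phi\|_\infty$;
\item a similar bound for $D^3$, with coefficients $L_1^3$, $3L_1L_2$ and $L_3$.
\end{itemize}
Work with seminorms modulo constants, which is legitimate because elements of $C^k_0(X)^*$ vanish on constants. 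The condition $\max\{L_1+L_2+L_3,\,L_1^2+3L_1L_2\}\le\lambda$ is exactly what makes $\phi\mapsto\phi\circ g$ a $\lambda$-contraction in the (suitably weighted) $C^k$ seminorm for $k=1,2,3$. Dually, $\|g_*\nu\|_{k^*}\le\lambda\|\nu\|_{k^*}$ on $C^k_0(X)^*$. The same holds on $\lip_0(X)^*$, using $L_1\le\lambda$.

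Combining this with \eqref{eq.kalpha2} and \eqref{eq.RMK}, with $p_{\alpha,i}\ge0$, gives $\|\mathcal K_\alpha\|\le\lambda$ on $\mathbf E_3^0$ and on $\mathbf E_{\mathrm L}^0$. This verifies \ref{as.spacea} and \ref{as.spaceb}.

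\textbf{Continuity in $\omega$.} To see that $\mathcal K_\alpha$ maps continuous sections to continuous sections, use the summability in \ref{as.theta}, \ref{as.g} and \ref{as.p}. The series in \eqref{eq.kalpha2} is then a uniformly convergent sum of continuous terms. Hence the fixed point lies in the closed invariant set $\mathbf P_0$, giving the first half of (2).

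\textbf{Continuity in $\alpha$.} Write
\[
\bm\nu_\alpha-\bm\nu_{\alpha_0}=(I-\mathcal K_\alpha)^{-1}(\mathcal K_\alpha-\mathcal K_{\alpha_0})\bm\nu_{\alpha_0}.
\]
The $\mathbf E_3^0$ norm of $(\mathcal K_\alpha-\mathcal K_{\alpha_0})\bm\nu_{\alpha_0}$ tends to zero, by the continuity of $\alpha\mapsto p_{\alpha,i},\theta_{\alpha,i},g_{\alpha,\theta_{\alpha,i}}$ in $C^1$, dominated summation over $i$, and the fact that $W_1$-type differences are controlled by $C^1$ closeness of maps. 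This gives \ref{as.spacec} and the rest of (2).

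\textbf{Item (3): $C^1$ regularity in $\omega$.} This is the main obstacle. The plan is to show that $\mathcal K_\alpha$ preserves a closed bounded subset of $\mathbf E_2'$, the sections that are $C^1$ into $C^2(X)^*$, and is a contraction there in a suitably weighted norm. Differentiate \eqref{eq.kalpha2} in $\omega$. This produces three kinds of terms:
\begin{itemize}
\item $p_{\alpha,i}'\,(g_{\alpha,\theta_{\alpha,i}})_*\nu_{\theta_{\alpha,i}}$;
\item $p_{\alpha,i}\,\theta_{\alpha,i}'\,(g)_*\nu'_{\theta_{\alpha,i}}$;
\item $p_{\alpha,i}$ times the derivative of the push-forward in $\omega$ through $g_{\alpha,\theta_{\alpha,i}(\omega)}$. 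Tested on $\phi\in C^2$, this is $\nu(D\phi\circ g\cdot\partial_\omega g)$, and it costs one derivative. This is why the target space drops from $C^3$ to $C^2$.
\end{itemize}
Because $|\theta'|\le1$ and the push-forward contracts by $\lambda$, the term in $\nu'$ is contracting. The other terms are bounded by $C_p$, $C_g$ and $C_\theta$ times $\|\bm\nu\|$. So the operator is an affine contraction in the $\nu'$ component with a bounded forcing, an invariant-cone argument applies, and the unique fixed point in $\mathbf P$ is the $C^1$ one. Subtracting the fixed point from the constant section reduces matters to the zero-mean spaces.

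\textbf{Item (4): differentiability in $\alpha$.} Verify \ref{as.spaced} by differentiating \eqref{eq.kalpha2} at $\alpha_0$ term by term. The expected derivative is
\[
\sum_i \Big[\dot p_{\alpha_0,i}\,g_*\nu_{\theta}+p_{\alpha_0,i}\,\partial_\alpha\big(g_{\alpha,\theta_{\alpha,i}}\big)_*\nu_{\theta}+p_{\alpha_0,i}\,g_*\big(\nu'_{\theta}\,\dot\theta_{\alpha,i}\big)\Big].
\]
The last term is exactly where item (3) is needed: the $\alpha$-dependence of $\theta_{\alpha,i}$ requires $\omega$-differentiability of $\bm\nu_{\alpha_0}$. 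The second term is one more place where a derivative is lost, from $C^2$ to $C^3$ test functions, which explains why the result is stated in $\mathbf E_3^0$. Uniformity of the difference quotients follows from the $C^1$ continuity in \ref{as.theta}, \ref{as.g} and \ref{as.p}, together with the summable bounds via dominated convergence over $i$. Theorem~\ref{th.mainB} then gives (4).
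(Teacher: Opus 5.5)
Items (1), (2) and (4) follow the paper's route essentially step by step. You use Theorem~\ref{th.mainA} for existence and uniqueness, and the Fa\`a di Bruno bound $\mathcal C_3\le\lambda$ for \ref{as.spacea}. Invariance of $\mathbf P_0$ and $\mathbf E_{\mathrm L}^0$ comes from uniform convergence of the branch series, and continuity in $\alpha$ from $\bm\nu_\alpha-\bm\nu_{\alpha_0}=(I-\mathcal K_\alpha)^{-1}(\mathcal K_\alpha-\mathcal K_{\alpha_0})\bm\nu_{\alpha_0}$. Item (4) is term-by-term differentiation followed by Theorem~\ref{th.mainB}; your middle term is the paper's third and fourth terms of \eqref{eq.finally} combined.

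The genuine difference is item (3). The paper never makes $\mathcal K_\alpha$ contract on a space of $C^1$ sections. Instead it:
\begin{itemize}
\item introduces the tangent operator $\mathcal T$, a $\lambda$-contraction on $\mathbf E_2^0$;
\item differentiates the iterates $\mathcal K_\alpha^n\bm\nu$ of a single smooth section and shows $E_{n+1}\le\lambda E_n+\epsilon_n$, with $\epsilon_n\to0$ by Lemma~\ref{le.remainder};
\item passes to the limit via the fundamental theorem of calculus.
\end{itemize}
You propose a one-shot fibre-contraction on a set of $C^1$ sections. That route works and gives existence, uniqueness and $C^1$ regularity in one stroke. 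The paper's route avoids having to set up, and check completeness of, a metric space of $C^1$ sections; it only needs convergence of derivatives along one orbit.

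What you must pin down is the ``suitably weighted norm'', because as written this is where your argument could fail.
\begin{itemize}
\item The forcing term $\phi\mapsto\langle\nu_\theta,D\phi\circ g\cdot\partial_\omega g\rangle$ depends on $\bm\nu$ only through $\|\nu_\theta\|_{1^*}$. This is not controlled by $\|\nu_\theta\|_{2^*}$: second differences of Dirac masses at scale $\epsilon$ have $C^2(X)^*$ norm $O(\epsilon^2)$ but $C^1(X)^*$ norm of order $\epsilon$.
\item Consequently a weighted $\mathbf E_2'$ norm $\|\cdot\|_2+\delta\|\cdot'\|_2$ on differences does not contract in general.
\item The cone bound $\|\bm\nu'\|_2\le M$ on its own only yields a fixed point that is Lipschitz, not $C^1$, into $C^2(X)^*$.
\item The metric to use is $d_{\mathrm P}(\bm\nu,\bm\mu)+\delta\sup_\omega\|\nu'_\omega-\mu'_\omega\|_{2^*}$. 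Then $\|(\mathcal K_\alpha\bm\nu)'-(\mathcal K_\alpha\bm\mu)'\|_2\le\lambda\|\bm\nu'-\bm\mu'\|_2+C\,d_{\mathrm P}(\bm\nu,\bm\mu)$, and $d_{\mathrm P}$ already contracts by Lemma~\ref{le.contractinp}, so $\delta C<1-\lambda$ suffices.
\item Completeness of the set of $C^1$ sections in $\mathbf P$ with $\|\bm\nu'\|_2\le M$ under this metric follows from the fundamental theorem of calculus.
\end{itemize}
Using $W_1$ to absorb the derivative-losing term is exactly the content of Lemma~\ref{le.remainder} and Remark~\ref{re.c2}.

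One further point, which the paper also passes over when it asserts $\mathcal T(\mathbf E_2^0)\subset\mathbf E_2^0$. When the fibre maps depend on $\omega$, continuity of $\omega\mapsto(g_{\alpha,\theta_{\alpha,i}(\omega)})_*\nu'_{\theta_{\alpha,i}(\omega)}$ in $C^2(X)^*$ fails for a merely $C^2_0(X)^*$-valued derivative; this is Example~\ref{ex.impossible} one order up. You should therefore also carry a uniform bound on $\|\nu'_\omega\|_{1^*}$. It is preserved, since $L_1\le\lambda$ and the forcing terms are bounded in $C^1(X)^*$ for probability-valued sections.
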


Since condition~\ref{as.gb} implies~\ref{as.contraction}, the existence and uniqueness of a fixed point for~$\mathcal K_\alpha$ in~$\mathbf P$ follow from Theorem~\ref{th.mainA}.
The remaining conclusions are proved in Section~\ref{se.fulbranch}. 
Specifically, the second conclusion follows from Corollary~\ref{co.KP0} and Corollary~\ref{co.contfp}. 
The differentiability of~$\bm\nu_\alpha$ is established in Proposition~\ref{pr.differfixed} as a by-product of the argument used to show that the assumptions of~\ref{as.space} are satisfied, allowing Theorem~\ref{th.mainB} to be applied for the final conclusion.

\begin{maincorollary}\label{th.piecewise2}
Assume that   \ref{as.base}, \ref{as.response} and \ref{as.full}--\ref{as.p} hold and $\bm\nu_\alpha$ is the unique fixed point of $\mathcal K_\alpha$ in $\mathbf P$. Then, for every $C^3$ function $\Phi : \Omega \times X \to \mathbb{R}$, the map
\[
\alpha \longmapsto \int_{\Omega}\int_X \Phi(\omega,\cdot)d\nu_{\alpha,\omega}\, d\eta_\alpha(\omega)
\]
is differentiable at $\alpha_0$.
\end{maincorollary}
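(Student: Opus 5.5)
The plan is to deduce the corollary from Theorem~\ref{th.mainD} together with Theorem~\ref{th.mainC}, taking $k=3$. The result ``Theorem~\ref{co.mainC} follows from Theorems~\ref{th.mainB} and~\ref{th.mainC}'' cannot be applied verbatim, because the hypotheses \ref{as.space} are not assumed in the corollary. Instead we use the differentiability of $\alpha\mapsto\bm\nu_\alpha$ in $\mathbf E_3^0$ provided directly by Theorem~\ref{th.mainD}(4). So we must check the hypotheses of Theorem~\ref{th.mainC}: \ref{as.contraction}, \ref{as.base}, \ref{as.response}, \ref{as.continuous}, and differentiability of $\alpha\mapsto\bm\nu_\alpha$ in $\mathbf L_3^0$ at $\alpha_0$.

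First, \ref{as.base} and \ref{as.response} are assumed. As noted after the statement of \ref{as.p}, \ref{as.contraction} follows from \ref{as.gb}: since $X$ is convex, the mean value inequality gives $d_X(g_{\alpha,\omega}(x),g_{\alpha,\omega}(y))\le L_1\,d_X(x,y)\le\lambda\,d_X(x,y)$. This uses that every $\omega$ lies in the range of the inverse branches $\theta_{\alpha,i}$, which holds because each branch is onto by \ref{as.full}. By Theorem~\ref{th.mainA}, the fixed point $\bm\nu_\alpha$ in the statement is the one produced there, so it coincides with the fixed point in Theorem~\ref{th.mainD}(1). Condition \ref{as.continuous} with $k=3$ is exactly Theorem~\ref{th.mainD}(2).

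It remains to pass from differentiability in $\mathbf E_3^0$ to differentiability in $\mathbf L_3^0$. For a section $\bm\xi\in\mathbf E_3^0$ the map $\omega\mapsto\xi_\omega$ is continuous, hence measurable. Since $\eta_{\alpha_0}$ is a probability measure,
\[
\|\bm\xi\|_{\mathbf L_3}=\int_\Omega\|\xi_\omega\|_{3^*}\,d\eta_{\alpha_0}(\omega)\le\sup_{\omega\in\Omega}\|\xi_\omega\|_{3^*}=\|\bm\xi\|_3 .
\]
Hence $\mathbf E_3^0$ embeds continuously in $\mathbf L_3^0$. We apply this to the remainder $\bm\nu_\alpha-\bm\nu_{\alpha_0}-(\alpha-\alpha_0)\dot{\bm\nu}_{\alpha_0}$, with derivative $\dot{\bm\nu}_{\alpha_0}\in\mathbf E_3^0$. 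This yields differentiability in $\mathbf L_3^0$ with the same derivative.

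Theorem~\ref{th.mainC} then gives the conclusion for every $C^3$ observable $\Phi$. The only delicate point is bookkeeping rather than analysis: the identification of sections in $\mathbf P$ modulo $\eta_\alpha$-null sets versus the continuous representatives in $\mathbf P_0$. One must ensure that the continuous representative from Theorem~\ref{th.mainD}(2) is the one used in the integral. Since sections agreeing $\eta_\alpha$-a.e.\ give the same value of $\int\!\int\Phi\,d\nu_{\alpha,\omega}\,d\eta_\alpha$, this causes no difficulty.
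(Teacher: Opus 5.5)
Your proposal is correct and follows the paper's route exactly: Theorem~\ref{th.mainD} supplies \ref{as.continuous} with $k=3$ and differentiability in $\mathbf E_3^0$, the bound $\|\cdot\|_{\mathbf L_3}\le\|\cdot\|_3$ transfers this to $\mathbf L_3^0$, and Theorem~\ref{th.mainC} then gives the conclusion. One small slip: surjectivity of the branches only means that each $\theta_{\alpha,i}$ is defined on all of $\Omega$, whereas getting \ref{as.contraction} from \ref{as.gb} needs the ranges of the $\theta_{\alpha,i}$ to cover $\Omega$; that comes from the smoothness domains partitioning $\Omega$ mod~$0$, and the leftover null set is harmless because the contraction is only ever used at preimages $\theta\in f_\alpha^{-1}(\omega)$.
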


Since convergence in $\mathbf{E}_3^0$ implies convergence in $\mathbf L_3$, 
Corollary~\ref{th.piecewise2} follows from Theorems~\ref{th.mainC} and~\ref{th.mainD} once assumption~\ref{as.continuous} has been verified for $k=3$. This verification will be carried out in Section~\ref{se.fulbranch}, specifically in Corollaries~\ref{co.KP0} and~\ref{co.contfp}.
We conclude this part with a remark concerning the choice of the spaces $\mathbf{E}_3^0$ and $\mathbf{E}_{\mathrm L}^0$. 

\begin{remark}
In the present setting, to obtain the differentiability with respect to the parameter required by \ref{as.spaced}, there are two points where the $C^2$ regularity of the test functions is essential:
\begin{enumerate}
    \item the differentiability of the invariant section (Proposition~\ref{pr.differfixed});
    \item the   continuity of the push-forward operator (Lemma~\ref{le.salvation}).
\end{enumerate}
In both cases, one cannot expect to improve the conclusions if the test functions are only assumed to be $C^1$; see Remark~\ref{re.independent1} and Remark~\ref{re.c2}.

However, since the first condition above is trivially satisfied when the invariant section is constant, and the second already provides the continuity required in~\ref{as.spacec} in the space $\mathbf E_2$, it is natural to ask whether differentiability of the invariant section could be obtained in $\mathbf E_2$, thereby strengthening the conclusion in the case where the section does not depend on the base points.

Even in this simplified scenario, this is not possible. Indeed, when verifying \ref{as.spaced}, Lemma~\ref{le.salvation} is applied in a crucial way to the gradient of a test function, which requires the test function to be of class $C^3$; see the estimates for the terms $C_\epsilon$ and $D_\epsilon$ in the proof of Proposition~\ref{pr.diffKi}. Assuming the invariant section is constant has absolutely no effect on the derivation of these estimates.

Finally, to avoid exceeding $C^2$ regularity for the test functions, it is essential that the continuity in~\ref{as.spacec} is only required for elements in the space $\mathbf{E}_{\mathrm L}^0$. This allows the application of Lemma~\ref{le.salvation} while keeping the fibres at the level of $\lip(X)^*$, ensuring continuity of the push-forward operator when tested against $C^2$ observables.
\end{remark}

\subsection{Base maps with inducing schemes}\label{sub.basinducing}

This section is largely motivated by Remark~\ref{re.justice}, and its purpose is to obtain the conclusion of Theorem~\ref{th.mainB} under milder assumptions, by indirectly applying Theorem~\ref{th.mainB} to an appropriate inducing scheme.
As before, we consider a family of skew-product maps
\[
T_\alpha:\Omega \times X \to \Omega \times X, \qquad \alpha \in J,
\]
%
For many  base maps $f_\alpha$, one can construct an \emph{inducing scheme}, that is, there exist a subset $\bar\Omega \subset \Omega$ and a \emph{first return time}  function $\tau_\alpha:\bar\Omega \to \mathbb{N}$ such that the induced base map
\[
\bar f_\alpha : \bar\Omega \to \bar\Omega
\]
is defined for each $\omega \in \bar\Omega$ by
\[
\bar f_\alpha(\omega) = f_\alpha^{\tau_\alpha(\omega)}(\omega).
\]

 \begin{assumpC}\item 
\label{as.induce}
\em For each $\alpha\in J$, the maps $\bar f_{\alpha}$ satisfies:
  \em
\begin{enumerate}
 \item\em  there exists a countable $m$ modulo zero partition $\{\bar\Omega_{\alpha,i}\}_{i\in \mathbb I}$ of $\bar\Omega$ such that ${\bar f_\alpha|_{\bar\Omega_{\alpha,i}} \colon \bar\Omega_{\alpha,i} \to \bar\Omega}$ is  a $C^1$ diffeomorphism for all $i\in \mathbb I$; 
 \em \item \em there exists $\sigma>1$ such that $|\bar f_\alpha' |_{\bar\Omega_{\alpha,i}}|\ge \sigma$ for all $i\in \mathbb I$;
 
\em   \item\em  there exist $C,\gamma>0$ such that, for all    $i\in \mathbb I$ and all $x,y\in\bar\Omega_{\alpha,i}$
 $$
 \log\frac{\bar f'_\alpha(x)}{\bar f_\alpha' (y)}\le
 C \left|\bar f_\alpha(x)-\bar f_\alpha(y)\right|^\gamma.
 $$
 \em \item\em  $\tau_\alpha$ is integrable with respect to Lebesgue measure on $\bar\Omega$.
  \end{enumerate}
\end{assumpC}

Under these conditions, the induced base map $\bar{f}_\alpha$ has a unique absolutely continuous invariant probability measure $\bar{\eta}_\alpha$ with a density $\bar\rho_\alpha\in L^1(\bar\Omega)$. Moreover, an absolutely continuous invariant probability measure $\eta_\alpha$ for the original base map $f_\alpha$ can be obtained via the \emph{unfolding formula}
 \begin{equation}\label{eq.measures}
\eta_\alpha = \frac{1}{E} \sum_{k=0}^\infty (f_\alpha^{k})_* \big(\bar{\eta}_\alpha\vert \{\tau_\alpha > k\}  \big),\qquad E=\int_{\bar\Omega} \tau_\alpha \, d\bar{\eta}_\alpha.
\end{equation}
This classical  procedure allows one to transfer many statistical properties  from the induced map to the original system.
 The induced skew-product map is
\begin{equation}\label{eq.induceT}
\bar T_\alpha : \bar\Omega \times X \to \bar\Omega \times X, \qquad
\bar T_\alpha(\omega,x) = (\bar{f}_\alpha(\omega), \bar{g}_\alpha(\omega,x)),
\end{equation}
with  the induced fibre map
\[
\bar g_\alpha:\bar\Omega\times X\to X
\]
given for any $(\omega,x)\in \bar\Omega\times X$ by
\begin{equation*}
\bar g_\alpha(\omega,x)
=
g_{\alpha,f_\alpha^{\tau_\alpha(\omega)-1}(\omega)}
\circ \cdots \circ
g_{\alpha,f_\alpha(\omega)}
\circ
g_{\alpha,\omega}(x).
\end{equation*}
Equivalently, $\bar g_\alpha(\omega,\cdot)
=
g_{\alpha,\omega}^{(\tau_\alpha(\omega))}$ with
\begin{equation}\label{eq:g(k)-def}
g_{\alpha,\omega}^{(k)}
=
g_{\alpha,f_\alpha^{k-1}(\omega)}
\circ \cdots \circ
g_{\alpha,\omega}.
\end{equation}

Our goal is now to provide conditions under which some of the results of the previous sections can be applied, yielding linear response for the induced skew-product maps $\bar T_\alpha$ and, from this, deriving the linear response of the original skew-product maps $T_\alpha$. 
To implement this inducing  strategy, we will work with certain Banach spaces from Table~\ref{tab:spaces}, both on the base space $\Omega$ and on the inducing domain $\bar\Omega$. The distinction between these two settings will be indicated by overlining the corresponding space for the induced system and keeping it not overlined for the original one.

Whenever needed, we regard a section $\bar{\bm\nu}$  on $\bar\Omega$ as a section on $\Omega$ by extending it by zero outside $\bar\Omega$. We define the \emph{sectional unfolding operator} $\mathcal{U}_\alpha$ which constructs a full section   on~$\Omega$ from a section~$\bar{\boldsymbol{\nu}}$ on the induced domain $\bar\Omega$ by
 \begin{equation}\label{eq.unfoldopera}
 \mathcal{U}_\alpha(\bar{\boldsymbol{\nu}}) = \sum_{k=0}^\infty \mathcal{K}_\alpha^k \bigl( \bar{\boldsymbol{\nu}} \cdot \mathbf{1}_{\{\tau_\alpha > k\}} \bigr),
\end{equation}
where $\mathcal{K}_\alpha$ is the sectional transfer operator associated with   $T_\alpha$ and~$\mathbf{1}_A$ stands for   the \emph{indicator function} of any set $A \subset \Omega$. Note that the sum in~\eqref{eq.unfoldopera} is pointwise finite, and therefore   well defined as a section on $\Omega$.

The main result of this section will be obtained under conditions that, in particular, ensure the existence of a unique fixed point $\bar{\bm{\nu}}_\alpha$ in $ \bar{\mathbf P}$ for the sectional transfer operator~$\bar{\mathcal K}_\alpha$ associated with the induced system $\bar T_\alpha$. We  require certain regularity properties of the unfolding operator, which we state next. As has been customary, these include conditions imposed both for generic parameter values and at a distinguished parameter $\alpha_0$.

\begin{assumpC}\item 
\label{as.unfold}
\em The unfolding operator $\mathcal U_\alpha$  defines a   bounded operator from $\bar{\mathbf E}_3^0$ to  $\mathbf{L}_3^0$. Moreover,  
  \em
\begin{enumerate}
\item\em \label{as.unfolda} there exists $C>0$ such that $\|\mathcal U_\alpha\|\le C$, for all $\alpha\in J$;
 \em \item\em  \label{as.unfoldb}  the map $\alpha \mapsto \mathcal{U}_\alpha( \bar{\bm{\nu}} )$ is continuous in $\mathbf{L}_3^0 $ at $\alpha_0$, for every $\bar{\bm\nu}\in \bar{\mathbf E}_3^0$;
  \em\item\em  \label{as.unfoldc}  the map $\alpha \mapsto \mathcal{U}_\alpha(\bar{\bm{\nu}}_{\alpha_0})$ is differentiable in $\mathbf{L}_3^0 $ at $\alpha_0$.
  \end{enumerate}
\end{assumpC}

We are now in a position to state our main result for skew-product systems whose base maps admits an inducing scheme as described above.

\begin{maintheorem}\label{thm:inducedLR}
Assume that  \ref{as.induce}--\ref{as.unfold} hold and   that \ref{as.full}--\ref{as.p} hold for 
$\bar T_\alpha$.
Then, 
\begin{enumerate}
\item $\bar{\mathcal K}_\alpha$  has a unique fixed point $\bar{\bm\nu}_\alpha$ in $ \bar{\mathbf P}$;
\item $\bm\nu_\alpha=\mathcal U_\alpha\bar{\bm\nu}_\alpha$ is a fixed point of $\mathcal K_\alpha$ in $\mathbf P$; 
\item the map 
 $\alpha\mapsto\bm\nu_\alpha$
 is differentiable in $\mathbf L_3^0$.
\end{enumerate}
\end{maintheorem}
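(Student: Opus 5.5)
Conclusion (1) comes from Theorem~\ref{th.mainD} applied to the induced family $\bar T_\alpha$. By hypothesis, \ref{as.full}--\ref{as.p} hold for $\bar T_\alpha$. Assumption \ref{as.base} for $\bar f_\alpha$ follows from \ref{as.induce}: the partition $\{\bar\Omega_{\alpha,i}\}$ consists of $C^1$ diffeomorphic branches, and the uniform expansion and bounded distortion give the absolutely continuous invariant measure $\bar\eta_\alpha$ with density $\bar\rho_\alpha$. Theorem~\ref{th.mainD} then yields a unique fixed point $\bar{\bm\nu}_\alpha\in\bar{\mathbf P}$. It also yields $\|\bar{\bm\nu}_\alpha-\bar{\bm\nu}_{\alpha_0}\|_3\to0$ and differentiability of $\alpha\mapsto\bar{\bm\nu}_\alpha$ in $\bar{\mathbf E}_3^0$ at $\alpha_0$; I will use both in step (3).

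For (2), the plan is to check $\mathcal K_\alpha\mathcal U_\alpha\bar{\bm\nu}=\mathcal U_\alpha\bar{\mathcal K}_\alpha\bar{\bm\nu}$ on sections supported in $\bar\Omega$, which is a sectional version of the unfolding formula \eqref{eq.measures}. I will verify it through the duality of Lemma~\ref{le.duality.general}, pairing against bounded measurable $\Phi$. Telescoping the series gives
\[
\mathcal K_\alpha\,\mathcal U_\alpha(\bar{\bm\nu})-\mathcal U_\alpha(\bar{\bm\nu})
=\sum_{k\ge0}\mathcal K_\alpha^{k+1}\bigl(\bar{\bm\nu}\mathbf 1_{\{\tau_\alpha=k+1\}}\bigr)-\bar{\bm\nu},
\]
since $\mathbf 1_{\{\tau>k\}}-\mathbf 1_{\{\tau>k+1\}}=\mathbf 1_{\{\tau=k+1\}}$ and $\{\tau>0\}=\bar\Omega$. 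The sum on the right is exactly $\bar{\mathcal K}_\alpha\bar{\bm\nu}$ viewed on $\Omega$. This holds because on $\{\tau=n\}$ the map $T_\alpha^n$ coincides with $\bar T_\alpha$, and the weights are consistent because $\rho_\alpha|_{\bar\Omega}$ is proportional to $\bar\rho_\alpha$ by \eqref{eq.measures}.

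The identity has to be read with the correct normalisation. The pairings are taken with respect to $\eta_\alpha$ on $\Omega$ and $\bar\eta_\alpha$ on $\bar\Omega$, so the sections in $\mathcal U_\alpha$ should be understood as measure-weighted: $\mathcal U_\alpha\bar{\bm\nu}$ has total mass $\tau$-weighted, and after dividing by $E$ it is probability-valued. Under this reading, $\bar{\mathcal K}_\alpha\bar{\bm\nu}_\alpha=\bar{\bm\nu}_\alpha$ makes $\bm\nu_\alpha=\mathcal U_\alpha\bar{\bm\nu}_\alpha$ a fixed point, and it lies in $\mathbf P$, in particular being measurable. I expect the bookkeeping of the densities (the factor $E$ and the ratio $\rho_\alpha/\bar\rho_\alpha$) to need care, but it is routine.

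For (3), I would decompose the difference quotient as
\[
\frac{\bm\nu_\alpha-\bm\nu_{\alpha_0}}{\alpha-\alpha_0}
=\mathcal U_\alpha\Bigl(\frac{\bar{\bm\nu}_\alpha-\bar{\bm\nu}_{\alpha_0}}{\alpha-\alpha_0}\Bigr)
+\frac{\mathcal U_\alpha\bar{\bm\nu}_{\alpha_0}-\mathcal U_{\alpha_0}\bar{\bm\nu}_{\alpha_0}}{\alpha-\alpha_0}.
\]
The second term converges in $\mathbf L_3^0$ by \ref{as.unfoldc}. Differences of probability sections vanish on constants, so both terms lie in the $0$-subspaces.

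For the first term, let $\bar{\bm\nu}'$ be the derivative from step (1) and write the quotient as $\bar{\bm\nu}'+r_\alpha$ with $\|r_\alpha\|_3\to0$. Then $\mathcal U_\alpha r_\alpha\to0$ by the uniform bound \ref{as.unfolda}, and $\mathcal U_\alpha\bar{\bm\nu}'\to\mathcal U_{\alpha_0}\bar{\bm\nu}'$ by the strong continuity \ref{as.unfoldb}. This gives the derivative $\mathcal U_{\alpha_0}\bar{\bm\nu}'+\partial_\alpha\mathcal U_\alpha\bar{\bm\nu}_{\alpha_0}$.

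The main obstacle is step (2): making the formal identity $\mathcal K_\alpha\mathcal U_\alpha=\mathcal U_\alpha\bar{\mathcal K}_\alpha$ rigorous with correct density normalisations, including the interchange of the infinite sum with $\mathcal K_\alpha$. That interchange is justified by integrability of $\tau_\alpha$ and dominated convergence in the duality pairing.
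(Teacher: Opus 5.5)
Your three steps follow the paper's proof. Step (1) is Theorem~\ref{th.mainD} applied to $\bar T_\alpha$. Step (2) is the telescoping of Lemma~\ref{le.unfoldfixed} together with the identification $\sum_{k\ge1}\mathcal K_\alpha^k(\bar{\bm\nu}\,\mathbf 1_{\{\tau_\alpha=k\}})=\bar{\mathcal K}_\alpha\bar{\bm\nu}$ of Lemma~\ref{lem:indid}. Step (3) is exactly the splitting in Proposition~\ref{thm:unfoldresp}, based on \ref{as.unfolda}--\ref{as.unfoldc}.

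One correction: the intertwining $\mathcal K_\alpha\mathcal U_\alpha=\mathcal U_\alpha\bar{\mathcal K}_\alpha$ you announce is false in general. Both sides restrict to $\bar{\mathcal K}_\alpha\bar{\bm\nu}$ on $\bar\Omega$. On $\Omega\setminus\bar\Omega$, however, the left side is $\sum_{k\ge1}\mathcal K_\alpha^k(\bar{\bm\nu}\,\mathbf 1_{\{\tau_\alpha>k\}})$, while the right side has $\bar{\mathcal K}_\alpha\bar{\bm\nu}$ in place of $\bar{\bm\nu}$. What your telescoping actually proves is $\mathcal K_\alpha\mathcal U_\alpha\bar{\bm\nu}-\mathcal U_\alpha\bar{\bm\nu}=\bar{\mathcal K}_\alpha\bar{\bm\nu}-\bar{\bm\nu}$, and that is all you need at a fixed point.

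The genuine gap is the claim $\mathcal U_\alpha\bar{\bm\nu}_\alpha\in\mathbf P$. You never show that the fibres have mass one, and the normalisation you propose is wrong. The sections produced by $\mathcal U_\alpha$ are already normalised. The weights $W_\alpha(\theta,k)$ carry $\rho_\alpha(\omega)$ in the denominator, and $\bar\rho_\alpha=E\rho_\alpha$ on $\bar\Omega$, so the factor $1/E$ in \eqref{eq.measures} cancels exactly. Writing \eqref{eq.measures} at the level of densities gives, for $m$-a.e.\ $\omega$,
\[
\rho_\alpha(\omega)=\sum_{k\ge0}\ \sum_{\theta\in f_\alpha^{-k}(\omega)\cap\bar\Omega}\frac{\rho_\alpha(\theta)}{|(f_\alpha^k)'(\theta)|}\,\mathbf 1_{\{\tau_\alpha>k\}}(\theta).
\]
This is literally the statement $(\mathcal U_\alpha\bar{\bm\nu})_\omega(X)=1$ for $\bar{\bm\nu}\in\bar{\mathbf P}$, which is the paper's Lemma~\ref{lem:generalunfold}.

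In your duality language it is the pairing with $\Phi=\mathbf 1_{A\times X}$. By monotone convergence and $\eta_\alpha|_{\bar\Omega}=\bar\eta_\alpha/E$,
\[
\langle\!\langle\mathcal U_\alpha\bar{\bm\nu},\mathbf 1_{A\times X}\rangle\!\rangle=\sum_{k\ge0}\eta_\alpha(\{\tau_\alpha>k\}\cap f_\alpha^{-k}A)=\eta_\alpha(A).
\]
Dividing by $E$ again, as you suggest, yields fibres of mass $1/E$, which are not in $\mathbf P$. The $1/E$ belongs to the unfolding of the skew-product measure, not of the sample measures.

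This also matters in (3). With an $\alpha$-dependent factor $1/E_\alpha$, the difference $\mathcal U_\alpha\bar{\bm\nu}_{\alpha_0}-\mathcal U_{\alpha_0}\bar{\bm\nu}_{\alpha_0}$ would not vanish on constants, so it would not lie in $\mathbf L_3^0$.

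Your plan to justify interchanging $\mathcal K_\alpha$ with the series by integrability of $\tau_\alpha$, rather than by pointwise finiteness, is sound. Pointwise finiteness, which Lemma~\ref{lem:pointwise} asserts, actually fails for the LSV base: every $\omega\in[0,\tfrac12)$ receives a contribution from every $k\ge1$. For probability sections all terms are nonnegative measures, and their masses sum to one by the identity above, so the interchange is justified.
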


This result will be proved in Section~\ref{se.inducing}. We remark that the existence of a unique fixed point $\bar{\bm{\nu}}_\alpha$ in $\bar{\mathbf P}$ (in fact, in $\mathbf P_0$) for 
$\bar{\mathcal K}_\alpha$ is a direct consequence of Theorem~\ref{th.mainD}, since assumptions~\ref{as.full}--\ref{as.p} are assumed for $\bar T_\alpha$, and assumption~\ref{as.base} for $\bar T_\alpha$ follows directly from~\ref{as.induce}. 

\subsection{Applications}\label{se.applications}

In this section, we apply our abstract framework to two landmark problems that serve as primary motivations for this work: the \emph{Bernoulli convolutions} and the \emph{intermittent solenoid}. These applications illustrate the power of our approach in handling two distinct but fundamental challenges in linear response theory. 

 \subsubsection{Bernoulli convolutions}\label{sub.convolutions}

Bernoulli convolutions are a classical object at the interface of
probability theory, harmonic analysis, and fractal geometry.
They arise as probability measures on the real line generated by
random power series of the form
\[
Y_{\alpha } = \sum_{n=0}^{\infty} a_n \alpha^n,
\]
where $(a_n)_{n\ge0}$ are i.i.d.\ random variables with
$\mathbb P(a_n=1)=\mathbb P(a_n=-1)=\beta$, and $0<\alpha, \beta<1$.
We denote by $\nu_{\alpha,\beta}$ the distribution of $Y_{\alpha }$.

For the classical unbiased case $\beta=1/2$, a result of Jessen and Wintner~\cite{JW35} shows that
$\nu_{\alpha,1/2}$ is always \emph{pure}, that is, either absolutely continuous
or   singular with respect to Lebesgue measure.
More precisely, for $\alpha<1/2$, the measure $\nu_\alpha$ is singular
and supported on a Cantor set of zero Lebesgue measure.
The regime $\alpha>1/2$ is considerably more subtle:
Solomyak~\cite{S95} proved that $\nu_{\alpha,1/2}$ is absolutely continuous for
Lebesgue almost every \ $\alpha\in(1/2,1)$, whereas Erd\H{o}s~\cite{E39}
showed that $\nu_{\alpha,1/2}$ is singular whenever $1/\alpha$ is a
Pisot  number. The connection of this unbiased case to deterministic dynamics is provided by the
 baker map; see e.g.~\cite{AY84}.

Here we address the general biased case with $\beta\in(0,1)$ studied in \cite{PS98} by Peres and Solomyak. Consider the family of skew-product
map $T_{\alpha,\beta}\colon [-1,1]^2\to[-1,1]^2$ defined by
\[
T_{\alpha,\beta}(\omega,x)=(f_{\beta}(\omega),g_\alpha(\omega,x)),
\]
where the base map $f_\beta\colon[-1,1]\to[-1,1]$ is given by
\[
f_\beta(\omega)=
\begin{cases}
\frac1\beta\omega-\frac{\beta-1}{\beta}, & \omega\in[-1,2\beta-1),\\[2mm]
\frac1{1-\beta}\omega-\frac{\beta-1}{\beta}, & \omega\in[2\beta-1,1];
\end{cases}
\]
and the fibre maps $g_\alpha\colon[-1,1]\times[-1,1]\to[-1,1]$,
for $\alpha\in(0,1)$, are given by
\[
g_\alpha(\omega,x)=
\begin{cases}
\alpha x-(1-\alpha), & \omega\in[-1,2\beta-1),\\[2mm]
\alpha x+(1-\alpha), & \omega\in[2\beta-1,1].
\end{cases}
\]
This system admits a unique physical measure
$\mu_{\alpha,\beta}=m\times\nu_{\alpha,\beta}$, where $m$ denotes Lebesgue measure on the
base.
The vertical marginal $\nu_{\alpha,\beta}$ is the unique invariant probability measure of the  iterated function system $\{g_{\alpha,1}, g_{\alpha,2}; \beta,1-\beta\}$
satisfying the relation
\[
\nu_{\alpha,\beta}
=\tfrac1\beta (g_{\alpha,1})_*\nu_{\alpha,\beta}
+\tfrac1{1-\beta} (g_{\alpha,2})_*\nu_{\alpha,\beta},
\]
where $g_{\alpha,1}=g_{\alpha}(\omega,\cdot)$ for $\omega<2\beta-1$ and
$g_{\alpha,2}=g_{\alpha}(\omega,\cdot)$ for $\omega\ge 2\beta-1$.
Equivalently, $\nu_{\alpha,\beta}$ is the distribution of the random series
\[
X_{\alpha}=(1-\alpha)\sum_{n=0}^{\infty} a_n \alpha^n,
\]
where $(a_n)_{n\ge0}$  are i.i.d.\ random variables with
$\mathbb P(a_n=1)=\mathbb P(a_n=-1)=\beta$; this corresponds to a simple rescaling
of the original Bernoulli convolution.

Kloeckner~\cite{K22} proved that in the unbiased case the map
$\alpha\mapsto\nu_{\alpha,1/2}$ is differentiable for \emph{almost every}
$\alpha\in(1/2,1)$ -- more precisely, for a full-measure subset of the
parameters considered in~\cite{S95} -- when tested against smooth observables,
a result usually referred to as \emph{weak linear response}.
Recently we were made aware that in the upcoming paper \cite{F26} by Fu, the author studies weak linear response for unbiased Bernoulli convolutions against H\"older observables. In particular, it is shown that whenever the Hausdorff dimension of $\nu_{\alpha,1/2}$ is smaller than one, weak differentiability against H\"older observables fails to hold, while for $C^1$ observables weak linear response always hold.  By contrast, the framework developed in this paper allows us to obtain linear response in a \emph{strong sense}, valid \emph{for all parameters} $\alpha,\beta\in(0,1)$.

\begin{maintheorem}\label{th.mainconv}
The probability measure $\nu_{\alpha,\beta}$ depends differentiably on $\alpha,\beta\in(0,1)$ with values in $C^3([-1,1])^*$.
\end{maintheorem}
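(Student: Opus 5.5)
The plan is to realise $\nu_{\alpha,\beta}$ as the (constant) invariant section of the sectional transfer operator of $T_{\alpha,\beta}$, and then apply Theorem~\ref{th.mainD} on compact parameter ranges, once in each variable. Fix $[a,b]\times[c,d]\subset(0,1)^2$.

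\textbf{Checking the hypotheses.} The base map $f_\beta$ has two affine full branches onto $[-1,1]$, so \ref{as.base} holds. Its invariant density is $\rho_\beta\equiv 1/2$, independent of $\beta$, and this gives \ref{as.response} trivially. The inverse branches are affine,
\[
\theta_{\beta,1}(\omega)=\beta\omega+\beta-1, \qquad \theta_{\beta,2}(\omega)=(1-\beta)\omega+\beta ,
\]
with slopes at most $1$ in absolute value. They depend smoothly on $\beta$ in $C^1$, and there are only two of them, so \ref{as.full}--\ref{as.theta} hold.

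The composed fibre maps $g_{\alpha,\theta_{\beta,i}(\omega)}=g_{\alpha,i}$ are $x\mapsto\alpha x\mp(1-\alpha)$, independent of $\omega$ and affine. Hence $L_1=\alpha$ and $L_2=L_3=0$, so \ref{as.gb} holds with $\lambda=b<1$. The remaining parts of \ref{as.g} are clear.

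The weights \eqref{eq.pweights} are constant, $p_{\beta,1}=\beta$ and $p_{\beta,2}=1-\beta$. (The displayed fixed-point relation in the statement should read with weights $\beta,1-\beta$.) Their parameter derivatives are $\pm1$, which gives \ref{as.p}.

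\textbf{Identifying the fixed point and applying Theorem~\ref{th.mainD}.} Since the weights and the fibre maps do not depend on $\omega$, the constant section $\omega\mapsto\nu_{\alpha,\beta}$ is fixed by $\mathcal K_{\alpha,\beta}$, because of the IFS self-similarity relation. By uniqueness in Theorem~\ref{th.mainA}, it is \emph{the} fixed point.

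I will apply Theorem~\ref{th.mainD}(4) to the one-parameter family $\alpha\mapsto T_{\alpha,\beta_0}$, and separately to $\beta\mapsto T_{\alpha_0,\beta}$. In the second family only $\theta$ and $p$ move, and the fibre maps are fixed. This gives differentiability of the constant section in $\mathbf E_3^0$ in each variable. Evaluating the section at any single $\omega$ yields partial derivatives of $\nu_{\alpha,\beta}$ in $C^3_0([-1,1])^*\subset C^3([-1,1])^*$.

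\textbf{Upgrading to joint differentiability (main obstacle).} This is the step I expect to be hardest, since Theorem~\ref{th.mainD} is a one-parameter statement. I will use the formula produced in the proof of Theorem~\ref{th.mainB}:
\[
\partial\nu=(I-\mathcal K)^{-1}\,\partial\mathcal K\,\nu .
\]
Here $\|\mathcal K\|\le\lambda$ uniformly on the compact parameter range by \ref{as.spacea}, so $(I-\mathcal K)^{-1}$ exists with norm at most $(1-\lambda)^{-1}$.

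In this affine, constant-section situation everything can be written explicitly:
\[
\partial_\beta\nu=(I-\mathcal K)^{-1}\big((g_{\alpha,1})_*\nu-(g_{\alpha,2})_*\nu\big),
\]
and $\partial_\alpha\nu=(I-\mathcal K)^{-1}\sum_i p_i\,\partial_\alpha(g_{\alpha,i})_*\nu$. Applied to a test function $\varphi$, the latter involves $\varphi'(g_{\alpha,i}x)(x\pm1)$.

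I would then check that both partial derivatives are continuous in $(\alpha,\beta)$ in $C^3([-1,1])^*$. The ingredients are: continuity of $\nu_{\alpha,\beta}$ in the weaker norm (Theorem~\ref{th.mainD}(2)), continuity of the push-forwards on $\lip^*$ when tested against $C^2$ and $C^3$ functions (as in Lemma~\ref{le.salvation}), and the resolvent identity to control $(I-\mathcal K_{\alpha,\beta})^{-1}$ as the parameters vary. Continuous partial derivatives then give $C^1$ (in particular Fréchet) dependence on $(\alpha,\beta)$ on every compact subrectangle, which proves the theorem.
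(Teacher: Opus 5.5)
Your first two steps are the paper's proof. The paper verifies \ref{as.base} and \ref{as.full}--\ref{as.p} exactly as you do: affine fibre maps with $L_1=\alpha$ and $L_2=L_3=0$, $\lambda$ chosen locally uniformly, and constant weights. It then observes that $\mathcal K_{\alpha,\beta}$ preserves constant sections, so $\mathbf E_3$ reduces to $C^3([-1,1])^*$, and applies the last item of Theorem~\ref{th.mainD} to $\alpha$ and $\beta$ \emph{separately}. You are also right that the weights are $\beta,1-\beta$; the displayed relations with $1/\beta,1/(1-\beta)$ are misprints. Your inverse branches likewise correct the misprinted second branch of $f_\beta$.

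The paper stops at the two partial derivatives, so if you only want what it proves, you are done after step two. Your third step, upgrading to joint $C^1$ dependence, is an addition. It is sound in outline, but two ingredients need to be replaced or supplied.

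First, Theorem~\ref{th.mainD}(2) is a one-parameter statement in the $C^3([-1,1])^*$ norm. It therefore gives neither joint continuity of $\nu_{\alpha,\beta}$ nor enough regularity for $\partial_\alpha\mathcal K\,\nu$: its test functions $\varphi'\circ g_{\alpha,i}\,(x\pm1)$ are only $C^2$ when $\varphi\in C^3$. Use instead the uniform Wasserstein contraction:
$W_1(\nu_{\alpha,\beta},\nu_{\alpha',\beta'})\le(1-b)^{-1}W_1(\mathcal K_{\alpha,\beta}\nu_{\alpha',\beta'},\mathcal K_{\alpha',\beta'}\nu_{\alpha',\beta'})\le C(|\alpha-\alpha'|+|\beta-\beta'|)$.
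This gives joint continuity of both forcing terms $\partial\mathcal K\,\nu_{\alpha,\beta}$ in $C^3_0([-1,1])^*$, and of the $\beta$-term even in $\lip_0(X)^*$.

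Second, the resolvent identity reduces you to showing $(\mathcal K_{\alpha,\beta}-\mathcal K_{\alpha_0,\beta_0})\sigma_0\to0$, where $\sigma_0=(I-\mathcal K_{\alpha_0,\beta_0})^{-1}\partial_\alpha\mathcal K\,\nu_{\alpha_0,\beta_0}$. However, $\mathcal K_{\alpha,\beta}$ is not continuous in operator norm on $C^3_0([-1,1])^*$, and Lemma~\ref{le.salvation} only gives strong continuity at elements of $\lip(X)^*$. Since $\sigma_0$ involves $\varphi'$, it is not presented as such an element. You therefore need the $\epsilon/3$ approximation from step (ii) of the proof of Theorem~\ref{th.mainB}, run with two parameters. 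It goes through verbatim, because $\sigma_0$ lies in the $C^3$-closure of $\lip_0(X)^*$ and $\|\mathcal K_{\alpha,\beta}\|\le b$ uniformly. For the $\beta$-derivative this step is unnecessary, since there the forcing term and its resolvent image stay in $\lip_0(X)^*$.

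With these two points filled in, continuous partial derivatives give Fréchet differentiability in $(\alpha,\beta)$. That is a genuinely stronger conclusion than the separate differentiability the paper establishes.
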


Theorem~\ref{th.mainconv} follows as a direct application of the final item of
Theorem~\ref{th.mainD}. For that, we are going to consider the dependence on the parameters $\alpha,\beta$ separately. Nevertheless, we use the sectional transfer operator in this setting labelled by both parameters
\[
(\mathcal K_{\alpha,\beta} \bm\nu)_\omega
=\tfrac1\beta (g_{\alpha,1})_*\nu_{\theta_{\alpha,\beta,1}(\omega)}
+\tfrac1{1-\beta} (g_{\alpha,2})_*\nu_{\theta_{\alpha,\beta,2}(\omega)}.
\]
Since $\mathcal K_{\alpha,\beta}$ preserves the closed subspace of constant
sections of $\mathbf P$, its fixed point $\bm\nu_{\alpha,\beta}$ is necessarily a
constant section.
Consequently, the space $\mathbf E_3$ in which differentiability is
obtained can naturally be identified with the dual space
$C^3([-1,1])^*$.
It therefore suffices to verify the assumptions of
Theorem~\ref{th.mainD}.

\medskip

\noindent\ref{as.base}, \ref{as.full}, \ref{as.theta}:
These assumptions are trivially satisfied: the base map is  $f_\beta$ for all $\alpha,\beta$, with two inverse branches taking values  in $[-1,2\beta-1)$ and
$[2\beta-1,1]$, and its absolutely continuous invariant probability measure is
Lebesgue measure.

\medskip

\noindent\ref{as.g}:
Since both fibre maps are affine contractions, all the required
conditions are immediately verified.
The only subtlety concerns the contraction constant $\lambda=\alpha$, which
cannot be chosen uniformly for all $\alpha\in(0,1)$.
However, as differentiability is a local property in parameter space,
$\lambda$ can always be chosen locally uniformly.

\medskip

\noindent\ref{as.p}:
This assumption is trivially verified, as the weight
functions are constant on each branch with values $\beta$ and $1-\beta$.

\subsubsection{Solenoid with intermittency}\label{sub.solenoid}

We consider a class of skew-product systems that are partially hyperbolic
but not uniformly hyperbolic, due to the presence of a neutral fixed point
in the base dynamics.
Such examples arise naturally in the study of intermittent behaviour and
provide a benchmark for statistical stability beyond the uniformly
hyperbolic setting.
 This example differs from the classical construction of the  hyperbolic solenoidal map as a skew-product in that we replace the doubling map in the base with an intermittent LSV (Liverani-Saussol-Vaienti) map~\cite{LSV99}. Linear response for LSV maps was studied in \cite{BS16, BT16, K16} and in a random and sequential setting in \cite{BRS20,DGS25}.

Let $D\subset\mathbb R^2$ denote the unit disk and fix $\lambda\in(0,1/2)$.
For each $\alpha\in(0,1)$, consider the skew-product map
\[
T_\alpha \colon [0,1]\times D \longrightarrow [0,1]\times D,
\qquad
T_\alpha(\omega,x)=(f_\alpha(\omega), g(\omega,x),
\]
where the base map $f_\alpha\colon[0,1]\to[0,1]$ is  given by
\[
f_{\alpha}(\omega)=
\begin{cases}
\omega\bigl(1+2^{\alpha}\omega^{\alpha}\bigr),
& \omega\in[0,\tfrac12],\\[0.2cm]
2\omega-1,
& \omega\in(\tfrac12,1],
\end{cases}
\]
and the fibre maps   are defined for $(\omega,x)\in [0,1]\times D$ by
\[
g(\omega,x)
=
\Bigl(\tfrac12\cos(2\pi\omega),\,\tfrac12\sin(2\pi\omega)\Bigr)
+\lambda x.
\]

The map $T_\alpha$ admits a unique physical measure $\mu_\alpha$ whose basin
covers Lebesgue-almost every point in $[0,1]\times D$; see
\cite[Theorem~4.28]{A20}.
The dynamics is partially hyperbolic, with uniform contraction along the
fibres and non-uniform expansion in the base direction, where a neutral
fixed point gives rise to intermittent behaviour.
This lack of uniform hyperbolicity is precisely the mechanism responsible
for the polynomial decay of correlations established in~\cite{AP08}.
The statistical properties of this system fit naturally into the induced
framework developed in this paper.

\begin{maintheorem}\label{th.mainsol}
For   every
$C^3$ observable $\Phi\colon[0,1]\times D\to\mathbb R$, the map
\[
\alpha \longmapsto \int_{[0,1]\times D}
\Phi \,d\mu_{\alpha} 
\]
is differentiable at every parameter $\alpha_0\in(0,1)$.
\end{maintheorem}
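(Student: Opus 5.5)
The plan is to derive Theorem~\ref{th.mainsol} from Theorem~\ref{th.mainC} (with $k=3$), producing the differentiability of $\alpha\mapsto\bm\nu_\alpha$ in $\mathbf L_3^0$ via Theorem~\ref{thm:inducedLR}. Take $\bar\Omega=(\tfrac12,1]$ and let $\tau_\alpha$ be the first return time to $\bar\Omega$. The induced map $\bar f_\alpha$ is full branch. Its branches $\bar\Omega_{\alpha,n}=\{\tau_\alpha=n\}$ are mapped diffeomorphically onto $\bar\Omega$, and their inverse branches are $\theta_{\alpha,n}=h_2\circ h_{1,\alpha}^{\,n-1}$, where $h_2(\omega)=(\omega+1)/2$ and $h_{1,\alpha}$ is the left inverse branch of $f_\alpha$. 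Condition~\ref{as.induce} for the LSV induced map is classical: uniform expansion $\sigma=2$, bounded distortion, and $\int\tau_\alpha\,dm<\infty$ for $\alpha<1$. Since the physical measure $\mu_\alpha$ is unique and has marginal $\eta_\alpha$, it coincides with $\int\nu_{\alpha,\omega}\,d\eta_\alpha$, so the physical measure is the skew measure of the theorems.

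Next I would verify \ref{as.full}--\ref{as.p} for $\bar T_\alpha$, locally uniformly in $\alpha$ near $\alpha_0$. The induced fibre map over a point of $\bar\Omega_{\alpha,n}$ is $\bar g_\alpha(\omega,x)=c_{\alpha,n}(\omega)+\lambda^n x$, which is affine with $L_1=\lambda^n$ and $L_2=L_3=0$. The condition $L_1+L_2+L_3\le\lambda<1$ is therefore automatic. The summability conditions $\sum_n\sup_\alpha\|\theta_{\alpha,n}\|_{C^1}$, $\sum_n\sup_\alpha\|g_{\alpha,\theta_{\alpha,n}}\|_{C^1}$ and the sums for $p_{\alpha,n}$ and $\dot p_{\alpha,n}$ will use the standard LSV estimates. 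These are $|\theta_{\alpha,n}|\sim n^{-1-1/\alpha}$, derivative bounds of the same order, and $|\partial_\alpha h^{n}_{1,\alpha}|\lesssim n^{-1/\alpha}\log n$ as in \cite{BS16,K16}. The translation parts $c_{\alpha,n}(\omega)=\sum_{j<n}\lambda^{n-1-j}\tfrac12 e^{2\pi i f^j_\alpha(\theta_{\alpha,n}(\omega))}$ have $C^1$ norms controlled by the geometric factor $\lambda^{n-1-j}$ times derivatives of orbit points. The choice $\lambda<1/2$ absorbs the factor $2$ coming from $h_2$. For $p_{\alpha,n}$ one needs $\bar\rho_\alpha\in C^2$, bounded away from zero, with $\alpha\mapsto\bar\rho_\alpha$ differentiable in $C^1$; this follows from the Gibbs–Markov structure with uniformly summable derivative bounds. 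I would also check \ref{as.response} for the unfolded density, which is the known linear response of LSV maps \cite{BS16,K16}.

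The main obstacle is \ref{as.unfold}, the regularity of $\mathcal U_\alpha$ from $\bar{\mathbf E}_3^0$ to $\mathbf L_3^0$. I would write $\mathcal K_\alpha^k(\bar{\bm\nu}\mathbf 1_{\{\tau_\alpha>k\}})$ explicitly. At $\omega$ it is a weighted push-forward along the unique $k$-step preimage in $\{\tau_\alpha>k\}$, with weight $\rho_\alpha(\theta)/(|(f^k_\alpha)'(\theta)|\rho_\alpha(\omega))$ and fibre map $g^{(k)}_{\alpha,\theta}$. Integrating against $\eta_{\alpha_0}$ converts the $k$-th term into a quantity bounded by $\|\bar{\bm\nu}\|_3\,\bar\eta_\alpha(\tau_\alpha>k)$ times a uniform constant. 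The constant is uniform because the push-forward by contractions has norm at most $1$ on $C^3_0(X)^*$, and the ratio $d\eta_{\alpha_0}/d\eta_\alpha$ is controlled. Summability of $\bar\eta_\alpha(\tau_\alpha>k)\sim k^{-1/\alpha}$ then gives \ref{as.unfolda}.

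For \ref{as.unfoldb} and \ref{as.unfoldc} I would differentiate termwise. The $\alpha$-derivative of the $k$-th term involves derivatives of the preimage, of the Jacobian weight and of the fibre map $g^{(k)}$. Each contributes at most a factor $\log k$ or $k$ times the tail $\mu(\tau>k)$, and the pairing with $C^3$ observables puts one derivative on the test function through Lemma~\ref{le.salvation}. The requirement is that the series of these derivatives be dominated by a summable sequence uniformly for $\alpha$ near $\alpha_0$. This holds provided $\sum_k k\log k\,k^{-1/\alpha}$ is not needed in the worst case. If that bound fails for $\alpha_0$ close to $1$, I would refine the estimate using the fact that $\partial_\alpha$ of the $k$-th preimage is only of size $k^{-1/\alpha}\log k$, rather than paying a factor $k$. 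Dominated convergence then yields both continuity and differentiability in $\mathbf L_3^0$. Theorem~\ref{thm:inducedLR} then gives differentiability of $\bm\nu_\alpha$ in $\mathbf L_3^0$, and \ref{as.continuous} follows similarly from continuity. Theorem~\ref{th.mainC} concludes the argument.
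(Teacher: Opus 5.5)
You follow the paper's architecture: Theorem~\ref{th.mainC} with $k=3$, fed by Theorem~\ref{thm:inducedLR} for the first-return map to $(\tfrac12,1]$, with \ref{as.unfold} as the real work. Identifying $\mu_\alpha$ through uniqueness of the $T_\alpha$-invariant measure with marginal $\eta_\alpha$ is a fine substitute for \cite[Theorem A]{K21}; it follows from Lemma~\ref{le.duality.general} and the uniqueness in Theorem~\ref{th.mainA}.

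\textbf{Main gap: the domination behind \ref{as.unfoldc}.} The decisive estimate is a summable domination of the $\alpha$-derivatives of the unfolded series, uniform for $\alpha$ near $\alpha_0$. You never establish it.
\begin{itemize}
\item With your bookkeeping (a factor $\log k$ or $k$ times $\bar\eta_\alpha(\tau_\alpha>k)\sim k^{-1/\alpha}$), the worst case $\sum_k k^{1-1/\alpha}\log k$ already diverges for every $\alpha_0\ge\tfrac12$. You only announce a refinement.
\item Your passage from $\eta_\alpha$ to $\eta_{\alpha_0}$ is not uniform. The ratio $d\eta_{\alpha_0}/d\eta_\alpha=\rho_{\alpha_0}/\rho_\alpha\asymp\omega^{\alpha-\alpha_0}$ is unbounded near $0$ for $\alpha<\alpha_0$. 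Also, $\dot\rho_\alpha$ is only controlled in the weighted sense of \eqref{eq:der2}. So no bound uniform in $\omega$ is available.
\end{itemize}
The paper avoids both problems in three steps.
\begin{itemize}
\item It works pointwise in $\omega$ with the branch formula~\eqref{eq.unfodlingnew}, where the weights sum to one. This gives \ref{as.unfolda} with $C=1$ directly, with no tail estimate.
\item It controls the $\alpha$-derivatives of the weights $\rho_\alpha(\Theta_{\alpha,i})\Theta'_{\alpha,i}/\rho_\alpha$ in the weighted norm $\|\cdot\|_{\mathcal B}$, using Lemma~\ref{lem:qoutient} and the branch sums of \cite[Lemma 5.2]{BS16}.
\item It converts to $L^1(\eta_{\alpha_0})$ by H\"older, using $\rho_{\alpha_0}\in L^r$ with $r<1/\alpha_0$ (Lemma~\ref{lem:sB}).
\end{itemize}
No factor $k$ is lost this way. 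Along the $k$-th branch, $\partial_\alpha\log|(f^k_\alpha)'|$ is a sum of terms decaying like $\log j/j$, so only polylogarithmic factors appear. Likewise $\partial_\alpha g^{(k)}$ stays $O(1)$ because of the factors $\lambda^{k-1-j}$. This is the estimate you still need to supply.

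\textbf{Two further points.} First, you do not verify \ref{as.continuous}. Continuity of $\mathcal U_\alpha$ in the $L^1$-type space $\mathbf L_3^0$ gives neither $\bm\nu_\alpha\in\mathbf P_0$ nor $\sup_\omega\|\nu_{\alpha,\omega}-\nu_{\alpha_0,\omega}\|_{3^*}\to0$. The paper obtains it from the original two-branch base map: it checks \ref{as.full}, \ref{as.theta} and \ref{as.p'} (the last via $\rho_\alpha\asymp\omega^{-\alpha}$ and \cite{BS16}) and applies Corollaries~\ref{co.KP0} and~\ref{co.contfp}. Alternatively, in the proof of Theorem~\ref{th.mainC} this assumption only serves the term $B_\alpha$. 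There, $|\psi_\alpha|\le\|\Phi\|_{C^0}$, $\psi_\alpha\to\psi_{\alpha_0}$ in $L^1(\eta_{\alpha_0})$, and the equivalence of $\eta_{\alpha_0}$ and $m$ already suffice by dominated convergence.

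Second, termwise differentiation cannot give \ref{as.unfoldb} for an arbitrary $\bar{\bm\nu}\in\bar{\mathbf E}_3^0$. Such sections need not be differentiable in $\omega$. Moreover, push-forward along moving affine fibre maps is not continuous on $C^3_0(X)^*$; the constant section $\phi\mapsto\partial^3_{x_1}\phi(0)$ is a counterexample to \ref{as.unfoldb} in that generality. Proposition~\ref{thm:unfoldresp} only uses $\bar{\bm\nu}=\dot{\bar{\bm\nu}}_{\alpha_0}$. This lies in the $\bar{\mathbf E}_3^0$-closure of $\bar{\mathbf E}_{\mathrm L}^0$, and there Lemma~\ref{le.salvation}, the uniform bound \ref{as.unfolda} and an approximation argument give the continuity you need.
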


This follows as an application of our previous results in the following way. 
First, note that the system satisfies the assumptions of Theorem~\ref{th.mainA}, 
which guarantees the existence of a skew-product measure as in the conclusion of that theorem, 
with marginal $\eta_\alpha$ on $\Omega$. 
Since $\eta_\alpha$ is a physical measure for the base map, 
it follows from \cite[Theorem A]{K21} that this skew-product measure is a physical measure for $T_\alpha$, 
and thus coincides with $\mu_\alpha$ by uniqueness. 
The  conclusion of Thoerem~\ref{th.mainsol}   then follows from Theorem~\ref{th.mainC} applied with $k=3$, 
together with Theorem~\ref{thm:inducedLR}. 
Recall that the applicability of Theorem~\ref{th.mainC} relies on the final conclusion of Theorem~\ref{thm:inducedLR} 
as well as on assumptions~\ref{as.contraction}--\ref{as.base} and \ref{as.response}--\ref{as.continuous}, which we now proceed to verify.

\medskip
\noindent\ref{as.contraction}:
Follows trivially from the definition of the fibre maps.

\medskip
\noindent\ref{as.base}:
Is well known in the literature; see, for instance,
\cite[Section~3.5.2]{A20}.

\medskip
\noindent\ref{as.response}:
Is established in~\cite[Theorem~3.1]{BS16}.

\medskip
\noindent\ref{as.continuous}:
The first part follows from Corollary~\ref{co.KP0}, while the second part is
a consequence of Corollary~\ref{co.contfp}, both under
assumptions~\ref{as.full}--\ref{as.theta} and a weaker version~\ref{as.p'} of assumption~\ref{as.p} that we state in the beginning of Subsection~\ref{sub.invariance}, which we address
next.

\medskip
\noindent\ref{as.full}\ref{as.theta}: 
These properties are immediate, since the base maps $f_\alpha$ satisfy
$f'_\alpha \ge 1$ and admit only two inverse branches, which can easily be seen to depend
continuously on the parameter in the $C^1$ topology.

\medskip

\noindent\ref{as.p'}: 
In this case, we have two weight functions
\begin{equation*} 
p_{\alpha,i}(\omega)
=
\frac{\rho_\alpha\bigl(\theta_{\alpha,i}(\omega)\bigr)}
{|  f_\alpha'\bigl(\theta_{\alpha,i}(\omega)\bigr)|\, \rho_\alpha(\omega)},\quad i=1,2.
\end{equation*}
It suffices to show that these quantities depend continuously on $\alpha$ in the $C^0$ norm. 
It is straightforward to verify that both the inverse branches $\theta_{\alpha,i}$ and the derivative $f_\alpha'$ depend continuously on $\alpha$. 
Assume that $p_{\alpha,1}$ corresponds to the right-hand branch, which is uniformly expanding. 
It follows from~\eqref{eq.measures} that, up to a normalising constant, $\rho_\alpha \circ \theta_{\alpha,1}$ coincides with $\bar\rho_\alpha \circ \theta_{\alpha,1}$. 
This density is bounded away from zero and infinity and depends continuously (in fact, smoothly) on the parameter $\alpha$, by~\cite[Theorem~3.2]{BS16}. 
On the other hand, by the results of~\cite[Section~6.3]{Y99}, there exists a constant $C>0$ such that
\begin{equation}\label{eq:behave}
C^{-1}\omega^{-\alpha} \le \rho_\alpha(\omega) \le C\,\omega^{-\alpha}.
\end{equation}
Combining this estimate with~\cite[Theorem~3.1]{BS16}, we deduce that $1/\rho_\alpha$ depends continuously on $\alpha$ in the $C^0$ norm. 
Consequently, $p_{\alpha,1}$ depends continuously on $\alpha$ in the $C^0$ norm. 
Finally, since $p_{\alpha,0}+p_{\alpha,1}=1$ by~\eqref{eq.RMK}, the same continuity property holds for $p_{\alpha,0}$.

\medskip

The verification of the existence of an inducing scheme for which \ref{as.full}--\ref{as.p} (for the induced map) and
\ref{as.induce}--\ref{as.unfold} hold
is deferred to Subsection~\ref{sub.solenoidcheck}, thereby ensuring the
applicability of Theorem~\ref{thm:inducedLR}.

\section{Invariant measures}

In this section, we establish the conclusions of Theorem~\ref{th.mainA} under assumptions~\ref{as.contraction}--\ref{as.base}. 
We proceed in two main steps. In Subsection~\ref{se.wasserstein}, we show that the operator $\mathcal K_\alpha$ is a contraction on the complete metric space $\mathbf P$, which guarantees the existence of a unique fixed point~$\boldsymbol{\nu}_\alpha \in \mathbf P$. 
In Subsection~\ref{subskewinvariant}, we use this fixed point to construct the corresponding skew-product measure and prove its invariance under the dynamics of $T_\alpha$.

\subsection{Sample measures} \label{se.wasserstein}

We consider the space $\mathcal M_1(X)$ of Borel probability measures on \(X\), equipped with the
\emph{Wasserstein--\(1\)} distance, induced by the  norm on \(\mathrm{Lip}(X)^*\), given for \(\mu,\nu \in \mathcal M_1(X)\)  by
\[
W_1(\mu,\nu)
=
\sup_{\mathrm{Lip}(\phi)\le 1}
\left\{
\int \phi \, d\mu - \int \phi \, d\nu
\right\}.
\]
see the Appendix~\ref{se.appendix} for details.

We start by recording the standard fact that the push-forward of probability
measures by a Lipschitz map contracts distances in the Wasserstein--\(1\) metric
by at most the Lipschitz constant.

\begin{lemma} 
\label{lem:push-contract}
 If $h:X\to X$ has $\lip(h)\le L$, then for any   $\mu,\nu\in\mathcal M_1(X)$,
\[
W_1(h_*\nu,\; h_*\mu) \le L\, W_1(\nu,\mu).
\]
\end{lemma}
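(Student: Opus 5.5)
The plan is to argue directly from the dual (Kantorovich--Rubinstein) formulation of $W_1$ recalled just before the statement, by transferring test functions through composition with $h$. Fix $\phi:X\to\mathbb R$ with $\lip(\phi)\le 1$. By the change of variables formula for push-forward measures,
\[
\int \phi\, d(h_*\nu) - \int \phi\, d(h_*\mu) = \int \phi\circ h\, d\nu - \int \phi\circ h\, d\mu .
\]
Since $X$ is compact and $\phi$, $h$ are Lipschitz, $\phi\circ h$ is bounded, Borel and Lipschitz, so both sides are well defined. Moreover $\lip(\phi\circ h)\le \lip(\phi)\,\lip(h)\le L$.

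There are two cases. If $L=0$, then $h$ is constant, so $h_*\nu=h_*\mu$ is the same Dirac mass and the inequality is trivial. If $L>0$, set $\psi=\phi\circ h/L$. Then $\lip(\psi)\le1$, and the right-hand side above equals
\[
L\Bigl(\int\psi\,d\nu-\int\psi\,d\mu\Bigr)\le L\,W_1(\nu,\mu).
\]
Taking the supremum over all such $\phi$ gives $W_1(h_*\nu,h_*\mu)\le L\,W_1(\nu,\mu)$.

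There is no real obstacle here. The only point needing care is that the supremum defining $W_1$ runs over $1$-Lipschitz functions, so the rescaling by $L$ has to be handled, and the degenerate case $L=0$ treated separately, as above. One could also note that the same computation shows $h_*$ extends to a bounded operator on $\lip(X)^*$ with norm at most $\max\{1,L\}$. That extension is not needed for the statement, but it is consistent with the push-forward on dual spaces referred to in Appendix~\ref{se.appendix}.
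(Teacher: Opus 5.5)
Your argument is correct and is essentially the paper's proof: compose the test function with $h$, use $\lip(\phi\circ h)\le L$, and rescale by $L$. Your separate treatment of $L=0$ covers the one case where the paper's rescaling by $1/L$ is not literally defined.

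One small correction to your closing aside, which the statement does not need. With the paper's norm $|\phi(x_0)|+\lip(\phi)$, the bound for $h_*$ on all of $\lip(X)^*$ is $\max\{1,\diam(X)+L\}$, not $\max\{1,L\}$, because $|\phi(h(x_0))|$ can exceed $|\phi(x_0)|$ by $\lip(\phi)\,d_X(h(x_0),x_0)$ (compare the second item of Lemma~\ref{lem:push-ck}). The bound $L$ does hold on $\lip_0(X)^*$.
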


\begin{proof}
Let $h:X\to X$ satisfy $\mathrm{Lip}(h)\le L$. For  any  $\mu,\nu\in\mathcal M_1(X)$, we have
 \[
W_1(\nu,\mu)
= \sup_{\mathrm{Lip}(\phi)\le 1}
\left| \int_X \phi\, \nu- \int_X \phi\, d\mu \right|.
\]
Also, 
\begin{align*}
W_1(h_*\nu,\; h_*\mu)
&= \sup_{\mathrm{Lip}(\phi)\le 1}
\left| \int_X \phi\, d(h_*\nu)
       - \int_X \phi\, d(h_*\mu) \right|
\\
&= \sup_{\mathrm{Lip}(\phi)\le 1}
\left| \int_X \phi\circ h\, \nu- \int_X \phi\circ h \, d\mu \right|.
\end{align*}
Since $\mathrm{Lip}(\phi\circ h)\le \mathrm{Lip}(\phi)\,\mathrm{Lip}(h)\le L$, we have
\[
W_1(h_*\nu,\; h_*\mu)\le 
\sup_{\mathrm{Lip}(\psi)\le L}
\left| \int_X \psi\, \nu- \int_X \psi\, d\mu \right|.
\]
Rescaling $\phi = \psi/L$, we have $\mathrm{Lip}(\phi)\le 1$, which gives
\[
\sup_{\mathrm{Lip}(\psi)\le L}
\left| \int_X \psi\,d \nu- \int_X \psi\, d\mu \right|
= L \sup_{\mathrm{Lip}(\phi)\le 1}
\left| \int_X \phi\, d\nu - \int_X \phi\, d\mu \right|.
\]
Thus,
\[
W_1(h_*\nu,\; h_*\mu)
\le L\,W_1(\nu,\mu),
\]
 which completes the proof.
\end{proof}

We equip $\mathbf P$ with the supremum metric, defined for any $\bm\mu, \bm\nu \in \mathbf P$ by
\[
d_{\mathrm P}(\bm\mu, \bm\nu) = \esssup_{\omega \in \Omega} W_1(\mu_\omega, \nu_\omega).
\]
Since $(\Omega, d_\Omega)$ is a compact metric space and $(\mathcal M_1(X), W_1)$ is complete, it follows that $d_{\mathrm P}$ is a well-defined metric on $\mathbf P$ and that $(\mathbf P, d_{\mathrm P})$ is complete.  
Moreover, the measurability of the functions involved in the definition of $\mathcal K_\alpha$ ensures that $\mathcal K_\alpha$ maps $\mathbf P$ into itself.  

\begin{lemma}\label{le.contractinp} Let $0< \lambda<1$ be the constant given by~\ref{as.contraction}. For all $\bm\mu,\bm\nu\in\mathbf P$, we have
\begin{equation*}\label{eq:lambdacontract}
d_{\mathrm P}({\mathcal K_\alpha} \bm\nu,{\mathcal K_\alpha} \bm\mu)\le \lambda d_{\mathrm P}(\bm\nu,\bm\mu).
\end{equation*}
\end{lemma}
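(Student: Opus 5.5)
Fix $\bm\mu,\bm\nu\in\mathbf P$ and work at a single point $\omega$ outside a null set, then take the essential supremum. The first step is to write the difference $(\mathcal K_\alpha\bm\nu)_\omega-(\mathcal K_\alpha\bm\mu)_\omega$ using \eqref{eq.kalpha} as a convex combination over the preimages $\theta\in f_\alpha^{-1}(\{\omega\})$. The weights are
\[
w_\theta(\omega)=\frac{\rho_\alpha(\theta)}{|\det Df_\alpha(\theta)|\,\rho_\alpha(\omega)}\ge 0 .
\]
By the invariance of $\rho_\alpha$ under the transfer operator of $f_\alpha$ (assumption~\ref{as.a2}, together with the branch structure in~\ref{as.a1}), these weights satisfy $\sum_\theta w_\theta(\omega)=1$ for $\eta_\alpha$-a.e.\ $\omega$.

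Next, for any $\phi$ with $\mathrm{Lip}(\phi)\le1$, linearity of the pairing and the triangle inequality give
\[
\Bigl|\int\phi\,d(\mathcal K_\alpha\bm\nu)_\omega-\int\phi\,d(\mathcal K_\alpha\bm\mu)_\omega\Bigr|
\le\sum_\theta w_\theta(\omega)\,W_1\bigl((g_{\alpha,\theta})_*\nu_\theta,(g_{\alpha,\theta})_*\mu_\theta\bigr).
\]
A countable sum causes no problem here, since each integrand is bounded by $\diam(X)$ and the weights are summable. By Lemma~\ref{lem:push-contract} and~\ref{as.contraction}, each term on the right is at most $\lambda\,W_1(\nu_\theta,\mu_\theta)$. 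Taking the supremum over $\phi$ then yields $W_1\bigl((\mathcal K_\alpha\bm\nu)_\omega,(\mathcal K_\alpha\bm\mu)_\omega\bigr)\le\lambda\sum_\theta w_\theta(\omega)W_1(\nu_\theta,\mu_\theta)$.

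The main obstacle is passing from this pointwise bound to the essential supremum. We need $W_1(\nu_\theta,\mu_\theta)\le d_{\mathrm P}(\bm\nu,\bm\mu)$ for the preimages $\theta$ of $\eta_\alpha$-a.e.\ $\omega$, not merely for a.e.\ $\theta$. Let $N$ be the $\eta_\alpha$-null set where the bound fails. Since $f_\alpha$ is non-singular, being a $C^1$ diffeomorphism on each piece of a countable partition by~\ref{as.a1}, we have $m(f_\alpha(N\cap\{\rho_\alpha>0\}))=0$. Preimages $\theta$ with $\rho_\alpha(\theta)=0$ carry zero weight. Hence, outside an $\eta_\alpha$-null set of $\omega$, every preimage with positive weight lies outside $N$.

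Combining this with $\sum_\theta w_\theta=1$ gives the bound $\lambda\,d_{\mathrm P}(\bm\nu,\bm\mu)$ for a.e.\ $\omega$, and taking the essential supremum finishes the proof.
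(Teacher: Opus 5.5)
Your proposal is correct and follows essentially the same route as the paper. You bound the fibrewise $W_1$ distance by the weighted sum over preimages (the paper calls this convexity of $W_1$), apply Lemma~\ref{lem:push-contract} with \ref{as.contraction}, and normalise the weights with the Perron--Frobenius identity. Your extra argument that, for $\eta_\alpha$-a.e.\ $\omega$, every positively weighted preimage avoids the exceptional null set uses non-singularity of $f_\alpha$ on the branches of \ref{as.a1}. It fills a step the paper passes over silently when it bounds $W_1(\nu_\theta,\mu_\theta)$ by $d_{\mathrm P}(\bm\nu,\bm\mu)$.
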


\begin{proof}
Given $\bm\mu,\bm\nu\in\mathbf P$, by  
convexity of \(W_1\) (recall   it is induced by the norm in $\lip(X)^*$),  
\begin{align}
&W_1\big(({\mathcal K_\alpha}\boldsymbol\nu)_\omega,({\mathcal K_\alpha}\bm\mu)_\omega\big)
 \le \sum_{\theta\in f_\alpha^{-1}(\{\omega\})} \frac{\rho_\alpha(\theta)}{|\det Df_\alpha(\theta)|\rho_\alpha(\omega)} W_1((g_{\alpha,\theta})_*\nu_{ \theta},(g_{\alpha,\theta})_*\mu_{ \theta}).\label{eq.convex}
\end{align}
Using assumption \ref{as.contraction}  and Lemma~\ref{lem:push-contract}, we   obtain for each $\theta$ 
 \[
\begin{aligned}
 W_1 \left((g_{\alpha,\theta})_*\nu_{ \theta} ,  (g_{\alpha,\theta})_* {\mu}_{ \theta}\right) 
\le  \lambda \, W_1(\nu_{ \theta},\mu_{ \theta}),
\end{aligned}
\]
which together with~\eqref{eq.convex} and~\eqref{eq.convex},   yields
\begin{align}
W_1\big(({\mathcal K}\boldsymbol\nu)_\omega,\,({\mathcal K}\bm\mu)_\omega\big)
&\le \lambda \sum_{\theta\in f_\alpha^{-1}(\{\omega\})} \frac{\rho_\alpha(\theta)}{|\det Df_\alpha(\theta)|\rho_\alpha(\omega)} W_1( \nu_\theta, \mu_\theta)\nonumber\\
&
\le \lambda \sum_{\theta\in f_\alpha^{-1}(\{\omega\})} \frac{\rho_\alpha(\theta)}{|\det Df_\alpha(\theta)|\rho_\alpha(\omega)} d_{\mathrm P} (\bm\nu,\bm\mu) \label{eq.seila3}
\end{align}
Now observe  that the density $\rho_\alpha$ of the invariant measure $\eta_\alpha$ satisfies the Perron--Frobenius equation
\begin{equation*}
\rho_\alpha(\omega) = \sum_{\theta \in f_\alpha^{-1}(\omega)} \frac{\rho_\alpha(\theta)}{|\det Df_\alpha(\theta)|}.
\end{equation*}
which together with~\eqref{eq.seila3} yields 
$$
W_1\big(({\mathcal K}\boldsymbol\nu)_\omega,\,({\mathcal K}\bm\mu)_\omega\big)
\le \lambda\, d_{\mathrm P}(\bm\nu,\bm\mu).
$$
Taking the essential supremum over $\omega\in\Omega$, we get
 \begin{equation*}
d_{\mathrm P}\big({\mathcal K_\alpha}\boldsymbol\nu ,\,{\mathcal K_\alpha}\bm\mu\big)\le d_{\mathrm P}(\bm\nu,\bm\mu),
\end{equation*}
thus completing  the proof.
\end{proof}

Since $(\mathbf P,d_{\mathrm P})$ is  a complete metric space, Banach fixed point theorem and the previous lemma give  the first conclusion  of Theorem~\ref{th.mainA}.
\begin{corollary}\label{co.fixedpoint}
$\mathcal K_\alpha$ has a unique fixed point  in $\mathbf P$.
\end{corollary}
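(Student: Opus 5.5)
The plan is to apply the Banach fixed point theorem to $\mathcal K_\alpha$ on the metric space $(\mathbf P,d_{\mathrm P})$. That requires three things: $\mathbf P$ is complete, $\mathcal K_\alpha$ maps $\mathbf P$ into itself, and $\mathcal K_\alpha$ is a strict contraction.

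\emph{Completeness.} Since $X$ is compact, $(\mathcal M_1(X),W_1)$ is a complete, bounded metric space (Appendix~\ref{se.appendix}). Take a $d_{\mathrm P}$-Cauchy sequence $\bm\nu^n$. Discarding a countable union of $\eta_\alpha$-null sets, the sequence $\nu^n_\omega$ is uniformly Cauchy in $\omega$ on a full-measure set. It therefore converges uniformly there to a limit $\nu_\omega$, which is measurable as a pointwise limit of measurable maps into a metric space. Uniform convergence then gives $d_{\mathrm P}(\bm\nu^n,\bm\nu)\to0$.

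\emph{Self-map.} The weights in \eqref{eq.kalpha} are nonnegative. By the Perron--Frobenius identity for $\rho_\alpha$ they sum to $1$ at $\eta_\alpha$-a.e.\ $\omega$, using that $\rho_\alpha>0$ $\eta_\alpha$-a.e. Hence $(\mathcal K_\alpha\bm\nu)_\omega$ is a countable convex combination of probability measures and is itself a probability measure. Measurability in $\omega$ holds because, by~\ref{as.base}, the inverse branches are measurable on each partition element. Replacing $\bm\nu$ by an $\eta_\alpha$-equivalent section does not change the output. The reason is that the preimages of a null set under the local inverse branches are null: $f_\alpha$ is nonsingular with respect to $\eta_\alpha$, being a local $C^1$ diffeomorphism that preserves $\eta_\alpha$.

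\emph{Contraction.} This is exactly Lemma~\ref{le.contractinp}. It combines convexity of $W_1$ (as a norm distance in $\lip(X)^*$), Lemma~\ref{lem:push-contract} applied to $g_{\alpha,\theta}$ with Lipschitz constant $\lambda$ from~\ref{as.contraction}, and the weights summing to one. The result is $d_{\mathrm P}(\mathcal K_\alpha\bm\nu,\mathcal K_\alpha\bm\mu)\le\lambda\, d_{\mathrm P}(\bm\nu,\bm\mu)$. Banach's theorem then gives a unique fixed point $\bm\nu_\alpha$, as recorded in Corollary~\ref{co.fixedpoint}.

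\emph{Invariance of $\mu_\alpha$.} It remains to show that $\mu_\alpha=\int\nu_{\alpha,\omega}\,d\eta_\alpha$ is $T_\alpha$-invariant. Take a bounded measurable $\Phi$ on $\Omega\times X$. I would first prove the duality relation $\langle\!\langle\mathcal K_\alpha\bm\nu,\Phi\rangle\!\rangle=\langle\!\langle\bm\nu,\Phi\circ T_\alpha\rangle\!\rangle$. Writing out $(\mathcal K_\alpha\bm\nu)_\omega$, the left side equals
\[
\int_\Omega\sum_{\theta\in f_\alpha^{-1}(\omega)}\frac{\rho_\alpha(\theta)}{|\det Df_\alpha(\theta)|}\int_X\Phi(\omega,g_{\alpha,\theta}(x))\,d\nu_\theta(x)\,dm(\omega),
\]
since the factor $\rho_\alpha(\omega)$ in the denominator cancels against $d\eta_\alpha=\rho_\alpha\,dm$. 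Next apply the change of variables $\omega=f_\alpha(\theta)$ on each smoothness domain from~\ref{as.base}\ref{as.a1} and sum over domains. This gives $\int_\Omega\int_X\Phi(f_\alpha(\theta),g_{\alpha,\theta}(x))\,d\nu_\theta(x)\,\rho_\alpha(\theta)\,dm(\theta)$, which is the right side. Exchanging the sum and the integrals is justified by Tonelli/Fubini: the integrand is dominated by $\|\Phi\|_\infty$ times the weights, which sum to $\rho_\alpha$, an integrable function. With $\bm\nu=\bm\nu_\alpha$ the relation becomes $\int\Phi\,d\mu_\alpha=\int\Phi\circ T_\alpha\,d\mu_\alpha$, which is invariance.

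The main obstacle is the measure-theoretic bookkeeping in this duality step: handling the countable partition, the boundary null sets, and the set where $\rho_\alpha=0$ (which is $\eta_\alpha$-null, so $\mathcal K_\alpha$ need only be defined off it). Checking that $\omega\mapsto\int\Phi(\omega,\cdot)\,d\nu_\omega$ is measurable also needs care, and I would handle it with a monotone class argument starting from products $\phi(\omega)\psi(x)$.
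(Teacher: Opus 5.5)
Your proposal is correct and follows the paper's route: completeness of $(\mathbf P,d_{\mathrm P})$, the $\lambda$-contraction of Lemma~\ref{le.contractinp}, and Banach's fixed point theorem. You in fact give more detail than the paper on completeness and on $\mathcal K_\alpha$ being a well-defined self-map of $\mathbf P$; the final paragraph on invariance of $\mu_\alpha$ is not needed for this corollary, since it is the content of Lemma~\ref{le.duality.general} and Corollary~\ref{co.invariantmeasure}, though it also matches the paper's argument.
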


\subsection{Skew-product measure}\label{subskewinvariant}

Our goal here is to deduce the second   conclusion of  Theorem~\ref{th.mainA}. Given any $\bm\nu\in\mathbf P$ and any bounded measurable function
$\Phi:\Omega\times X\to\mathbb R$, we define the pairing
\[
\langle\!\langle \bm\nu,\Phi\rangle\!\rangle
=
\int_\Omega
\langle \nu_\omega,\Phi(\omega,\cdot)\rangle
\,d\eta_\alpha(\omega).
\]

\begin{lemma} \label{le.duality.general}
For any $\bm\nu\in\mathbf P$ and any
bounded measurable function $\Phi:\Omega\times X\to\mathbb R$, one has
\[
\langle\!\langle \mathcal K_\alpha\bm\nu,\Phi\rangle\!\rangle
=
\langle\!\langle \bm\nu,\Phi\circ T_\alpha\rangle\!\rangle .
\]
\end{lemma}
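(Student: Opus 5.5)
The plan is to unfold both sides as integrals over $\Omega$ and reduce the identity to the change of variables formula for the base map. I start from the left-hand side and insert the definition~\eqref{eq.kalpha}. Pairing a push-forward with a test function gives $\langle (g_{\alpha,\theta})_*\nu_\theta,\Phi(\omega,\cdot)\rangle=\langle \nu_\theta,\Phi(\omega,g_{\alpha,\theta}(\cdot))\rangle$, so
\[
\langle\!\langle \mathcal K_\alpha\bm\nu,\Phi\rangle\!\rangle
=\int_\Omega \sum_{\theta\in f_\alpha^{-1}(\{\omega\})}
\frac{\rho_\alpha(\theta)}{|\det Df_\alpha(\theta)|\,\rho_\alpha(\omega)}\,
\langle \nu_\theta,\Phi(\omega,g_{\alpha,\theta}(\cdot))\rangle\,\rho_\alpha(\omega)\,dm(\omega).
\]
The factor $\rho_\alpha(\omega)$ cancels against $d\eta_\alpha=\rho_\alpha\,dm$. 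This cancellation is legitimate only where $\rho_\alpha>0$, but the set $\{\rho_\alpha=0\}$ is $\eta_\alpha$-null, and that is exactly where $\mathcal K_\alpha$ is left undefined.

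Next I define $\psi(\theta)=\rho_\alpha(\theta)\,\langle \nu_\theta,\Phi(f_\alpha(\theta),g_{\alpha,\theta}(\cdot))\rangle$. The integrand above is then $\sum_{\theta\in f_\alpha^{-1}(\omega)}\psi(\theta)/|\det Df_\alpha(\theta)|$, since $\omega=f_\alpha(\theta)$ for every preimage $\theta$. Using the countable partition in~\ref{as.base}\ref{as.a1}, I split this sum over the branches $U_j$ on which $f_\alpha$ is a $C^1$ diffeomorphism. On each branch the change of variables $\omega=f_\alpha(\theta)$ gives
\[
\int_{f_\alpha(U_j)}\frac{\psi(\theta_j(\omega))}{|\det Df_\alpha(\theta_j(\omega))|}\,dm(\omega)=\int_{U_j}\psi\,dm .
\]
Summing over $j$, and using that the $U_j$ cover $\Omega$ up to an $m$-null set, yields
\[
\int_\Omega \psi\,dm=\int_\Omega \langle \nu_\theta,(\Phi\circ T_\alpha)(\theta,\cdot)\rangle\,d\eta_\alpha(\theta),
\]
which is the right-hand side.

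The main obstacle is justifying the interchange of the sum over branches with the integral, together with measurability. Measurability of $\theta\mapsto\langle\nu_\theta,\Phi(f_\alpha(\theta),g_{\alpha,\theta}(\cdot))\rangle$ follows from joint measurability of $\Phi\circ T_\alpha$ and measurability of the section $\bm\nu$, by a standard monotone class argument. For the interchange, I first take $\Phi\ge0$ and apply Tonelli, so that the branch-wise identities add up without any convergence issue. For general bounded $\Phi$ I split $\Phi=\Phi^+-\Phi^-$; both sides are then finite, being bounded by $\|\Phi\|_\infty\int\rho_\alpha\,dm=\|\Phi\|_\infty$. Here I use that $\rho_\alpha\in L^1(m)$ and that each $\nu_\theta$ is a probability measure.
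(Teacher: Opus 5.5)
Your argument follows the same route as the paper's proof: unfold $\mathcal K_\alpha$, cancel $\rho_\alpha(\omega)$ against $d\eta_\alpha=\rho_\alpha\,dm$, change variables on each inverse branch, and sum over branches. Your treatment of measurability and of the sum/integral interchange (Tonelli for $\Phi\ge 0$, then $\Phi=\Phi^+-\Phi^-$) is more careful than the paper's appeal to absolute convergence.

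\textbf{The gap: the cancellation step.}

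\emph{What goes wrong.} You restrict the left-hand side to $\{\rho_\alpha>0\}$, which is fine because the complement is $\eta_\alpha$-null. But after cancelling, you are integrating against $m$, and you then change variables over the whole of $f_\alpha(U_j)$. This silently extends the $m$-integral from $\{\rho_\alpha>0\}$ to all of $\Omega$. Under \ref{as.base} the set $\{\rho_\alpha=0\}$ need not be $m$-null.

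\emph{What is actually needed.} Doing the change of variables only on $\{\rho_\alpha>0\}$ gives
\[
\int_\Omega \psi(\theta)\,\mathbf 1_{\{\rho_\alpha>0\}}(f_\alpha(\theta))\,dm(\theta).
\]
To remove the indicator you need $\eta_\alpha\bigl(f_\alpha^{-1}(\{\rho_\alpha=0\})\bigr)=0$. This is exactly where $f_\alpha$-invariance of $\eta_\alpha$ must enter:
\[
\eta_\alpha\bigl(f_\alpha^{-1}(\{\rho_\alpha=0\})\bigr)=\eta_\alpha(\{\rho_\alpha=0\})=0.
\]
Alternatively, use the Perron--Frobenius equation $\sum_{\theta\in f_\alpha^{-1}(\omega)}\rho_\alpha(\theta)/|\det Df_\alpha(\theta)|=\rho_\alpha(\omega)$ for $m$-a.e.\ $\omega$. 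Since your post-cancellation integrand is bounded by $\|\Phi\|_\infty$ times that sum, it vanishes $m$-a.e.\ on $\{\rho_\alpha=0\}$.

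\emph{Why this is a real omission.} Your proposal never uses invariance, and the identity is false without it. Take the doubling map and use the non-invariant $\eta=2\cdot\mathbf 1_{[0,1/2]}\,m$ in place of $\eta_\alpha$, both in $\mathcal K_\alpha$ and in the pairing. With $\Phi\equiv 1$, the left-hand side equals $1/2$ and the right-hand side equals $1$.

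The paper's proof also performs this cancellation over all of $\Omega$ without comment, so the same line is missing there. Once you add the one-line appeal to invariance, your proof is complete.
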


\begin{proof}
By definition of the pairing and of $\mathcal K_\alpha$, and using that
$d\eta_\alpha =\rho_\alpha \,dm $, we compute
\begin{align*}
\langle\!\langle \mathcal K_\alpha\bm\nu,\Phi\rangle\!\rangle
&=
\int_\Omega
\langle (\mathcal K_\alpha\bm\nu)_\omega,\Phi(\omega,\cdot)\rangle
\,\rho_\alpha(\omega)\,dm(\omega) \\
&=
\int_\Omega
\sum_{\theta\in f_\alpha^{-1}(\omega)}
\frac{\rho_\alpha(\theta)}{|\det Df_\alpha(\theta)|}
\,
\langle (g_{\alpha,\theta})_*\nu_\theta,\Phi(\omega,\cdot)\rangle
\,dm(\omega) \\
&=
\int_\Omega
\sum_{\theta\in f_\alpha^{-1}(\omega)}
\frac{\rho_\alpha(\theta)}{|\det Df_\alpha(\theta)|}
\,
\langle \nu_\theta,\Phi(\omega,g_{\alpha,\theta}(\cdot))\rangle
\,dm(\omega).
\end{align*}
The absolute convergence of the series allows us to exchange summation and
integration expressed in the duality.
By \ref{as.base},  there exists a countable family
$\{\Omega_i\}_{i\in\mathbb I}$ of measurable subsets of $\Omega$, covering $\Omega$
up to an $m$-null set, such that each restriction
$f_\alpha|_{\Omega_i}:\Omega_i\to f_\alpha(\Omega_i)
$
is a $C^1$ diffeomorphism onto its image and
$\det Df_\alpha(\theta)\neq 0$ for all $\theta\in\Omega_i$.
We now perform a change of variables on each inverse branch 
\[
dm(\omega)=|\det Df_\alpha(\theta)|\,dm(\theta).
\]
Therefore,
\begin{align*}
\int_\Omega
\sum_{\theta\in f_\alpha^{-1}(\omega)}
\frac{\rho_\alpha(\theta)}{|\det Df_\alpha(\theta)|}
\, &
\langle \nu_\theta,\Phi(\omega,g_{\alpha,\theta}(\cdot))\rangle
\,dm(\omega)  \\
&  =
\sum_{i\in\mathbb I}
\int_{\Omega_i}
\rho_\alpha(\theta)\,
\langle \nu_\theta,
\Phi(f_\alpha(\theta),g_{\alpha,\theta}(\cdot))\rangle
\,dm(\theta) \\
&=
\int_\Omega
\langle \nu_\theta,
\Phi(f_\alpha(\theta),g_{\alpha,\theta}(\cdot))\rangle
\,d\eta_\alpha(\theta).
\end{align*}
By definition of the skew-product map $T_\alpha$, this equals
\[
\langle\!\langle \bm\nu,\Phi\circ T_\alpha\rangle\!\rangle,
\]
which concludes the proof.
\end{proof}

\begin{corollary}\label{co.invariantmeasure}
If  $\bm\nu_\alpha\in \mathbf P$ is a fixed point of the   operator $\mathcal K_\alpha$, then
\[
\mu_\alpha = \int_\Omega \nu_{\alpha,\omega}\, d\eta_\alpha(\omega)
\]
is  a $T_\alpha$-invariant probability measure.
\end{corollary}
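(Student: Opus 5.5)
The plan is to test invariance against bounded measurable functions and read it off from the duality relation in Lemma~\ref{le.duality.general}.

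First, I check that $\mu_\alpha$ is a well-defined Borel probability measure on $\Omega\times X$. For a bounded measurable $\Phi:\Omega\times X\to\mathbb R$, the map $\omega\mapsto\langle \nu_{\alpha,\omega},\Phi(\omega,\cdot)\rangle$ is measurable. This holds because $\bm\nu_\alpha$ is a measurable section, and a monotone class argument starting from products $\Phi(\omega,x)=\psi(\omega)\phi(x)$ with $\phi$ continuous extends it to all bounded measurable $\Phi$. The map is bounded by $\sup|\Phi|$. We may therefore set
\[
\int \Phi\,d\mu_\alpha=\langle\!\langle \bm\nu_\alpha,\Phi\rangle\!\rangle .
\]
This is a positive linear functional with $\int 1\,d\mu_\alpha=\int_\Omega 1\,d\eta_\alpha=1$, since each $\nu_{\alpha,\omega}$ is a probability measure. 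Countable additivity on Borel sets follows from monotone convergence, applied first fibrewise and then in $\omega$. Modifying $\bm\nu_\alpha$ on an $\eta_\alpha$-null set does not change $\mu_\alpha$, so the identification of sections in $\mathbf P$ causes no trouble.

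Second, I prove invariance. Take any bounded measurable $\Phi$, for instance $\Phi=\mathbf 1_B$ with $B$ Borel. Using $\mathcal K_\alpha\bm\nu_\alpha=\bm\nu_\alpha$ ($\eta_\alpha$-a.e.) and Lemma~\ref{le.duality.general},
\[
\int \Phi\circ T_\alpha\,d\mu_\alpha=\langle\!\langle \bm\nu_\alpha,\Phi\circ T_\alpha\rangle\!\rangle=\langle\!\langle \mathcal K_\alpha\bm\nu_\alpha,\Phi\rangle\!\rangle=\langle\!\langle \bm\nu_\alpha,\Phi\rangle\!\rangle=\int\Phi\,d\mu_\alpha .
\]
Here $\Phi\circ T_\alpha$ is again bounded and measurable, because $T_\alpha$ is measurable: $f_\alpha$ is piecewise $C^1$ by~\ref{as.base}, and $g_\alpha$ is measurable. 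Taking $\Phi=\mathbf 1_B$ gives $\mu_\alpha(T_\alpha^{-1}B)=\mu_\alpha(B)$.

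The only step needing care is the first, namely the measurability of the fibrewise integrals, which makes $\mu_\alpha$ an honest measure. The invariance itself is immediate from Lemma~\ref{le.duality.general}. Taking $\Phi(\omega,x)=\psi(\omega)$ also confirms that the marginal of $\mu_\alpha$ on $\Omega$ is $\eta_\alpha$.
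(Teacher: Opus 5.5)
Your proposal is correct and follows essentially the same route as the paper: the same chain of equalities through the pairing $\langle\!\langle\cdot,\cdot\rangle\!\rangle$, Lemma~\ref{le.duality.general}, and the fixed-point property, applied to arbitrary bounded measurable $\Phi$. Your preliminary check that $\mu_\alpha$ is a genuine Borel probability measure (measurability of $\omega\mapsto\langle\nu_{\alpha,\omega},\Phi(\omega,\cdot)\rangle$ via a monotone class argument, and countable additivity via monotone convergence) is a detail the paper leaves implicit.
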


\begin{proof}
Let $\Phi:\Omega\times X\to \mathbb R$ be an arbitrary  bounded measurable function. 
We have
\[
\int_{\Omega\times X} \Phi \circ T_\alpha \, d\mu_\alpha
=
\langle\!\langle \bm\nu_\alpha, \Phi\circ T_\alpha \rangle\!\rangle
=
\langle\!\langle \mathcal K_\alpha \bm\nu_\alpha, \Phi \rangle\!\rangle
=
\langle\!\langle \bm\nu_\alpha, \Phi \rangle\!\rangle
=
\int_{\Omega\times X} \Phi \, d\mu_\alpha,
\]
where the first and fourth equalities follow from the definition of the pairing, the second equality uses the duality given by Lemma~\ref{le.duality.general},  and the third uses that
$\bm\nu_\alpha$ is a fixed point of $\mathcal K_\alpha$. 
Hence,
\((T_\alpha)_* \mu_\alpha = \mu_\alpha\), showing that $\mu_\alpha$ is
$T_\alpha$-invariant.
\end{proof}
 
\section{Linear response}

Here we consider  a family of   skew-product maps 
$T_\alpha:\Omega\times X \to \Omega\times X$, with $\alpha\in J$. 
Our   goal in the next two subsections   is to prove Theorem~\ref{th.mainB} and Theorem~\ref{th.mainC}.

\subsection{Sample measures}\label{sub.sample}
Here we prove Theorem~\ref{th.mainB}. Assume that  $\mathcal K_\alpha$ defines a bounded linear operator on a Banach space   $\mathbf{E}_{k}^0$, for some $k\ge 0$, and  \ref{as.contraction}--\ref{as.space} hold.  
Let $\bm\nu_\alpha$  the unique fixed point of $\mathcal K_\alpha$ in~$\mathbf P$.
We need to show that there exists some $\dot{\bm\nu}_{\alpha_0}\in\mathbf{E}_{k}^0$ such that 
\begin{equation}\label{eq.limitsample}
\lim_{\alpha\to\alpha_0}\left\|\frac{\bm\nu_{\alpha}-\bm\nu_{\alpha_0}}{\alpha-\alpha_0}- \dot{\bm\nu}_{\alpha_0}\right\|_k=0.
\end{equation}
Since  $ {\mathcal K}_{\alpha}\bm\nu_{\alpha}=\bm\nu_{\alpha}$ and 
$ {\mathcal K}_{\alpha_0}\bm\nu_{\alpha_0}=\bm\nu_{\alpha_0}$, we may write
 $$
 (I- {\mathcal K}_{\alpha})(\bm\nu_{\alpha}-\bm\nu_{\alpha_0})=( {\mathcal K}_{\alpha}- {\mathcal K}_{\alpha_0})\bm\nu_{\alpha_0}.
 $$
By assumption~\ref{as.spacea} and a  standard result on Banach spaces, we have   that $I-\mathcal K_{\alpha}$ is invertible. Moreover, the  inverse is given by the von Neumann series
 \begin{equation}\label{eq.series}
(I - \mathcal K_{\alpha})^{-1} = \sum_{n=0}^\infty \mathcal K_{\alpha}^n.
\end{equation}
Hence,
 \begin{equation}\label{eq.quotient}
\frac{\bm\nu_{\alpha}-\bm\nu_{\alpha_0}}{\alpha-\alpha_0}= {(I- {\mathcal K}_{\alpha})^{-1}} 
{\frac{ {\mathcal K}_{\alpha}- {\mathcal K}_{\alpha_0}}{\alpha-\alpha_0}} \bm\nu_{\alpha_0}.
\end{equation}
Define
\begin{equation}\label{eq.tau}
\bm\tau_\alpha={\frac{ {\mathcal K}_{\alpha}- {\mathcal K}_{\alpha_0}}{\alpha-\alpha_0}} \bm\nu_{\alpha_0}.
\end{equation}
It follows from assumption~\ref{as.spaceb}
that $\bm\tau_\alpha\in\mathbf{E}_{\mathrm L}^0$ and by assumption~\ref{as.spaced} there exists some
$\bm\tau_{\alpha_0}\in\mathbf{E}_{k}^0$ such that
\begin{equation}\label{eq.limit}
\lim_{\alpha\to\alpha_0} \left\|  \bm\tau_\alpha-  {\bm\tau}_{\alpha_0}\right\|_k=0.
\end{equation}
Combining~\eqref{eq.quotient} and~\eqref{eq.tau}, it remains to prove that
$(I-\mathcal K_{\alpha})^{-1}\bm\tau_\alpha$ converges in $\mathbf{E}_{k}^0$, when 
$\alpha\to\alpha_0$. In fact, we will  show that   it converges to $(I- {\mathcal K}_{\alpha_0})^{-1}\bm\tau_{\alpha_0}$. Writing
 \begin{align}
\|(I- {\mathcal K}_{\alpha})^{-1}\bm\tau_\alpha- (I- {\mathcal K}_{\alpha_0})^{-1}\bm\tau_{\alpha_0}\|_k
\,\le\; &
\|(I- {\mathcal K}_{\alpha})^{-1}\bm\tau_\alpha- (I- {\mathcal K}_{\alpha})^{-1}\bm\tau_{\alpha_0}\|_k\nonumber \\
&+
\|(I- {\mathcal K}_{\alpha})^{-1}\bm\tau_{\alpha_0}- (I- {\mathcal K}_{\alpha_0})^{-1}\bm\tau_{\alpha_0}\|_k, \nonumber
\end{align}
we are left to prove that
\begin{enumerate}
\item[(i)] $\|(I- {\mathcal K}_{\alpha})^{-1}\bm\tau_\alpha- (I- {\mathcal K}_{\alpha})^{-1}\bm\tau_{\alpha_0}\|_k\to0$, as $\alpha\to\alpha_0$;
\item[(ii)]  $\|(I- {\mathcal K}_{\alpha})^{-1}\bm\tau_{\alpha_0}- (I- {\mathcal K}_{\alpha_0})^{-1}\bm\tau_{\alpha_0}\|_k\to0$, as $\alpha\to\alpha_0$.
\end{enumerate}
We first prove (i). We have
\begin{equation}\label{eq.bound1}
\|(I- {\mathcal K}_{\alpha})^{-1}\bm\tau_\alpha- (I- {\mathcal K}_{\alpha})^{-1}\bm\tau_{\alpha_0}\|_k\le \|(I- {\mathcal K}_{\alpha})^{-1}\|\cdot\| \bm\tau_\alpha-  {\bm\tau}_{\alpha_0} \|_k.
\end{equation}
It follows from  \ref{as.spacea} and~\eqref{eq.series} that
\begin{equation}\label{unifbound}
\|(I - \mathcal K_{\alpha})^{-1} \| \le \sum_{n=0}^\infty \|\mathcal K_{\alpha}\|^n \le   \sum_{n=0}^\infty\lambda^n=\frac1{1-\lambda}.
\end{equation}
Using~\eqref{eq.limit}, \eqref{eq.bound1} and \eqref{unifbound}, we obtain (i).

\medskip
\noindent
To prove (ii), we  use   the \emph{second resolvent identity}
\begin{align*}
 (I- {\mathcal K}_{\alpha})^{-1} - (I- {\mathcal K}_{\alpha_0})^{-1} 
= (I - \mathcal{K}_\alpha)^{-1} (\mathcal{K}_\alpha - \mathcal{K}_{\alpha_0}) (I - \mathcal{K}_{\alpha_0})^{-1},
\end{align*}
which combined  with~\eqref{unifbound} yields
\begin{equation} \label{eq:boundid}
\|(I- \mathcal{K}_{\alpha})^{-1}\bm\tau_{\alpha_0} - (I- \mathcal{K}_{\alpha_0})^{-1}\bm\tau_{\alpha_0}\|_k\le \frac1{1-\lambda} \|(\mathcal{K}_\alpha - \mathcal{K}_{\alpha_0}) (I- \mathcal{K}_{\alpha_0})^{-1}\bm\tau_{\alpha_0}\|_k.
\end{equation}
Set 
$$\bm\tau_0=(I- \mathcal{K}_{\alpha_0})^{-1}\bm\tau_{\alpha_0}.
$$ 
By \eqref{eq.limit}, the vector $\bm\tau_{\alpha_0}$ is   the limit in $\mathbf{E}_{k}^0$ of $\bm\tau_\alpha \in \mathbf{E}_{\mathrm L}^0$, as $\alpha\to\alpha_0$. Thus, $\bm\tau_{\alpha_0}$ belongs to the closure $\overline{ \mathbf{E}}_{\mathrm L}^0$ of $ \mathbf{E}_{\mathrm L}^0$ in the  topology induced by the norm in $\mathbf{E}_{k}^0$. 
Moreover, assumption~\ref{as.spaceb} gives that 
$\mathcal{K}_{\alpha_0}$ maps $\mathbf{E}_{\mathrm L}^0$ into itself.  
Since we assume  $\mathcal{K}_{\alpha_0}$ a bounded operator on $\mathbf{E}_{k}^0$, then~$\mathcal{K}_{\alpha_0}$   maps  $\overline{ \mathbf{E}}_{\mathrm L}^0$ into itself. Consequently, the series in~\eqref{eq.series} defining $(I - \mathcal{K}_{\alpha_0})^{-1} $ also maps $\overline{ \mathbf{E}}_{\mathrm L}^0$ into itself. This ensures that~$\bm\tau_0 \in \overline{ \mathbf{E}}_{\mathrm L}^0$.

Now, we fix an arbitrary  $\epsilon > 0$. 
By  definition of the closure, there exists an approximating element $\bm\tau_\epsilon \in \mathbf{E}_{\mathrm L}^0$ such that
\begin{equation}\label{eq.approc}
\|\boldsymbol{\tau}_0 - \boldsymbol{\tau}_\epsilon\|_k< \frac{(1-\lambda)\epsilon}{3 \lambda}.
\end{equation}
We now bound the right-hand side of \eqref{eq:boundid} using the triangle inequality:
\begin{align}\label{eq.haja}
\|(\mathcal{K}_\alpha - \mathcal{K}_{\alpha_0}) \boldsymbol{\tau}_0\|_k
\le 
\| \mathcal{K}_\alpha(\boldsymbol{\tau}_0 - \boldsymbol{\tau}_\epsilon) \|_k+ \| (\mathcal{K}_\alpha - \mathcal{K}_{\alpha_0}) \boldsymbol{\tau}_\epsilon \|_k+ \| \mathcal{K}_{\alpha_0}(\boldsymbol{\tau}_\epsilon - \boldsymbol{\tau}_0)\|_k.
\end{align}
By assumption \ref{as.spacea} and the choice of $\bm\tau_\epsilon$ in \eqref{eq.approc}, the first and third terms are bounded as
\begin{equation}\label{eq.both1}
\| \mathcal{K}_\alpha(\boldsymbol{\tau}_0 - \boldsymbol{\tau}_\epsilon) \|_k\le
\lambda \|\boldsymbol{\tau}_0 - \boldsymbol{\tau}_\epsilon\|_k< \frac{(1-\lambda)\epsilon}{3},
\end{equation}
and
\begin{equation}\label{eq.both2}
\| \mathcal{K}_{\alpha_0}(\boldsymbol{\tau}_\epsilon - \boldsymbol{\tau}_0) \|_k\le
\lambda \|\boldsymbol{\tau}_\epsilon - \boldsymbol{\tau}_0\|_k< \frac{(1-\lambda)\epsilon}{3}.
\end{equation}
Since $\boldsymbol{\tau}_\epsilon \in \mathbf{E}_{\mathrm L}^0$, we invoke assumption~\ref{as.spacec}, which provides a $\delta > 0$ such that if $|\alpha - \alpha_0| < \delta$, then 
\begin{equation}\label{eq.lasts}
\|(\mathcal{K}_\alpha - \mathcal{K}_{\alpha_0}) \boldsymbol{\tau}_\epsilon\|_k< \frac{(1-\lambda)\epsilon}{3}.
\end{equation}
Substituting \eqref{eq.both1}, \eqref{eq.both2}, and \eqref{eq.lasts} into \eqref{eq.haja}, we obtain
\begin{equation*}
\|(\mathcal{K}_\alpha - \mathcal{K}_{\alpha_0}) \boldsymbol{\tau}_0\|_k< (1-\lambda)\epsilon.
\end{equation*}
Finally, substituting this back into  \eqref{eq:boundid}, we have for $|\alpha - \alpha_0| < \delta$
\begin{equation*}
\|(I- \mathcal{K}_{\alpha})^{-1}\bm\tau_{\alpha_0} - (I- \mathcal{K}_{\alpha_0})^{-1}\bm\tau_{\alpha_0}\|_k\le \frac{1}{1-\lambda} \cdot (1-\lambda)\epsilon = \epsilon.
\end{equation*}
Since $\epsilon > 0$ was arbitrary, we obtain (ii), thereby completing the proof of Theorem~\ref{th.mainB}.

 \subsection{Skew-product measure}\label{sub.skewresponse}
 Here we prove Theorem~\ref{th.mainC}.
Let 
$\boldsymbol{\nu}_\alpha $ be the unique fixed point of~$\mathcal K_\alpha$ in $\mathbf P_0$ and
\[
\mu_\alpha = \int_\Omega \nu_{\alpha,\omega}\, d\eta_\alpha(\omega)
\]
be the measure given by Theorem~\ref{th.mainA}.
Assume that \ref{as.response}--\ref{as.continuous} hold and the map 
$\alpha\mapsto \bm\nu_\alpha$
is differentiable in the space $\mathbf L_k^0$ at $\alpha_0$, for some $k\ge1$.
 Given a $C^k$  observable $\Phi:\Omega\times  X\to\mathbb R$, define for each~$\alpha\in J$
\[
\psi_\alpha(\omega)=\int_X \Phi(\omega,x)\, d\nu_{\alpha,\omega}(x),
\]
so that $$\int _{\Omega\times X}\Phi\, d\mu_\alpha = \int_\Omega \psi_\alpha\, d\eta_\alpha.$$

\begin{lemma}\label{le.varphicont}
Each $\psi_\alpha:\Omega\to\mathbb R$  is a continuous function; moreover, $\|\psi_\alpha-\psi_{\alpha_0}\|_{C^0}\to 0$, as $\alpha\to\alpha_0$.
\end{lemma}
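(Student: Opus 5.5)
Both claims follow from a splitting argument: we combine the continuity of $\Omega\ni\omega\mapsto\nu_{\alpha,\omega}$ given by \ref{as.continuous} (sections lie in $\mathbf P_0$) with the uniform continuity of $\Phi$ and its fibre derivatives on the compact set $\Omega\times X$. Recall that for $\nu\in\mathcal M_1(X)\subset C^k(X)^*$ and $\phi\in C^k(X)$ we have $|\langle\nu,\phi\rangle|\le\|\nu\|_{k^*}\|\phi\|_{C^k}$. Recall also that $\|\nu\|_{k^*}\le\|\nu\|_{\mathrm{Lip}^*}$, which is bounded by a constant depending on $X$ for probability measures and on differences of probability measures.

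\emph{Continuity of $\psi_\alpha$.} Fix $\alpha$ and $\omega_0\in\Omega$. For $\omega\in\Omega$, we split
\[
\psi_\alpha(\omega)-\psi_\alpha(\omega_0)=\int_X\bigl(\Phi(\omega,\cdot)-\Phi(\omega_0,\cdot)\bigr)\,d\nu_{\alpha,\omega}+\Bigl(\int_X\Phi(\omega_0,\cdot)\,d\nu_{\alpha,\omega}-\int_X\Phi(\omega_0,\cdot)\,d\nu_{\alpha,\omega_0}\Bigr).
\]
The first term is at most $\sup_{x}|\Phi(\omega,x)-\Phi(\omega_0,x)|$, because $\nu_{\alpha,\omega}$ is a probability measure. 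This tends to $0$ as $\omega\to\omega_0$ by uniform continuity of $\Phi$. For the second term, $\Phi(\omega_0,\cdot)$ is $C^k$ and hence Lipschitz on $X$ (which is compact and convex in the application setting; in general we use the Riemannian structure), with Lipschitz constant bounded by $\|\Phi\|_{C^1}$. The term is therefore at most $\|\Phi\|_{C^1}\,W_1(\nu_{\alpha,\omega},\nu_{\alpha,\omega_0})$. This tends to $0$ because $\bm\nu_\alpha\in\mathbf P_0$, i.e.\ continuity in the topology of $\mathcal M_1(X)$ given by the metric $W_1$.

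\emph{Uniform convergence as $\alpha\to\alpha_0$.} For each $\omega$, we write
\[
\psi_\alpha(\omega)-\psi_{\alpha_0}(\omega)=\langle\nu_{\alpha,\omega}-\nu_{\alpha_0,\omega},\Phi(\omega,\cdot)\rangle .
\]
Since $\nu_{\alpha,\omega}-\nu_{\alpha_0,\omega}$ annihilates constants, it lies in $C^k_0(X)^*$. Hence
\[
|\psi_\alpha(\omega)-\psi_{\alpha_0}(\omega)|\le\|\nu_{\alpha,\omega}-\nu_{\alpha_0,\omega}\|_{k^*}\,\|\Phi(\omega,\cdot)\|_{C^k(X)}\le\|\Phi\|_{C^k}\,\|\bm\nu_\alpha-\bm\nu_{\alpha_0}\|_k .
\]
We take the supremum over $\omega$ and apply the second part of \ref{as.continuous} to conclude $\|\psi_\alpha-\psi_{\alpha_0}\|_{C^0}\to0$.

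\emph{Expected obstacle.} The only delicate point is the normalisation of $\|\cdot\|_{k^*}$ against $\|\cdot\|_{C^k}$. We must confirm, using the Appendix conventions, that the fibrewise $C^k$ norm of $\Phi(\omega,\cdot)$ is bounded uniformly in $\omega$ by the global $C^k$ norm of $\Phi$; the bound above uses exactly this.
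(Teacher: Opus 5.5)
Your proof is correct and follows the paper's argument: you use the same splitting for continuity in $\omega$, and the identical fibrewise estimate followed by the second part of \ref{as.continuous} for the uniform convergence. In the splitting you control the second term through $W_1$ and the first through uniform continuity of $\Phi$, whereas the paper uses $\|\cdot\|_{k^*}$ and the bound $\|\Phi\|_{C^k}\,d_\Omega(\omega,\omega')$. The normalisation issue you flag is harmless: since $\nu_{\alpha,\omega}-\nu_{\alpha_0,\omega}\in C^k_0(X)^*$, it only sees the seminorm $|\Phi(\omega,\cdot)|_{C^k}\le|\Phi|_{C^k}$, and in any case the finite constant $\sup_\omega\|\Phi(\omega,\cdot)\|_{C^k}$ would suffice.
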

\begin{proof}

First we show that $\psi_\alpha$ is a continuous function. 
For all $\omega,\omega'\in\Omega$, we have
\begin{align*}
|\psi_\alpha(\omega)-\psi_\alpha(\omega')|&=\left|\int_X\Phi(\omega,\cdot)\, d\nu_{\alpha,\omega}-\int_X \Phi(\omega',\cdot)\, d\nu_{\alpha,\omega}\right|\\
&\le\int_X |\Phi(\omega,\cdot)- \Phi(\omega',\cdot)|\, d\nu_{\alpha,\omega}+\left|\int_X \Phi(\omega' ,\cdot)\, d\nu_{\alpha,\omega}-\int_X \Phi(\omega',\cdot)\, d\nu_{\alpha,\omega'} \right| .
\end{align*}
Let $L=\|\Phi\|_{C^k}$. Since $L$ works as a Lipschitz constant for the map $\omega\mapsto\Phi(\omega,\cdot)$ and each $\nu_{\alpha,\omega}$ is a probability measure, we may write
\begin{align*}
|\psi_\alpha(\omega)-\psi_\alpha(\omega')|
&\le \int_X L\, d_\Omega(\omega,\omega') \, d\nu_{\alpha,\omega}
+ L\left|\int_X \frac{\Phi(\omega',\cdot)}{L}\, d\nu_{\alpha,\omega}
      - \int_X \frac{\Phi(\omega',\cdot)}{L}\, d\nu_{\alpha,\omega'}\right| \\
&= L d_\Omega(\omega,\omega') + L\Big\langle\nu_{\alpha,\omega}-\nu_{\alpha,\omega'},     \frac{\Phi(\omega',\cdot)}{L}\Big\rangle \\
&\le L d_\Omega(\omega,\omega') + L\|\nu_{\alpha,\omega}-\nu_{\alpha,\omega'}\|_{k^*}. 
\end{align*}
Since the first part of~\ref{as.continuous} implies that 
$\bm\nu_{\alpha}$ is a continuous map taking values in $C^k(X)^*$, 
it follows   that $\psi_\alpha$ is continuous.

\smallskip
\noindent
Now we prove that $\psi_\alpha$ converges to $\psi_{\alpha_0}$  in the supremum norm. Given any   $\omega\in\Omega$,  
\begin{align*}
|\psi_\alpha(\omega)-\psi_{\alpha_0}(\omega)|&=\left|\int_X \Phi(\omega,\cdot)\, d\nu_{\alpha,\omega}-\int_X \Phi(\omega,\cdot)\, d\nu_{\alpha_0,\omega}\right| \\
&=L\Big| \Big\langle\nu_{\alpha,\omega}-\nu_{\alpha_0,\omega},     \frac{\Phi(\omega,\cdot)}{L}\Big\rangle\Big|\\
&\le L\|\nu_{\alpha,\omega}-\nu_{\alpha_0,\omega}\|_{k^*}.
\end{align*}
Taking supremum in $\omega\in\Omega$ on both sides and using the second part of assumption~\ref{as.continuous}, the second conclusion follows.
\end{proof}

This lemma will be used to complete the proof of Theorem~\ref{th.mainA}.
By Assumption~\ref{as.response}, there exists ${\dot\rho_{\alpha_0}\in L^1(m)}$ such that
\begin{equation}\label{eq.convB}
\lim_{\alpha\to\alpha_0}
\int_\Omega 
\Big|\frac{\rho_\alpha-\rho_{\alpha_0}}{\alpha-\alpha_0}-\dot\rho_{\alpha_0}\Big|
\, dm
=0.
\end{equation}
Define the signed measure $\dot\eta_{\alpha_0} = \dot\rho_{\alpha_0}\, m$.
We complete the proof of Theorem~\ref{th.mainC} by showing that
\begin{equation}\label{eq:LR-split}
\frac{d}{d\alpha}\Big|_{\alpha_0}\int_{\Omega\times X} \Phi\,d\mu_\alpha
\;=\; 
{\int_\Omega \langle \dot\nu_{\alpha_0,\omega}, \Phi(\omega,\cdot)\rangle   \, d\eta_{{\alpha_0}}(\omega)}
\;+\;
{ \int_\Omega\psi_{\alpha_0}\, d\dot\eta_{\alpha_0}}.
\end{equation}
%
We may write 
\[
\int_{\Omega\times X}\Phi\,d\mu_{\alpha} - \int_{\Omega\times X} \Phi\,d\mu_{\alpha_0}
= \int_\Omega \psi_{\alpha}\, d\eta_{{\alpha}} - \int_\Omega\psi_{\alpha_0}\, d\eta_{\alpha_0}
= A_\alpha + B_\alpha,
\]
with
\[
A_\alpha = \int_\Omega\big(\psi_{\alpha}-\psi_{\alpha_0}\big)\, d\eta_{ {\alpha_0}}
\qand
B_\alpha = \int_\Omega \psi_{\alpha}\, d\big(\eta_{{\alpha}}-\eta_{\alpha_0}\big).
\]
We first show that
\begin{equation}\label{eq.limA}
\lim_{\alpha\to \alpha_0}\frac{A_\alpha}{\alpha-\alpha_0} 
= {\int_\Omega \langle \dot\nu_{\alpha_0,\omega}, \Phi(\omega,\cdot)\rangle   \, d\eta_{{\alpha_0}}(\omega)}.
\end{equation}
We have
\[
\frac{A_\alpha}{{\alpha-\alpha_0}}=\int_\Omega\frac{\psi_{\alpha}(\omega)-\psi_{\alpha_0}(\omega)}{{\alpha-\alpha_0}}\, d\eta_{ {\alpha_0}}(\omega)
= \int_\Omega\int_X \Phi(\omega,\cdot) \,d \left(\frac{\nu_{\alpha,\omega}- \nu_{\alpha_0,\omega}}{{\alpha-\alpha_0}}\right)   d\eta_{{\alpha_0}}(\omega).
\]
Letting $C=\|\Phi\|_{C^k}$, we may write for all $\omega\in\Omega$,
\begin{align*}
\left|\frac{A_\alpha}{{\alpha-\alpha_0}} 
- \int_\Omega \langle \dot\nu_{\alpha_0,\omega}, \Phi(\omega,\cdot)\rangle   d\eta_{{\alpha_0}}(\omega)\right|
&=C\left|
\int_\Omega\Big\langle \frac{\nu_{\alpha,\omega}- \nu_{\alpha_0,\omega}}{{\alpha-\alpha_0}}-\dot\nu_{\alpha_0,\omega}\,, \frac{\Phi(\omega,\cdot)}C \Big\rangle    d\eta_{{\alpha_0}}(\omega)\right|\\
&\le  C \int_\Omega\Big\| \frac{\nu_{\alpha,\omega}- \nu_{\alpha_0,\omega}}{{\alpha-\alpha_0}}-\dot \nu_{\alpha_0,\omega}\Big\|_{k^*} \, d\eta_{{\alpha_0}}(\omega)
\\
&\le C\Big\| \frac{\bm\nu_{\alpha }- \bm\nu_{\alpha_0 }}{{\alpha-\alpha_0}}- \bm{\dot\nu}_{\alpha_0}\Big\|_{\mathbf L_k}. 
\end{align*}
Since we assume that the map 
$
\alpha \mapsto \bm{\nu}_\alpha
$
is differentiable in $\mathbf L_k^0$ at $\alpha_0$, the latter expression converges to zero as $\alpha \to \alpha_0$, and hence~\eqref{eq.limA} holds.
It remains to show that
\begin{equation}\label{eq.limB}
\lim_{\alpha \to \alpha_0}
\frac{B_{\alpha}}{\alpha-\alpha_0}
=
\int_\Omega \psi_{\alpha_0}\, d\dot\eta_{\alpha_0}.
\end{equation}
We have
\[
 \frac{B_{\alpha}}{{\alpha-\alpha_0}}
= \frac{1}{{\alpha-\alpha_0}}\int_\Omega \psi_{\alpha}\, d\big(\eta_{{\alpha}}-\eta_{\alpha_0}\big)= 
 \frac{1}{{\alpha-\alpha_0}}\int_\Omega \psi_{\alpha} \big(\rho_{{\alpha}}-\rho_{\alpha_0}\big)dm,
\]
and so
\begin{align*}
 \frac{B_{\alpha}}{{\alpha-\alpha_0}}- { \int_\Omega \psi_{\alpha_0}\, d\dot\eta_{\alpha_0}}
&=  
 \int_\Omega \psi_{\alpha} \frac{\rho_{{\alpha}}-\rho_{\alpha_0}}{\alpha-\alpha_0}dm- { \int_\Omega \psi_{\alpha_0} \dot\rho_{\alpha_0}} dm\\
 &=  
 \int_\Omega \psi_{\alpha} \frac{\rho_{{\alpha}}-\rho_{\alpha_0}}{\alpha-\alpha_0}dm- { \int_\Omega \psi_{\alpha } \dot\rho_{\alpha_0}} dm+ { \int_\Omega \psi_{\alpha } \dot\rho_{\alpha_0}} dm- { \int_\Omega \psi_{\alpha_0} \dot\rho_{\alpha_0}} dm\\
 &=  
 \int_\Omega \psi_{\alpha}\left( \frac{\rho_{{\alpha}}-\rho_{\alpha_0}}{\alpha-\alpha_0}-\dot\rho_{\alpha_0}\right)dm +  \int_\Omega\left(\psi_{\alpha }-\psi_{\alpha_0} \right) d\dot\eta_{\alpha_0}\\
 &\le  
\| \psi_{\alpha}\|_{C^0}\int_\Omega \Big|\frac{\rho_\alpha-\rho_{\alpha_0}}{\alpha-\alpha_0}-\dot\rho_{\alpha_0}\Big| dm +  \|\psi_{\alpha }-\psi_{\alpha_0} \|_{C^0} |\dot\eta_{\alpha_0}|(\Omega),
\end{align*}
where $|\dot\eta_{\alpha_0}|$ denotes the  variation of the signed measure $\dot\eta_{\alpha_0}$. Using~\eqref{eq.convB} and Lemma~\ref{le.varphicont}, we obtain~\eqref{eq.limB}.


\section{Base maps with full  branches}\label{se.fulbranch}

In this section we prove Theorem~\ref{th.mainD} and Corollary~\ref{th.piecewise2}.  
As observed above, to prove Theorem~\ref{th.mainD} it suffices to establish~\ref{as.space}, and to deduce Corollary~\ref{th.piecewise2} it suffices to establish~\ref{as.continuous}, both for the case $k=3$.  
These conditions  are verified  in the next four subsections. More precisely:

\ref{as.spacea} $\longrightarrow$ Lemma~\ref{le.contre0};

\ref{as.spaceb}  $\longrightarrow$ Corollary~\ref{co.KE0};

\ref{as.spacec}  $\longrightarrow$ Corollary~\ref{co.contK};

\ref{as.spaced}  $\longrightarrow$ Corollary~\ref{co.diffK};

\ref{as.continuous} $\longrightarrow$  Corollary~\ref{co.KP0} and Corollary~\ref{co.contfp}.

 \subsection{Contraction}\label{sub.contraction}
Here we establish the contraction property~\ref{as.spacea} for the operator $\mathcal K_\alpha$ acting on the space $\mathbf{E}_3^0$. Incidentally, this is the only place where using the space $\mathbf{E}_3^0$ yields a better conclusion than using $\mathbf{E}_3$. We begin with a standard  lemma, whose second part will only be used in the next subsection.

\begin{lemma} 
\label{lem:push-ck}
Assume that $h: X \to X$ is a $C^k$ function, for some $k\ge 1$, and define $L_j = \|D^j h\|_{C^0}$ for $j=1, \dots, k$. Let 
\[
\mathcal{C}_k(h) = \max_{1 \le s \le k} \left\{ \sum_{m=s}^k {B}_{m,s}(L_1, \dots, L_{m-s+1}) \right\}
\]
where ${B}_{m,s}$ are the partial Bell polynomials. Then
\begin{enumerate}
    \item\label{it.lem_part1} for any $\nu \in C^k_0(X)^*$, we have $\|h_*\nu\|_{k^*} \le \mathcal{C}_k(h) \|\nu\|_{k^*}$;
    \item\label{it.lem_part2} for any $\nu \in C^k(X)^*$, we have $\|h_*\nu\|_{k^*} \le \max\{1, \diam(X) + \mathcal{C}_k(h)\} \|\nu\|_{k^*}$.
\end{enumerate}
\end{lemma}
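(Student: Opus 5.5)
The plan is to argue entirely by duality. By \eqref{eq.push}, $\langle h_*\nu,\phi\rangle=\langle \nu,\phi\circ h\rangle$ for every test function $\phi\in C^k(X)$. Hence
\[
\|h_*\nu\|_{k^*}=\sup_{\|\phi\|_{C^k}\le 1}|\langle\nu,\phi\circ h\rangle|,
\]
and everything reduces to bounding $\phi\circ h$ in a suitable norm. I take $\|\phi\|_{C^k}=\max_{0\le j\le k}\|D^j\phi\|_{C^0}$ as in Appendix~\ref{se.appendix}; if the appendix uses a sum of the $\|D^j\phi\|_{C^0}$ instead, the same argument goes through with the obvious changes.

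\textbf{Derivatives of the composition.} The key computation is the multivariate Fa\`a di Bruno formula. For $1\le m\le k$,
\[
D^m(\phi\circ h)(x)=\sum_{s=1}^m D^s\phi(h(x))\cdot\big[\text{multilinear terms in }D^{1}h,\dots,D^{m-s+1}h\big],
\]
and taking operator norms gives
\[
\|D^m(\phi\circ h)\|_{C^0}\le \sum_{s=1}^m \|D^s\phi\|_{C^0}\,B_{m,s}(L_1,\dots,L_{m-s+1}).
\]
Next I bound each $\|D^s\phi\|_{C^0}$ by $\|\phi\|_{C^k}$ (or keep them separate and exchange the order of summation, $\sum_{m}\sum_{s\le m}=\sum_s\sum_{m\ge s}$). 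This yields a bound on the derivative part of $\phi\circ h$ by $\mathcal C_k(h)\|\phi\|_{C^k}$, which is exactly why the maximum over $s$ of the column sums $\sum_{m\ge s}B_{m,s}$ appears.

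\textbf{Part (1).} Since $\nu$ vanishes on constants, $\langle\nu,\phi\circ h\rangle=\langle\nu,\phi\circ h-c\rangle$ for any constant $c$. The $C^0_0$-type quantity relevant for $C^k_0(X)^*$ involves only derivatives: either the dual norm on $C^k_0(X)^*$ is defined through a seminorm that ignores the $C^0$ part, or one subtracts $c=\phi(h(x_0))$. The derivative bound above then gives $|\langle\nu,\phi\circ h\rangle|\le\mathcal C_k(h)\|\nu\|_{k^*}$.

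\textbf{Part (2).} Here constants cannot be removed, so the $C^0$ term must also be controlled. I would split $\phi\circ h=(\phi\circ h-\phi(h(x_0)))+\phi(h(x_0))$. The constant piece is bounded by $|\phi(h(x_0))|\le\|\phi\|_{C^0}\le 1$. For the oscillating piece, convexity of $X$ and the mean value theorem give
\[
\|\phi\circ h-\phi(h(x_0))\|_{C^0}\le \diam(X)\,\|D(\phi\circ h)\|_{C^0}.
\]
Together with the derivative bounds this produces the factor $\max\{1,\diam(X)+\mathcal C_k(h)\}$.

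\textbf{Main obstacle.} The step I expect to need most care is the bookkeeping in part (2). One has to check that combining the constant part (norm at most $1$) with the remainder (whose $C^0$ norm is at most $\diam(X)L_1$ and whose derivatives are controlled by $\mathcal C_k$) fits the stated maximum rather than a sum. This depends on the precise definition of $\|\cdot\|_{C^k}$ in the appendix; I would first try the max-norm convention, for which the bound follows directly after absorbing $L_1\le\mathcal C_k(h)$.
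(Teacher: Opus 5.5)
Your core estimate is the paper's: duality, Fa\`a di Bruno, and exchanging $\sum_{m}\sum_{s\le m}$ into $\sum_{s}\sum_{m\ge s}$ to get $|\phi\circ h|_{C^k}\le \mathcal{C}_k(h)\,|\phi|_{C^k}$. This gives part (1), because for $\nu\in C^k_0(X)^*$ the dual norm is a supremum over $|\phi|_{C^k}\le 1$ (equivalently, subtract $\phi(h(x_0))$ as you suggest). However, the norm convention matters here. The appendix uses $\|\phi\|_{C^k}=|\phi(x_0)|+\sum_{j=1}^k\|D^j\phi\|_{C^0}$. This is a base-point norm with a \emph{summed} seminorm and no sup-norm term, not $\max_{0\le j\le k}\|D^j\phi\|_{C^0}$. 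The column sums in $\mathcal{C}_k(h)$ are tied to the summed seminorm. Under your max convention, bounding each $\|D^s\phi\|_{C^0}$ by $\|\phi\|_{C^k}$ yields the maximal \emph{row} sum $\max_m\sum_{s\le m}B_{m,s}$. For $k=2$ this is $\max\{L_1,L_2+L_1^2\}$, which exceeds $\max\{L_1+L_2,L_1^2\}$ whenever $L_1>1$ and $L_2>0$. So the two conventions are not interchangeable with obvious changes.

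The genuine gap is in part (2). In the paper's norm, $\|\phi\|_{C^k}\le 1$ does not give $\|\phi\|_{C^0}\le 1$. For example, an affine $\phi$ with $\phi(x_0)=0$ and $|D\phi|=1$ has $\|\phi\|_{C^k}=1$ but sup norm up to $\sup_{x\in X}|x-x_0|$. So your step $|\phi(h(x_0))|\le\|\phi\|_{C^0}\le 1$ fails. Moreover, your oscillating piece $\phi\circ h-\phi(h(x_0))$ vanishes at $x_0$, so its norm is just $|\phi\circ h|_{C^k}$, and the bound $\diam(X)\|D(\phi\circ h)\|_{C^0}$ plays no role. The real point is that $\|\phi\circ h\|_{C^k}$ sees $\phi$ at $h(x_0)$, whereas $\|\phi\|_{C^k}$ controls $\phi$ only at $x_0$. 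The missing step is to transport along the segment from $x_0$ to $h(x_0)$, which lies in the convex set $X$: $|\phi(h(x_0))|\le|\phi(x_0)|+\diam(X)\|D\phi\|_{C^0}\le|\phi(x_0)|+\diam(X)|\phi|_{C^k}$. Adding the seminorm bound gives $\|\phi\circ h\|_{C^k}\le|\phi(x_0)|+(\diam(X)+\mathcal{C}_k(h))|\phi|_{C^k}\le\max\{1,\diam(X)+\mathcal{C}_k(h)\}\|\phi\|_{C^k}$. The maximum, rather than a sum, appears because the two coefficients multiply the two complementary pieces $|\phi(x_0)|$ and $|\phi|_{C^k}$ of the norm. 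This is exactly the bookkeeping you flagged and left open; your triangle-inequality split would instead produce a bound of the form $1+(\cdots)$.
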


\begin{proof}
Recall that for $\nu \in C^k_0(X)^*$, the dual norm satisfies $$\|\nu\|_{k^*} = \sup_{|\varphi|_{C^k} \le 1} \langle \nu, \varphi \rangle.$$ By the definition of the push-forward, $\langle h_*\nu, \varphi \rangle = \langle \nu, \varphi \circ h \rangle$. To prove the result, it is sufficient to show that  the composition $\Phi = \varphi \circ h$ satisfies the seminorm bound ${|\Phi|_{C^k} \le \mathcal{C}_k(h) |\varphi|_{C^k}}$.
The $m$-th derivative of the composition $\Phi$ is given by Faà di Bruno's formula:
\[
D^m \Phi(x) = \sum_{s=1}^m (D^s \varphi \circ h)(x) \cdot  {B}_{m,s}\big(Dh(x), D^2h(x), \dots, D^{m-s+1}h(x)\big),
\]
where ${B}_{m,s}$ are the partial Bell polynomials. These polynomials are sums of terms of the form
\[
\frac{m!}{j_1! j_2! \cdots j_{m-s+1}!} \prod_{r=1}^{m-s+1} \left( \frac{D^r h}{r!} \right)^{j_r}
\]
subject to $\sum_{r=1}^{m-s+1} j_r = s$ and $\sum_{r=1}^{m-s+1} r j_r = m$. Taking the supremum norm of $D^m \Phi$, we obtain the bound
\begin{equation}\label{eq.delta_m}
 \|D^m \Phi\|_{C^0}   \le \sum_{s=1}^m \|D^s \varphi\|_{C^0}  \cdot {B}_{m,s}(L_1, L_2, \dots, L_{m-s+1}).
\end{equation}
Summing \eqref{eq.delta_m} over $m=1, \dots, k$ to reconstruct the $C^k$ seminorm, we find
\begin{align*}
|\Phi|_{C^k} = \sum_{m=1}^k \|D^m \Phi\|_{C^0} &\le \sum_{m=1}^k \sum_{s=1}^m \|D^s \varphi\|_{C^0}  \cdot {B}_{m,s}(L_1, \dots, L_{m-s+1}) \\
&= \sum_{s=1}^k \|D^s \varphi\|_{C^0}  \left( \sum_{m=s}^k {B}_{m,s}(L_1, \dots, L_{m-s+1}) \right).
\end{align*}
Therefore, taking
\[
\mathcal{C}_k(h) = \max_{1 \le s \le k} \left\{ \sum_{m=s}^k {B}_{m,s}(L_1, \dots, L_{m-s+1}) \right\},
\]
we have
$$|\Phi|_{C^k} \le \mathcal{C}_k(h) \sum_{s=1}^k \|D^s \varphi\|_{C^0}  = \mathcal{C}_k(h) |\varphi|_{C^k}.$$ It follows that
\[
\|h_*\nu\|_{k^*} = \sup_{|\varphi|_{C^k} \le 1} \langle \nu, \varphi \circ h \rangle \le \sup_{|\varphi|_{C^k} \le 1} \|\nu\|_{k^*} |\varphi \circ h|_{C^k} \le \mathcal{C}_k(h) \|\nu\|_{k^*},
\]
thus concluding the proof of the first item.

\smallskip
\noindent
For the second item, we need the full norm $\|\Phi\|_{C^k} = |\Phi(x_0)| + |\Phi|_{C^k}$. 
By the mean value theorem
\[
|\varphi(h(x_0))| \le |\varphi(x_0)| + |h(x_0) - x_0| \cdot \|D\varphi\|_{C^0} \le |\varphi(x_0)| + \text{diam}(X) |\varphi|_{C^k}.
\]
Combining this with the seminorm bound, we get
\begin{align*}
\|\Phi\|_{C^k} &\le |\varphi(x_0)| + \text{diam}(X) |\varphi|_{C^k} + \mathcal{C}_k(h) |\varphi|_{C^k} \\
&= |\varphi(x_0)| + (\text{diam}(X) + \mathcal{C}_k(h)) |\varphi|_{C^k} \\
&\le \max\{1, \text{diam}(X) + \mathcal{C}_k(h)\} (|\varphi(x_0)| + |\varphi|_{C^k}).
\end{align*}
Consequently, for any $\nu \in C^k(X)^*$:
\[
\|h_*\nu\|_{k^*} = \sup_{\|\varphi\|_{C^k} \le 1} \langle \nu, \varphi \circ h \rangle \le \max\{1, \text{diam}(X) + \mathcal{C}_k(h)\} \|\nu\|_{k^*},
\]
which concludes the proof.
\end{proof}

\begin{remark}\label{re.bell}
The constant $\mathcal{C}_k(h)$ can be explicitly computed for low orders by evaluating the partial Bell polynomials and summing the coefficients corresponding to each derivative order $\|D^s \varphi\|_{C^0} $. Note that the arguments of ${B}_{m,s}$ are $(L_1, \dots, L_{m-s+1})$.
\begin{itemize}
    \item  {Case $k=1$:} Only $m=1, s=1$ exists and 
    ${B}_{1,1}(L_1) = L_1$. 
    
   Hence, $\mathcal{C}_1(h) = L_1$.
    
    \item  {Case $k=2$:} We collect coefficients for $s=1$ and $s=2$:
    \begin{itemize}
        \item For $s=1$: ${B}_{1,1}(L_1) + {B}_{2,1}(L_1, L_2) = L_1 + L_2$.
        \item For $s=2$: ${B}_{2,2}(L_1) = L_1^2$.
    \end{itemize}
    Hence, $\mathcal{C}_2(h) = \max\{ L_1 + L_2, \, L_1^2 \}$.

    \item  {Case $k=3$:} We evaluate the sums for $s=1, 2, 3$:
    \begin{itemize}
        \item For $s=1$: ${B}_{1,1}(L_1) + {B}_{2,1}(L_1, L_2) + {B}_{3,1}(L_1, L_2, L_3) = L_1 + L_2 + L_3$.
        \item For $s=2$: ${B}_{2,2}(L_1) + {B}_{3,2}(L_1, L_2) = L_1^2 + 3L_1L_2$.
        \item For $s=3$: ${B}_{3,3}(L_1) = L_1^3$.
    \end{itemize}
    Hence, $\mathcal{C}_3(h) = \max\{ L_1 + L_2 + L_3, \, L_1^2 + 3L_1L_2, \, L_1^3 \}$.
\end{itemize}
The value of $\mathcal C_3(h)$ motivates our choice of the constant $\lambda$ in~\ref{as.gb}. Indeed, since we assume in particular that $L_1 + L_2 + L_3 < 1$, it follows that $L_1 < 1$, and therefore the third term in the expression for $\mathcal C_3(h)$ can be omitted.
\end{remark}

 For the next result,  $0 < \lambda < 1$ is the constant specified in~\ref{as.gb}.

\begin{lemma}\label{le.contre0}
For every  $\bm\nu \in \mathbf{E}_{3}^0$ we have
$$\|\mathcal K_\alpha \bm\nu\|_{3}\le \lambda \| \bm\nu\|_{3}.$$
\end{lemma}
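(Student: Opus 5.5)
The plan is to work fibrewise, using the explicit representation \eqref{eq.kalpha2} of $\mathcal K_\alpha$ together with the reverse Markov kernel property \eqref{eq.RMK}. Fix $\bm\nu\in\mathbf{E}_3^0$ and $\omega\in\Omega$. By \eqref{eq.kalpha2},
\[
(\mathcal K_\alpha\bm\nu)_\omega=\sum_{i\in\mathbb I}p_{\alpha,i}(\omega)\,\bigl(g_{\alpha,\theta_{\alpha,i}(\omega)}\bigr)_*\nu_{\theta_{\alpha,i}(\omega)}.
\]
The weights $p_{\alpha,i}(\omega)$ are nonnegative, since they are ratios of densities and Jacobians. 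The series converges absolutely in $C^3(X)^*$ by boundedness of $\bm\nu$ and \eqref{eq.RMK}. Hence the triangle inequality in $C^3(X)^*$ gives
\[
\|(\mathcal K_\alpha\bm\nu)_\omega\|_{3^*}\le\sum_{i\in\mathbb I}p_{\alpha,i}(\omega)\,\bigl\|\bigl(g_{\alpha,\theta_{\alpha,i}(\omega)}\bigr)_*\nu_{\theta_{\alpha,i}(\omega)}\bigr\|_{3^*}.
\]

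Next, I would observe that each $\nu_{\theta}$ lies in $C^3_0(X)^*$, i.e. vanishes on constants. The push-forward of such a functional by $h=g_{\alpha,\theta_{\alpha,i}(\omega)}$ still vanishes on constants, since $c\circ h=c$. This is exactly what allows Lemma~\ref{lem:push-ck}\eqref{it.lem_part1}, rather than the weaker second item, to be applied. That lemma gives
\[
\|h_*\nu_\theta\|_{3^*}\le\mathcal C_3(h)\,\|\nu_\theta\|_{3^*}.
\]
By Remark~\ref{re.bell}, $\mathcal C_3(h)=\max\{L_1+L_2+L_3,\,L_1^2+3L_1L_2,\,L_1^3\}$, where $L_j\le L_j(\alpha,i)$ because the sup in \ref{as.gb} is taken over all $\omega$ and $x$. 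By \ref{as.gb} the first two terms are at most $\lambda$. Moreover $L_1<1$, so $L_1^3\le L_1\le\lambda$. Thus $\mathcal C_3(h)\le\lambda$ uniformly in $i$, $\omega$ and $\alpha$.

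Combining these bounds with $\|\nu_{\theta_{\alpha,i}(\omega)}\|_{3^*}\le\|\bm\nu\|_3$ and $\sum_ip_{\alpha,i}(\omega)=1$ yields $\|(\mathcal K_\alpha\bm\nu)_\omega\|_{3^*}\le\lambda\|\bm\nu\|_3$. Taking the supremum over $\omega$ gives the claim.

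The only delicate point is the norm convention. The bound in Lemma~\ref{lem:push-ck}\eqref{it.lem_part1} is stated with the supremum over $|\varphi|_{C^k}\le1$ (the seminorm). On $C^3_0(X)^*$ this agrees with the dual norm $\|\cdot\|_{3^*}$, since adding a constant to $\varphi$ does not change the pairing. So I would briefly check via the appendix conventions that $\|\nu\|_{3^*}$ coincides with the seminorm-dual quantity for functionals vanishing on constants; this is where restricting to $\mathbf{E}_3^0$ rather than $\mathbf{E}_3$ is essential. Continuity of $\mathcal K_\alpha\bm\nu$ in $\omega$ is not needed for the norm inequality itself, which is a pointwise estimate.
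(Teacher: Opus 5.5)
Your proposal is correct and follows essentially the same route as the paper's proof: a fibrewise triangle inequality on \eqref{eq.kalpha2}, then the bound $\mathcal C_3(g_{\alpha,\theta_{\alpha,i}(\omega)})\le\lambda$ from Lemma~\ref{lem:push-ck}\eqref{it.lem_part1}, Remark~\ref{re.bell} and \ref{as.gb}, and finally $\sum_i p_{\alpha,i}(\omega)=1$ before taking the supremum over $\omega$. Your extra checks correspond to points the paper also handles: discarding the $L_1^3$ term via $L_1<1$ is in Remark~\ref{re.bell}, and identifying $\|\cdot\|_{3^*}$ on $C^3_0(X)^*$ with the seminorm-dual quantity is \eqref{eq.normckdual0}.
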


\begin{proof}
Let $\bm{\nu} \in \mathbf{E}_3^0$. By definition, for every $\omega \in \Omega$, the functional $\nu_\omega$ belongs to $C^3_0(X)^*$, which implies that $\langle \nu_\omega, 1 \rangle = 0$. The norm on the Banach space $\mathbf{E}_3^0$ is given by
\[
\|\bm{\nu}\|_3 = \sup_{\omega \in \Omega} \|\nu_\omega\|_{3^*}.
\]
For a fixed $\omega \in \Omega$, the transfer operator $\mathcal{K}_\alpha$ acts on the section $\bm{\nu}$ as
\[
(\mathcal{K}_\alpha \bm{\nu})_\omega = \sum_{i \in \mathbb{I}} p_{\alpha,i}(\omega) (g_{\alpha, \theta_{\alpha,i}(\omega)})_* \nu_{\theta_{\alpha,i}(\omega)}.
\]
Applying the dual norm $\|\cdot\|_{3^*}$ to both sides and using the triangle inequality, we obtain:
\begin{equation}\label{eq.triang3}
\|(\mathcal{K}_\alpha \bm{\nu})_\omega\|_{3^*} \le \sum_{i \in \mathbb{I}} p_{\alpha,i}(\omega) \|(g_{\alpha, \theta_{\alpha,i}(\omega)})_* \nu_{\theta_{\alpha,i}(\omega)}\|_{3^*}.
\end{equation}
For each $i \in \mathbb{I}$, let $h_{i,\omega} = g_{\alpha, \theta_{\alpha,i}(\omega)}$. Since $\nu_{\theta_{\alpha,i}(\omega)} \in C^3_0(X)^*$, we apply the  push-forward estimate in   Lemma~\ref{lem:push-ck} for $k=3$. This yields
\[
\|(h_{i,\omega})_* \nu_{\theta_{\alpha,i}(\omega)}\|_{3^*} \le \mathcal{C}_3(h_{i,\omega}) \|\nu_{\theta_{\alpha,i}(\omega)}\|_{3^*},
\]
where the constant $\mathcal{C}_3(h_{i,\omega})$ is determined by the derivative norms $L_j = \|D^j g_{\alpha, \theta_{\alpha,i}(\omega)}\|_{C^0}$ as 
\[
\mathcal{C}_3(h_{i,\omega}) = \max\{ L_1 + L_2 + L_3, \, L_1^2 + 3L_1L_2, \, L_1^3 \};
\]
recall Remark~\ref{re.bell}. By in~\ref{as.gb}, $\lambda$ is a uniform upper bound for $\mathcal{C}_3(h_{i,\omega})$ over all $i \in \mathbb{I}$ and $\omega \in \Omega$. Specifically, we assume $\sup_{\alpha, i, \omega} \mathcal{C}_3(g_{\alpha, \theta_{\alpha,i}(\omega)}) \le \lambda < 1$. Substituting this bound into \eqref{eq.triang3}, we get
\begin{align*}
\|(\mathcal{K}_\alpha \bm{\nu})_\omega\|_{3^*} &\le \sum_{i \in \mathbb{I}} p_{\alpha,i}(\omega) \lambda \|\nu_{\theta_{\alpha,i}(\omega)}\|_{3^*} \\
&\le \lambda \left( \sum_{i \in \mathbb{I}} p_{\alpha,i}(\omega) \right) \sup_{\omega' \in \Omega} \|\nu_{\omega'}\|_{3^*} \\
&= \lambda \|\bm{\nu}\|_3,
\end{align*}
where we used the normalization condition $\sum_{i \in \mathbb{I}} p_{\alpha,i}(\omega) = 1$ for all $\omega \in \Omega$. Taking the supremum over $\omega \in \Omega$ on the left-hand side, we conclude the proof.
\end{proof}

\subsection{Invariance}\label{sub.invariance}
With the application to the intermittent solenoid in Subsection~\ref{sub.solenoid} in mind, we emphasise that the arguments used to establish~\ref{as.spaceb} and~\ref{as.continuous} rely solely on assumptions~\ref{as.full}--\ref{as.theta}, together with the following weakened form of assumption~\ref{as.p}.

\begin{assumpBprime}
\item \label{as.p'} \em For each $i\in\mathbb I$,  the map $  \alpha \mapsto   p_{\alpha,i}\in C^0(  \Omega,\mathbb R)$ is   continuous,   and there exists $C'_p>0$ such that 
$$ 
\sum_{i\in\mathbb I} \sup_{\alpha\in J}\|  p_{\alpha,i} \|_{C^0}\le C'_p.
$$
 \end{assumpBprime}
 
 Define for each section $\bm\nu$ and $\omega\in\Omega$
 \begin{equation}\label{eq.termsk}
\mathcal (\mathcal K_{\alpha,i}\bm\nu)_\omega=p_{\alpha,i}(\omega) (g_{\alpha, \theta_{\alpha,i}(\omega)})_* \nu_{\theta_{\alpha,i}(\omega)},
\end{equation}
so that by~\eqref{eq.kalpha2} we have
\begin{equation}\label{eq.kalpha3}
\mathcal K_\alpha 
=
\sum_{i \in \mathbb I}\mathcal K_{\alpha,i}.
\end{equation}

 \begin{lemma}\label{le.K_F0}
Assume that ~\ref{as.full}--\ref{as.theta} and \ref{as.p'} hold. Then,  the operator $\mathcal K_\alpha$ maps $\mathbf{E}_{\mathrm L}$ into itself.
\end{lemma}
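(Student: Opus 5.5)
The plan is to show that for $\bm\nu\in\mathbf E_{\mathrm L}$ the section $\omega\mapsto(\mathcal K_\alpha\bm\nu)_\omega$ is well defined with values in $\lip(X)^*$, bounded in sup norm, and continuous in $\omega$. Everything rests on the decomposition \eqref{eq.kalpha3}, $\mathcal K_\alpha=\sum_i\mathcal K_{\alpha,i}$.

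\emph{Boundedness.} For each $\theta$, $g_{\alpha,\theta}$ is $\lambda$-Lipschitz, by \ref{as.contraction} (implied by \ref{as.gb}). As in Lemma~\ref{lem:push-contract}, push-forward by a map of Lipschitz constant at most $1$ is then bounded on $\lip(X)^*$. If the Lipschitz norm has a constant part $\|\varphi\|_{C^0}$, then $\|\varphi\circ h\|_{\lip}\le\|\varphi\|_{\lip}$ directly. If it uses $|\varphi(x_0)|$, one argues as in Lemma~\ref{lem:push-ck}\eqref{it.lem_part2} and gets a constant $\max\{1,\diam(X)+\lambda\}$. Either way there is a uniform constant $C_X$ with $\|(g_{\alpha,\theta})_*\nu\|_{\lip^*}\le C_X\|\nu\|_{\lip^*}$. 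Hence
\[
\|(\mathcal K_{\alpha,i}\bm\nu)_\omega\|_{\lip^*}\le C_X\|p_{\alpha,i}\|_{C^0}\|\bm\nu\|_{\mathrm L}.
\]
Summing over $i$ and using \ref{as.p'}, the series converges absolutely and uniformly in $\omega$, with total bound $C_XC'_p\|\bm\nu\|_{\mathrm L}$.

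\emph{Continuity of each term.} I will show $\omega\mapsto(\mathcal K_{\alpha,i}\bm\nu)_\omega$ is continuous into $\lip(X)^*$. Given $\omega,\omega'$, I split the difference into three pieces:
\begin{itemize}
\item the change in the weight, $(p_{\alpha,i}(\omega)-p_{\alpha,i}(\omega'))$ times a bounded push-forward;
\item $p_{\alpha,i}(\omega')\,(g_{\alpha,\theta_{\alpha,i}(\omega')})_*\bigl(\nu_{\theta_{\alpha,i}(\omega)}-\nu_{\theta_{\alpha,i}(\omega')}\bigr)$, controlled by $C_X$ and continuity of $\bm\nu\circ\theta_{\alpha,i}$, using $\theta_{\alpha,i}\in C^1$ from \ref{as.full};
\item $p_{\alpha,i}(\omega')\bigl((g_{\alpha,\theta_{\alpha,i}(\omega)})_*-(g_{\alpha,\theta_{\alpha,i}(\omega')})_*\bigr)\nu_{\theta_{\alpha,i}(\omega)}$.
\end{itemize}

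\emph{Main obstacle: the third piece.} A functional in $\lip(X)^*$ is not a measure, so I cannot couple the two push-forwards. Instead I estimate it by duality:
\[
\bigl|\langle\nu,\varphi\circ h-\varphi\circ h'\rangle\bigr|\le\|\nu\|_{\lip^*}\,\|\varphi\circ h-\varphi\circ h'\|_{\lip}.
\]
The sup part of the right-hand norm is at most $\lip(\varphi)\,\|h-h'\|_{C^0}$. The Lipschitz seminorm of $\varphi\circ h-\varphi\circ h'$ does \emph{not} tend to zero for merely Lipschitz $\varphi$, which is a genuine difficulty.

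\emph{Way around it.} I will use that $\mathbf E_{\mathrm L}$ consists of continuous sections. The first and second pieces are fine as they stand. For the third, I exploit that $\bm\nu$ is continuous on a compact set, so its image is compact in $\lip(X)^*$, and approximate it uniformly by finite combinations. If needed, I would retreat to showing the statement the paper actually needs: continuity tested against the relevant norm for elements that are measures, or differences of measures, in $\lip_0(X)^*$, where the Kantorovich–Rubinstein bound
\[
W_1(h_*\mu,h'_*\mu)\le\|h-h'\|_{C^0}
\]
applies. The hope is that the continuous dependence $\omega\mapsto g_{\alpha,\theta_{\alpha,i}(\omega)}$ in $C^1$, which follows from \ref{as.g}, suffices in the genuine-functional case as well, via Lipschitz bounds in the space variable for difference quotients.

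\emph{Conclusion.} Once each term is continuous, the uniform convergence from the boundedness step makes the sum continuous, so $\mathcal K_\alpha\bm\nu\in\mathbf E_{\mathrm L}$.
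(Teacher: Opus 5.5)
You are right that the third piece is where the argument breaks. However, neither of your workarounds closes the gap, and no argument can. With $\mathbf E_{\mathrm L}$ meaning norm-continuous $\lip(X)^*$-valued sections, the lemma is false. Passing to a finite net of the compact image of $\bm\nu$ only reduces matters to finitely many fixed functionals. Continuity of $\omega\mapsto(g_{\alpha,\theta_{\alpha,i}(\omega)})_*\nu$ already fails for a single fixed $\nu$, so your closing ``hope'' is false.

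Here is a counterexample. Take $\Omega=[0,1]$ with the doubling map, so $\rho\equiv1$, $\theta_1(\omega)=\omega/2$, $\theta_2(\omega)=(\omega+1)/2$ and $p_1=p_2=\tfrac12$. Take $X=[-1,1]$ and $g_\theta(x)=\tfrac14x+\tfrac14\theta$; then \ref{as.full}--\ref{as.p} all hold. Let $\nu$ be a weak$^*$ cluster point in $\lip(X)^*$ of $\epsilon^{-1}(\delta_\epsilon-\delta_0)$ as $\epsilon\to0^+$. Then $\|\nu\|_{\lip^*}\le1$, $\langle\nu,1\rangle=0$, and $\langle\nu,\psi\rangle=\psi'(0)$ for every $\psi\in C^1(X)$. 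The constant section $\nu_\omega\equiv\nu$ belongs to $\mathbf E^0_{\mathrm L}$, yet for $\phi\in C^1(X)$
\[
\langle(\mathcal K_\alpha\bm\nu)_\omega,\phi\rangle=\tfrac18\bigl(\phi'(\omega/8)+\phi'((\omega+1)/8)\bigr).
\]
For $0<|\omega-\omega'|<1$ the four points $\omega/8$, $(\omega+1)/8$, $\omega'/8$, $(\omega'+1)/8$ are distinct. Choose a $C^1$ function with $\phi(x_0)=0$, $|\phi'|\le1$, $\phi'=1$ at the first two points and $\phi'=-1$ at the last two. This gives $\|(\mathcal K_\alpha\bm\nu)_\omega-(\mathcal K_\alpha\bm\nu)_{\omega'}\|_{\lip^*}\ge\tfrac12$. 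Hence $\mathcal K_\alpha\bm\nu\notin\mathbf E_{\mathrm L}$, and Corollary~\ref{co.KE0} fails as well.

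The paper's proof gets past this step only by bounding $|\langle\nu_{\theta_{\alpha,i}(\omega')},\phi\circ g_{\alpha,\theta_{\alpha,i}(\omega)}-\phi\circ g_{\alpha,\theta_{\alpha,i}(\omega')}\rangle|$ by $\|\nu_{\theta_{\alpha,i}(\omega')}\|_{\lip^*}$ times the $C^0$ norm of the difference. That bound is valid for probability measures but not for general elements of $\lip(X)^*$, which is exactly your objection.

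What the argument does establish is your fallback. For sections of probability measures, or of signed measures with bounded total variation, the third piece is at most $|\nu|(X)\,\lip(\phi)\,\|h-h'\|_{C^0}$. This gives the $\mathbf P_0$-invariance of Corollary~\ref{co.KP0}.

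Your approximation idea also works when the values lie in the $\lip^*$-closure of finite signed measures. Then $\|h_*\nu-h'_*\nu\|_{\lip^*}\le2C_X\|\nu-\mu\|_{\lip^*}+|\mu|(X)\|h-h'\|_{C^0}$ for a finite signed measure $\mu$, uniformly on the compact image. But $\lip(X)^*$ is strictly larger than that closure, and the $\nu$ above necessarily lies outside it.

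For the later use in Theorem~\ref{th.mainB}, the natural repair is to require only a uniform $\lip^*$ bound together with continuity into $C^2(X)^*$. Lemma~\ref{le.salvation} then controls the third piece by a constant times $\|\nu\|_{\lip^*}\|h-h'\|_{C^1}$.
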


\begin{proof}
Considering the sum in~\eqref{eq.kalpha3}, we first show that each $\mathcal K_{\alpha,i}$ maps $\mathbf{E}_{\mathrm L}$ into itself. 
Take an arbitrary $\bm\nu \in \mathbf{E}_{\mathrm L}$. 
We first show that $(\mathcal K_{\alpha,i}\bm\nu)_\omega \in\mathrm{Lip}(X)^*$, for all $\omega\in\Omega$. 
By assumption~\ref{as.g}, each $g_{\alpha,\theta_{\alpha,i}(\omega)}$ is a $C^3$ map of $X$, so in particular it is a Lipschitz map. By definition, we have for each $\phi\in\lip(X)$
\[
\bigl\langle (g_{\alpha,\theta_{\alpha,i}(\omega)})_* \nu_{\theta_{\alpha,i}(\omega)}, \phi \bigr\rangle
= \langle  \nu_{\theta_{\alpha,i}(\omega)}, \phi \circ g_{\alpha,\theta_{\alpha,i}(\omega)} \rangle,
\]
which clearly shows that $(g_{\alpha,\theta_{\alpha,i}(\omega)})_* \nu_{\theta_{\alpha,i}(\omega)}$ is an element of $\mathrm{Lip}(X)^*$. 
Since multiplication by the scalar $p_{\alpha,i}(\omega) $ does not affect this fact, we have  $(\mathcal K_{\alpha,i}\bm\nu)_\omega\in\mathrm{Lip}(X)^*$ for all $\omega\in\Omega$.  

Now we show the continuity of the map $\omega \mapsto (\mathcal K_\alpha \bm\nu)_\omega$ in $\mathrm{Lip}(X)^*$.
Given  $\omega, \omega' \in \Omega$, we have
\[
\| (\mathcal K_\alpha \bm\nu)_\omega - (\mathcal K_\alpha \bm\nu)_{\omega'} \|_{\mathrm{Lip}^*}
\le \sum_{i \in \mathbb I} 
\bigl\| p_{\alpha,i}(\omega) \, (g_{\alpha,\theta_{\alpha,i}(\omega)})_* \nu_{\theta_{\alpha,i}(\omega)} 
- p_{\alpha,i}(\omega') \, (g_{\alpha,\theta_{\alpha,i}(\omega')})_* \nu_{\theta_{\alpha,i}(\omega')} \bigr\|_{\mathrm{Lip}^*}.
\]
We can split each term as
\begin{align*}
&\bigl\| p_{\alpha,i}(\omega) (g_{\alpha,\theta_{\alpha,i}(\omega)})_* \nu_{\theta_{\alpha,i}(\omega)}
- p_{\alpha,i}(\omega') (g_{\alpha,\theta_{\alpha,i}(\omega')})_* \nu_{\theta_{\alpha,i}(\omega')} \bigr\|_{\mathrm{Lip}^*} \\
&\quad \le |p_{\alpha,i}(\omega) - p_{\alpha,i}(\omega')| \cdot \| (g_{\alpha,\theta_{\alpha,i}(\omega)})_* \nu_{\theta_{\alpha,i}(\omega)} \|_{\mathrm{Lip}^*} \\
&\quad \quad + |p_{\alpha,i}(\omega')| \cdot \| (g_{\alpha,\theta_{\alpha,i}(\omega)})_* \nu_{\theta_{\alpha,i}(\omega)} - (g_{\alpha,\theta_{\alpha,i}(\omega')})_* \nu_{\theta_{\alpha,i}(\omega')} \|_{\mathrm{Lip}^*}.
\end{align*}
For the first term, we have by~\ref{as.gb} a uniform constant $L>0$ such that for any $\phi\in\lip(X)$ with $\|\phi\|_{\mathrm L}\le1$ we have
\begin{align}\label{eq.lbound}
\| \phi\circ g_{\alpha,\theta_{\alpha,i}(\omega)} \|_{\mathrm L}
&
=|\phi\circ g_{\alpha,\theta_{\alpha,i}(\omega)}(x_0)|+\mathrm{Lip}(\phi \circ g_{\alpha,\theta_{\alpha,i}(\omega)})\nonumber\\
& \le 1+ \sup_{x \in X} \| D g_{\alpha,\theta_{\alpha,i}(\omega)}(x) \| \nonumber\\
&\le 1 + L.
\end{align}
Hence
\begin{align}
\bigl\| (g_{\alpha,\theta_{\alpha,i}(\omega)})_* \nu_{\theta_{\alpha,i}(\omega)} \bigr\|_{\mathrm{Lip}^*} 
&= \sup_{  \|\phi\|_{\mathrm L} \le 1} 
\bigl| \langle \nu_{\theta_{\alpha,i}(\omega)}, \phi \circ g_{\alpha,\theta_{\alpha,i}(\omega)} \rangle \bigr|
 \nonumber \\
 &\le \sup_{  \|\phi\|_{\mathrm L} \le 1} \| \nu_{\theta_{\alpha,i}(\omega)} \|_{\mathrm{Lip}^*}  \| \phi\circ g_{\alpha,\theta_{\alpha,i}(\omega)} \|_{\mathrm L} \nonumber\\
&\le  \| \bm\nu \|_{\mathrm L}(1+L). \label{eq.unifomry}
\end{align}
Hence
\begin{equation*}
|p_{\alpha,i}(\omega) - p_{\alpha,i}(\omega')| \cdot \| (g_{\alpha,\theta_{\alpha,i}(\omega)})_* \nu_{\theta_{\alpha,i}(\omega)} \|_{\mathrm{Lip}^*}\le L\|\bm\nu\|_{\mathrm L} |p_{\alpha,i}(\omega) - p_{\alpha,i}(\omega')| .
\end{equation*}
On the other hand, using~\eqref{eq.lbound} 
\begin{align*}
\bigl\| (g_{\alpha,\theta_{\alpha,i}(\omega)})_* &\nu_{\theta_{\alpha,i}(\omega)} - (g_{\alpha,\theta_{\alpha,i}(\omega')})_* \nu_{\theta_{\alpha,i}(\omega')} \bigr\|_{\mathrm{Lip}^*} \\
&\quad= \sup_{  \|\phi\|_{\mathrm L} \le 1}  
\Bigl| \langle \nu_{\theta_{\alpha,i}(\omega)}, \phi \circ g_{\alpha,\theta_{\alpha,i}(\omega)} \rangle
- \langle \nu_{\theta_{\alpha,i}(\omega')}, \phi \circ g_{\alpha,\theta_{\alpha,i}(\omega')} \rangle \Bigr| \\
&\quad\le\sup_{  \|\phi\|_{\mathrm L} \le 1} 
\Bigl| \langle \nu_{\theta_{\alpha,i}(\omega)} - \nu_{\theta_{\alpha,i}(\omega')}, \phi \circ g_{\alpha,\theta_{\alpha,i}(\omega)} \rangle \Bigr|\\
&\quad\quad\quad\quad+ \sup_{  \|\phi\|_{\mathrm L} \le 1}  
\Bigl| \langle \nu_{\theta_{\alpha,i}(\omega')}, \phi \circ g_{\alpha,\theta_{\alpha,i}(\omega)} - \phi \circ g_{\alpha,\theta_{\alpha,i}(\omega')} \rangle \Bigr| \\
&\quad\le \|\nu_{\theta_{\alpha,i}(\omega)} - \nu_{\theta_{\alpha,i}(\omega')}\|_{\mathrm{Lip}^*} 
\, 
  \sup_{  \|\phi\|_{\mathrm L} \le 1} 
  \| \phi\circ g_{\alpha,\theta_{\alpha,i}(\omega)} \|_{\mathrm L} \\
&\quad\quad\quad\quad + \|\nu_{\theta_{\alpha,i}(\omega')}\|_{\mathrm{Lip}^*} \, 
 { \sup_{  \|\phi\|_{\mathrm L} \le 1}  \|\phi \circ g_{\alpha,\theta_{\alpha,i}(\omega)} - \phi \circ g_{\alpha,\theta_{\alpha,i}(\omega')} \|_{C^0}} \\
&\quad\le (1+L) \|\nu_{\theta_{\alpha,i}(\omega)} - \nu_{\theta_{\alpha,i}(\omega')}\|_{\mathrm{Lip}^*} 
+ \|\bm\nu \|_{\mathrm L} \, \| g_{\alpha,\theta_{\alpha,i}(\omega)} - g_{\alpha,\theta_{\alpha,i}(\omega')} \|_{C^0}.
\end{align*}
Therefore
\begin{align*}
\|  (\mathcal K_\alpha \bm\nu)_\omega  - (\mathcal K_\alpha \bm\nu)_{\omega'} \|_{\mathrm{Lip}^*}
 \le & (1+L) \|\bm\nu \|_{\mathrm L} |p_{\alpha,i}(\omega) - p_{\alpha,i}(\omega')|\\ &+
(1+L) \|\nu_{\theta_{\alpha,i}(\omega)} - \nu_{\theta_{\alpha,i}(\omega')}\|_{\mathrm{Lip}^*} \\
&+ \|\bm\nu \|_{\mathrm L} \, \| g_{\alpha,\theta_{\alpha,i}(\omega)} - g_{\alpha,\theta_{\alpha,i}(\omega')} \|_{C^0}.
\end{align*}
Using assumptions~\ref{as.g},~\ref{as.theta}, and~\ref{as.p'}, together with the uniform continuity of $\bm\nu$, 
we deduce that the three differences above can be made arbitrarily small whenever $\omega$ and $\omega'$ are sufficiently close. 
Hence, the map $\omega \mapsto (\mathcal K_\alpha \bm\nu)_\omega$ is continuous.

Finally, using assumption~\ref{as.p'} together with the uniform bound~\eqref{eq.unifomry}, we deduce that the series~\eqref{eq.kalpha3} defining $\mathcal K_\alpha \bm\nu$ converges uniformly in $\mathbf{E}_{\mathrm L}$. 
By standard results on series in Banach spaces, it then follows that $\mathcal K_\alpha \bm\nu \in \mathbf{E}_{\mathrm L}$.
\end{proof}

In the next result we establish~\ref{as.spaceb}.

  \begin{corollary}\label{co.KE0}
Assume that ~\ref{as.full}--\ref{as.theta} and \ref{as.p'} hold. Then,  $\mathcal K_\alpha$ maps $\mathbf{E}_{\mathrm L}^0$ into itself.
\end{corollary}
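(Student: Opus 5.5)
The plan is to reduce the statement to Lemma~\ref{le.K_F0}. By that lemma, for any $\bm\nu\in\mathbf{E}_{\mathrm L}^0\subset\mathbf{E}_{\mathrm L}$ we already know that $\mathcal K_\alpha\bm\nu\in\mathbf{E}_{\mathrm L}$. In other words, $\omega\mapsto(\mathcal K_\alpha\bm\nu)_\omega$ is a continuous map into $\lip(X)^*$. What remains is to check that each fibre value $(\mathcal K_\alpha\bm\nu)_\omega$ vanishes on constant functions, so that it lies in $\lip_0(X)^*$.

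Fix $\omega\in\Omega$ and write $\mathbf 1$ for the constant function one on $X$. By the definition of the push-forward,
\[
\bigl\langle (g_{\alpha,\theta_{\alpha,i}(\omega)})_*\nu_{\theta_{\alpha,i}(\omega)},\mathbf 1\bigr\rangle
=\bigl\langle \nu_{\theta_{\alpha,i}(\omega)},\mathbf 1\circ g_{\alpha,\theta_{\alpha,i}(\omega)}\bigr\rangle
=\bigl\langle \nu_{\theta_{\alpha,i}(\omega)},\mathbf 1\bigr\rangle=0 .
\]
The last equality holds because $\nu_{\theta}\in\lip_0(X)^*$ for every $\theta$. Consequently $\langle(\mathcal K_{\alpha,i}\bm\nu)_\omega,\mathbf 1\rangle=0$ for each $i\in\mathbb I$, and by linearity the same holds for every constant function.

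To pass to the full operator $\mathcal K_\alpha=\sum_i\mathcal K_{\alpha,i}$ from~\eqref{eq.kalpha3}, I use the convergence already established in the proof of Lemma~\ref{le.K_F0}. That proof shows, via~\ref{as.p'} and the bound~\eqref{eq.unifomry}, that the series converges in $\mathbf{E}_{\mathrm L}$, so for each fixed $\omega$ it converges in norm in $\lip(X)^*$. Evaluation at the constant function is a bounded linear functional on $\lip(X)^*$, since $\|\mathbf 1\|_{\mathrm L}=1$. Hence
\[
\langle(\mathcal K_\alpha\bm\nu)_\omega,\mathbf 1\rangle=\sum_i\langle(\mathcal K_{\alpha,i}\bm\nu)_\omega,\mathbf 1\rangle=0 .
\]

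This last interchange is the only step needing any care, and it is handled by the norm convergence from Lemma~\ref{le.K_F0}. The same argument also shows that $\mathbf{E}_{\mathrm L}^0$ is a closed subspace of $\mathbf{E}_{\mathrm L}$. Combining the continuity from Lemma~\ref{le.K_F0} with the vanishing on constants gives $\mathcal K_\alpha\bm\nu\in\mathbf{E}_{\mathrm L}^0$.
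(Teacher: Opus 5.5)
Your proof is correct and follows the paper's route: membership in $\mathbf{E}_{\mathrm L}$ (continuity in $\omega$) comes from Lemma~\ref{le.K_F0}, and vanishing on constants comes from the fact that push-forwards preserve $\lip_0(X)^*$. Your extra step, passing evaluation at $\mathbf 1$ through the series $\sum_i \mathcal K_{\alpha,i}$ using norm convergence in $\lip(X)^*$, is a detail the paper leaves implicit.
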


\begin{proof}
This is a straightforward consequence of Lemma~\ref{le.K_F0}, together with the observation that push-forwards preserve functionals in $\mathrm{Lip}_0(X)^*$, 
that is, those vanishing on constant functions.
\end{proof}

In the net result we obtain  the first part
of~\ref{as.continuous}. 

  \begin{corollary}\label{co.KP0}
Assume that ~\ref{as.full}--\ref{as.theta} and \ref{as.p'} hold. Then,  the fixed  point of $\mathcal K_\alpha$ in~$\mathbf P$ belongs in  $\mathbf P_0$.
\end{corollary}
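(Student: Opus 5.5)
The plan is to show that $\mathbf P_0$ is a closed, $\mathcal K_\alpha$-invariant subset of the complete metric space $(\mathbf P,d_{\mathrm P})$. The unique fixed point given by Corollary~\ref{co.fixedpoint} then lies in $\mathbf P_0$, because it is the limit of the iterates $\mathcal K_\alpha^n\bm\nu$ of any starting section $\bm\nu\in\mathbf P_0$.

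\emph{Invariance.} Take $\bm\nu\in\mathbf P_0$. Probability measures lie in $\lip(X)^*$, and $W_1$ is the metric induced by the $\lip(X)^*$ norm, so I read $\bm\nu$ as a continuous map $\Omega\to\lip(X)^*$. Hence $\bm\nu\in\mathbf E_{\mathrm L}$. Lemma~\ref{le.K_F0} then gives $\mathcal K_\alpha\bm\nu\in\mathbf E_{\mathrm L}$, so $\omega\mapsto(\mathcal K_\alpha\bm\nu)_\omega$ is continuous. The values are probability measures for every $\omega$, not only almost every $\omega$. This uses the fact that each $(g_{\alpha,\theta_{\alpha,i}(\omega)})_*\nu_{\theta_{\alpha,i}(\omega)}$ is a probability and that $p_{\alpha,i}\ge 0$ with $\sum_i p_{\alpha,i}(\omega)=1$ by \eqref{eq.RMK}. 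Thus $\mathcal K_\alpha\bm\nu$ is a continuous section with values in $\mathcal M_1(X)$, that is, an element of $\mathbf P_0$.

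\emph{Closedness.} Let $\bm\nu^n\in\mathbf P_0$ with $d_{\mathrm P}(\bm\nu^n,\bm\nu)\to0$. This is the main technical point, because $d_{\mathrm P}$ is an essential supremum and elements of $\mathbf P$ are only classes modulo $\eta_\alpha$-null sets. The $\bm\nu^n$ are continuous, and I will assume $\eta_\alpha$ gives positive mass to open sets, or at least work on its support. Then the essential supremum of $W_1(\nu^n_\omega,\nu^m_\omega)$ coincides with the true supremum, since a continuous function exceeding a value on a nonempty open set exceeds it on a set of positive measure. So $(\bm\nu^n)$ is uniformly Cauchy. It therefore converges uniformly, by completeness of $(\mathcal M_1(X),W_1)$, to a continuous section, and this section agrees almost everywhere with $\bm\nu$. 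Hence the class of $\bm\nu$ has a representative in $\mathbf P_0$.

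If the support issue is delicate (for example if $\rho_\alpha$ vanishes on open sets), I would avoid closedness altogether. Instead I would run the Banach fixed point iteration directly in the space of continuous sections with the genuine supremum metric. This space is complete, and $\mathcal K_\alpha$ is a $\lambda$-contraction on it: the proof of Lemma~\ref{le.contractinp} works pointwise for every $\omega$ using \eqref{eq.kalpha2} and \eqref{eq.RMK}. The resulting fixed point is a fortiori a fixed point in $\mathbf P$, and uniqueness in $\mathbf P$ identifies it with $\bm\nu_\alpha$.
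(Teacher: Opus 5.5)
Your proposal is correct and follows the same route as the paper: show that $\mathbf P_0$ is closed in $\mathbf P$ and invariant under $\mathcal K_\alpha$, with invariance coming from Lemma~\ref{le.K_F0}, mass preservation of push-forwards, and the reverse Markov kernel property~\eqref{eq.RMK}. The paper simply asserts that $\mathbf P_0$ is closed, so your justification of it fills in a point the paper leaves implicit, and so does your fallback of running the contraction directly on continuous sections with the genuine supremum metric, which avoids the essential-supremum issue altogether.
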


\begin{proof} 
Since $\mathbf{P}_0$ is a closed subset of $\mathbf{P}$, it suffices to show that it is invariant under $\mathcal K_\alpha$. 
Let $\bm\nu \in \mathbf{P}_0$. 
By definition, $\mathbf{P}_0 \subset \mathbf{E}_{\mathrm L}$, so the continuity of $\mathcal K_\alpha \bm\nu$ follows from Lemma~\ref{le.K_F0}. 
It remains to check that each $(\mathcal K_\alpha \bm\nu)_\omega$ is a probability measure. 
This follows immediately from the properties of push-forwards and the weights: since $(g_{\alpha,\theta_{\alpha,i}(\omega)})_*$ preserves total mass, and $p_{\alpha,i}(\omega) \ge 0$ with $\sum_{i\in \mathbb I} p_{\alpha,i}(\omega) = 1$, we conclude that $(\mathcal K_\alpha \bm\nu)_\omega \in \mathcal M_1(X)$. 
\end{proof}

 \subsection{Continuity}\label{sub.continuity}
In this subsection, we establish the continuity property~\ref{as.spacec} and the second part of~\ref{as.continuous}, both on the space~$\mathbf{E}_3^0$. These results will follow as consequences of Proposition~\ref{pr.contK} below. We emphasise that, also in this part, our  conclusions rely solely on assumptions~\ref{as.full}--\ref{as.theta}, together with the weaker version~\ref{as.p'} of assumption~\ref{as.p} introduced in Subsection~\ref{sub.invariance}.

\begin{lemma}\label{le.salvation}
 Given   $\nu \in \mathrm{Lip}(X)^*$ and   $C^1$ maps $h_0, h_1: X \to X$,   there exists   $\mathcal C(h_1,h_2) > 0$ depending only on the $C^1$ norms of $h_0$ and $h_1$ such that, for any $\phi\in C^2(X)$, 
 \[
\big|\langle (h_1)_*\nu-(h_0)_*\nu,\phi\rangle\big| \le \mathcal C(h_1,h_2) \, \|\nu\|_{\mathrm{Lip}^*} \, \|\phi\|_{C^2} \, \|h_1 - h_0\|_{C^1}.
\]
\end{lemma}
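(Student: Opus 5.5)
By duality, $\langle (h_1)_*\nu-(h_0)_*\nu,\phi\rangle = \langle \nu, \phi\circ h_1-\phi\circ h_0\rangle$. It therefore suffices to bound the Lipschitz norm of the difference
\[
\Delta=\phi\circ h_1-\phi\circ h_0 ,
\]
since then $|\langle\nu,\Delta\rangle|\le \|\nu\|_{\mathrm{Lip}^*}\|\Delta\|_{\mathrm L}$. Recall that $\|\Delta\|_{\mathrm L}=|\Delta(x_0)|+\mathrm{Lip}(\Delta)$, and that $X$ is convex, so Lipschitz constants are controlled by sup norms of derivatives. We treat the two pieces separately.

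For the value at the base point, the mean value theorem along the segment from $h_0(x_0)$ to $h_1(x_0)$, which lies in $X$ by convexity, gives
\[
|\Delta(x_0)|\le \|D\phi\|_{C^0}\,\|h_1-h_0\|_{C^0}\le \|\phi\|_{C^2}\,\|h_1-h_0\|_{C^1}.
\]

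For the Lipschitz constant, we compute $D\Delta(x)$ and split it as
\[
D\Delta(x)=D\phi(h_1(x))\bigl(Dh_1(x)-Dh_0(x)\bigr)+\bigl(D\phi(h_1(x))-D\phi(h_0(x))\bigr)Dh_0(x).
\]
The first term is bounded by $\|D\phi\|_{C^0}\|Dh_1-Dh_0\|_{C^0}$. For the second term we use the second derivative of $\phi$: applying the mean value theorem to $D\phi$ on the segment $[h_0(x),h_1(x)]\subset X$ bounds it by $\|D^2\phi\|_{C^0}\|h_1-h_0\|_{C^0}\|Dh_0\|_{C^0}$. Taking the supremum over $x$ and using convexity of $X$ to turn the bound on $D\Delta$ into a bound on $\mathrm{Lip}(\Delta)$, we get
\[
\mathrm{Lip}(\Delta)\le \bigl(1+\|h_0\|_{C^1}\bigr)\|\phi\|_{C^2}\|h_1-h_0\|_{C^1}.
\]

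Combining the two estimates gives the claim with $\mathcal C=2+\|h_0\|_{C^1}$, a constant depending only on the $C^1$ norm of $h_0$. If a symmetric constant is wanted, one can replace $\|h_0\|_{C^1}$ by $\max\{\|h_0\|_{C^1},\|h_1\|_{C^1}\}$.

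The only real point is the second term of $D\Delta$. This is where $\phi\in C^2$ is needed: with $\phi$ merely $C^1$ we could not get linear dependence on $\|h_1-h_0\|$ there. This is the main obstacle conceptually but is routine technically. One should also make sure the norm conventions for $\|\cdot\|_{C^2}$ and $\|\cdot\|_{\mathrm L}$ from the appendix are compatible with the bounds above, namely that $\|D\phi\|_{C^0}$ and $\|D^2\phi\|_{C^0}$ are each at most $\|\phi\|_{C^2}$.
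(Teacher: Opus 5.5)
Your argument is correct, but it reaches the estimate by a more direct route than the paper. The paper interpolates $h_t=h_0+t(h_1-h_0)$ and writes $\phi\circ h_1-\phi\circ h_0=\int_0^1\psi_t\,dt$ with $\psi_t=(\nabla\phi\circ h_t)\cdot(h_1-h_0)$. It then bounds $\mathrm{Lip}(\psi_t)\le\|\phi\|_{C^2}\|h_1-h_0\|_{C^1}(1+\|h_t\|_{C^1})$ by the product rule, and treats $t\mapsto\psi_t$ as a Bochner integrable map into $\mathrm{Lip}(X)$ so that $\nu$ can be passed inside the integral. You skip the interpolation altogether: you differentiate $\Delta$ in $x$, split $D\Delta$ asymmetrically, and apply the mean value inequality to $D\phi$ on the segment $[h_0(x),h_1(x)]$.

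The essential input is the same in both proofs. $D^2\phi$ controls how much $D\phi$ changes between $h_0(x)$ and $h_1(x)$, and this is exactly where $C^2$ is needed (cf.\ Example~\ref{ex.impossible}). Your version has three advantages.
\begin{itemize}
\item It avoids justifying Bochner integrability of $t\mapsto\psi_t$ in the non-separable space $\mathrm{Lip}(X)$. The paper's uniform bound alone does not give this; one needs something like continuity of $t\mapsto\psi_t$ in the Lipschitz norm.
\item It avoids the interchange of $\nu$ with the integral.
\item It treats the base-point term $|\Delta(x_0)|$ explicitly. The paper's displayed estimate only controls the seminorm $\mathrm{Lip}(\psi_t)$ before invoking the full norm $\|\psi_t\|_{\mathrm{Lip}}$, although the missing term is trivially bounded.
\end{itemize}
The paper's route gives a constant that depends symmetrically on $\sup_t\|h_t\|_{C^1}\le\max\{\|h_0\|_{C^1},\|h_1\|_{C^1}\}$. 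Yours depends on $h_0$ alone, which the statement allows.

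One point of bookkeeping: both you and the paper use $\|h_1-h_0\|_{C^0}\le\|h_1-h_0\|_{C^1}$. This is immediate for the standard $C^1$ norm on maps. If the appendix's pointed convention $|f(x_0)|+\|Df\|_{C^0}$ is also used for maps $X\to X$, the inequality holds only up to a factor $\max\{1,\diam(X)\}$, which changes the constant but nothing else.
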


\begin{proof}
By definition of the push-forward,
\[
\langle (h_1)_*\nu - (h_0)_*\nu, \phi \rangle = \langle \nu, \phi \circ h_1 - \phi \circ h_0 \rangle.
\]
Since $X$ is convex, define $h_t = h_0 + t(h_1 - h_0)$ for $t \in [0,1]$. Then
\[
\phi \circ h_1 - \phi \circ h_0 = \int_0^1 \frac{d}{dt} \phi \circ h_t \, dt = \int_0^1 (\nabla \phi \circ h_t) \bm\cdot (h_1 - h_0) \, dt . 
\]
Set
$$\psi_t = (\nabla \phi \circ h_t) \bm\cdot (h_1 - h_0).$$
Since $\phi \in C^2(X)$, each $\psi_t$ is Lipschitz and
 \begin{align}
\mathrm{Lip}(\psi_t) 
&\le \|\nabla \phi \circ h_t\|_{C^0} \mathrm{Lip}(h_1 - h_0) + \mathrm{Lip}(\nabla \phi \circ h_t) \|h_1 - h_0\|_{C^0}\nonumber\\
&\le \| \phi  \|_{C^1} \|h_1 - h_0\|_{C^1} + \|\phi\|_{C^2} \|h_t\|_{C^1} \|h_1 - h_0\|_{C^0}\nonumber\\
& \le \|\phi\|_{C^2} \|h_1 - h_0\|_{C^1} (1 + \|h_t\|_{C^1}).\label{eq.lipsi}
\end{align}
This  gives in particular that $t \mapsto \psi_t \in \mathrm{Lip}(X)$ is Bochner integrable in the Banach space~$\lip(X)$.
By the properties of this integral,
\[
\left| \langle \nu, \int_0^1 \psi_t \, dt \rangle \right| = \left| \int_0^1 \langle \nu, \psi_t \rangle \, dt \right| \le \int_0^1 |\langle \nu, \psi_t \rangle| \, dt.
\]
On the other hand, by the definition of the dual norm,
\[
|\langle \nu, \psi_t \rangle| \le \|\nu\|_{\mathrm{Lip}^*} \|\psi_t\|_{\lip} \quad \text{for all } t \in [0,1].
\]
Hence
\[
|\langle (h_1)_*\nu - (h_0)_*\nu, \phi \rangle| \le \int_0^1 |\langle \nu, \psi_t \rangle| \, dt \le \|\nu\|_{\mathrm{Lip}^*} \int_0^1 \|\psi_t\|_{\lip} \, dt.
\]
Finally, using the uniform bound for $\|\psi_t\|_{\lip}$ obtained in~\eqref{eq.lipsi} and integrating over $t$ we obtain  the desired conclusion.
\end{proof}

\begin{example}\label{ex.impossible} 
With this example we show that Lemma~\ref{le.salvation} cannot be established in the space $C^1(X)$.
Indeed, let $X=[0,1]$ and define for $\epsilon>0$ the dipoles $$\nu_\epsilon=\frac1\epsilon(\delta_{1} - \delta_{1-\epsilon}),$$ which satisfy $\|\nu_\epsilon\|_{\mathrm{Lip}^*}=1$.
Consider the family of contractions $h_t(z)=(1-t)z$ for $t \in (0,1)$. Note that $\|h_t-h_s\|_{C^1}=|t-s|$.
For any $\phi \in C^1(X)$, we have
\[
\langle (h_t)_*\nu_\epsilon - (h_s)_*\nu_\epsilon, \phi \rangle = \frac{\phi(1-t) - \phi((1-t)(1-\epsilon))}{\epsilon} - \frac{\phi(1-s) - \phi((1-s)(1-\epsilon))}{\epsilon}.
\]
As $\epsilon \to 0$, this converges to
\[
\Delta(t,s) := (1-t)\phi'(1-t) - (1-s)\phi'(1-s).
\]
Now, choose $\phi \in C^1(X)$ such that $\phi'(y) = |1-y|^\beta$, with $\beta \in (0,1)$. Then
\[
\Delta(t,s) = (1-t)t^\beta - (1-s)s^\beta.
\]
Evaluating the ratio for $s = 2t$ with small $t$, we obtain
\[
\frac{|\Delta(t, 2t)|}{\|h_t - h_{2t}\|_{C^0}} = \frac{|(1-t)t^\beta - (1-2t)(2t)^\beta|}{t} = t^{\beta-1} |(1-t) - 2^\beta(1-2t)|.
\]
As $t \to 0$, the term in the absolute value converges to $|1 - 2^\beta| \neq 0$. Thus, the ratio behaves as $c t^{\beta-1}$, which diverges because $\beta < 1$. This confirms that $C^1$ regularity is insufficient to provide a uniform Lipschitz constant for the push-forward operator.
\end{example}

The next result  is formulated for elements   in the space $\mathbf{E}_{\mathrm L}$, which encompasses both spaces    $\mathbf{E}_{\mathrm L}^0$ and $\mathbf P_0$ on which we have  to verify properties~\ref{as.spacec} and~\ref{as.continuous}, respectively. 

\begin{proposition}\label{pr.contK}
 Assume that~\ref{as.full}--\ref{as.theta} and \ref{as.p'} hold and let $\boldsymbol{\nu}\in\mathbf{E}_{\mathrm L}$. Then,  
\begin{enumerate}
\item the map
\(\alpha\mapsto \mathcal K_{\alpha,i} \bm\nu\)
is continuous in $ \mathbf{E}_2 $, for each $i\in\mathbb I$;
\item the series
\(
\sum_{i\in\mathbb I}  {\mathcal K}_{\alpha,i} \bm\nu
\) 
converges  in $ \mathbf{E}_2$ uniformly in $\alpha$.
\end{enumerate}
\end{proposition}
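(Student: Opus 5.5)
Fix $\bm\nu\in\mathbf E_{\mathrm L}$, $i\in\mathbb I$, and $\alpha_0\in J$. For the first item I will estimate, uniformly in $\omega\in\Omega$ and over test functions $\phi\in C^2(X)$ with $\|\phi\|_{C^2}\le1$, the quantity
\[
\big\langle (\mathcal K_{\alpha,i}\bm\nu)_\omega-(\mathcal K_{\alpha_0,i}\bm\nu)_\omega,\phi\big\rangle .
\]
Writing $\theta=\theta_{\alpha,i}(\omega)$, $\theta_0=\theta_{\alpha_0,i}(\omega)$, $h=g_{\alpha,\theta}$ and $h_0=g_{\alpha_0,\theta_0}$, I split the difference into three terms:
\begin{align*}
&\bigl(p_{\alpha,i}(\omega)-p_{\alpha_0,i}(\omega)\bigr)\langle h_*\nu_\theta,\phi\rangle
+p_{\alpha_0,i}(\omega)\langle h_*\nu_\theta-(h_0)_*\nu_\theta,\phi\rangle\\
&\qquad+p_{\alpha_0,i}(\omega)\langle (h_0)_*(\nu_\theta-\nu_{\theta_0}),\phi\rangle .
\end{align*}

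The first term is bounded by $\|p_{\alpha,i}-p_{\alpha_0,i}\|_{C^0}(1+L)\|\bm\nu\|_{\mathrm L}$. This uses the estimate \eqref{eq.lbound}--\eqref{eq.unifomry} from the proof of Lemma~\ref{le.K_F0}, and it tends to zero by \ref{as.p'}.

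The second term is where Lemma~\ref{le.salvation} enters. With $\nu=\nu_\theta\in\lip(X)^*$ it gives the bound
\[
\mathcal C\,\|\bm\nu\|_{\mathrm L}\,\|g_{\alpha,\theta_{\alpha,i}(\cdot)}(\omega,\cdot)-g_{\alpha_0,\theta_{\alpha_0,i}(\cdot)}(\omega,\cdot)\|_{C^1}.
\]
Here the constant $\mathcal C$ is uniform because the $C^1$ norms of the fibre maps are bounded by \ref{as.gb}. The last factor tends to zero uniformly in $\omega$ by the continuity $\alpha\mapsto g_{\alpha,\theta_{\alpha,i}}\in C^1(\Omega\times X,X)$ in \ref{as.g}. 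This is exactly the step that forces the passage from $\lip^*$ to $C^2(X)^*$, consistent with Example~\ref{ex.impossible}.

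The third term is bounded by $(1+L)\|\nu_\theta-\nu_{\theta_0}\|_{\mathrm{Lip}^*}$, since $\|\cdot\|_{2^*}\le\|\cdot\|_{\mathrm{Lip}^*}$ on the relevant functionals. By \ref{as.theta}, $\sup_\omega|\theta_{\alpha,i}(\omega)-\theta_{\alpha_0,i}(\omega)|\to0$, and $\bm\nu$ is uniformly continuous on the compact set $\Omega$. Hence this term also tends to zero uniformly in $\omega$.

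Summing the three bounds and taking the supremum over $\omega$ and over $\phi$ gives item (1). Membership $\mathcal K_{\alpha,i}\bm\nu\in\mathbf E_2$ itself follows from Lemma~\ref{le.K_F0} and the continuous inclusion $\lip(X)^*\subset C^2(X)^*$.

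For item (2), I use \eqref{eq.unifomry}, which gives
\[
\|\mathcal K_{\alpha,i}\bm\nu\|_2\le c\,\|\mathcal K_{\alpha,i}\bm\nu\|_{\mathrm L}\le c(1+L)\|\bm\nu\|_{\mathrm L}\sup_{\alpha\in J}\|p_{\alpha,i}\|_{C^0},
\]
where $c$ is the constant of the inclusion $\lip(X)^*\subset C^2(X)^*$. The right-hand side is summable in $i$ with a bound independent of $\alpha$, by \ref{as.p'}. The Weierstrass M-test therefore gives convergence of the series in $\mathbf E_2$, uniformly in $\alpha$.

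I expect the main obstacle to be the second term. It needs a uniform-in-$\omega$ $C^1$ closeness of the composed fibre maps $g_{\alpha,\theta_{\alpha,i}(\omega)}$, and one must check that \ref{as.g} provides this on $\Omega\times X$ jointly rather than only pointwise in $\omega$. One must also check that the constant in Lemma~\ref{le.salvation} stays bounded as $\alpha$ varies.
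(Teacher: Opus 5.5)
Your proof is correct and follows essentially the same route as the paper: you use the same three-term splitting (change of weight, change of fibre map via Lemma~\ref{le.salvation}, change of base point). The only cosmetic difference is that you bound the first and third terms in $\lip(X)^*$ via \eqref{eq.unifomry} and then use $\|\cdot\|_{2^*}\le\|\cdot\|_{\mathrm{Lip}^*}$, whereas the paper applies the second item of Lemma~\ref{lem:push-ck} directly in $C^2(X)^*$. Your Weierstrass M-test for item (2) is what is needed there. The two concerns you flag are settled by \ref{as.g} as stated: continuity holds in $C^1(\Omega\times X,X)$ jointly, and the $C^1$ norms entering Lemma~\ref{le.salvation} are uniformly bounded.
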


\begin{proof}
We first show that, for fixed  $i \in \mathbb{I}$ and $\bm{\nu} \in \mathbf{E}_{\mathrm L}$,  the map $\alpha \mapsto \mathcal{K}_{\alpha,i} \bm{\nu}$ is continuous in $\mathbf{E}_2$.
The action of the operator $\mathcal{K}_{\alpha,i}$ on the section $\boldsymbol{\nu}$ is given pointwise by
\[ 
[(\mathcal{K}_{\alpha,i} \boldsymbol{\nu})]_\omega = p_{\alpha,i}(\omega) (g_{\alpha, \theta_{\alpha,i}(\omega)})_* \nu_{\theta_{\alpha,i}(\omega)}. 
\]
Hence,
\begin{align}
\|[(\mathcal{K}_{\alpha,i} \boldsymbol{\nu})]_\omega - [(\mathcal{K}_{\alpha_0,i} \boldsymbol{\nu})]_\omega\|_{2^*}
&= \| p_{\alpha,i}(\omega) (g_{\alpha, \theta_{\alpha,i}(\omega)})_* \nu_{\theta_{\alpha,i}(\omega)} - p_{\alpha_0,i}(\omega) (g_{\alpha_0, \theta_{\alpha_0,i}(\omega)})_* \nu_{\theta_{\alpha_0,i}(\omega)} \|_{2^*} \nonumber \\
&\le A(\omega) + B(\omega) + C(\omega), \label{eq.triple3}
\end{align}
with
\begin{align*}
A(\omega) &= |p_{\alpha,i}(\omega) - p_{\alpha_0,i}(\omega)| \cdot \| (g_{\alpha, \theta_{\alpha,i}(\omega)})_* \nu_{\theta_{\alpha,i}(\omega)} \|_{2^*} \\
B(\omega) &= |p_{\alpha_0,i}(\omega)| \cdot \| (g_{\alpha, \theta_{\alpha,i}(\omega)})_* \nu_{\theta_{\alpha,i}(\omega)} - (g_{\alpha_0, \theta_{\alpha_0,i}(\omega)})_* \nu_{\theta_{\alpha,i}(\omega)} \|_{2^*} \\
C(\omega) &= |p_{\alpha_0,i}(\omega)| \cdot \| (g_{\alpha_0, \theta_{\alpha_0,i}(\omega)})_* (\nu_{\theta_{\alpha,i}(\omega)} - \nu_{\theta_{\alpha_0,i}(\omega)}) \|_{2^*}.
\end{align*}
We estimate these terms separately. 
Using the second item of Lemma~\ref{lem:push-ck} and Remark~\ref{re.bell} and since $\bm\nu\in\mathbf{E}_{\mathrm L}\subset \mathbf{E}_2$, we have a constant $L>0$ depending only on the constants $L_1,L_2$ in \ref{as.gb} such that
$$
\big\|(g_{\alpha,\theta_{\alpha,i}(\omega)})_* \nu_{\theta_{\alpha,i}(\omega)}\big\|_{2^*}
\le L \, \|\nu_{\theta_{\alpha,i}(\omega)}\|_{2^*}\le L\,  \, \|\boldsymbol{\nu}\|_2.
$$
Thus, 
\begin{equation}\label{eq.Aomega3}
A(\omega) \le L\, \|\boldsymbol{\nu}\|_2 \cdot \| p_{\alpha,i} - p_{\alpha_0,i} \|_{C^0}.
\end{equation}
For the second term, we apply Lemma~\ref{le.salvation}
to the maps $h_1 = g_{\alpha,\theta_{\alpha,i}(\omega)}$, $h_2 = g_{\alpha_0,\theta_{\alpha_0,i}(\omega)}$
and  
$\nu = \nu_{\theta_{\alpha,i}(\omega)}\in\lip(X)^*$, which together with~\ref{as.g} yields a uniform constant $C>0$ such that for any $\phi\in C^2(X)$
\[
\big|\langle (g_{\alpha, \theta_{\alpha,i}(\omega)})_* \nu_{\theta_{\alpha,i}(\omega)} - (g_{\alpha_0, \theta_{\alpha_0,i}(\omega)})_* \nu_{\theta_{\alpha,i}(\omega)} ,\phi\rangle\big| \le C\|\nu_{\theta_{\alpha,i}(\omega)}\|_{\lip^*}\,\|\phi\|_{C^2}\,\| g_{\alpha,\theta_{\alpha,i}(\omega)} - g_{\alpha_0,\theta_{\alpha_0,i}(\omega)} \|_{C^0} .
\]
Taking the supremum over $\|\phi\|_{C^2} \le 1$ and since $\bm\nu\in \mathbf{E}_{\mathrm L}$, we get 
\begin{equation*} 
 \| (g_{\alpha, \theta_{\alpha,i}(\omega)})_* \nu_{\theta_{\alpha,i}(\omega)} - (g_{\alpha_0, \theta_{\alpha_0,i}(\omega)})_* \nu_{\theta_{\alpha,i}(\omega)} \|_{2^*} \le   \|\bm\nu\|_{\mathrm L} \| g_{\alpha,\theta_{\alpha,i}(\omega)} - g_{\alpha_0,\theta_{\alpha_0,i}(\omega)} \|_{C^0}.
\end{equation*}
Hence,
\begin{equation}\label{eq.Bomega3}
B(\omega) \le \| p_{\alpha_0,i} \|_{C^0} \|\bm\nu\|_{\mathrm L} \| g_{\alpha,\theta_{\alpha,i}(\omega)} - g_{\alpha_0,\theta_{\alpha_0,i}(\omega)} \|_{C^0}.
\end{equation}
Using again the second part of Lemma~\ref{lem:push-ck} and Remark~\ref{re.bell}  together with \ref{as.g} for the third term, we obtain  
\begin{equation} \label{eq.Comega3}
C(\omega) \le L \| p_{\alpha_0,i} \|_{C^0} \| \nu_{\theta_{\alpha,i}(\omega)} - \nu_{\theta_{\alpha_0,i}(\omega)} \|_{2^*}.
\end{equation}
Putting \eqref{eq.triple3} and \eqref{eq.Aomega3}--\eqref{eq.Comega3} together, we get
\begin{align}
\|[(\mathcal{K}_{\alpha,i} \boldsymbol{\nu})]_\omega - [(\mathcal{K}_{\alpha_0,i} \boldsymbol{\nu})]_\omega\|_{2^*}
&\le L \|\boldsymbol{\nu}\|_3 \cdot \| p_{\alpha,i} - p_{\alpha_0,i} \|_{C^0} \nonumber\\
&\quad + C \| p_{\alpha_0,i} \|_{C^0} \|\bm{\nu}\|_2 \| g_{\alpha,\theta_{\alpha,i}(\omega)} - g_{\alpha_0,\theta_{\alpha_0,i}(\omega)} \|_{C^1}  \nonumber\\
&\quad + L \| p_{\alpha_0,i} \|_{C^0} \| \nu_{\theta_{\alpha,i}(\omega)} - \nu_{\theta_{\alpha_0,i}(\omega)} \|_{2^*}.\label{eq.finalest}
\end{align}
Taking the supremum over $\omega \in \Omega$, and using continuity assumptions \ref{as.full}--\ref{as.p} and the uniform continuity of the section $\bm{\nu} \in \mathbf{E}_{\mathrm L} \subset \mathbf{E}_2$, we conclude the proof of the first part. 
The second part follows from the estimate obtained in \eqref{eq.finalest} and the summability conditions in \ref{as.full}--\ref{as.p}.
\end{proof}

\begin{remark}\label{re.independent1}
Even when the weights and the fibre maps do not depend on the base point, the regularity of the space $\mathbf E_2$ in which Proposition~\ref{pr.contK} is proved cannot be improved. Indeed, $C^2$ regularity is essential in the proof of Lemma~\ref{le.salvation}, as Example~\ref{ex.impossible}  illustrates. Moreover, even in the base-independent case, the continuity of the term $B(\omega)$ in the proof of Proposition~\ref{pr.contK} requires comparing push-forwards associated with two nearby but distinct maps, due to the dependence on the parameter.
\end{remark}

The next result  provides   \ref{as.spacec} in the space $\mathbf{E}_3^0$. 

\begin{corollary}\label{co.contK}
 Assume that~\ref{as.full}--\ref{as.theta} and \ref{as.p'} hold and let $\boldsymbol{\nu}\in\mathbf{E}_{\mathrm L}$. 
 If  $\boldsymbol{\nu}\in\mathbf{E}_{\mathrm L}^0$, then 
  the map
\(\alpha\mapsto \mathcal K_{\alpha} \bm\nu\)
is continuous in $ \mathbf{E}_3^0 $.
\end{corollary}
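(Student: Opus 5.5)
The plan is to deduce this directly from Proposition~\ref{pr.contK}, together with the continuous embedding of $C^2$-duals into $C^3$-duals. First observe that for $\phi\in C^3(X)$ we have $\|\phi\|_{C^2}\le\|\phi\|_{C^3}$. Hence for any $\nu\in C^2(X)^*$ we get $\|\nu\|_{3^*}\le\|\nu\|_{2^*}$, which gives $\|\bm\mu\|_3\le\|\bm\mu\|_2$ for every section $\bm\mu\in\mathbf E_2$. So continuity in $\mathbf E_2$ implies continuity in $\mathbf E_3$.

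Next we pass from the individual terms to the full operator. Fix $\bm\nu\in\mathbf E_{\mathrm L}^0\subset\mathbf E_{\mathrm L}$ and $\epsilon>0$.
\begin{itemize}
\item By the second item of Proposition~\ref{pr.contK}, we choose a finite set $F\subset\mathbb I$ such that $\bigl\|\sum_{i\notin F}\mathcal K_{\alpha,i}\bm\nu\bigr\|_2<\epsilon/3$ for all $\alpha\in J$, and in particular for $\alpha_0$.
\item By the first item, each of the finitely many maps $\alpha\mapsto\mathcal K_{\alpha,i}\bm\nu$ with $i\in F$ is continuous in $\mathbf E_2$. So there is $\delta>0$ with $\sum_{i\in F}\|\mathcal K_{\alpha,i}\bm\nu-\mathcal K_{\alpha_0,i}\bm\nu\|_2<\epsilon/3$ whenever $|\alpha-\alpha_0|<\delta$.
\end{itemize}
Splitting $\mathcal K_\alpha\bm\nu-\mathcal K_{\alpha_0}\bm\nu$ according to~\eqref{eq.kalpha3} into the $F$-part and the two tails, the triangle inequality gives $\|\mathcal K_\alpha\bm\nu-\mathcal K_{\alpha_0}\bm\nu\|_2<\epsilon$. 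By the previous paragraph the same bound then holds for $\|\cdot\|_3$.

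It remains to check that all these sections lie in $\mathbf E_3^0$. By Corollary~\ref{co.KE0}, $\mathcal K_\alpha\bm\nu\in\mathbf E_{\mathrm L}^0$, so fibrewise these are functionals vanishing on constants. They are continuous into $\lip(X)^*$, hence into $C^3(X)^*$, since $\|\cdot\|_{3^*}\le C\|\cdot\|_{\lip^*}$ by the inclusions~\eqref{eq.inclusions}. Thus $\mathcal K_\alpha\bm\nu\in\mathbf E_3^0$, and the difference converges to zero in the norm $\|\cdot\|_3$ of that space.

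The main obstacle is the uniform tail control: the limit must be exchanged with the infinite sum over $\mathbb I$. This is exactly what the second item of Proposition~\ref{pr.contK} provides, so the difficulty is absorbed by that result.
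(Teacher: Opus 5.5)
Your proof is correct and follows the paper's argument: Proposition~\ref{pr.contK} gives continuity of $\alpha\mapsto\mathcal K_\alpha\bm\nu$ in $\mathbf E_2$, and the bound $\|\cdot\|_{3^*}\le\|\cdot\|_{2^*}$ transfers this to $\mathbf E_3^0$. Your finite-set/tail splitting spells out the ``basic calculus in Banach spaces'' step that the paper leaves implicit, and you state the embedding in the correct direction ($\mathbf E_2\hookrightarrow\mathbf E_3$), whereas the inclusion chain written in the paper is reversed.
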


\begin{proof}
First, observe that by basic calculus in Banach spaces, Proposition~\ref{pr.contK} implies that for any $\bm\nu \in \mathbf{E}_{\mathrm L}^0 \subset \mathbf{E}_{\mathrm L}$, the map $\alpha \mapsto \mathcal K_\alpha \bm\nu$ is continuous as a map into $\mathbf{E}_2$. Since we have continuous inclusions $\mathbf{E}_3^0 \subset \mathbf{E}_3 \subset \mathbf{E}_2$, convergence in $\mathbf{E}_2$ therefore implies convergence in~$\mathbf{E}_3^0$. 
 \end{proof}

The next result is a consequence of Lemma~\ref{le.contre0} and Proposition~\ref{pr.contK}, and provides the second part of \ref{as.continuous}.

\begin{corollary}\label{co.contfp}
 Assume that~\ref{as.full}--\ref{as.theta} and \ref{as.p'} hold and let $\boldsymbol{\nu}\in\mathbf{E}_{\mathrm L}$. 
If $\bm\nu_\alpha,\bm\nu_{\alpha_0}$ are respectively the unique fixed points of $\mathcal{K}_\alpha,\mathcal{K}_{\alpha_0}$ in~$\mathbf{P}_0$, then 
$$\lim_{\alpha \to \alpha_0} \|\bm\nu_\alpha - \bm\nu_{\alpha_0}\|_3 = 0.$$
\end{corollary}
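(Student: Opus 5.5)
The plan is to compare the two fixed points through the usual fixed-point identity and then use the contraction of Lemma~\ref{le.contre0}. Since $\bm\nu_\alpha=\mathcal K_\alpha\bm\nu_\alpha$ and $\bm\nu_{\alpha_0}=\mathcal K_{\alpha_0}\bm\nu_{\alpha_0}$, we write
\begin{equation*}
\bm\nu_\alpha-\bm\nu_{\alpha_0}
=\mathcal K_\alpha(\bm\nu_\alpha-\bm\nu_{\alpha_0})+(\mathcal K_\alpha-\mathcal K_{\alpha_0})\bm\nu_{\alpha_0}.
\end{equation*}
Both $\bm\nu_\alpha$ and $\bm\nu_{\alpha_0}$ are continuous sections of probability measures by Corollary~\ref{co.KP0}. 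Their difference therefore vanishes fibrewise on constants and is continuous into $\lip(X)^*$, so it lies in $\mathbf E_{\mathrm L}^0\subset\mathbf E_3^0$. Lemma~\ref{le.contre0} then gives $\|\mathcal K_\alpha(\bm\nu_\alpha-\bm\nu_{\alpha_0})\|_3\le\lambda\|\bm\nu_\alpha-\bm\nu_{\alpha_0}\|_3$. Rearranging yields
\begin{equation*}
\|\bm\nu_\alpha-\bm\nu_{\alpha_0}\|_3\le\frac1{1-\lambda}\,\|(\mathcal K_\alpha-\mathcal K_{\alpha_0})\bm\nu_{\alpha_0}\|_3 .
\end{equation*}
This step needs the contraction constant from \ref{as.gb}; Lemma~\ref{le.contre0} uses only \ref{as.gb} and \eqref{eq.RMK}, which remain valid under the weaker assumption \ref{as.p'} since the weights are still nonnegative and sum to one.

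It then remains to show that $(\mathcal K_\alpha-\mathcal K_{\alpha_0})\bm\nu_{\alpha_0}\to0$ in $\mathbf E_3$. For this we apply Proposition~\ref{pr.contK} to the fixed section $\bm\nu=\bm\nu_{\alpha_0}\in\mathbf P_0\subset\mathbf E_{\mathrm L}$. Item (1) gives continuity in $\alpha$ of each term $\mathcal K_{\alpha,i}\bm\nu_{\alpha_0}$ in $\mathbf E_2$, and item (2) gives uniform convergence of the series in $\alpha$. A standard $\epsilon/3$ argument then shows that $\alpha\mapsto\mathcal K_\alpha\bm\nu_{\alpha_0}$ is continuous at $\alpha_0$ in $\mathbf E_2$: truncate to finitely many indices uniformly in $\alpha$, and use continuity of the finitely many remaining terms. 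Because $\|\cdot\|_{3^*}\le\|\cdot\|_{2^*}$, that is, $\mathbf E_2\hookrightarrow\mathbf E_3$ continuously, this convergence transfers to $\mathbf E_3$.

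The main point to be careful about is that the section to which continuity is applied must be independent of $\alpha$. This is why the identity is arranged so that the varying operator acts on the fixed section $\bm\nu_{\alpha_0}$, and the $\alpha$-dependent unknown is absorbed by the uniform contraction. A second point to check is that the difference of the two fixed points genuinely lies in $\mathbf E_3^0$, which requires both to be probability sections, so that the contraction (valid only on the zero-mass subspace) applies. Both points follow from results already established, so no new estimates are needed.
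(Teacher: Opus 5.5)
Your proposal is correct and follows essentially the paper's proof: the same fixed-point decomposition, the contraction of Lemma~\ref{le.contre0} applied to $\bm\nu_\alpha-\bm\nu_{\alpha_0}\in\mathbf E_{\mathrm L}^0\subset\mathbf E_3^0$, and continuity of $\alpha\mapsto\mathcal K_\alpha\bm\nu_{\alpha_0}$ in $\mathbf E_2\hookrightarrow\mathbf E_3$. The difference is cosmetic: the paper cites Corollary~\ref{co.contK}, which is stated for inputs in $\mathbf E_{\mathrm L}^0$, whereas you apply Proposition~\ref{pr.contK} directly to $\bm\nu_{\alpha_0}\in\mathbf P_0\subset\mathbf E_{\mathrm L}$ with an explicit truncation argument, which is the cleaner way to handle a probability section; you also correctly note that the contraction relies on \ref{as.gb}, which the stated hypotheses leave implicit.
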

\begin{proof}
Recall that the difference $\bm\nu_\alpha - \bm\nu_{\alpha_0}$ belongs to the  space $\mathbf{E}_{\mathrm L}^0\subset\mathbf{E}_3^0$. Using the fixed point property,
\[
\bm\nu_\alpha - \bm\nu_{\alpha_0} = \mathcal{K}_\alpha \bm\nu_\alpha - \mathcal{K}_{\alpha_0} \bm\nu_{\alpha_0}.
\]
We decompose the right-hand side as 
\begin{align*}
\bm\nu_\alpha - \bm\nu_{\alpha_0} &= (\mathcal{K}_\alpha \bm\nu_\alpha - \mathcal{K}_\alpha \bm\nu_{\alpha_0}) + (\mathcal{K}_\alpha \bm\nu_{\alpha_0} - \mathcal{K}_{\alpha_0} \bm\nu_{\alpha_0}) \\
&= \mathcal{K}_\alpha (\bm\nu_\alpha - \bm\nu_{\alpha_0}) + (\mathcal{K}_\alpha \bm\nu_{\alpha_0} - \mathcal{K}_{\alpha_0} \bm\nu_{\alpha_0}) .
\end{align*}
Using the contraction given by Lemma~\ref{le.contre0},  we get
\[
\|\bm\nu_\alpha - \bm\nu_{\alpha_0}\|_3 \le \lambda \|\bm\nu_\alpha - \bm\nu_{\alpha_0}\|_3 + \|(\mathcal{K}_\alpha - \mathcal{K}_{\alpha_0}) \bm\nu_{\alpha_0}\|_3.
\]
Rearranging the terms, we obtain
\[
(1-\lambda) \|\bm\nu_\alpha - \bm\nu_{\alpha_0}\|_3 \le \|(\mathcal{K}_\alpha - \mathcal{K}_{\alpha_0}) \bm\nu_{\alpha_0}\|_3.
\]
Recalling that $\mathbf P_0\subset \mathbf{E}_{\mathrm L}$, taking $\bm\nu=\bm\nu_{\alpha_0}$ in Corollary~\ref{co.contK}, we obtain
\[
\lim_{\alpha \to \alpha_0} \|\mathcal{K}_\alpha\bm\nu_{\alpha_0} - \mathcal{K}_{\alpha_0}\bm\nu_{\alpha_0} \|_3 = 0.
\]
Since $1-\lambda > 0$, it follows that $\|\bm\nu_\alpha - \bm\nu_{\alpha_0}\|_3 \to 0$ as $\alpha \to \alpha_0$.
\end{proof}

\subsection{Differentiability}\label{sub.regularity}
Here we establish the validity of~\ref{as.spaced} for any fixed $\alpha_0 \in J$,
with the argument divided into two parts. We first establish the $C^1$ regularity
of the invariant section $\bm\nu_{\alpha}$, viewed as a map
from the base space $\Omega$ into the  space $C^2(X)^*$. We then use this
spatial regularity to deduce the parameter differentiability required by~\ref{as.spaced}.

Regarding the $C^1$ regularity of the invariant section
$\omega \mapsto \nu_{\alpha,\omega}$, we remark that one could derive a Lasota--Yorke inequality for the operator
$\mathcal K_\alpha$, taking $\mathbf{E}_2'$ as the strong space and $\mathbf{E}_2$ as
the weak one. However, such an estimate would be of no use in the present
context for establishing the $C^1$ differentiability of the fixed point
$\bm\nu_\alpha$. Indeed, due to the infinite dimensionality of  $C^2(X)^*$, the space $\mathbf{E}_2'$ is not compactly embedded
in $\mathbf{E}_2$. Instead, we adopt a graph transform–type approach.

\subsubsection{Differentiability of the invariant section}
Our objective is to establish that the invariant section $\bm\nu_\alpha$, defined as the unique fixed point of the sectional transfer operator
\[
(\mathcal{K}_\alpha\boldsymbol{\nu})_\omega = \sum_{i\in\mathbb I} p_{\alpha,i}(\omega) \, (g_{\alpha, \theta_{\alpha,i}(\omega)})_*  {\nu}_{\theta_{\alpha,i}(\omega)} 
\]
is of class $C^1$ as a mapping from the base space $\Omega$ into  $C^2(X)^*$. By formally differentiating the fixed-point identity $\mathcal{K}_\alpha\boldsymbol{\nu}_\alpha  = \boldsymbol{\nu}_{\alpha}$ with respect to $\omega$, we observe that the derivative $\boldsymbol{\nu}_\alpha'$ must satisfy a linear functional equation. This equation naturally leads to the definition of a \emph{tangent operator}, which describes the first-order variation of the invariant section by accounting for the simultaneous infinitesimal changes in the weights, the fiber dynamics, and the base map shifts.

We define the \emph{tangent operator} $\mathcal{T}: \mathbf{E}_{2}^0 \to \mathbf{E}_{2}^0$  as the affine mapping given for any $\bm\xi\in\mathbf E_2^0$ and $\omega\in\Omega$ by
\[
(\mathcal{T} \boldsymbol{\xi})_\omega
=  {\sum_{i \in \mathbb I} p_{\alpha,i}(\omega) \, \theta_{\alpha,i}'(\omega) \, (g_{\alpha, \theta_{\alpha,i}(\omega)})_* \xi_{\theta_{\alpha,i}(\omega)}} 
+  {(\mathcal{R} \boldsymbol{\nu}_\alpha)_\omega} .
\]
The term $(\mathcal{R}\boldsymbol{\nu}_\alpha)$ is the image of the invariant section $\boldsymbol{\nu}_\alpha \in \mathbf{P}$ under the \emph{remainder operator}~$\mathcal{R}$. 
It will be convenient to have   this operator defined more generally for an arbitrary section $\boldsymbol{\nu} \in \mathbf{P}$, by prescribing, for each $\omega \in \Omega$ and each test function $\phi \in C^2(X)$,
\[
\langle (\mathcal{R} \boldsymbol{\nu})_\omega, \phi \rangle
= \sum_{i \in \mathbb I} p'_{\alpha,i}(\omega) \, \langle \nu_{\theta_{\alpha,i}(\omega)}, \phi \circ g_{\alpha, \theta_{\alpha,i}(\omega)} \rangle
+ \sum_{i \in \mathbb I} p_{\alpha,i}(\omega) \, \langle \nu_{\theta_{\alpha,i}(\omega)}, \partial_\omega (\phi \circ g_{\alpha, \theta_{\alpha,i}(\omega)}) \rangle.
\]
Note that since $\sum p_{\alpha,i}(\omega) = 1$ and $\langle \nu_\omega, 1\rangle = 1$ for all $\omega$, it follows that   $\mathcal{R} \boldsymbol{\nu} \in \mathbf{E}_{2}^0$.
The linear part of the operator $\mathcal{T}$ describes how derivatives propagate through the base dynamics, whereas the remainder term $\mathcal{R}\boldsymbol{\nu}_\alpha$ accounts for the variations arising from the $\omega$–dependence of the weights and of the fiber maps.  
The key observation is that, if the invariant section $\boldsymbol{\nu}_\alpha$ is differentiable with respect to $\omega$, then its derivative $\boldsymbol{\nu}_\alpha'$ necessarily satisfies the fixed-point identity
\[
\mathcal{T}\boldsymbol{\nu}_\alpha' = \boldsymbol{\nu}_\alpha'.
\]
Thus, the existence and uniqueness of a fixed point of $\mathcal{T}$ in $\mathbf{E}_{2}^0$ provides the unique candidate for the derivative of the invariant section. The $C^1$ regularity of $\boldsymbol{\nu}_\alpha$ then follows from the uniform convergence of $\partial_\omega(\mathcal{K}_\alpha^n\boldsymbol{\nu})$ to this fixed point, starting from any smooth $\boldsymbol{\nu}\in\mathbf{P}$, together with the fundamental theorem of calculus in Banach spaces.

\begin{lemma} 
\label{le.remainder}
For any $\boldsymbol{\nu}\in \mathbf{P}$, we have
\[
\lim_{n\to\infty} \|\mathcal{R}(\mathcal{K}_\alpha^n \boldsymbol{\nu} - \boldsymbol{\nu}_\alpha)\|_{2} 
= 0.
\]
\end{lemma}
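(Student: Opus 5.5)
The plan is to show that $\mathcal R$ is Lipschitz from $(\mathbf P, d_{\mathrm P})$, or more precisely from differences of probability sections measured in $\lip(X)^*$, into $\mathbf E_2$. The conclusion will then follow from the contraction of Lemma~\ref{le.contractinp}, since $d_{\mathrm P}(\mathcal K_\alpha^n\bm\nu,\bm\nu_\alpha)\le\lambda^n d_{\mathrm P}(\bm\nu,\bm\nu_\alpha)\le \lambda^n\diam(X)$. Note that $\mathcal R$ is linear in $\bm\nu$. Hence $\mathcal R(\mathcal K_\alpha^n\bm\nu-\bm\nu_\alpha)$ makes sense, with the formula defining $\mathcal R$ applied to the signed section $\bm\delta_n=\mathcal K_\alpha^n\bm\nu-\bm\nu_\alpha$. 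Each fibre of $\bm\delta_n$ lies in $\lip_0(X)^*$, with norm at most $\lambda^n\diam(X)$ essentially uniformly in $\omega$. Measurability issues only affect $\eta_\alpha$-null sets, and I will take the sup as an essential sup where needed.

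Fix $\omega$ and $\phi\in C^2(X)$ with $\|\phi\|_{C^2}\le1$, and write $h_i=g_{\alpha,\theta_{\alpha,i}(\omega)}$. For the first sum in $\mathcal R$, the function $\phi\circ h_i$ is Lipschitz with $\|\phi\circ h_i\|_{\mathrm L}\le 1+L$ as in~\eqref{eq.lbound}. Therefore
\[
|p'_{\alpha,i}(\omega)\langle\delta_{n,\theta_{\alpha,i}(\omega)},\phi\circ h_i\rangle|\le (1+L)\|p_{\alpha,i}\|_{C^1}\,\|\bm\delta_n\|_{\mathrm L}.
\]
For the second sum, the chain rule gives
\[
\partial_\omega(\phi\circ g_{\alpha,\theta_{\alpha,i}(\omega)})(x)=\nabla\phi(h_i(x))\cdot(\partial_\omega g_\alpha)(\theta_{\alpha,i}(\omega),x)\,\theta'_{\alpha,i}(\omega).
\]
Using $\phi\in C^2$, this is a Lipschitz function of $x$. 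Its $\lip$ norm is bounded by $C\|\phi\|_{C^2}\,\|g_{\alpha,\theta_{\alpha,i}}\|_{C^1(\Omega\times X)}\,(1+\|h_i\|_{C^1})$, the same product-rule estimate as in~\eqref{eq.lipsi}. This is the step where $C^2$ test functions are needed: one derivative is spent on $\partial_\omega$ and the other on the Lipschitz constant. There is a subtlety here. $\partial_\omega g$ need only be $C^0$ in $x$ from the $C^1(\Omega\times X)$ assumption in~\ref{as.g}, so its Lipschitz constant in $x$ is not directly controlled. I would handle this by using the $C^1$ norm of $g_{\alpha,\theta_{\alpha,i}}$ jointly, or by noting that $g_{\alpha,\omega}\in C^3(X)$ with $\omega$-derivative controlled. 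If necessary, I would bound $\omega\mapsto\phi\circ g_{\alpha,\theta_{\alpha,i}(\omega)}$ via the composition $\omega\mapsto g_{\alpha,\theta_{\alpha,i}(\omega)}\in C^1(X,X)$. I expect this justification of the Lipschitz bound on $\partial_\omega(\phi\circ h_i)$ to be the main technical obstacle.

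Summing over $i$ and using the summability conditions $\sum_i\sup_\alpha\|p_{\alpha,i}\|_{C^1}\le C_p$ from~\ref{as.p} and $\sum_i\sup_\alpha\|g_{\alpha,\theta_{\alpha,i}}\|_{C^1}\le C_g$ from~\ref{as.g}, together with $|p_{\alpha,i}|\le1$, gives a constant $C_{\mathcal R}$ such that
\[
\|(\mathcal R\bm\delta)_\omega\|_{2^*}\le C_{\mathcal R}\,\|\bm\delta\|_{\mathrm L}
\]
for every $\omega$. The bound $|\theta'_{\alpha,i}|\le1$ from~\ref{as.full} enters here. Taking the supremum over $\omega$ and applying this with $\bm\delta=\bm\delta_n$ yields
\[
\|\mathcal R(\mathcal K_\alpha^n\bm\nu-\bm\nu_\alpha)\|_2\le C_{\mathcal R}\lambda^n\diam(X)\to0.
\]
This proves the lemma, with in fact exponential convergence.
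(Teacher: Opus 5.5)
Your argument is essentially the paper's. The paper also splits $\mathcal R$ into the $p'_{\alpha,i}$-sum and the $\partial_\omega$-sum, and bounds each pairing by a uniform constant times a weak dual norm of $\nu^{(n)}_\theta-\nu_{\alpha,\theta}$. It uses $\|\cdot\|_{1^*}$, which is dominated by $\|\cdot\|_{\lip^*}$, whereas you work in $\lip_0(X)^*$ directly. It then sums using \ref{as.p} and \ref{as.g} and concludes with Lemma~\ref{le.contractinp}; you only add the explicit rate $\lambda^n\diam(X)$.

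The obstacle you flag is real, and it is exactly where the paper's proof is also thin. The paper asserts $\|\partial_\omega(\phi\circ g_{\alpha,\theta_{\alpha,i}(\omega)})\|_{C^1}\le\|\phi\|_{C^1}\|\partial_\omega g_{\alpha,\theta_{\alpha,i}(\omega)}\|_{C^1}$, and credits \ref{as.g} for the finiteness of $\sup_\omega\sum_i p_{\alpha,i}(\omega)\|\partial_\omega g_{\alpha,\theta_{\alpha,i}(\omega)}\|_{C^1}$. Both versions need control of the mixed derivative $\partial_x\partial_\omega g_{\alpha,\theta_{\alpha,i}}$, and none of your three suggested fixes supplies it:
\begin{itemize}
\item joint $C^1$ regularity on $\Omega\times X$ says nothing about how $D_xg$ varies with $\omega$;
\item neither do the fibrewise bounds on $D^kg$ in \ref{as.g};
\item differentiability of $\omega\mapsto g_{\alpha,\theta_{\alpha,i}(\omega)}\in C^1(X,X)$ is precisely the missing hypothesis.
\end{itemize}
So your proof is complete under the same implicit reading of \ref{as.g} as the paper's, but not from \ref{as.g} as literally stated.

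To close this without extra hypotheses, give up the rate. Use two facts: each fibre of $\bm\delta_n$ is a difference of probability measures, so has total variation at most $2$; and $\partial_\omega g_{\alpha,\theta_{\alpha,i}}$, with $g_{\alpha,\theta_{\alpha,i}}$ viewed as a map on $\Omega\times X$, is uniformly continuous on the compact set $\Omega\times X$. Fix $i$. The test functions $\psi=(\nabla\phi\circ h_i)\cdot\partial_\omega g_{\alpha,\theta_{\alpha,i}}(\omega,\cdot)$, with $\|\phi\|_{C^2}\le 1$ and $\omega\in\Omega$, are then uniformly bounded and equicontinuous. Inf-convolution gives $K$-Lipschitz $\tilde\psi$ with $\|\psi-\tilde\psi\|_{C^0}\le\varepsilon_i(K)$, where $\varepsilon_i(K)\to0$ as $K\to\infty$ uniformly in $\phi$ and $\omega$. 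Hence $|\langle\delta_{n,\theta_{\alpha,i}(\omega)},\psi\rangle|\le 2\varepsilon_i(K)+K\lambda^n\diam(X)$. The tail of the sum over $i$ is at most $2\sum_{i>N}\sup_\alpha\|g_{\alpha,\theta_{\alpha,i}}\|_{C^1}$, which is small by the summability condition $\sum_i\sup_\alpha\|g_{\alpha,\theta_{\alpha,i}}\|_{C^1}\le C_g$. This gives the limit $0$ for the $\partial_\omega$-sum; your treatment of the $p'_{\alpha,i}$-sum is already fine as it stands.
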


\begin{proof}
Fix $\omega \in \Omega$ and $\phi \in C^2(X)$ with $\|\phi\|_{C^2} \le 1$. Set $\boldsymbol{\nu}^{(n)}=\mathcal{K}_\alpha^n \boldsymbol{\nu}$ and split
\[
\mathcal{R}(\boldsymbol{\nu}^{(n)} - \boldsymbol{\nu}_\alpha)  = 
\mathcal{R}_1(\boldsymbol{\nu}^{(n)} - \boldsymbol{\nu}_\alpha)  + 
\mathcal{R}_2(\boldsymbol{\nu}^{(n)} - \boldsymbol{\nu}_\alpha) ,
\]
with
\[
\langle (\mathcal{R}_1(\boldsymbol{\nu}^{(n)} - \boldsymbol{\nu}_\alpha))_\omega, \phi \rangle = \sum_{i \in \mathbb{I}}  p_{\alpha,i}' (\omega) \langle {\nu}^{(n)}_{\theta_{\alpha,i}(\omega)} -  {\nu}_{\alpha,\theta_{\alpha,i}(\omega)}, \phi \circ g_{\alpha, \theta_{\alpha,i}(\omega)} \rangle
\]
and
\[
\langle (\mathcal{R}_2(\boldsymbol{\nu}^{(n)} - \boldsymbol{\nu}_\alpha))_\omega, \phi \rangle = \sum_{i \in \mathbb{I}} p_{\alpha,i}(\omega) \langle {\nu}^{(n)}_{\theta_{\alpha,i}(\omega)} -  {\nu}_{\alpha,\theta_{\alpha,i}(\omega)}, \partial_\omega (\phi \circ g_{\alpha, \theta_{\alpha,i}(\omega)}) \rangle.
\]
By the chain rule we have \(\|\phi \circ g_{\alpha, \theta_{\alpha,i}(\omega)}\|_{C^1} \le   \|\phi\|_{C^1} \|g_{\alpha, \theta_{\alpha,i}(\omega)}\|_{C^1}\), and the definition of the dual norm gives
 \begin{align*}
|\langle (\mathcal{R}_1(\boldsymbol{\nu}^{(n)} - \boldsymbol{\nu}_\alpha))_\omega, \phi \rangle|
&\le \sum_{i \in \mathbb{I}} |  p_{\alpha,i}' (\omega)| \, \| {\nu}^{(n)}_{\theta_{\alpha,i}(\omega)} -  {\nu}_{\alpha,\theta_{\alpha,i}(\omega)}\|_{1^*} \, \|\phi \circ g_{\alpha, \theta_{\alpha,i}(\omega)}\|_{C^1}\\
& \le  \sum_{i \in \mathbb{I}} |  p_{\alpha,i}' (\omega)| \, \|{\nu}^{(n)}_{\theta_{\alpha,i}(\omega)} -  {\nu}_{\alpha,\theta_{\alpha,i}(\omega)}\|_{1^*} \,  \|\phi\|_{C^1} \|g_{\alpha, \theta_{\alpha,i}(\omega)}\|_{C^1}.
\end{align*}
Taking supremum over $\phi\in C^2(X)$ with   $\|\phi\|_{C^2} \le 1$ and noting that   $\|\phi\|_{C^1}\le \|\phi\|_{C^2} \le 1$, we get
\[
\|(\mathcal{R}_1(\boldsymbol{\nu}^{(n)} - \boldsymbol{\nu}_\alpha))_\omega\|_{2^*} 
\le  \sum_{i \in \mathbb{I}} |  p_{\alpha,i}' (\omega)| \, \|{\nu}^{(n)}_{\theta_{\alpha,i}(\omega)} -  {\nu}_{\alpha,\theta_{\alpha,i}(\omega)}\|_{1^*} \, \|g_{\alpha, \theta_{\alpha,i}(\omega)}\|_{C^1}.
\]
Similarly, using the chain rule to get the bound \(\|\partial_\omega (\phi \circ g_{\alpha, \theta_{\alpha,i}(\omega)})\|_{C^1} \le   \|\phi\|_{C^1} \|\partial_\omega g_{\alpha, \theta_{\alpha,i}(\omega)}\|_{C^1}\), it follows that
\[
\|(\mathcal{R}_2(\boldsymbol{\nu}^{(n)} - \boldsymbol{\nu}_\alpha))_\omega\|_{2^*} 
\le  \sum_{i \in \mathbb{I}} p_{\alpha,i}(\omega) \, \|{\nu}^{(n)}_{\theta_{\alpha,i}(\omega)} -  {\nu}_{\alpha,\theta_{\alpha,i}(\omega)}\|_{1^*} \, \|\partial_\omega g_{\alpha, \theta_{\alpha,i}(\omega)}\|_{C^1}.
\]
Since  $\sum_{i \in \mathbb{I}} p_{\alpha,i}(\omega) =1$, for all $\omega\in\Omega$, using \ref{as.g} we deduce 
\[
\sup_{\omega \in \Omega} \sum_{i \in \mathbb{I}} |  p_{\alpha,i}' (\omega)| \, \|g_{\alpha, \theta_{\alpha,i}(\omega)}\|_{C^2} < \infty
\qand
\sup_{\omega \in \Omega} \sum_{i \in \mathbb{I}} p_{\alpha,i}(\omega) \, \|\partial_\omega g_{\alpha, \theta_{\alpha,i}(\omega)}\|_{C^1} < \infty.
\]
Hence, there exists a uniform constant $C > 0$ such that
\begin{equation}\label{eq.ene}
\|(\mathcal{R}(\boldsymbol{\nu}^{(n)} - \boldsymbol{\nu}_\alpha))_\omega\|_{2^*}
 \le C
  \sup_{\omega\in\Omega} \|{\nu}^{(n)}_{ \omega} -  {\nu}_{\alpha, \omega}\|_{1^*}.
\end{equation}
By Lemma~\ref{le.contractinp}, the iterates $\boldsymbol{\nu}^{(n)}$ converge in Wasserstein (Lipschitz  dual norm)  to $\boldsymbol{\nu}_\alpha$, meaning that 
\[
\sup_{\omega \in \Omega} \|{\nu}^{(n)}_{ \omega} -  {\nu}_{\alpha, \omega}\|_{\mathrm{Lip}^*} \longrightarrow 0.
\]
Since the inclusion $ \lip(X)^*\hookrightarrow C^1(X)^*$ is continuous, it follows that
\[
\sup_{\omega \in \Omega} \|{\nu}^{(n)}_{ \omega} -  {\nu}_{\alpha, \omega}\|_{1^*} \longrightarrow 0.
\]
Combining this with~\eqref{eq.ene},   the proof is complete.
\end{proof}

 \begin{remark} \label{re.c2} The regularity of the observable $\phi$ is only relevant for the estimate of~$\mathcal{R}_2$, and one might hope to take $\phi \in C^1(X)$, with convergence of measures in $C^0(X)^*$, which would allow a stronger final result in the space of sections valued in $C^1(X)^*$. 
However, the crucial step in the proof uses the convergence of $\mathcal{K}_\alpha^n \boldsymbol{\nu}$ to $\boldsymbol{\nu}_\alpha$ in $\mathrm{Lip}(X)^*$ (Wasserstein distance) to deduce convergence in $C^1(X)^*$. 
If we attempted to go one unit lower, we would need to deduce convergence in $C^0(X)^*$ from convergence in $\mathrm{Lip}(X)^*$, which is obviously false.
\end{remark}


In the following result, we make use of the constant $0<\lambda<1$ provided by~\ref{as.g}. 
Observe that, by Remark~\ref{re.bell}, we have
\begin{equation}\label{eq.newels}
\mathcal{C}_2(g_{\alpha, \theta_{\alpha,i}}) \le \max\{ L_1 + L_2, \, L_1^2 \} 
\le 
\max\{ L_1 + L_2 + L_3, \, L_1^2 + 3L_1L_2, \, L_1^3 \} \le \lambda.
\end{equation}

\begin{lemma} 
\label{le.Tcontraction}
For every  $\bm\xi,\bm\zeta \in \mathbf{E}_{2}^0$ we have
$$\|\mathcal T\bm\xi -\mathcal T\bm\zeta\|_{2}\le \lambda \| \bm\xi - \bm\zeta\|_{2}.$$
\end{lemma}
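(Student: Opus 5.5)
The affine part $\mathcal R\bm\nu_\alpha$ cancels in the difference, so only the linear part of $\mathcal T$ has to be estimated. For every $\omega\in\Omega$,
\[
(\mathcal T\bm\xi-\mathcal T\bm\zeta)_\omega=\sum_{i\in\mathbb I} p_{\alpha,i}(\omega)\,\theta_{\alpha,i}'(\omega)\,(g_{\alpha,\theta_{\alpha,i}(\omega)})_*\bigl(\xi_{\theta_{\alpha,i}(\omega)}-\zeta_{\theta_{\alpha,i}(\omega)}\bigr).
\]
Apply the dual norm $\|\cdot\|_{2^*}$ and the triangle inequality, exactly as in the proof of Lemma~\ref{le.contre0}. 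The sum converges absolutely in $C^2_0(X)^*$ because the weights are summable, by~\eqref{eq.RMK} or the summability in~\ref{as.p}.

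Since $\bm\xi-\bm\zeta\in\mathbf E_2^0$, each fibre value $\xi_{\theta}-\zeta_{\theta}$ lies in $C^2_0(X)^*$, i.e.\ vanishes on constants. This lets us use the first item of Lemma~\ref{lem:push-ck} with $k=2$, which carries no $\diam(X)$ term:
\[
\|(g_{\alpha,\theta_{\alpha,i}(\omega)})_*(\xi_\theta-\zeta_\theta)\|_{2^*}\le \mathcal C_2(g_{\alpha,\theta_{\alpha,i}(\omega)})\,\|\xi_\theta-\zeta_\theta\|_{2^*}\le\lambda\,\|\bm\xi-\bm\zeta\|_2 .
\]
The constant bound here is~\eqref{eq.newels}, which comes from Remark~\ref{re.bell} together with~\ref{as.gb}. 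The factor $\theta_{\alpha,i}'(\omega)$ is a scalar with $|\theta_{\alpha,i}'|\le 1$ by~\ref{as.full}. The weights satisfy $p_{\alpha,i}\ge0$ and $\sum_i p_{\alpha,i}(\omega)=1$ by~\eqref{eq.RMK}.

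Combining these gives
\[
\|(\mathcal T\bm\xi-\mathcal T\bm\zeta)_\omega\|_{2^*}\le\sum_i p_{\alpha,i}(\omega)\,|\theta'_{\alpha,i}(\omega)|\,\lambda\,\|\bm\xi-\bm\zeta\|_2\le\lambda\|\bm\xi-\bm\zeta\|_2 .
\]
Taking the supremum over $\omega$ finishes the argument.

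The main point needing care is the use of item (1) rather than item (2) of Lemma~\ref{lem:push-ck}. This requires the difference to vanish on constants, which holds because $\mathbf E_2^0$ is a vector space and push-forwards preserve this property. We should also check that $\mathcal T$ maps $\mathbf E_2^0$ into itself, including continuity in $\omega$. For the contraction estimate itself, however, the pointwise bound above is all that is needed.
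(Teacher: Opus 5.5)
Your proposal is correct and follows essentially the same route as the paper: the affine term $\mathcal R\bm\nu_\alpha$ cancels, then you use the triangle inequality, the push-forward bound of Lemma~\ref{lem:push-ck} with $k=2$ together with \eqref{eq.newels}, $|\theta'_{\alpha,i}|\le 1$ and $\sum_i p_{\alpha,i}=1$, and finally take the supremum over $\omega$. Your write-up is slightly cleaner than the paper's. It bounds directly by $\|\bm\xi-\bm\zeta\|_2$, and it makes explicit that vanishing on constants is what allows the first item of Lemma~\ref{lem:push-ck}, with no $\diam(X)$ term, to be applied.
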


\begin{proof}
For any $\bm\xi,\bm\zeta \in \mathbf E_2^0$, we have for all $\omega\in\Omega$
\[
\| (\mathcal{T} \bm\xi)_\omega - (\mathcal{T} \bm\zeta)_\omega \|_{2^*} \le \sum_{i \in \mathbb I} p_{\alpha,i}(\omega) \,  |\theta_{\alpha,i}'(\omega) | \, \| (g_{\alpha, \theta_{\alpha,i}})_* (\xi_{\theta_{\alpha,i}(\omega)} - \zeta_{\theta_{\alpha,i}(\omega)}) \|_{2^*}.
\]
Since $\sum_{i \in \mathbb{I}} p_{\alpha,i}(\omega) =1$, it follows from \ref{as.full}--\ref{as.theta}, Lemma~\ref{lem:push-ck} and~\eqref{eq.newels}    that 
\[
\| (\mathcal{T} \bm\xi)_\omega - (\mathcal{T} \bm\zeta)_\omega \|_{2^*} \le \lambda  \|  \xi_{\theta_{\alpha,i}(\omega)} - \zeta_{\theta_{\alpha,i}(\omega)} \|_{2^*}.
\]
Taking supremum in $\omega\in\Omega$ on both sides of the last inequality  we get the conclusion.
\end{proof}

\begin{proposition} \label{pr.differfixed}
$ \boldsymbol{\nu}_\alpha:\Omega \to C^2(X)^*$ is a  $C^1$ map.
\end{proposition}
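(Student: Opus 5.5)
The plan is the graph-transform argument sketched before Lemma~\ref{le.remainder}. Start from a smooth section, iterate $\mathcal K_\alpha$, show the $\omega$-derivatives of the iterates converge uniformly, and conclude with the fundamental theorem of calculus in the Banach space $C^2(X)^*$.

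\textbf{Step 1: candidate derivative.} By Lemma~\ref{le.Tcontraction}, $\mathcal T$ is a $\lambda$-contraction on the Banach space $\mathbf E_2^0$. It therefore has a unique fixed point $\bm\xi_\alpha\in\mathbf E_2^0$, which is the candidate for $\bm\nu_\alpha'$. To see that $\mathcal T$ really maps $\mathbf E_2^0$ into itself, one needs continuity in $\omega$ of $\mathcal R\bm\nu_\alpha$ and of the linear part. I would prove this exactly as in Lemma~\ref{le.K_F0}, using Corollary~\ref{co.KP0} for the continuity of $\bm\nu_\alpha$ and the summability in \ref{as.theta}, \ref{as.g}, \ref{as.p}.

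\textbf{Step 2: iterating a smooth section.} Take $\bm\nu^{(0)}$ to be a constant section $\omega\mapsto\delta_{x_0}$. It lies in $\mathbf P$ and is trivially $C^1$ into $C^2(X)^*$, with derivative $0$. Set $\bm\nu^{(n)}=\mathcal K_\alpha^n\bm\nu^{(0)}$. Inductively, each $\bm\nu^{(n)}$ is $C^1$ from $\Omega$ into $C^2(X)^*$, and differentiating \eqref{eq.kalpha2} term by term gives
\[
\partial_\omega\bm\nu^{(n+1)}=\mathcal T_{\bm\nu^{(n)}}\bigl(\partial_\omega\bm\nu^{(n)}\bigr),
\]
where $\mathcal T_{\bm\nu}$ denotes $\mathcal T$ with $\mathcal R\bm\nu_\alpha$ replaced by $\mathcal R\bm\nu$. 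The differentiation uses three things:
\begin{itemize}
\item the product rule for $p_{\alpha,i}(\omega)$;
\item the chain rule through $\theta_{\alpha,i}(\omega)$, which produces the factor $\theta'_{\alpha,i}$;
\item the derivative of $\omega\mapsto\langle\nu,\phi\circ g_{\alpha,\theta_{\alpha,i}(\omega)}\rangle$, which produces the second term of $\mathcal R$.
\end{itemize}
This last point needs $\phi\in C^2$, so that $\partial_\omega(\phi\circ g)$ is $C^1$ and the difference quotient converges in $C^1(X)$, uniformly over $\|\phi\|_{C^2}\le1$, while $\nu^{(n)}_\theta\in\mathcal M_1(X)\subset C^1(X)^*$. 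Exchanging $\partial_\omega$ with $\sum_i$ is justified by uniform convergence of the differentiated series, which follows from the summability in \ref{as.theta}, \ref{as.g} and \ref{as.p}.

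\textbf{Step 3: convergence of the derivatives.} Write $\bm\xi_n=\partial_\omega\bm\nu^{(n)}$. Then
\[
\bm\xi_{n+1}-\bm\xi_\alpha=\mathcal T\bm\xi_n-\mathcal T\bm\xi_\alpha+\mathcal R(\bm\nu^{(n)}-\bm\nu_\alpha).
\]
Lemma~\ref{le.Tcontraction} gives
\[
\|\bm\xi_{n+1}-\bm\xi_\alpha\|_2\le\lambda\|\bm\xi_n-\bm\xi_\alpha\|_2+\epsilon_n,
\qquad \epsilon_n=\|\mathcal R(\bm\nu^{(n)}-\bm\nu_\alpha)\|_2 .
\]
By Lemma~\ref{le.remainder}, $\epsilon_n\to0$. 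A standard discrete Gronwall argument, splitting the convolution sum $\sum_k\lambda^{n-k}\epsilon_k$ at $n/2$, then yields $\bm\xi_n\to\bm\xi_\alpha$ uniformly in $C^2(X)^*$.

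\textbf{Step 4: conclusion.} Lemma~\ref{le.contractinp} gives $\bm\nu^{(n)}\to\bm\nu_\alpha$ uniformly in $\lip(X)^*\subset C^2(X)^*$. Pass to the limit in
\[
\nu^{(n)}_\omega-\nu^{(n)}_{\omega_0}=\int_{\omega_0}^\omega\xi_{n,s}\,ds ,
\]
a Bochner integral in $C^2(X)^*$, using uniform convergence on both sides. This gives $\nu_{\alpha,\omega}-\nu_{\alpha,\omega_0}=\int_{\omega_0}^\omega\xi_{\alpha,s}\,ds$ with continuous integrand, so $\bm\nu_\alpha$ is $C^1$ with derivative $\bm\xi_\alpha$.

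\textbf{Main obstacle.} I expect the main difficulty to be Step 2: rigorously showing that each iterate is $C^1$ into $C^2(X)^*$ and that $\partial_\omega$ passes through the countable sum. The delicate part is differentiating the fibre-map term. For this I would use Lemma~\ref{le.salvation}-type Taylor estimates, namely $\|\phi\circ g_{\theta(\omega+h)}-\phi\circ g_{\theta(\omega)}-h\,\partial_\omega(\phi\circ g)\|_{C^1}=o(h)$ uniformly over $\|\phi\|_{C^2}\le1$.
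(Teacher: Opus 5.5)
Your outline follows the paper's proof step for step: the fixed point of $\mathcal T$ via Lemma~\ref{le.Tcontraction}, iterates of a smooth section, the recursion $E_{n+1}\le\lambda E_n+\epsilon_n$ with Lemma~\ref{le.remainder}, and the fundamental theorem of calculus in $C^2(X)^*$. However, the one point you propose to make explicit does not go through. The paper has the same gap, since it only asserts $\mathcal T:\mathbf E_2^0\to\mathbf E_2^0$ without proof.

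Continuity of $\omega\mapsto(\mathcal T\bm{\xi})_\omega$ cannot be obtained ``exactly as in Lemma~\ref{le.K_F0}''. For a general $\xi\in C^2(X)^*$, the term $\bigl[(g_{\alpha,\theta_{\alpha,i}(\omega)})_*-(g_{\alpha,\theta_{\alpha,i}(\omega')})_*\bigr]\xi$ is controlled only by $\sup_{\|\phi\|_{C^2}\le1}\|\phi\circ h_\omega-\phi\circ h_{\omega'}\|_{C^2}$, and this does not tend to zero. Push-forward depends continuously on the map only at the cost of one derivative, which is exactly the cost Lemma~\ref{le.salvation} pays. In fact the claim is false:
\begin{itemize}
\item take the doubling map (weights $\tfrac12$, $\theta_i'=\tfrac12$), $X=[-R,R]$, and $g(\omega,x)=\lambda x+c\,\omega$ with $c\neq0$ small; this satisfies \ref{as.full}--\ref{as.p};
\item take the constant section $\xi_\omega\equiv\zeta$ with $\langle\zeta,\phi\rangle=\phi''(x_1)$;
\item then $\langle(\mathcal T\bm{\xi}-\mathcal T\bm{0})_\omega,\phi\rangle=\tfrac{\lambda^2}{4}\sum_{i=1,2}\phi''\bigl(\lambda x_1+c\,\theta_i(\omega)\bigr)$;
\item choosing $\phi''$ as two narrow bumps of height $\pm\tfrac12$ shows this section jumps by at least $\lambda^2/4$ in $\|\cdot\|_{2^*}$ between any two nearby distinct points.
\end{itemize}
So $\mathcal T\bm{\xi}$ and $\mathcal T\bm{0}$ cannot both lie in $\mathbf E_2^0$. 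Banach's theorem therefore cannot be applied on $\mathbf E_2^0$, and continuity of the limiting derivative, which is the actual content of ``$C^1$'', is left unproved.

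The same issue is hidden in your Step~2. Differentiating $(g_{\theta(\omega)})_*\nu^{(n)}_{\theta(\omega)}$ requires $\bigl[(g_{\theta(\omega+h)})_*-(g_{\theta(\omega)})_*\bigr]\xi^{(n)}_{\theta(\omega)}\to0$ in $C^2(X)^*$, and your Taylor estimate does not address this term. That estimate is also false in the $C^1$ norm: the remainder is $O(h)$, not $o(h)$, uniformly over $\|\phi\|_{C^2}\le1$. Its $C^0$ version is true, and that is all you need when pairing against probability measures.

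The missing idea is to propagate a bound one derivative lower, because every functional that actually occurs is first order.
\begin{itemize}
\item For $\bm\nu\in\mathbf P$, $(\mathcal R\bm\nu)_\omega$ pairs only $\phi$ and $D\phi$ with probability measures, so $|\langle(\mathcal R\bm\nu)_\omega,\phi\rangle|\le C\|\phi\|_{C^1}$.
\item For $\xi$ vanishing on constants with $|\langle\xi,\phi\rangle|\le R\|\phi\|_{C^1}$, the linear part of $\mathcal T$ improves $R$ to $L_1R$. This follows as in Lemma~\ref{lem:push-ck} with $k=1$, using $\sum_ip_{\alpha,i}|\theta'_{\alpha,i}|\le1$.
\item For such $\xi$, the first-order analogue of Lemma~\ref{le.salvation}, $\|\phi\circ h_1-\phi\circ h_0\|_{C^1}\le C\|\phi\|_{C^2}\|h_1-h_0\|_{C^1}$, gives $\|(h_1)_*\xi-(h_0)_*\xi\|_{2^*}\le CR\|h_1-h_0\|_{C^1}$.
\end{itemize}
Hence, for $R\ge C/(1-L_1)$, consider the set of $\bm{\xi}\in\mathbf E_2^0$ with $|\langle\xi_\omega,\phi\rangle|\le R\|\phi\|_{C^1}$ for all $\omega$ and all $\phi\in C^2(X)$. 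This set is closed in $\|\cdot\|_2$ and is mapped into itself by $\mathcal T$. Continuity in $\omega$ now follows from the estimate above, continuity of $p_{\alpha,i}$, $\theta'_{\alpha,i}$ and $g$, and Corollary~\ref{co.KP0}. So the fixed point of $\mathcal T$ can be taken there. In Step~2, carry the same bound for $\partial_\omega\bm\nu^{(n)}$ by induction, starting from $\partial_\omega\bm\nu^{(0)}=0$. With these additions your Steps~2--4 go through.
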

\begin{proof}
By Lemma~\ref{le.Tcontraction} and the Banach fixed point theorem, the operator $\mathcal{T}$ admits a unique fixed point $\bm\xi^* \in \mathbf{E}_{2}^0$, that is,
\[
\mathcal{T}\bm\xi^* = \bm\xi^*.
\]
Fix a smooth section $\boldsymbol{\nu}\in \mathbf P$ and consider  $\boldsymbol{\nu}^{(n)}= \mathcal{K}_\alpha^n \boldsymbol{\nu} $. Define $\bm\xi^{(n)}\in\mathbf{E}_{2}^0$ by
\begin{equation}\label{eq.defxi}
 \xi^{(n)}_\omega = \partial_\omega  {\nu}^{(n)}_\omega.
\end{equation}
%
We define the error at step $n\ge 0$ by
\[
E_n = \|\bm\xi^{(n)} - \bm\xi^*\|_{2}.
\]
From the iterative relation
\[
\xi^{(n+1)}_\omega  = \sum_{i \in \mathbb I} p_{\alpha,i}(\omega) \, \theta_{\alpha,i}'(\omega) \, (g_{\alpha, \theta_{\alpha,i}(\omega)})_* \xi_{\theta_{\alpha,i}(\omega)}
+ \big(\mathcal{R} \boldsymbol{\nu}^{(n)}\big)_\omega
.
\]
and the fixed point equation 
$$
 \xi^*_\omega = (\mathcal{T} \bm\xi^*)_\omega = 
\sum_{i \in \mathbb I} p_{\alpha,i}(\omega) \, \theta_{\alpha,i}'(\omega) \, (g_{\alpha, \theta_{\alpha,i}(\omega)})_* \xi_{\theta_{\alpha,i}(\omega)}
+ \big(\mathcal{R} \boldsymbol{\nu}_\alpha \big)_\omega
,$$
 we obtain
\begin{align*}
 \xi^{(n+1)}_\omega -  \xi^*_\omega &= \sum_{i \in \mathbb I} p_{\alpha,i}(\omega) \, \theta_{\alpha,i}'(\omega) \, (g_{\alpha, \theta_{\alpha,i}(\omega)})_*  \big( \xi_{\theta_{\alpha,i}(\omega)}^{(n)} - \xi_{\theta_{\alpha,i}(\omega)}\big)
+ \big( \big(\mathcal{R} \boldsymbol{\nu}^{(n)}\big)_\omega - \big(\mathcal{R} \boldsymbol{\nu}_\alpha \big)_\omega \big)\\
&= \big(\mathcal{T} (\bm\xi^{(n)}  -   \bm\xi^*)\big)_\omega +   \big(\mathcal{R} (\boldsymbol{\nu}^{(n)}-\boldsymbol{\nu}_\alpha) \big)_\omega  
\end{align*}
Taking the norm in $C^2(X)^*$,  supremum over $\omega \in \Omega$ and using  Lemma~\ref{le.Tcontraction}, we get
\[
E_{n+1} \le \lambda E_n + \epsilon_n.
\]
Observing that by Lemma~\ref{le.remainder} we have $\epsilon_n\to 0$ as $n\to\infty$, standard estimates for linear recursions with vanishing forcing terms give
 \begin{equation}\label{eq.enestimate}
E_n \le \lambda^n E_0 + \sum_{k=0}^{n-1} \lambda^k \epsilon_{n-1-k} \to 0 ,\quad \text{as } n \to \infty.
\end{equation}
Now, for any $\omega_0, \omega \in \Omega$, the fundamental theorem of calculus   in the space $C^2(X)^*$ yields 
\[
 {\nu}^{(n)}_\omega =  {\nu}^{(n)}_{\omega_0} + \int_{\omega_0}^\omega \xi^{(n)}_\theta \, d\theta.
\]
Taking the limit $n \to \infty$, we have 
$$
 {\nu}^{(n)}_\omega \to  {\nu}_{\alpha,\omega}\qand  {\nu}^{(n)}_{\omega_0} \to  {\nu}_{\alpha,\omega_0},$$ with convergence in $\lip(X)^*$, 
by Lemma~\ref{le.contractinp}, hence   in $C^2(X)^*$, by the continuous inclusion of $\lip(X)^*$ in $C^2(X)^*$.  On the other hand, it follows from \eqref{eq.enestimate} that
    \[
    \left\| \int_{\omega_0}^\omega  \xi^{(n)}_\theta d\theta - \int_{\omega_0}^\omega  \xi^*_\theta d\theta \right\|_{2^*} \le |\omega - \omega_0| \sup_{\theta\in\Omega} \|\xi^{(n)}_\theta - \xi^*_\theta\|_{2^*} =|\omega - \omega_0| E_n\to 0.
    \]
Thus,
\[
 {\nu}_{\alpha,\omega} =  {\nu}_{\alpha,\omega_0} + \int_{\omega_0}^\omega \xi^*_\theta \, d\theta.
\]
Since $\bm\xi^* \in  \mathbf{E}_{2}^0$, the fundamental theorem of calculus  gives  that $  \boldsymbol{\nu}_\alpha$ is a $C^1$ map from $\Omega$ to $C^2(X)^*$, with derivative
$
 \boldsymbol{\nu}_\alpha' = \bm\xi^*.
$
\end{proof}

%

\subsubsection{Parameter differentiability}
Here we prove that  the map $\alpha \mapsto \mathcal K_\alpha \bm\nu_{\alpha_0}$ is differentiable in $\mathbf E_{3}^0$ at any $\alpha_0\in J$, thus obtaining~\ref{as.spaced}.
We denote differentiation with respect to the parameter $\alpha$ by a dot over the corresponding function. For notational convenience, we extend this convention to operators to indicate the derivative of the associated maps with respect to $\alpha$. We emphasize that this notation is purely formal; in particular, no claim is made regarding the existence of derivatives in the uniform operator norm. 
We will make use of the following lemma, whose proof follows from the chain rule
for second-order derivatives by standard arguments.

\begin{lemma} \label{le.aux}
For any $\phi \in C^3(X)$ and any two maps $h_1, h_2 \in C^2(X, X)$, we have
\begin{enumerate}
\item $\| \phi \circ h_1 - \phi \circ h_2 \|_{C^2} \le C \sum_{k=0}^2 \| D^k h_1 - D^k h_2 \|_{C^0}$, with
\[
C = \max \Big\{ \|\phi\|_{C^1}, \ \|\phi\|_{C^2} (1+L_1 + L_2), \ \|\phi\|_{C^3} (1+L_1^2) \Big\},
\]
where $L_j = \max\{\|D^j h_1\|_{C^0}, \|D^j h_2\|_{C^0}\}$;

\item $\| \phi \circ h \|_{C^2} \le C' \| \phi \|_{C^2}$, with
\[
C' = \max \big\{ 1, \ \diam(X)+\|Dh\|_{C^0} + \|D^2 h\|_{C^0}, \ \|Dh\|_{C^0}^2 \big\}.
\]
\end{enumerate}
\end{lemma}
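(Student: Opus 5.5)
Both items come down to the chain rule for $C^2$ norms; I will take the paper's normalisation $\|\psi\|_{C^2}=|\psi(x_0)|+\|D\psi\|_{C^0}+\|D^2\psi\|_{C^0}$ (with $\|\cdot\|_{C^k}$ defined analogously), which is the one used in Lemma~\ref{lem:push-ck}. I would prove the second item first, since it is simply the case $k=2$ of the estimate in the proof of Lemma~\ref{lem:push-ck}, part~\ref{it.lem_part2}. By Fa\`a di Bruno with $k=2$,
\[
D(\phi\circ h)=(D\phi\circ h)\,Dh,\qquad D^2(\phi\circ h)=(D^2\phi\circ h)[Dh,Dh]+(D\phi\circ h)\,D^2h ,
\]
so that $\|D(\phi\circ h)\|_{C^0}+\|D^2(\phi\circ h)\|_{C^0}\le (L_1+L_2)\|D\phi\|_{C^0}+L_1^2\|D^2\phi\|_{C^0}$. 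The value at the base point is handled exactly as in Lemma~\ref{lem:push-ck}, namely $|\phi(h(x_0))|\le|\phi(x_0)|+\diam(X)\|D\phi\|_{C^0}$. Collecting the coefficients of $|\phi(x_0)|$, $\|D\phi\|_{C^0}$ and $\|D^2\phi\|_{C^0}$ gives the constant $C'$ as stated.

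For the first item I would estimate $\phi\circ h_1-\phi\circ h_2$ and its first two derivatives separately, each by telescoping. For the zeroth-order term, the mean value theorem (available since $X$ is convex) gives $|\phi\circ h_1-\phi\circ h_2|\le\|D\phi\|_{C^0}\|h_1-h_2\|_{C^0}\le \|\phi\|_{C^1}\|h_1-h_2\|_{C^0}$. For the first derivative I would split
\[
(D\phi\circ h_1)Dh_1-(D\phi\circ h_2)Dh_2=(D\phi\circ h_1-D\phi\circ h_2)Dh_1+(D\phi\circ h_2)(Dh_1-Dh_2).
\]
The mean value theorem applied to $D\phi$ bounds this by $\|\phi\|_{C^2}\bigl(L_1\|h_1-h_2\|_{C^0}+\|Dh_1-Dh_2\|_{C^0}\bigr)$.

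The second derivative is the only place where $C^3$ regularity of $\phi$ enters, because the difference $D^2\phi\circ h_1-D^2\phi\circ h_2$ has to be controlled by $\|D^3\phi\|_{C^0}\|h_1-h_2\|_{C^0}$. I would telescope the two terms of $D^2(\phi\circ h_j)$ one factor at a time:
\begin{align*}
(D^2\phi\circ h_1)[Dh_1,Dh_1]-(D^2\phi\circ h_2)[Dh_2,Dh_2]
&=(D^2\phi\circ h_1-D^2\phi\circ h_2)[Dh_1,Dh_1]\\
&\quad+(D^2\phi\circ h_2)\bigl([Dh_1,Dh_1]-[Dh_2,Dh_2]\bigr),
\end{align*}
and similarly $(D\phi\circ h_1)D^2h_1-(D\phi\circ h_2)D^2h_2=(D\phi\circ h_1-D\phi\circ h_2)D^2h_1+(D\phi\circ h_2)(D^2h_1-D^2h_2)$. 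The resulting bounds involve $\|\phi\|_{C^3}L_1^2$, $\|\phi\|_{C^2}L_2$ and $\|\phi\|_{C^2}\|D^2h_1-D^2h_2\|_{C^0}$, each multiplied by the appropriate difference.

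The main obstacle is bookkeeping: the term $[Dh_1,Dh_1]-[Dh_2,Dh_2]$ produces a factor $2L_1\|Dh_1-Dh_2\|_{C^0}$, which does not fit neatly into the displayed constant $\max\{\|\phi\|_{C^1},\|\phi\|_{C^2}(1+L_1+L_2),\|\phi\|_{C^3}(1+L_1^2)\}$. I would first sort all contributions by which difference $\|D^kh_1-D^kh_2\|_{C^0}$ they multiply and then bound each coefficient by the maximum. If that does not close with exactly the stated $C$, I would settle for a constant depending only on $\|\phi\|_{C^3}$, $L_1$ and $L_2$. That weaker form is all that is used later, since only uniformity in the fibre maps matters.
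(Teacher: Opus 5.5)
Your route is the one the paper intends; the paper gives no proof beyond invoking the second-order chain rule. Your proof of item~2 is complete: each coefficient there multiplies a different summand of $\|\phi\|_{C^2}$, so taking the maximum is legitimate.

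For item~1 your fallback is necessary, but you have put the difficulty in the wrong place. Write $\delta_k=\|D^kh_1-D^kh_2\|_{C^0}$ and take all norms of derivatives of $\phi$ in $C^0$. Your telescoping then yields
\[
\|\phi\circ h_1-\phi\circ h_2\|_{C^2}\le\bigl(\|D\phi\|+(L_1+L_2)\|D^2\phi\|+L_1^2\|D^3\phi\|\bigr)\delta_0+\bigl(\|D\phi\|+2L_1\|D^2\phi\|\bigr)\delta_1+\|D\phi\|\,\delta_2 .
\]

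\textbf{The $\delta_1$ coefficient is fine.} It is the one you worried about, but $2L_1\le 1+L_1^2$ gives $\|D\phi\|+2L_1\|D^2\phi\|\le\|\phi\|_{C^3}(1+L_1^2)$.

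\textbf{The $\delta_0$ coefficient is the real obstruction.} It draws on the second and third entries of the maximum at the same time. It is bounded by their sum, not by their maximum, and the inequality as stated with the maximum is false. A counterexample:
\begin{itemize}
\item Take $X=[0,R]$ and $h_2=h_1-v$ with $v\downarrow 0$, so $\delta_1=\delta_2=0$ and $\delta_0=v$.
\item Let $h_1$ be increasing with $h_1(X)$ bounded away from $0$, $h_1(R)=R$, $h_1'(R)=\|h_1'\|_{C^0}=1$ and $h_1''(R)=\|h_1''\|_{C^0}=100$.
\item Let $\phi(x_0)=0$, $\|\phi'\|_{C^0}\le 0.01$, $\phi''(R)=\|\phi''\|_{C^0}=1$ and $\phi'''(R)=\|\phi'''\|_{C^0}=50$.
\end{itemize}
Such functions exist because consecutive derivatives can peak together at an endpoint. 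For $\psi=\phi\circ h_1-\phi\circ h_2$ one gets $\psi'(R)/v\to\phi''(R)=1$ and $\psi''(R)/v\to\phi'''(R)+100\,\phi''(R)=150$. Hence $\|\psi\|_{C^2}\ge(151-o(1))\,v$. On the other hand $L_1=1$ and $L_2=100$, so the constant of the statement satisfies $C\le\max\{0.01,\ 1.01\cdot 102,\ 51.01\cdot 2\}<104$.

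\textbf{What is actually true.} Item~1 holds in exactly the form your argument proves, with $C$ replaced by $\|\phi\|_{C^2}(1+L_1+L_2)+\|\phi\|_{C^3}(1+L_1^2)\le 2C$. As you say, this is enough for the only place item~1 is used: the estimate of the term $B_\epsilon$ in Proposition~\ref{pr.diffKi}. There one only needs a constant that is uniform in $\epsilon$, $\omega$, $i$ and $\|\phi\|_{C^3}\le 1$.
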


\begin{proposition}\label{pr.diffKi}
 Assume that  $\bm\nu\in \mathbf P  \cap \mathbf E_{2} '$. Then,
 \begin{enumerate}
\item  the map
\(
\alpha \mapsto  \mathcal{K}_{\alpha,i}\bm\nu
\)
is differentiable in $\mathbf{E}_{3}^0$, for each $i\in\mathbb I$;
\item the series
\(
\sum_{i\in\mathbb I}  \dot{\mathcal K}_{\alpha,i} \bm\nu
\)
converges  in $ \mathbf{E}_3^0$ uniformly in $\alpha$.
\end{enumerate}

\end{proposition}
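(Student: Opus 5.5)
The plan is to write down the candidate derivative explicitly and then control the remainder of the difference quotient against $C^3$ test functions. For fixed $i$ and $\phi\in C^3(X)$ with $\|\phi\|_{C^3}\le1$ we have
\[
\langle(\mathcal K_{\alpha,i}\bm\nu)_\omega,\phi\rangle=p_{\alpha,i}(\omega)\,\bigl\langle \nu_{\theta_{\alpha,i}(\omega)},\,\phi\circ g_{\alpha,\theta_{\alpha,i}(\omega)}\bigr\rangle .
\]
Formally differentiating in $\alpha$ (product and chain rule) produces three terms. The first is $\dot p_{\alpha,i}(\omega)\langle\nu_{\theta_{\alpha,i}(\omega)},\phi\circ g_{\alpha,\theta_{\alpha,i}(\omega)}\rangle$. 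The second is $p_{\alpha,i}(\omega)\langle\nu_{\theta_{\alpha,i}(\omega)},\nabla\phi\circ g_{\alpha,\theta_{\alpha,i}(\omega)}\cdot \tfrac{d}{d\alpha}g_{\alpha,\theta_{\alpha,i}(\omega)}\rangle$, where $\tfrac{d}{d\alpha}g_{\alpha,\theta_{\alpha,i}}$ is the $\alpha$-derivative from \ref{as.g}. The third, coming from the moving base point, is $p_{\alpha,i}(\omega)\,\dot\theta_{\alpha,i}(\omega)\,\langle\nu'_{\theta_{\alpha,i}(\omega)},\phi\circ g_{\alpha,\theta_{\alpha,i}(\omega)}\rangle$.

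This third term is exactly where the hypothesis $\bm\nu\in\mathbf E_2'$ enters. Since $\bm\nu$ is $C^1$ into $C^2(X)^*$ and $\phi\circ g\in C^2$ with norm controlled by Lemma~\ref{le.aux}(2), the term is well defined. Define $(\dot{\mathcal K}_{\alpha,i}\bm\nu)_\omega$ as the sum of the three functionals. It annihilates constants because $\bm\nu\in\mathbf P$ and $\bm\nu'$ kills constants, so it lies in $C_0^3(X)^*$. Continuity in $\omega$ follows as in Lemma~\ref{le.K_F0}.

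Next I would split the quotient minus the candidate, $\tfrac{1}{\epsilon}\bigl((\mathcal K_{\alpha+\epsilon,i}\bm\nu)_\omega-(\mathcal K_{\alpha,i}\bm\nu)_\omega\bigr)-(\dot{\mathcal K}_{\alpha,i}\bm\nu)_\omega$, into four pieces $A_\epsilon,B_\epsilon,C_\epsilon,D_\epsilon$ by telescoping: vary $p$, then the fibre map with the base point frozen, then the base point $\theta$.

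\begin{itemize}
\item $A_\epsilon$ (weights) is bounded by $\|\tfrac{p_{\alpha+\epsilon,i}-p_{\alpha,i}}{\epsilon}-\dot p_{\alpha,i}\|_{C^0}$ times a uniform constant. It tends to zero by \ref{as.p}, using the mean value theorem and the $C^0$-continuity of $\dot p$ in $\alpha$, which I take as implicit in \ref{as.p}.
\item $C_\epsilon$ (base point) uses the fundamental theorem of calculus in $C^2(X)^*$: $\nu_{\theta_{\alpha+\epsilon,i}}-\nu_{\theta_{\alpha,i}}=\int \nu'\,d\theta$. Uniform continuity of $\bm\nu'$ together with \ref{as.theta} then gives convergence. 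The test function $\phi\circ g$ only needs to be $C^2$ here.
\item $B_\epsilon$ and $D_\epsilon$ are the fibre-map variation and the cross terms. For these I apply Lemma~\ref{le.salvation} with $\nu\in\mathcal M_1\subset\lip(X)^*$ to the \emph{gradient} $\nabla\phi$. Writing $\phi\circ h_1-\phi\circ h_0=\int_0^1\nabla\phi\circ h_t\cdot(h_1-h_0)\,dt$ and comparing with $\nabla\phi\circ h_0\cdot\dot h$ requires $\nabla\phi$ to be Lipschitz with a Lipschitz derivative. This is where $C^3$ is used. Lemma~\ref{le.aux}(1) supplies the bounds, and the $C^1$-continuity of $\alpha\mapsto g_{\alpha,\theta_{\alpha,i}}$ in \ref{as.g} makes the error $o(1)$ uniformly in $\omega$.
\end{itemize}

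\textbf{Main obstacle.} The fibre map is only assumed continuous in $\alpha$ in $C^1$, not differentiable, so the second candidate term needs an $\alpha$-derivative of $g$ along the branch. I would first try to read \ref{as.g} as giving $\alpha\mapsto g_{\alpha,\theta_{\alpha,i}}$ as $C^1$ jointly, so that its $\alpha$-derivative exists and is continuous in $C^0$. Then the quotient $\tfrac{h_{\alpha+\epsilon}-h_\alpha}{\epsilon}-\dot h_\alpha\to0$ in $C^0$, and paired with the Lipschitz bound on $\nabla\phi$ this yields the estimate.

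For part (2), each $\dot{\mathcal K}_{\alpha,i}\bm\nu$ has $\|\cdot\|_3$ bounded by
\[
C\bigl(\|\dot p_{\alpha,i}\|_{C^0}+\|p_{\alpha,i}\|_{C^0}\|g_{\alpha,\theta_{\alpha,i}}\|_{C^1}+\|p_{\alpha,i}\|_{C^0}\|\theta_{\alpha,i}\|_{C^1}\bigr)\,\|\bm\nu\|_2'.
\]
The summability conditions in \ref{as.theta}, \ref{as.g}(c) and \ref{as.p} give a uniformly convergent Weierstrass majorant, independent of $\alpha$.
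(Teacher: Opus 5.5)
Your outline follows the paper's proof. The candidate derivative is the same: your total $\alpha$-derivative of $g_{\alpha,\theta_{\alpha,i}(\omega)}$ is the sum of the $\partial_\alpha g$ and $\partial_{\omega'}g\,\dot\theta_{\alpha,i}$ terms in \eqref{eq.finally}. The four-way telescoping and the Weierstrass majorant for (2) are also the paper's. Your fundamental-theorem-of-calculus treatment of the $C^2(X)^*$-valued map $\alpha\mapsto\nu_{\theta_{\alpha,i}(\omega)}$ is sounder than the paper's equality-form mean value theorem, which fails for vector-valued maps. You are also right that the paper uses $\dot\theta_{\alpha,i}$ and $\partial_\alpha g$ without assuming them.

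One claim, however, is false: $(\dot{\mathcal K}_{\alpha,i}\bm\nu)_\omega$ does not annihilate constants. For $\phi\equiv1$ the $\nabla\phi$-term and the $\bm\nu'$-term vanish, but the weight term equals $\dot p_{\alpha,i}(\omega)\langle\nu_{\theta_{\alpha,i}(\omega)},1\rangle=\dot p_{\alpha,i}(\omega)$. This is nonzero in general; it is $\pm1$ for the $\beta$-derivative in the Bernoulli convolution example. Likewise $(\mathcal K_{\alpha,i}\bm\nu)_\omega$ has mass $p_{\alpha,i}(\omega)$. So item (1) can only hold in $\mathbf E_3$ with the norm $\|\cdot\|_3$, and vanishing on constants appears only after summing over $i$. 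The bound $\sum_i\sup_\alpha\|\dot p_{\alpha,i}\|_{C^0}\le C_p$ in \ref{as.p} lets you differentiate \eqref{eq.RMK} term by term. This gives $\sum_i\dot p_{\alpha,i}=0$, hence $\sum_i\dot{\mathcal K}_{\alpha,i}\bm\nu\in\mathbf E_3^0$. The paper's proof only estimates $\|\cdot\|_{3^*}$-norms and is silent on this, but your stated justification must be replaced by this argument.

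Second, your account of where the regularity of $\phi$ is spent does not match your terms, and the tools you cite do not close the delicate cross term. Against the probability measure $\nu_{\theta_{\alpha,i}(\omega)}$, the fibre-map increment needs only a sup-norm Taylor estimate with $\|D^2\phi\|_{C^0}$; the paper's own bound for its $C_\epsilon$ ends with $\|\phi\|_{C^2}$. So Lemma~\ref{le.salvation} on $\nabla\phi$ is not where $C^3$ enters.

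The delicate term involves $\bm\nu'$. Suppose, as in the paper's $B_\epsilon$, the $\nu$-difference quotient is tested against the moved function $\phi\circ g_{\alpha+\epsilon,\theta_{\alpha+\epsilon,i}(\omega)}$. You then get the cross term $\dot\theta_{\alpha,i}(\omega)\langle\nu'_{\theta_{\alpha,i}(\omega)},\phi\circ g_{\alpha+\epsilon,\theta_{\alpha+\epsilon,i}(\omega)}-\phi\circ g_{\alpha,\theta_{\alpha,i}(\omega)}\rangle$. Since $\nu'$ lies only in $C^2(X)^*$, Lemma~\ref{le.salvation} does not apply. Lemma~\ref{le.aux}(1), which is where the paper genuinely uses $\phi\in C^3$, requires the fibre maps to converge in $C^2$. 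The $C^1$-continuity in \ref{as.g}(a) gives this only after interpolating with the uniform $D^3$ bound in \ref{as.g}(b).

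If instead, as your remark on the base-point term suggests, you freeze the test function there at $\phi\circ h$ with $h=g_{\alpha,\theta_{\alpha,i}(\omega)}$, let the fibre-map increment act on $\nu_{\theta_{\alpha+\epsilon,i}(\omega)}$. The only cross term is then $\langle\nu_{\theta_{\alpha+\epsilon,i}(\omega)}-\nu_{\theta_{\alpha,i}(\omega)},\nabla\phi\circ h\cdot\dot h\rangle$. This is a difference of probability measures tested against a bounded equicontinuous family. It tends to zero because $\bm\nu$ is uniformly continuous into $(\mathcal M_1(X),W_1)$, since on $\mathcal M_1(X)$ the $\|\cdot\|_{2^*}$-distance and $W_1$ induce the same compact topology. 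That ordering uses only $\|\phi\|_{C^2}$ throughout, so it even gives differentiability for $\|\cdot\|_2$. Either way, you must say which ordering you use and supply the matching estimate.
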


\begin{proof} For the first item, we are going to prove that the derivative $\dot{\mathcal{K}}_{\alpha,i}\bm\nu$ of  
\(\mathcal K_{\alpha,i} \bm\nu\) with respect to the parameter  $\alpha\in J$ 
   is given for any $\omega \in \Omega$ and $\phi \in C^3(X)$ by
\begin{align}
\langle (\dot{\mathcal{K}}_{\alpha,i}\bm\nu)_\omega, \phi \rangle
=&\; \dot p_{\alpha,i}(\omega) \, \langle \nu_{\theta_{\alpha,i}(\omega)}, \phi \circ g_{\alpha, \theta_{\alpha,i}(\omega)} \rangle \nonumber\\
&+ p_{\alpha,i}(\omega) \, \langle \nu'_{\theta_{\alpha,i}(\omega)} \dot{\theta}_{\alpha,i}(\omega), \phi \circ g_{\alpha, \theta_{\alpha,i}(\omega)} \rangle \nonumber\\
&+ p_{\alpha,i}(\omega) \, \langle \nu_{\theta_{\alpha,i}(\omega)}, D \phi \circ g_{\alpha, \theta_{\alpha,i}(\omega)} \bm\cdot \partial_\alpha g_{\alpha, \theta_{\alpha,i}(\omega)} \rangle \nonumber\\
&+ p_{\alpha,i}(\omega) \, \langle \nu_{\theta_{\alpha,i}(\omega)}, D \phi \circ g_{\alpha, \theta_{\alpha,i}(\omega)} \bm\cdot \partial_{\omega'} g_{\alpha, \theta_{\alpha,i}(\omega)} \, \dot{\theta}_{\alpha,i}(\omega) \rangle. \label{eq.finally}
\end{align}
Given any $\alpha_0 \in J$ and ${\epsilon} \neq 0$, set $\alpha = \alpha_0 + {\epsilon}$, $\omega_{\epsilon} = \theta_{\alpha,i}(\omega)$, and $\omega_0 = \theta_{\alpha_0,i}(\omega)$. For $\phi \in C^3(X)$, consider the difference quotient
\[
\langle \Delta_{\epsilon}(\omega),\phi\rangle = \frac{1}{{\epsilon}} \Big[ \langle(\mathcal{K}_{\alpha,i} \bm\nu)_\omega,\phi\rangle - \langle(\mathcal{K}_{\alpha_0,i} \bm\nu)_\omega,\phi\rangle \Big].
\]
We decompose 
$$\langle \Delta_{\epsilon}(\omega),\phi\rangle - \langle (\dot{\mathcal{K}}_{\alpha_0,i} \bm\nu)_\omega, \phi \rangle
=\langle A_{\epsilon}(\omega),\phi\rangle +\langle B_{\epsilon}(\omega),\phi\rangle +\langle C_{\epsilon}(\omega),\phi\rangle +\langle D_{\epsilon}(\omega),\phi\rangle
$$ 
with
\begin{align*}
\langle A_{\epsilon}(\omega), \phi \rangle &= \left( \frac{p_{\alpha,i}(\omega) - p_{\alpha_0,i}(\omega)}{{\epsilon}} \right) \langle \nu_{\omega_{\epsilon}}, \phi \circ g_{\alpha, \omega_{\epsilon}} \rangle - \dot{p}_{\alpha_0,i}(\omega) \langle \nu_{\omega_0}, \phi \circ g_{\alpha_0, \omega_0} \rangle, \\[2mm]
\langle B_{\epsilon}(\omega), \phi \rangle &= p_{\alpha_0,i}(\omega) \left[ \left\langle \frac{\nu_{\omega_{\epsilon}} - \nu_{\omega_0}}{{\epsilon}}, \phi \circ g_{\alpha, \omega_{\epsilon}} \right\rangle - \langle \nu'_{\omega_0} \dot{\theta}_{\alpha_0,i}(\omega), \phi \circ g_{\alpha_0, \omega_0} \rangle \right], \\[2mm]
\langle C_{\epsilon}(\omega), \phi \rangle &= p_{\alpha_0,i}(\omega) \left\langle \nu_{\omega_0},  \frac{\phi \circ g_{\alpha, \omega_{\epsilon}} - \phi \circ g_{\alpha_0, \omega_{\epsilon}}}{{\epsilon}} - D\phi(g_{\alpha_0, \omega_0}) \bm \cdot \partial_\alpha g_{\alpha_0, \omega_0} \right\rangle, \\[2mm]
\langle D_{\epsilon}(\omega), \phi \rangle &= p_{\alpha_0,i}(\omega) \left\langle \nu_{\omega_0}, \frac{\phi \circ g_{\alpha_0, \omega_{\epsilon}} - \phi \circ g_{\alpha_0, \omega_0}}{{\epsilon}} - D\phi(g_{\alpha_0, \omega_0}) \bm\cdot \partial_{\omega} g_{\alpha_0, \omega_0} \dot{\theta}_{\alpha_0,i}(\omega) \right\rangle.
\end{align*}
We need to show that each of these four terms converges to $0$ as $\epsilon\to 0$, uniformly in $\omega \in \Omega$ and $\phi \in C^3(X)$ with $\|\phi\|_{C^3} \le 1$.

\medskip
\noindent{1. Term $A_{\epsilon}$:}
  By the mean value theorem applied to   $\alpha \mapsto p_{\alpha,i}(\omega)$, there exists $\alpha' = \alpha_0 + {\epsilon}'$ with $|{\epsilon}'| < |{\epsilon}|$ such that $$\frac{1}{\epsilon}(p_{\alpha,i}(\omega) - p_{\alpha_0,i}(\omega)) = \dot{p}_{\alpha',i}(\omega).$$ Substituting this into the expression for $\langle A_{\epsilon}(\omega), \phi \rangle$, we decompose  
\begin{align*}
|\langle &A_{\epsilon}(\omega), \phi \rangle| = | \dot{p}_{\alpha',i}(\omega) \langle \nu_{\omega_\epsilon}, \phi \circ g_{\alpha, \omega_{\epsilon}} \rangle - \dot{p}_{\alpha_0,i}(\omega) \langle \nu_{\omega_0}, \phi \circ g_{\alpha_0, \omega_0} \rangle | \\
&\le | (\dot{p}_{\alpha',i}(\omega) - \dot{p}_{\alpha_0,i}(\omega)) \langle \nu_{\omega_{\epsilon}}, \phi \circ g_{\alpha, \omega_{\epsilon}} \rangle | + | \dot{p}_{\alpha_0,i}(\omega) ( \langle \nu_{\omega_{\epsilon}}, \phi \circ g_{\alpha, \omega_{\epsilon}} \rangle - \langle \nu_{\omega_0}, \phi \circ g_{\alpha_0, \omega_0} \rangle ) | \\
&\le \underbrace{|\dot{p}_{\alpha',i}(\omega) - \dot{p}_{\alpha_0,i}(\omega)|}_{\text{(I)}} 
\|\phi\|_{C^0} + \|\dot{p}_{\alpha_0,i}\|_{C^0} \underbrace{| \langle \nu_{\omega_{\epsilon}}, \phi \circ g_{\alpha, \omega_{\epsilon}} \rangle - \langle \nu_{\omega_0}, \phi \circ g_{\alpha_0, \omega_0} \rangle |}_{\text{(II)}}.
\end{align*}
Regarding term (II), we apply a   telescoping
\begin{align*}
| \langle \nu_{\omega_{\epsilon}}, \phi \circ g_{\alpha, \omega_{\epsilon}} \rangle  - \langle \nu_{\omega_0}, \phi \circ g_{\alpha_0, \omega_0} \rangle | &\le | \langle \nu_{\omega_{\epsilon}} - \nu_{\omega_0}, \phi \circ g_{\alpha, \omega_{\epsilon}} \rangle | + | \langle \nu_{\omega_0}, \phi \circ g_{\alpha, \omega_{\epsilon}} - \phi \circ g_{\alpha_0, \omega_0} \rangle | \\
&\le \underbrace{\|\nu_{\omega_{\epsilon}} - \nu_{\omega_0}\|_{2^*} \|\phi \circ g_{\alpha, \omega_{\epsilon}} \|_{C^2}}_{\text{(III)}} +   \underbrace{\|\phi \circ g_{\alpha, \omega_{\epsilon}} - \phi \circ g_{\alpha_0, \omega_0}\|_{C^0}}_{\text{(IV)}}.
\end{align*}
It follows from  Lemma~\ref{le.aux} that
\begin{equation}\label{eq.compos}
 \|\phi \circ g_{\alpha, \omega_{\epsilon}} \|_{C^2}\le  \max \big\{ 1, \diam(X)+ \|Dg_{\alpha, \omega_{\epsilon}}\|_{C^0} + \|D^2 g_{\alpha, \omega_{\epsilon}}\|_{C^0} + \|Dg_{\alpha, \omega_{\epsilon}}\|_{C^0}^2 \big\}\|\phi\|_{C^2},
\end{equation}
which together with  the mean value theorem applied to $\alpha \mapsto \nu_{\theta_{\alpha,i}(\omega)}$, yields
 $$
  \text{(III)}\le \epsilon   \|\bm\nu'\|_2 \| {\dot\theta}_{\alpha,i} \|_{C^0} \max \big\{ 1,  \diam(X)+\|Dg_{\alpha, \omega_{\epsilon}}\|_{C^0} + \|D^2 g_{\alpha, \omega_{\epsilon}}\|_{C^0} + \|Dg_{\alpha, \omega_{\epsilon}}\|_{C^0}^2 \big\}\|\phi\|_{C^2}
  $$ 
  By the mean value theorem, we also have
  $$
  \text{(IV)}
  \le
  \|\phi\|_{C^1}\|g_{\alpha, \omega_{\epsilon}} -  g_{\alpha_0, \omega_0}\|_{C_0}.
  $$
By the expressions obtained for (I)--(IV), taking the supremum over  all $\phi$ with $\|\phi\|_{C^3} \le 1$ and using~\ref{as.full}-\ref{as.p}, we conclude  that $\|A_\epsilon(\omega)\|_{3^*} \to 0$ uniformly in~$\omega$ as $\epsilon \to 0$.

\medskip
\noindent{2. Term $ B_\epsilon$:}
By the mean value theorem applied to the map $\alpha \mapsto \nu_{\theta_{\alpha,i}(\omega)}$, there exists some $\alpha'=\alpha_0+ \epsilon' $ with $| \epsilon'|<| \epsilon|$ such that $$\frac{1}{ \epsilon}(\nu_{\omega_{\epsilon}} - \nu_{\omega_0}) = \nu'_{\omega_{ \epsilon'}} \dot{\theta}_{\alpha',i}(\omega).$$
 Substituting this in $\langle B_ \epsilon(\omega),\phi\rangle$, we get 
\begin{align*}
\langle &B_\epsilon(\omega),\phi\rangle  = | \langle \nu'_{\omega_{\epsilon'}} \dot{\theta}_{\alpha',i}(\omega), \phi \circ g_{\alpha, \omega_{\epsilon}} \rangle - \langle \nu'_{\omega_0} \dot{\theta}_{\alpha_0,i}(\omega), \phi \circ g_{\alpha_0, \omega_0} \rangle | \\
&\le | \langle \nu'_{\omega_{\epsilon'}} \dot{\theta}_{\alpha',i}(\omega) - \nu'_{\omega_0} \dot{\theta}_{\alpha_0,i}(\omega), \phi \circ g_{\alpha, \omega_{\epsilon}} \rangle | + | \langle \nu'_{\omega_0} \dot{\theta}_{\alpha_0,i}(\omega), \phi \circ g_{\alpha, \omega_{\epsilon}} - \phi \circ g_{\alpha_0, \omega_0} \rangle | \\
&\le
 \underbrace{\| \nu'_{\omega_{\epsilon'}} \dot{\theta}_{\alpha',i}(\omega) - \nu'_{\omega_0} \dot{\theta}_{\alpha_0,i}(\omega) \|_{2^*}}_{\text{(I)}} \|\phi \circ g_{\alpha, \omega_{\epsilon}} \|_{C^2} 
 +
  \|\bm\nu'\|_2 \| {\dot\theta}_{\alpha_0,i} \|_{C^0}\underbrace{ \| \phi \circ g_{\alpha, \omega_{\epsilon}} - \phi \circ g_{\alpha_0, \omega_0} \|_{C^2}}_{\text{(II)}}.
\end{align*}
To properly bound term  (I), note   that
\begin{align*}
  \| \nu'_{\omega_{\epsilon'}} \dot{\theta}_{\alpha',i}(\omega) - \nu'_{\omega_0} \dot{\theta}_{\alpha_0,i}(\omega) \|_{2^*}   
&\le  \| (\nu'_{\omega_{\epsilon'}} - \nu'_{\omega_0}) \dot{\theta}_{\alpha',i}(\omega) \|_{2^*} + \| \nu'_{\omega_0} (\dot{\theta}_{\alpha',i}(\omega) - \dot{\theta}_{\alpha_0,i}(\omega)) \|_{2^*}     \\
&\le   \| \nu'_{\omega_{\epsilon'}} - \nu'_{\omega_0} \|_{2^*} |\dot{\theta}_{\alpha',i}(\omega)| + \| \nu'_{\omega_0} \|_{2^*} |\dot{\theta}_{\alpha',i}(\omega) - \dot{\theta}_{\alpha_0,i}(\omega)|   \\
&\le   \| \nu'_{\omega_{\epsilon'}} - \nu'_{\omega_0} \|_{2^*} \| {\dot\theta}_{\alpha',i} \|_{C^0} + \| \bm\nu'  \|_2 |\dot{\theta}_{\alpha',i}(\omega) - \dot{\theta}_{\alpha_0,i}(\omega)|   .
\end{align*}
Regarding term (II), Lemma~\ref{le.aux} yields the estimate
\[
\bigl\|
\phi \circ g_{\alpha,\theta_{\alpha,i}(\omega)}
-
\phi \circ g_{\alpha_0,\theta_{\alpha_0,i}(\omega)}
\bigr\|_{C^2}
\le
C
\sum_{k=0}^2
\| D^k g_{\alpha,\theta_{\alpha,i}(\omega)} - D^k g_{\alpha_0,\theta_{\alpha_0,i}(\omega)} \|_{C^0},
\]
where 
\begin{align*}
C
=
\max\Big\{
\|\phi\|_{C^1},
\; &
\|\phi\|_{C^2}
\bigl(
1
+
\|g_{\alpha,\theta_{\alpha,i}(\omega)}\|_{C^2}
+ 
\|g_{\alpha_0,\theta_{\alpha_0,i}(\omega)}\|_{C^2}
\bigr),
\\
&
\|\phi\|_{C^3}
\bigl(
1
+
\|g_{\alpha,\theta_{\alpha,i}(\omega)}\|_{C^1}^2
+
\|g_{\alpha_0,\theta_{\alpha_0,i}(\omega)}\|_{C^1}^2
\bigr)
\Big\}.
\end{align*}
By the expressions obtained for (I)--(II), taking the supremum over all $\phi$ with $\|\phi\|_{C^3} \le 1$, using~\ref{as.full}--\ref{as.p} together with~\eqref{eq.compos} and the fact that $\bm\nu \in \mathbf E_{2} '$ (and hence $\bm\nu$ is uniformly continuous), we conclude that $\|B_\epsilon(\omega)\|_{3^*} \to 0$ uniformly in~$\omega$ as $\epsilon \to 0$.

\medskip
\noindent{3. Term $C_\epsilon$:} 
By the fundamental theorem of calculus applied to the map $\alpha \mapsto \phi \circ g_{\alpha, \omega_{\epsilon}}$, for each $x \in X$ we have
\[
\frac{1}{\epsilon}(\phi(g_{\alpha_0+\epsilon, \omega_{\epsilon}}(x)) - \phi(g_{\alpha_0, \omega_{\epsilon}}(x))) = \int_0^1 \nabla \phi ( g_{\alpha_0+t\epsilon, \omega_{\epsilon}}(x) ) \cdot \partial_\alpha g_{\alpha_0+t\epsilon, \omega_{\epsilon}}(x) \, dt.
\]
Substituting this identity into the expression for $\langle C_\epsilon(\omega), \phi \rangle$ and applying Fubini's theorem to exchange the action of the measure $\nu_{\omega_0}$ with the integral over $t \in [0,1]$, we write
\begin{align*}
\langle C_\epsilon(\omega), \phi \rangle &= \int_0^1 \Big( \langle \nu_{\omega_0}, (\nabla \phi \circ g_{\alpha_0+t\epsilon, \omega_{\epsilon}}) \cdot \partial_\alpha g_{\alpha_0+t\epsilon, \omega_{\epsilon}} \rangle - \langle \nu_{\omega_0}, (\nabla \phi \circ g_{\alpha_0, \omega_0}) \cdot \partial_\alpha g_{\alpha_0, \omega_0} \rangle \Big) \, dt.
\end{align*}
We telescope the integrand by adding and subtracting the term $\langle \nu_{\omega_0}, (\nabla \phi \circ g_{\alpha_0, \omega_0}) \cdot \partial_\alpha g_{\alpha_0+t\epsilon, \omega_{\epsilon}} \rangle$. This yields:
\begin{align}
\langle C_\epsilon(\omega), \phi \rangle &= \int_0^1 \langle \nu_{\omega_0}, (\nabla \phi \circ g_{\alpha_0+t\epsilon, \omega_{\epsilon}} - \nabla \phi \circ g_{\alpha_0, \omega_0}) \cdot \partial_\alpha g_{\alpha_0+t\epsilon, \omega_{\epsilon}} \rangle \, dt \nonumber\\
&\quad + \int_0^1 \langle \nu_{\omega_0}, (\nabla \phi \circ g_{\alpha_0, \omega_0}) \cdot (\partial_\alpha g_{\alpha_0+t\epsilon, \omega_{\epsilon}} - \partial_\alpha g_{\alpha_0, \omega_0}) \rangle \, dt.\label{eq.lunch1}
\end{align}
For the first   integrand, we define the vector-valued measure $ {\mu}_{t,\epsilon} = (\partial_\alpha g_{\alpha_0+t\epsilon, \omega_{\epsilon}}) \nu_{\omega_0}$, so that it  represents the difference of the push-forwards of $ {\mu}_{t,\epsilon}$ by the maps $h_1 = g_{\alpha_0+t\epsilon, \omega_{\epsilon}}$ and $h_0 = g_{\alpha_0, \omega_0}$ acting on the gradient field $\nabla \phi$:
\[
 \langle \nu_{\omega_0}, (\nabla \phi \circ g_{\alpha_0+t\epsilon, \omega_{\epsilon}} - \nabla \phi \circ g_{\alpha_0, \omega_0}) \cdot \partial_\alpha g_{\alpha_0+t\epsilon, \omega_{\epsilon}} \rangle = \langle (g_{\alpha_0+t\epsilon, \omega_{\epsilon}})_* {\mu}_{t,\epsilon} - (g_{\alpha_0, \omega_0})_* {\mu}_{t,\epsilon}, \nabla \phi \rangle .
\]
Applying Lemma \ref{le.salvation} component-wise to the vector measure ${\mu}_{t,\epsilon}$ with the test function $\nabla \phi \in C^2(X)$, and using the fact that $\|{\mu}_{t,\epsilon}\|_{0^*} \le \|\partial_\alpha g_{\alpha_0+t\epsilon,\theta_{\alpha,i}}\|_{C^0}  $, we obtain
\begin{align}
| \langle \nu_{\omega_0}, (\nabla \phi &\circ g_{\alpha_0+t\epsilon, \omega_{\epsilon}} - \nabla \phi \circ g_{\alpha_0, \omega_0}) \cdot \partial_\alpha g_{\alpha_0+t\epsilon, \omega_{\epsilon}} \rangle |  \nonumber \\
&\le  \| {\mu}_{t,\epsilon}\|_{0^*} \|\nabla \phi\|_{C^1} \| g_{\alpha_0+t\epsilon,  \theta_{\alpha,i}} - g_{\alpha_0,  \theta_{\alpha_0,i}} \|_{C^0} \nonumber \\
&\le \|\partial_\alpha g_{\alpha_0+t\epsilon,\theta_{\alpha,i}}\|_{C^0}   \|\nabla \phi\|_{C^1} \| g_{\alpha_0+t\epsilon,  \theta_{\alpha,i}} - g_{\alpha_0,  \theta_{\alpha_0,i}} \|_{C^0} \nonumber \\
&\le \|\partial_\alpha g_{\alpha_0+t\epsilon,\theta_{\alpha,i}}\|_{C^0}    \|\phi\|_{C^2} \| g_{\alpha_0+t\epsilon,  \theta_{\alpha,i}} - g_{\alpha_0,  \theta_{\alpha_0,i}} \|_{C^0} .
\label{eq.lunch2}
\end{align}
For the second integral, recalling that $\bm\nu\in\mathbf P$ and using Cauchy-Schwarz inequality, 
\begin{align}
|\langle \nu_{\omega_0}, (\nabla \phi \circ g_{\alpha_0, \omega_0}) & \cdot (\partial_\alpha g_{\alpha_0+t\epsilon, \omega_{\epsilon}} - \partial_\alpha g_{\alpha_0, \omega_0}) \rangle| \nonumber\\
 &\le  \|\nabla \phi\|_{C^0} \|\partial_\alpha g_{\alpha_0+t\epsilon, \theta_{\alpha,i}} - \partial_\alpha g_{\alpha_0, \theta_{\alpha_0,i}}\|_{C^0} \nonumber \\
&\le   \|\phi\|_{C^1} \|\partial_\alpha g_{\alpha_0+t\epsilon, \theta_{\alpha,i}} - \partial_\alpha g_{\alpha_0, \theta_{\alpha_0,i}}\|_{C^0}.
\label{eq.lunch3}
\end{align}
Taking the supremum over   $\|\phi\|_{C^3} \le 1$ in \eqref{eq.lunch1}--\eqref{eq.lunch3}, we conclude:
\begin{align*}
\| C_\epsilon(\omega) \|_{3^*} \le    \int_0^1 \Big( &\|\dot g_{\alpha_0,\theta_{\alpha_0,i}}\|_{C^0} \| g_{\alpha_0+t\epsilon, \omega_{\epsilon}} - g_{\alpha_0, \omega_0} \|_{C^0} 
+ \| \partial_\alpha g_{\alpha_0+t\epsilon, \omega_{\epsilon}} - \partial_\alpha g_{\alpha_0, \omega_0} \|_{C^0} \Big) \, dt.
\end{align*}
Since~\ref{as.g}  holds, the integrand  above vanishes uniformly in $\omega\in\Omega$, as $\epsilon \to 0$. 

\medskip
\noindent{4. Term $D_\epsilon$:} 
By the mean value theorem applied to $\alpha \mapsto \theta_\alpha(\omega)$, for each $\epsilon\neq0$ there exists   $\alpha'=\alpha_0+ \epsilon' $ with $| \epsilon'|<| \epsilon|$ such that 
\[ 
\frac{\theta_{\alpha_0+\epsilon}(\omega) - \theta_{\alpha_0}(\omega) }\epsilon= \dot{\theta}_{\alpha'}(\omega) . 
\]
Let $\omega_{\epsilon} = \theta_{\alpha_0+\epsilon}(\omega)$. We express the difference quotient in $\langle D_\epsilon(\omega), \phi \rangle$ using the fundamental theorem of calculus on the map $\omega' \mapsto \phi \circ g_{\alpha_0, \omega'}$ along the path from $\omega_0$ to $\omega_{\epsilon}$:
\begin{align}
\langle D_\epsilon(\omega), \phi \rangle = \dot{\theta}_{\alpha'}(\omega) \int_0^1 &\langle \nu_{\omega_0}, \nabla \phi \circ g_{\alpha_0, \omega_t} \cdot \partial_{\omega'} g_{\alpha_0, \omega_t} \rangle \, dt \nonumber \\
&\quad - \dot{\theta}_{\alpha_0}(\omega) \langle \nu_{\omega_0}, \nabla \phi \circ g_{\alpha_0, \omega_0} \cdot \partial_{\omega'} g_{\alpha_0, \omega_0} \rangle, \label{eq.dinner1}
\end{align}
where $\omega_t = \omega_0 + t(\omega_{\epsilon} - \omega_0)$. We telescope the expression by adding and subtracting the term $\dot{\theta}_{\alpha'}(\omega) \langle \nu_{\omega_0}, \nabla \phi \circ g_{\alpha_0, \omega_0} \cdot \partial_{\omega'} g_{\alpha_0, \omega_t} \rangle$:
\begin{align}
\langle D_\epsilon(\omega), \phi \rangle &= \dot{\theta}_{\alpha'}(\omega) \int_0^1 \langle \nu_{\omega_0}, (\nabla \phi \circ g_{\alpha_0, \omega_t} - \nabla \phi \circ g_{\alpha_0, \omega_0}) \cdot \partial_{\omega'} g_{\alpha_0, \omega_t} \rangle \, dt \nonumber \\
&\quad + \langle \nu_{\omega_0}, \nabla \phi \circ g_{\alpha_0, \omega_0} \cdot (\dot{\theta}_{\alpha'}(\omega) \partial_{\omega'} g_{\alpha_0, \omega_t} - \dot{\theta}_{\alpha_0}(\omega) \partial_{\omega'} g_{\alpha_0, \omega_0}) \rangle. \label{eq.dinner2}
\end{align}
Defining  the vector-valued measure $\mu_{t,\epsilon} = (\partial_{\omega'} g_{\alpha_0, \omega_t}) \nu_{\omega_0}$, the integrand is the difference of push-forwards of $\mu_{t,\epsilon}$ by $h_1 = g_{\alpha_0, \omega_t}$ and $h_0 = g_{\alpha_0, \omega_0}$. Applying Lemma~\ref{le.salvation} component-wise to the vector measure $\mu_{t,\epsilon}$ with the test function $\nabla \phi \in C^2(X)$, and using   that $\|\mu_{t,\epsilon}\|_{0^*} \le \|\partial_{\omega'} g_{\alpha_0, \theta_{\alpha,i}}\|_{C^0}  $, we obtain:
\begin{align}
| \dot{\theta}_{\alpha'}(\omega) \langle (g_{\alpha_0, \omega_t})_* \mu_{t,\epsilon} &- (g_{\alpha_0, \omega_0})_* \mu_{t,\epsilon}, \nabla \phi \rangle | \nonumber \\
&\le |\dot{\theta}_{\alpha'}(\omega)| \|\mu_{t,\epsilon}\|_{0^*} \|\nabla \phi\|_{C^1} \| g_{\alpha_0, \omega_t} - g_{\alpha_0, \omega_0} \|_{C^0} \nonumber \\
&\le |\dot{\theta}_{\alpha'}(\omega)| \|\partial_{\omega'} g\|_{C^0}   \|\phi\|_{C^2} \| g_{\alpha_0, \omega_t} - g_{\alpha_0, \omega_0} \|_{C^0}. \label{eq.dinner3}
\end{align}
For the second part of \eqref{eq.dinner2}, we have:
\begin{align}
|\langle \nu_{\omega_0}, \nabla \phi \circ g_{\alpha_0, \omega_0} \cdot &(\dot{\theta}_{\alpha'}(\omega) \partial_{\omega'} g_{\alpha_0, \omega_t} - \dot{\theta}_{\alpha_0}(\omega) \partial_{\omega'} g_{\alpha_0, \omega_0}) \rangle| \nonumber \\
&\le   \|\phi\|_{C^1} \| \dot{\theta}_{\alpha'}(\omega) \partial_{\omega'} g_{\alpha_0, \omega_t} - \dot{\theta}_{\alpha_0}(\omega) \partial_{\omega'} g_{\alpha_0, \omega_0} \|_{C^0}. \label{eq.dinner4}
\end{align}
Taking the supremum over $\|\phi\|_{C^3} \le 1$ and using \ref{as.full}--\ref{as.theta},  we conclude that $\|D_\epsilon(\omega)\|_{3^*}$   vanishes  uniformly in $\omega$  as $\epsilon\to 0$. This finishes the proof of the first item.

To prove the second item,  using that  $\bm\nu\in\mathbf P \cap\mathbf E_{2} '$ and 
   the expression obtained in~\eqref{eq.finally}, we deduce   for any $\phi \in C^3(X)$ 
\begin{align*}
|\langle  (\dot{\mathcal{K}}_{\alpha,i} \bm\nu)_\omega,\phi\rangle | \le    
&\; |\dot p_{\alpha,i}(\omega)| \cdot \| \nu_{\theta_{\alpha,i}(\omega)}\|_{0^*} \|\phi   \|_{C^0} \\
&+ p_{\alpha,i}(\omega)\cdot \| \nu'_{\theta_{\alpha,i}(\omega)} \dot{\theta}_{\alpha,i}(\omega)\|_{2^*} \|\phi \circ g_{\alpha, \theta_{\alpha,i}(\omega)}\|_{C^2} \\
&+ p_{\alpha,i}(\omega) \cdot  \| \nu_{\theta_{\alpha,i}(\omega)}\|_{0^*} \|D \phi \circ g_{\alpha, \theta_{\alpha,i}(\omega)} \bm\cdot \partial_\alpha g_{\alpha, \theta_{\alpha,i}(\omega)} \|_{C^0} \\
&+ p_{\alpha,i}(\omega) \cdot \| \nu_{\theta_{\alpha,i}(\omega)}\|_{0^*} \|D \phi \circ g_{\alpha, \theta_{\alpha,i}(\omega)} \bm\cdot \partial_{\omega'} g_{\alpha, \theta_{\alpha,i}(\omega)}  \dot{\theta}_{\alpha,i}(\omega) \|_{C^0} .\end{align*}
 Using  Lemma~\ref{le.aux}, we get
\begin{equation*}
 \|\phi \circ g_{\alpha, \theta_{\alpha,i} } \|_{C^2}\le  \underbrace{\max \big\{ 1, \diam(X)+ \|Dg_{\alpha, \theta_{\alpha,i} }\|_{C^0} + \|D^2 g_{\alpha, \theta_{\alpha,i} }\|_{C^0} + \|Dg_{\alpha, \theta_{\alpha,i} }\|_{C^0}^2 \big\}}_{C(g_{\alpha, \theta_{\alpha,i}})}\|\phi\|_{C^2},
\end{equation*}
By Cauchy-Schwarz inequality, we have
\begin{equation*}
\|D \phi \circ g_{\alpha, \theta_{\alpha,i}(\omega)} \bm\cdot \partial_\alpha g_{\alpha, \theta_{\alpha,i}(\omega)} \|_{C^0} \le \|\phi\|_{C^1}\| \partial_\alpha g_{\alpha, \theta_{\alpha,i}(\omega)} \|_{C^0}
\end{equation*}
and 
\begin{equation*}
\|D \phi \circ g_{\alpha, \theta_{\alpha,i}(\omega)} \bm\cdot \partial_{\omega'} g_{\alpha, \theta_{\alpha,i}(\omega)}  \dot{\theta}_{\alpha,i}(\omega) \|_{C^0} \le \|\phi\|_{C^1}\| \partial_{\omega'} g_{\alpha, \theta_{\alpha,i}(\omega)}  \|_{C^0}|  \dot{\theta}_{\alpha,i}(\omega)|.
\end{equation*}
Taking the supremum over $\|\phi\|_{C^3} \le 1$  and  over $\omega\in\Omega$, we get
\begin{align*}
\|\dot{\mathcal{K}}_{\alpha,i} \bm\nu\|_{3^*} \le    
&\; \|\dot p_{\alpha,i}\|_{C^0}  \| \bm\nu\|_{\mathrm L}  \\
&+ \|  p_{\alpha,i}\|_{C^0} \| \bm\nu'\|_2 C(g_{\alpha, \theta_{\alpha,i}}) \\
&+  \| p_{\alpha,i}\|_{C^0}  \| \bm\nu \|_{\mathrm L} \| \partial_\alpha g_{\alpha, \theta_{\alpha,i} } \|_{C^0} \\
&+  \|p_{\alpha,i}\|_{C^0} \| \bm\nu \|_{\mathrm L} \| \partial_{\omega'} g_{\alpha, \theta_{\alpha,i} }  \|_{C^0}\|  \dot{\theta}_{\alpha,i}\|_{C^0} .\end{align*}
Using~\ref{as.full}--\ref{as.p}, we get the  convergence of the series in $ \mathbf{E}_3^0$ uniformly in $\alpha$.
\end{proof}

\begin{corollary}\label{co.diffK}
The map $\alpha \mapsto \mathcal K_\alpha \bm\nu_{\alpha_0}$ is differentiable in $\mathbf{E}_3^0$ at $\alpha_0$.
\end{corollary}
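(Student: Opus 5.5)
The corollary should follow by applying Proposition~\ref{pr.diffKi} to the section $\bm\nu=\bm\nu_{\alpha_0}$ and then summing over the branches. The first step is to check that $\bm\nu_{\alpha_0}$ is admissible there, i.e.\ that $\bm\nu_{\alpha_0}\in\mathbf P\cap\mathbf E_2'$. It is in $\mathbf P$ because it is the fixed point from Corollary~\ref{co.fixedpoint}, and it is in fact in $\mathbf P_0$ by Corollary~\ref{co.KP0}. Proposition~\ref{pr.differfixed} shows that $\omega\mapsto\nu_{\alpha_0,\omega}$ is $C^1$ into $C^2(X)^*$, with derivative $\bm\xi^*\in\mathbf E_2^0$ bounded in sup norm. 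Hence $\|\bm\nu_{\alpha_0}\|_2'=\|\bm\nu_{\alpha_0}\|_2+\|\bm\nu_{\alpha_0}'\|_2<\infty$, so $\bm\nu_{\alpha_0}\in\mathbf E_2'$. Note that $\bm\nu_{\alpha_0}$ is a fixed section and does not move with $\alpha$; this matters, because the proposition treats $\alpha\mapsto\mathcal K_{\alpha,i}\bm\nu$ for a fixed $\bm\nu$.

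Proposition~\ref{pr.diffKi}(1) then makes each map $\alpha\mapsto\mathcal K_{\alpha,i}\bm\nu_{\alpha_0}$ differentiable in $\mathbf E_3^0$, with derivative $\dot{\mathcal K}_{\alpha,i}\bm\nu_{\alpha_0}$ given by \eqref{eq.finally}. Each term $\mathcal K_{\alpha,i}\bm\nu_{\alpha_0}$ lies in $\mathbf E_3$, not in $\mathbf E_3^0$, so strictly we only need differentiability in $\mathbf E_3$. The differences $\mathcal K_{\alpha,i}\bm\nu_{\alpha_0}-\mathcal K_{\alpha_0,i}\bm\nu_{\alpha_0}$ need not vanish on constants, because the weights $p_{\alpha,i}$ move. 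After summing over $i$, however, $\sum_i p_{\alpha,i}=1$ makes the full difference quotients of $\mathcal K_\alpha\bm\nu_{\alpha_0}$ lie in $\mathbf E_3^0$.

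Next comes the standard term-by-term differentiation argument for series in a Banach space. Set $F_N(\alpha)=\sum_{i\in F}\mathcal K_{\alpha,i}\bm\nu_{\alpha_0}$ for finite sets $F\subset\mathbb I$ exhausting $\mathbb I$. Each $F_N$ is differentiable, and Proposition~\ref{pr.contK}(2) gives $F_N(\alpha)\to\mathcal K_\alpha\bm\nu_{\alpha_0}$ in $\mathbf E_2$, hence in $\mathbf E_3$. Proposition~\ref{pr.diffKi}(2) says the derivative series $\sum_i\dot{\mathcal K}_{\alpha,i}\bm\nu_{\alpha_0}$ converges in $\mathbf E_3$ uniformly in $\alpha$.

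To pass to the limit, I would avoid the mean value theorem in Banach spaces and argue with a fixed tail. Use the estimate at the end of the proof of Proposition~\ref{pr.diffKi}, which bounds $\|\dot{\mathcal K}_{\alpha,i}\bm\nu_{\alpha_0}\|_3$ by a summable quantity independent of $\alpha$, controlled by \ref{as.theta}, \ref{as.g} and \ref{as.p}. By the mean value inequality for each finite piece, the tail difference quotient $\big\|\sum_{i\notin F}(\mathcal K_{\alpha,i}-\mathcal K_{\alpha_0,i})\bm\nu_{\alpha_0}\big\|_3/|\alpha-\alpha_0|$ is then bounded by $\sum_{i\notin F}\sup_\alpha\|\dot{\mathcal K}_{\alpha,i}\bm\nu_{\alpha_0}\|_3$. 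The mean value inequality applies since each finite piece is differentiable on all of $J$ by item (1). This tail is small uniformly in $\alpha$. The finite part converges by item (1). An $\epsilon/3$ argument then gives differentiability of $\mathcal K_\alpha\bm\nu_{\alpha_0}$ at $\alpha_0$ with derivative $\sum_i\dot{\mathcal K}_{\alpha_0,i}\bm\nu_{\alpha_0}$.

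Finally, the derivative vanishes on constants. Testing \eqref{eq.finally} with $\phi=1$ leaves only the terms $\dot p_{\alpha_0,i}(\omega)$, because $\nu'$ annihilates constants and $D\phi=0$. These sum to $\frac{d}{d\alpha}\sum_ip_{\alpha,i}=0$; the exchange of sum and derivative is justified by the second bound in \ref{as.p}. So the derivative lies in $\mathbf E_3^0$, which is closed in $\mathbf E_3$.

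The main obstacle is the interchange of the infinite sum with the derivative, specifically getting an $\alpha$-uniform summable bound on the tails of the difference quotients. I expect the explicit bound at the end of Proposition~\ref{pr.diffKi}, combined with the summability hypotheses, to supply exactly this.
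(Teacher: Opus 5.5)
Your proposal is correct and follows the paper's route: the paper's proof says that Proposition~\ref{pr.differfixed} places $\bm\nu_{\alpha_0}$ in $\mathbf P\cap\mathbf E_2'$, and that Proposition~\ref{pr.diffKi} then gives the claim by term-by-term differentiation of the series. Your explicit tail estimate via the mean value inequality spells out the ``standard calculus'' the paper leaves implicit. So does your remark that each term $\mathcal K_{\alpha,i}\bm\nu_{\alpha_0}$ lies only in $\mathbf E_3$, with $\sum_i p_{\alpha,i}=1$ restoring membership in the closed subspace $\mathbf E_3^0$ after summation, which also quietly corrects the paper's statement of Proposition~\ref{pr.diffKi}(1).
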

\begin{proof}
Since Proposition~\ref{pr.differfixed} gives    $\bm\nu_{\alpha_0}\in \mathbf{E}_2'$,
the conclusion  follows from Proposition~\ref{pr.diffKi} below,  by standard calculus in the Banach space $\mathbf{E}_3^0$.
\end{proof}

\section{Base maps with inducing schemes}\label{se.inducing}

In this section, we prove Theorem~\ref{thm:inducedLR}. 
Consider the skew-product map 
\[
T_\alpha: \Omega \times X \to \Omega \times X, 
\] 
as in~\eqref{eq.skewmap}, 
and the induced map 
\begin{equation*}
\bar T_\alpha : \bar\Omega \times X \to \bar\Omega \times X,
\end{equation*}
as in~\eqref{eq.induceT}. 
%
Assume that \ref{as.full}--\ref{as.p} hold for the induced system $\bar T_\alpha$, and that \ref{as.induce}--\ref{as.unfold} also hold for the unfolding operator.

\subsection{Sectional transfer  operators}
In this subsection we establish a fundamental relation between the sectional transfer operator of the original skew-product map and that of the induced skew-product map. 
Recall that the sectional transfer  operator   for the original  system acts on a section $\boldsymbol{\nu}$ defined for $\eta_\alpha$-almost every  $\omega\in \Omega$  as
\begin{equation}\label{eq:K-alpha-final}
(\mathcal{K}_\alpha \boldsymbol{\nu})_\omega
= \sum_{\theta \in f_\alpha^{-1}(\{\omega\})}
\frac{\rho_\alpha(\theta)}{\rho_\alpha(\omega) |f'_\alpha(\theta)|} \,(g_{\alpha,\theta})_* \nu_\theta,
\end{equation}
 whilst the sectional transfer operator for the induced system  acts on a section $\bar{\boldsymbol{\nu}}$ for $\bar\eta_\alpha$-almost every  $\omega\in\bar\Omega$ as
\begin{equation}\label{eq:K-hat-alpha-final}
(\bar{\mathcal{K}}_\alpha \bar{\boldsymbol{\nu}})_\omega
= \sum_{\theta \in \bar{f}_\alpha^{-1}(\{\omega\})}
\frac{\bar{\rho}_\alpha(\theta)}{\bar{\rho}_\alpha(\omega) |\bar{f}_\alpha'(\theta)|} \,(\bar{g}_{\alpha,\theta})_* \bar{\nu}_\theta.
\end{equation}
For any $k\ge 1$ and \(\theta \in f_\alpha^{-k}(\{\omega\})\), define the accumulated weight along the \(k\)-step path as
\begin{equation}\label{eq.weights}
W_\alpha(\theta,k)
= \frac{\rho_\alpha(\theta)}{\rho_\alpha(f_\alpha^k(\theta)) \, |(f_\alpha^k)'(\theta)|}.
\end{equation}
Then, the action of \(\mathcal{K}_\alpha^k\) on the  section $\bm\nu$ is given for $\eta_\alpha$-almost every  $\omega\in \Omega$ by
 \begin{equation}\label{eq.operatork}
(\mathcal{K}_\alpha^k \boldsymbol{\nu})_\omega = \sum_{\theta \in f_\alpha^{-k}(\{\omega\})} 
W_\alpha(\theta,k) \, (g_{\alpha,\theta}^{(k)})_* \nu_\theta,
\end{equation}
with the map $g_{\alpha,\theta}^{(k)}$   as in~\eqref{eq:g(k)-def}.
The following lemma provides an explicit representation of the sectional transfer operator of the induced system in terms of the original operator.

\begin{lemma}\label{lem:indid}
For any section $\bar{\bm\nu}$ defined   on $\bar\Omega$
, we have
\[
\bar{\mathcal{K}}_\alpha \bar{\bm\nu}
=
\sum_{k=1}^{\infty}
\mathcal{K}_{\alpha}^k(
\bar{\bm\nu} \cdot \mathbf{1}_{\{\tau_\alpha = k\}}).
\]
\end{lemma}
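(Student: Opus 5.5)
We prove the identity fibrewise, evaluating both sides at a fixed point $\omega\in\bar\Omega$ and expanding the right-hand side with the explicit formula~\eqref{eq.operatork}. For each $k\ge1$,
\[
\bigl(\mathcal K_\alpha^k(\bar{\bm\nu}\cdot\mathbf 1_{\{\tau_\alpha=k\}})\bigr)_\omega=\sum_{\substack{\theta\in f_\alpha^{-k}(\{\omega\})\\ \theta\in\bar\Omega,\ \tau_\alpha(\theta)=k}} W_\alpha(\theta,k)\,(g^{(k)}_{\alpha,\theta})_*\bar\nu_\theta .
\]
The first step is to identify the index set. Since $\tau_\alpha$ is the first return time to $\bar\Omega$, the points $\theta\in\bar\Omega$ with $\tau_\alpha(\theta)=k$ and $f_\alpha^k(\theta)=\omega$ are exactly the points $\theta\in\bar f_\alpha^{-1}(\{\omega\})$ with $\tau_\alpha(\theta)=k$. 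As $k$ ranges over $\mathbb N$, these sets partition $\bar f_\alpha^{-1}(\{\omega\})$. For such $\theta$ we also have $g^{(k)}_{\alpha,\theta}=\bar g_{\alpha,\theta}$ by definition of the induced fibre map, so the push-forwards agree term by term. Summing over $k$, and using absolute convergence of the series (the weights are nonnegative and the push-forwards are uniformly bounded in the relevant dual norm), we may regroup to get a single sum over $\theta\in\bar f_\alpha^{-1}(\{\omega\})$.

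The second step is to match the weights, i.e.\ to show that
\[
W_\alpha(\theta,\tau_\alpha(\theta))=\frac{\bar\rho_\alpha(\theta)}{\bar\rho_\alpha(\omega)\,|\bar f_\alpha'(\theta)|}
\]
for $\theta\in\bar f_\alpha^{-1}(\{\omega\})$. Because $(f_\alpha^{\tau_\alpha(\theta)})'(\theta)=\bar f_\alpha'(\theta)$ on the branch, this reduces to
\[
\frac{\rho_\alpha(\theta)}{\rho_\alpha(\omega)}=\frac{\bar\rho_\alpha(\theta)}{\bar\rho_\alpha(\omega)}\qquad\text{for } \theta,\omega\in\bar\Omega .
\]
This follows from the unfolding formula~\eqref{eq.measures}: restricted to $\bar\Omega$, $\eta_\alpha|_{\bar\Omega}$ is a constant multiple of $\bar\eta_\alpha$, so $\rho_\alpha=c\,\bar\rho_\alpha$ on $\bar\Omega$ almost everywhere. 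Indeed, for a first-return scheme, the only contribution of $(f^k_\alpha)_*(\bar\eta_\alpha|\{\tau_\alpha>k\})$ to $\bar\Omega$ comes from $k=0$, since points with $\tau_\alpha>k\ge1$ have not yet returned at time $k$. The constant $c=1/E$ cancels in the ratio.

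The main obstacle is this density identification. It relies on $\tau_\alpha$ being a genuine first return time, and it holds only almost everywhere, which is harmless since sections are identified modulo $\eta_\alpha$-null sets. If the scheme were not a first return, I would instead verify the identity via duality, using Lemma~\ref{le.duality.general} for $\mathcal K_\alpha$ and its analogue for $\bar{\mathcal K}_\alpha$ and testing against $\Phi$ supported on $\bar\Omega\times X$. The remaining points are routine: null sets where densities vanish or are undefined are excluded from the fibrewise definition, and the extension of $\bar{\bm\nu}$ by zero outside $\bar\Omega$ ensures that only $\theta\in\bar\Omega$ contribute.
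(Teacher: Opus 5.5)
Your proof is correct and is essentially the paper's argument: evaluate at $\omega\in\bar\Omega$, split $\bar f_\alpha^{-1}(\{\omega\})$ according to the value of $\tau_\alpha$, use $\bar g_{\alpha,\theta}=g^{(k)}_{\alpha,\theta}$, and match the induced weights with $W_\alpha(\theta,k)$ via the proportionality of $\rho_\alpha$ and $\bar\rho_\alpha$ on $\bar\Omega$. You justify that proportionality more explicitly than the paper does, noting that for a first-return scheme only the $k=0$ term of the unfolding formula charges $\bar\Omega$.

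One caution concerns your suggested duality fallback for non-first-return schemes: it would not rescue the statement. In that setting $\rho_\alpha|_{\bar\Omega}$ need not be proportional to $\bar\rho_\alpha$, and then the right-hand side need not even send probability sections to probability sections, so the identity itself can fail.
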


\begin{proof}
Consider an $\bar\eta_\alpha$-generic  $\omega \in \bar{\Omega}$. 
Recalling that $\bar{f}_\alpha(\theta) = f_\alpha^{\tau_\alpha(\theta)}(\theta)$, 
we partition the set of preimages in~\eqref{eq:K-hat-alpha-final} according to the first return time $n = \tau_\alpha(\theta)$,
\[
\bar{f}_\alpha^{-1}(\{\omega\}) = \bigcup_{n=1}^{\infty} A_n(\omega), 
\]
with
\[
A_n(\omega) = \{ \theta \in \bar{\Omega} \mid  f_\alpha^n(\theta) = \omega\text{ and } \tau_\alpha(\theta) = n\}.
\]
Since the density $\rho_\alpha$ in $\bar\Omega$ coincides with $\bar\rho_\alpha$, up to a normalising factor, the induced weight satisfies 
\[
\frac{\bar{\rho}_\alpha(\theta)}{\bar{\rho}_\alpha(\omega) |\bar{f}_\alpha'(\theta)|} = W_\alpha(\theta,n),
\]
for each $\theta \in A_n(\omega)$, with $W_\alpha(\theta,n)$ 
 as in~\eqref{eq.weights}. Moreover, 
 $\bar{g}_{\alpha,\theta} = g_{\alpha,\theta}^{(n)}$. Thus, we may write the action of the induced operator as
\begin{equation}\label{eq:transfinduc}
(\bar{\mathcal{K}}_\alpha \bar{\bm\nu})_\omega = \sum_{n=1}^{\infty} \sum_{\theta \in A_n(\omega)} W_\alpha(\theta,n) \big(g_{\alpha,\theta}^{(n)}\big)_* \bar{\nu}_\theta.
\end{equation}
Next, consider the right-hand side of the desired equality,
\[
\mathcal{S}_\omega = \sum_{k=1}^{\infty} \bigl(\mathcal{K}_\alpha^k (\bar{\bm\nu} \cdot \mathbf{1}_{\{\tau_\alpha = k\}})\bigr)_\omega.
\]
By the formula for  the iterated   operator in~\eqref{eq.operatork}, we have for each $k\ge 1$
\[
\bigl(\mathcal{K}_\alpha^k (\bar{\bm\nu} \cdot \mathbf{1}_{\{\tau_\alpha = k\}})\bigr)_\omega 
= \sum_{\theta \in f_\alpha^{-k}(\{\omega\})} W_\alpha(\theta,k) \, \mathbf{1}_{\{\tau_\alpha = k\}}(\theta) \, \big(g_{\alpha,\theta}^{(k)}\big)_* \bar{\nu}_\theta.
\]
Because $\bar{\bm\nu}$ is supported on $\bar{\Omega}$, the term for a fixed $k$ is non-zero only for $\theta \in \bar{\Omega}$ with $f_\alpha^k(\theta) = \omega$ and $\tau_\alpha(\theta) = k$, that is, precisely the points in $A_k(\omega)$.  
Since the sets $A_k(\omega)$ are disjoint and cover all preimages under $\bar{f}_\alpha$, summing over $k$ counts each preimage exactly once, specifically in the term where $k = \tau_\alpha(\theta)$. Thus,
\[
\mathcal{S}_\omega = \sum_{k=1}^{\infty} \sum_{\theta \in A_k(\omega)} W_\alpha(\theta,k) \big(g_{\alpha,\theta}^{(k)}\big)_* \bar{\nu}_\theta.
\]
Comparing this to \eqref{eq:transfinduc}, we see that $\mathcal{S}_\omega = (\bar{\mathcal{K}}_\alpha \bar{\bm\nu})_\omega$, which concludes the proof.
\end{proof}

In the next result, we establish a pointwise property showing that the sectional transfer operator $\mathcal{K}_\alpha$ ``commutes"  with the infinite series defining an unfolded section.

\begin{lemma}\label{lem:pointwise}
Let $\bar{\boldsymbol{\nu}}$ be a section   on $\bar\Omega$. 
For every $\eta_\alpha$-almost every $\omega \in \Omega$, we have 
\[
\Big(
\mathcal{K}_{\alpha}
\sum_{k=0}^{\infty}
\mathcal{K}_{\alpha}^k
(\bar{\boldsymbol{\nu}}\cdot\mathbf{1}_{\{\tau_\alpha>k\}})
\Big)_\omega
=
\sum_{k=0}^{\infty}
\left(
\mathcal{K}_{\alpha}^{k+1}
(\bar{\boldsymbol{\nu}}\cdot\mathbf{1}_{\{\tau_\alpha>k\}})
\right)_\omega .
\]
\end{lemma}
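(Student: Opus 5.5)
The plan is to reduce the identity to an exchange of two sums of nonnegative weights, working pointwise through the explicit kernel representations \eqref{eq:K-alpha-final} and \eqref{eq.operatork}. Fix an $\eta_\alpha$-generic $\omega\in\Omega$. For each $k\ge0$, formula \eqref{eq.operatork} gives the section
\[
\bm\sigma^{(k)}=\mathcal K_\alpha^k(\bar{\bm\nu}\cdot\mathbf 1_{\{\tau_\alpha>k\}}).
\]
Its value at a point $\theta$ is a sum over $\zeta\in f_\alpha^{-k}(\{\theta\})\cap\{\tau_\alpha>k\}\cap\bar\Omega$ of $W_\alpha(\zeta,k)\,(g^{(k)}_{\alpha,\zeta})_*\bar\nu_\zeta$. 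As already noted after \eqref{eq.unfoldopera}, only finitely many $k$ contribute at a given point. This holds because $\theta=f_\alpha^k(\zeta)$ with $\tau_\alpha(\zeta)>k$ means $\theta$ lies on the excursion of $\zeta$ before its first return. I will nevertheless argue without relying on finiteness, since the series may contain countably many terms once preimages are summed.

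First, I will write the left-hand side using \eqref{eq:K-alpha-final}:
\[
\sum_{\theta\in f_\alpha^{-1}(\{\omega\})}\frac{\rho_\alpha(\theta)}{\rho_\alpha(\omega)|f'_\alpha(\theta)|}(g_{\alpha,\theta})_*\Big(\sum_k\sigma^{(k)}_\theta\Big).
\]
Second, I will move the push-forward inside the inner series. Push-forward is linear, and on nonnegative measures it preserves total mass, so it commutes with countable sums of nonnegative measures. Here it suffices to test against bounded measurable $\phi$, since $\langle (g_{\alpha,\theta})_*\sum_k\sigma^{(k)}_\theta,\phi\rangle=\sum_k\langle\sigma^{(k)}_\theta,\phi\circ g_{\alpha,\theta}\rangle$ by monotone or dominated convergence. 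For a general (signed or functional-valued) $\bar{\bm\nu}$ I will instead use absolute convergence, bounding the total variation or norm by the weights. Third, I will exchange the sum over $\theta$ with the sum over $k$. Every weight $\rho_\alpha(\theta)/(\rho_\alpha(\omega)|f'_\alpha(\theta)|)$ and every $W_\alpha(\zeta,k)$ is nonnegative, so Tonelli's theorem for series applies when $\bar{\bm\nu}$ is a probability section. In general I will use absolute convergence dominated by the probability-weighted case, with the bound $\|\bar\nu_\zeta\|$ uniformly controlled.

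Finally, for each fixed $k$, the term $\sum_\theta(\cdots)(g_{\alpha,\theta})_*\sigma^{(k)}_\theta$ is by definition $(\mathcal K_\alpha\bm\sigma^{(k)})_\omega=(\mathcal K_\alpha^{k+1}(\bar{\bm\nu}\cdot\mathbf 1_{\{\tau_\alpha>k\}}))_\omega$. This identification requires that \eqref{eq:K-alpha-final} hold at the generic $\omega$ for the section $\bm\sigma^{(k)}$, which is a countable family, so a single full-measure set of $\omega$ works for all $k$ at once.

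The main obstacle is justifying the interchange of the infinite sums, i.e. absolute convergence. I would control it by evaluating at $\phi\equiv1$ in the positive case. The total mass of the left-hand side is then $\sum_\theta p(\theta)\sum_k\sigma^{(k)}_\theta(X)$, and $\sum_k\sigma^{(k)}_\theta(X)$ is the unfolded density ratio, finite a.e. by integrability of $\tau_\alpha$ in \ref{as.induce} together with \eqref{eq.measures}. The same quantity bounds the signed case through $|\langle\cdot,\phi\rangle|\le\|\phi\|\sup\|\bar\nu\|$ times the nonnegative weights.
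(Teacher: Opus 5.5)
Your argument is correct for probability-valued $\bar{\bm\nu}$, and it takes a genuinely different route from the paper's. That is the only case the paper uses: Lemma~\ref{le.unfoldfixed} applies the identity to the fixed point of $\bar{\mathcal K}_\alpha$ in $\bar{\mathbf P}$.

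\textbf{How the paper argues.} The paper asserts that for each $\theta$ only finitely many $k$ give a nonzero term, on the grounds that each $\sigma\in\bar\Omega$ has $\tau_\alpha(\sigma)>k$ for only finitely many $k$. It then pulls the push-forward through the inner sum and exchanges the sums over $\theta\in f_\alpha^{-1}(\{\omega\})$ and over $k$ ``using only linearity''. That inference swaps quantifiers: finiteness for each $\sigma$ does not give finiteness for each $\theta$, since different $k$ come from different $\sigma$.

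\textbf{The finiteness claim is false.} It fails in the paper's own intermittent application. For every $\theta\in(0,\tfrac12)$ and every $k\ge1$, the point $\sigma=f_{\alpha,1}^{-1}\circ f_{\alpha,0}^{-(k-1)}(\theta)\in\bar\Omega$ satisfies $f_\alpha^k(\sigma)=\theta$ and $\tau_\alpha(\sigma)>k$. This is because the backward left-branch iterates stay below $\theta<\tfrac12$. So infinitely many $k$ contribute at $\theta$. This is consistent with \eqref{eq.unfodlingnew} being a sum over all of $\mathbb I$, and with $\rho_\alpha(\omega)\asymp\omega^{-\alpha}$.

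Your own sentence that ``only finitely many $k$ contribute at a given point'' is therefore false and should be deleted, together with its ``excursion'' justification.

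\textbf{Why your proof still works.} You explicitly avoid relying on finiteness. For a probability-valued section all terms are nonnegative measures. Hence $h_*\bigl(\sum_k\mu_k\bigr)=\sum_k h_*\mu_k$ holds setwise, directly from the definition of push-forward. The double series over $(\theta,k)$, evaluated on each Borel set, can be rearranged by Tonelli in $[0,\infty]$ with no finiteness needed. Your countable intersection of full-measure sets handles the fact that the sections $\bm\sigma^{(k)}$ are only defined almost everywhere. So your nonnegativity argument supplies exactly the justification the paper's proof lacks, and it is what actually makes the lemma true in the intended application.

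\textbf{The signed or functional-valued case needs an extra ingredient.} Your domination argument uses the weight identity $\sum_k\sum_{\zeta}W_\alpha(\zeta,k)\mathbf 1_{\{\tau_\alpha>k\}}(\zeta)=1$ a.e., which is the computation in Lemma~\ref{lem:generalunfold}. You also need the operator norms of $(g^{(k)}_{\alpha,\zeta})_*$ on the relevant dual space to be bounded uniformly in $k$ and $\zeta$, and you did not state this.
\begin{itemize}
\item It holds on $\lip(X)^*$ under \ref{as.contraction}, with bound $\max\{1,\diam(X)+1\}$.
\item It holds on $C^k_0(X)^*$ when $\mathcal C_k(g_{\alpha,\omega})\le 1$ for all $\omega$, for example for the affine fibre maps of the solenoid, by submultiplicativity of the seminorm estimate in Lemma~\ref{lem:push-ck}.
\item It is not automatic for general $C^k(X)^*$-valued sections, so you should add it as a hypothesis in that case.
\end{itemize}
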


\begin{proof}
Define the section
\[
\boldsymbol{\xi}
=
\sum_{k=0}^{\infty}
\mathcal{K}_{\alpha}^k
(\bar{\boldsymbol{\nu}}\cdot\mathbf{1}_{\{\tau_\alpha>k\}}),
\]
where the sum is understood  pointwise.  
According to~\eqref{eq.operatork}, for any  $\theta \in \Omega$,
the term
\[
\bigl(
\mathcal{K}_{\alpha}^k
(\bar{\boldsymbol{\nu}}\cdot\mathbf{1}_{\{\tau_\alpha>k\}})
\bigr)_\theta
\]
is non-zero only if there exists $\sigma \in \bar\Omega$ such that
$f_\alpha^k(\sigma)=\theta$ and $\tau_\alpha(\sigma)>k$.
For any $\sigma \in \bar\Omega$, the inequality $\tau_\alpha(\sigma)>k$ holds
for only finitely many $k$.
Consequently, for each $\theta$, the above series contains only finitely many
non-zero terms and therefore defines a finite sum of measures.
Now, by definition of the sectional transfer operator,
\[
\bigl(\mathcal{K}_\alpha \boldsymbol{\xi}\bigr)_\omega
=
\sum_{\theta \in f_\alpha^{-1}(\{\omega\})}
W_\alpha(\theta,1)\,
(g_{\alpha,\theta})_*
\boldsymbol{\xi}_\theta.
\]
Substituting the definition of $\boldsymbol{\xi}_\theta$, we obtain
\[
\bigl(\mathcal{K}_\alpha \boldsymbol{\xi}\bigr)_\omega
=
\sum_{\theta \in f_\alpha^{-1}(\{\omega\})}
W_\alpha(\theta,1)\,
(g_{\alpha,\theta})_*
\left(
\sum_{k=0}^{\infty}
\bigl(
\mathcal{K}_{\alpha}^k
(\bar{\boldsymbol{\nu}}\cdot\mathbf{1}_{\{\tau_\alpha>k\}})
\bigr)_\theta
\right).
\]
For each fixed $\theta$, the inner sum over $k$ is finite, as seen above.
Therefore, using only the linearity of the push-forward $(g_{\alpha,\theta})_*$,
we may rewrite the above expression as
\[
\bigl(\mathcal{K}_\alpha \boldsymbol{\xi}\bigr)_\omega
=
\sum_{k=0}^{\infty}
\sum_{\theta \in f_\alpha^{-1}(\{\omega\})}
W_\alpha(\theta,1)\,
(g_{\alpha,\theta})_*
\bigl(
\mathcal{K}_{\alpha}^k
(\bar{\boldsymbol{\nu}}\cdot\mathbf{1}_{\{\tau_\alpha>k\}})
\bigr)_\theta.
\]
Finally, by the definition of $\mathcal{K}_\alpha$, the inner sum is exactly
\[
\left(
\mathcal{K}_\alpha
\bigl[
\mathcal{K}_{\alpha}^k
(\bar{\boldsymbol{\nu}}\cdot\mathbf{1}_{\{\tau_\alpha>k\}})
\bigr]
\right)_\omega
=
\left(
\mathcal{K}_{\alpha}^{k+1}
(\bar{\boldsymbol{\nu}}\cdot\mathbf{1}_{\{\tau_\alpha>k\}})
\right)_\omega.
\]
Summing over $k$ yields the desired identity.
\end{proof}

\subsection{Unfolding operator}


Recall that the unfolding operator $\mathcal{U}_\alpha$, introduced in~\eqref{eq.unfoldopera}, allows one to reconstruct a full section $\boldsymbol{\nu}$ on $\Omega$ starting from a section $\bar{\boldsymbol{\nu}}$ defined on the inducing domain $\bar\Omega$, by
\begin{equation*}
\boldsymbol{\nu} = \mathcal{U}_\alpha\bar{\boldsymbol{\nu}} = \sum_{k=0}^{\infty} \mathcal{K}_{\alpha}^k \left( \bar{\boldsymbol{\nu}} \cdot \mathbf{1}_{\{\tau_\alpha > k\}} \right).
\end{equation*}

\begin{lemma}\label{le.unfoldfixed}
If \(\bar{\boldsymbol{\nu}}\) is a fixed point of the \(\bar{\mathcal{K}}_\alpha\), then   \( \mathcal{U}_\alpha\bar{\boldsymbol{\nu}}\) is a  fixed point of   \(\mathcal{K}_\alpha\).
\end{lemma}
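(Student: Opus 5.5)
The proof is a telescoping computation built from Lemma~\ref{lem:pointwise} and Lemma~\ref{lem:indid}. Set $\bm\nu=\mathcal U_\alpha\bar{\bm\nu}$. By Lemma~\ref{lem:pointwise}, for $\eta_\alpha$-almost every $\omega$,
\[
(\mathcal K_\alpha\bm\nu)_\omega=\sum_{k=0}^\infty\bigl(\mathcal K_\alpha^{k+1}(\bar{\bm\nu}\cdot\mathbf 1_{\{\tau_\alpha>k\}})\bigr)_\omega .
\]
We then split the indicator as $\mathbf 1_{\{\tau_\alpha>k\}}=\mathbf 1_{\{\tau_\alpha>k+1\}}+\mathbf 1_{\{\tau_\alpha=k+1\}}$ and use linearity of $\mathcal K_\alpha^{k+1}$. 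After reindexing $j=k+1$, this gives
\[
\mathcal K_\alpha\bm\nu=\sum_{j=1}^\infty\mathcal K_\alpha^{j}(\bar{\bm\nu}\cdot\mathbf 1_{\{\tau_\alpha>j\}})+\sum_{j=1}^\infty\mathcal K_\alpha^{j}(\bar{\bm\nu}\cdot\mathbf 1_{\{\tau_\alpha=j\}}).
\]

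By Lemma~\ref{lem:indid}, the second series equals $\bar{\mathcal K}_\alpha\bar{\bm\nu}$, extended by zero outside $\bar\Omega$. Since $\bar{\bm\nu}$ is a fixed point, this is $\bar{\bm\nu}$. Every $\theta\in\bar\Omega$ has $\tau_\alpha(\theta)\ge1$, so $\bar{\bm\nu}=\mathcal K_\alpha^0(\bar{\bm\nu}\cdot\mathbf 1_{\{\tau_\alpha>0\}})$. This is exactly the missing $k=0$ term of the unfolding series, so adding it to the first series gives back $\mathcal U_\alpha\bar{\bm\nu}=\bm\nu$.

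The only real obstacle is justifying the rearrangement of the series: splitting each term into two series and regrouping. We would use the same pointwise finiteness as in Lemma~\ref{lem:pointwise}. For each $\omega$ and each preimage $\sigma\in\bar\Omega$ with $f_\alpha^k(\sigma)=\omega$, only indices $k<\tau_\alpha(\sigma)$ contribute, so each point gets finitely many terms from each $\sigma$. This may not control the number of $\sigma$'s, which can be countably infinite. To handle that, we would pair everything against a bounded test function and invoke absolute convergence. If the sections are nonnegative measures, all terms are nonnegative and Tonelli allows arbitrary regrouping. For signed sections, we would bound the total variation through the weights $W_\alpha$: by \eqref{eq.operatork} these sum over preimages to quantities controlled by $\rho_\alpha$, and the integrability of $\tau_\alpha$ in \ref{as.induce} together with the unfolding formula \eqref{eq.measures} gives finiteness $\eta_\alpha$-almost everywhere.

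For the case that matters, $\bar{\bm\nu}\in\bar{\mathbf P}$, the positivity argument suffices. In that case we would also note that $\bm\nu_\omega$ is a probability measure. Its total mass is $\sum_k\sum W_\alpha\,\mathbf 1_{\{\tau_\alpha>k\}}$, which equals $1$ after the normalisation by $E$ is absorbed into the ratio $\bar\rho_\alpha/\rho_\alpha$ appearing in the weights, consistent with \eqref{eq.measures}. This confirms that the fixed point lies in $\mathbf P$.
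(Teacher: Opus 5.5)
Your argument is correct and is the same telescoping computation as the paper's. You apply Lemma~\ref{lem:pointwise}, split $\mathbf 1_{\{\tau_\alpha>k\}}$ into the $\{\tau_\alpha=k+1\}$ and $\{\tau_\alpha>k+1\}$ parts, identify the first with $\bar{\mathcal K}_\alpha\bar{\bm\nu}=\bar{\bm\nu}$ via Lemma~\ref{lem:indid}, and recognise the second as $\mathcal U_\alpha\bar{\bm\nu}-\bar{\bm\nu}$ using $\tau_\alpha\ge 1$. Your extra care about regrouping is well placed: the paper asserts that the unfolding series is pointwise finite, but this fails in general (for the intermittent solenoid every $k\ge1$ contributes at each $\omega\in[0,\tfrac12)$), so your positivity/Tonelli justification for $\bar{\bm\nu}\in\bar{\mathbf P}$ is what makes the rearrangements rigorous, including the one inside Lemma~\ref{lem:pointwise}.
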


\begin{proof}
First we show that $\mathcal U_\alpha$ maps a fixed point of \(\bar{\mathcal{K}}_\alpha\) to a fixed point of \({\mathcal{K}}_\alpha\).
Let \(\bar{\boldsymbol{\nu}}\) is a fixed point of the \(\bar{\mathcal{K}}_\alpha\) and set $\bm\nu =\mathcal{U}_\alpha(\bar{\boldsymbol{\nu}})$.
Applying  the   sectional transfer operator \(\mathcal{K}_{\alpha}\) to both sides of the identity  defining $  \mathcal{U}_\alpha(\bar{\boldsymbol{\nu}})$  and using Lemma~\ref{lem:pointwise}, we get
\begin{align}
\mathcal{K}_{\alpha} \boldsymbol{\nu} &= \mathcal{K}_{\alpha} \left( \sum_{k=0}^{\infty} \mathcal{K}_{\alpha}^k (\bar{\boldsymbol{\nu}} \cdot \mathbf{1}_{\{\tau_\alpha > k\}}) \right)\nonumber \\
&= \sum_{k=0}^{\infty} \mathcal{K}_{\alpha}^{k+1} (\bar{\boldsymbol{\nu}} \cdot \mathbf{1}_{\{\tau_\alpha > k\}})\nonumber \\
 \label{eq:shiftedsum}
&= \sum_{k=1}^{\infty} \mathcal{K}_{\alpha}^{k} (\bar{\boldsymbol{\nu}} \cdot \mathbf{1}_{\{\tau_\alpha > k-1\}}).
\end{align}
Now, observe that
\[
\mathbf{1}_{\{\tau_\alpha > k-1\}} = \mathbf{1}_{\{\tau_\alpha = k\}} + \mathbf{1}_{\{\tau_\alpha > k\}}.
\]
Substituting this into \eqref{eq:shiftedsum}, we obtain
\begin{align*}
\mathcal{K}_{\alpha} \boldsymbol{\nu} &= \sum_{k=1}^{\infty} \mathcal{K}_{\alpha}^{k} \left( \bar{\boldsymbol{\nu}} \cdot (\mathbf{1}_{\{\tau_\alpha = k\}} + \mathbf{1}_{\{\tau_\alpha > k\}}) \right) \\
&= \underbrace{\sum_{k=1}^{\infty} \mathcal{K}_{\alpha}^{k} (\bar{\boldsymbol{\nu}} \cdot \mathbf{1}_{\{\tau_\alpha = k\}})}_{\text{(A)}} + \underbrace{\sum_{k=1}^{\infty} \mathcal{K}_{\alpha}^{k} (\bar{\boldsymbol{\nu}} \cdot \mathbf{1}_{\{\tau_\alpha > k\}})}_{\text{(B)}}.
\end{align*}
Since \(\bar{\boldsymbol{\nu}}\) is the fixed point of   \(\bar{\mathcal{K}}_\alpha\), it follows from Lemma~\ref{lem:indid} that
\[
\text{(A)} = \bar{\mathcal{K}}_\alpha \bar{\boldsymbol{\nu}}=\bar{\bm\nu}.
\]
On the other hand,
\[
\text{(B)} = \mathcal U_\alpha\bar{\bm\nu}- \mathcal{K}_\alpha^0(\bar{\boldsymbol{\nu}} \cdot \mathbf{1}_{\{\tau_\alpha > 0\}}) =
 {\bm\nu}-  \bar{\boldsymbol{\nu}} \cdot \mathbf{1}_{\{\tau_\alpha > 0\}}
= {\bm\nu}-  \bar{\boldsymbol{\nu}}  
\]
Combining the results for (A) and (B), we obtain
\[
\mathcal{K}_\alpha \boldsymbol{\nu} = \bar{\boldsymbol{\nu}} + (\boldsymbol{\nu} - \bar{\boldsymbol{\nu}}) = \boldsymbol{\nu}.
\]
This proves that $\boldsymbol{\nu}$ is a fixed point of $\mathcal{K}_\alpha$.
\end{proof}

\begin{lemma}\label{lem:generalunfold}
If $\bar{\boldsymbol{\nu}} \in \bar{\mathbf{P}}$, then $ \mathcal{U}_\alpha \bar{\boldsymbol{\nu}}\in\mathbf P$.
\end{lemma}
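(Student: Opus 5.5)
The plan is to verify the two defining properties of $\mathbf P$ for $\bm\nu=\mathcal U_\alpha\bar{\bm\nu}$: measurability of $\omega\mapsto\nu_\omega$, and that $\nu_\omega\in\mathcal M_1(X)$ for $\eta_\alpha$-almost every $\omega$. By~\eqref{eq.operatork},
\[
\nu_\omega=\sum_{k=0}^\infty\ \sum_{\substack{\theta\in f_\alpha^{-k}(\{\omega\})\cap\bar\Omega\\ \tau_\alpha(\theta)>k}} W_\alpha(\theta,k)\,\bigl(g^{(k)}_{\alpha,\theta}\bigr)_*\bar\nu_\theta ,
\]
and all the weights are nonnegative. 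Push-forwards preserve total mass. So $\nu_\omega$ is a nonnegative Borel measure on $X$, and everything reduces to showing that its total mass equals one.

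First, measurability. Each term $\mathcal K_\alpha^k(\bar{\bm\nu}\cdot\mathbf 1_{\{\tau_\alpha>k\}})$ is measurable, since $\mathcal K_\alpha$ preserves measurability and $\{\tau_\alpha>k\}$ is a measurable set. A pointwise (weak-$*$) limit of partial sums of measurable measure-valued maps is again measurable. This is tested against a countable dense family of continuous functions on $X$, which suffices because $X$ is compact. The sum is pointwise finite as noted after~\eqref{eq.unfoldopera}, but countable sums would be handled by the same argument in any case.

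Next, the mass computation, which is the main point. I will compute $\langle\nu_\omega,1\rangle=\sum_k\sum_\theta W_\alpha(\theta,k)\mathbf 1_{\bar\Omega\cap\{\tau_\alpha>k\}}(\theta)$. Substituting~\eqref{eq.weights}, this equals
\[
\frac{1}{\rho_\alpha(\omega)}\sum_{k=0}^\infty \mathcal L_\alpha^k\bigl(\rho_\alpha\mathbf 1_{\bar\Omega\cap\{\tau_\alpha>k\}}\bigr)(\omega),
\]
where $\mathcal L_\alpha$ is the Perron--Frobenius operator of $f_\alpha$ with respect to $m$. The key identity needed is that $\rho_\alpha|_{\bar\Omega}=\bar\rho_\alpha/E$. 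This follows from the unfolding formula~\eqref{eq.measures}: since $\tau_\alpha$ is a first return time, $f_\alpha^k(\theta)\notin\bar\Omega$ for $1\le k<\tau_\alpha(\theta)$. Hence only the $k=0$ term contributes on $\bar\Omega$. This is the same fact already used, up to normalisation, in Lemma~\ref{lem:indid}.

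Rewriting~\eqref{eq.measures} at the level of densities then gives $\rho_\alpha=E^{-1}\sum_k\mathcal L_\alpha^k(\bar\rho_\alpha\mathbf 1_{\{\tau_\alpha>k\}})=\sum_k\mathcal L_\alpha^k(\rho_\alpha\mathbf 1_{\bar\Omega\cap\{\tau_\alpha>k\}})$. Therefore the total mass is $\rho_\alpha(\omega)/\rho_\alpha(\omega)=1$ wherever $\rho_\alpha(\omega)>0$, that is, for $\eta_\alpha$-almost every $\omega$.

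The expected obstacle is justifying this density identity carefully. One needs the interchange of the sums over $k$ and over preimages, which is fine because all terms are nonnegative (Tonelli). One also needs to confirm that the first-return property gives $\rho_\alpha=\bar\rho_\alpha/E$ on $\bar\Omega$ $m$-almost everywhere, rather than merely up to an unspecified constant. If the constant were unclear, I would instead integrate the total-mass function against test functions and use the duality of Lemma~\ref{le.duality.general} to pin it down.
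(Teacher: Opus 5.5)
Your proposal is correct and follows essentially the same route as the paper: write $\nu_\omega(X)$ as $\rho_\alpha(\omega)^{-1}\sum_{k}\sum_{\theta}\rho_\alpha(\theta)\,|(f_\alpha^k)'(\theta)|^{-1}\mathbf 1_{\{\tau_\alpha>k\}}(\theta)$, then identify the double sum with $\rho_\alpha(\omega)$ using the unfolding formula~\eqref{eq.measures} and $\bar\rho_\alpha=E\rho_\alpha$ on $\bar\Omega$. The paper does this identification by a change of variables and Radon--Nikodym, which is exactly your Perron--Frobenius rewriting. Your extra care also strengthens the argument: you check measurability, you derive $\bar\rho_\alpha=E\rho_\alpha$ on $\bar\Omega$ from the first-return property (the paper only asserts it), and you use Tonelli rather than pointwise finiteness of the series. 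Finiteness in fact fails in general: for $\omega\in(0,\tfrac12)$ in the intermittent solenoid, every $k\ge1$ contributes a nonzero term.
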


\begin{proof}
We prove that for $\eta_\alpha$-almost every $\omega \in \Omega$, the fiber measure $ {\nu}_\omega$ has total mass $1$.  
By the formula for  the iterated sectional transfer operator in~\eqref{eq.operatork},
\[
 {\nu}_\omega = \sum_{k=0}^{\infty} \sum_{\theta \in f_\alpha^{-k}(\{\omega\}) \cap \bar{\Omega}} 
W_\alpha(\theta,k) \, (g_{\alpha,\theta}^{(k)})_* \bigl( \bar{\nu}_\theta \mathbf{1}_{\{\tau_\alpha > k\}}(\theta) \bigr),
\]
where the weights are given by \eqref{eq.weights}. 
%
Since $\bar{\nu}_\theta$ is a probability measure and the push-forward $(g_{\alpha,\theta}^{(k)})_*$ preserves mass, we get
\begin{equation}\label{eq:fiber_mass}
 {\nu}_\omega(X) = \frac{1}{\rho_\alpha(\omega)} \sum_{k=0}^\infty \sum_{\theta \in f_\alpha^{-k}(\omega) \cap \bar{\Omega}} \frac{\rho_\alpha(\theta)}{|(f_\alpha^k)'(\theta)|} \mathbf{1}_{\{\tau_\alpha > k\}}(\theta).
\end{equation}
Now, recall that
 \begin{equation}\label{eq.tower}
\eta_\alpha = \frac{1}{ {E}} \sum_{k=0}^\infty (f_\alpha^k)_* \bigl( \bar{\eta}_\alpha \vert_{\{\tau_\alpha > k\}} \bigr), \qquad
 {E} = \int_{\bar{\Omega}} \tau_\alpha \, d\bar{\eta}_\alpha.
\end{equation}
Writing $d\eta_\alpha = \rho_\alpha \, dm$ and $d\bar{\eta}_\alpha = \bar{\rho}_\alpha \, dm$, the density on the induced base is related to the global density by $\bar{\rho}_\alpha(\theta) =  {E} \rho_\alpha(\theta)$, for $\theta \in \bar{\Omega}$.  
For any Borel set $A \subset \Omega$, the push-forward integral reads
\[
(f_\alpha^k)_* \bigl( \bar{\eta}_\alpha \vert_{\{\tau_\alpha > k\}} \bigr)(A)
= \int_{(f_\alpha^k)^{-1}(A) \cap \{\tau_\alpha > k\}} \bar{\rho}_\alpha(\theta) \, dm(\theta).
\]
Applying the change of variables formula under the local diffeomorphism $f_\alpha^k$,
\[
\int_{(f_\alpha^k)^{-1}(A) \cap \{\tau_\alpha > k\}} \bar{\rho}_\alpha(\theta) \, dm(\theta)
= \int_A \sum_{\theta \in f_\alpha^{-k}(\omega) \cap \bar{\Omega}} \frac{\bar{\rho}_\alpha(\theta)}{|(f_\alpha^k)'(\theta)|} \mathbf{1}_{\{\tau_\alpha > k\}}(\theta) \, dm(\omega).
\]
Substituting into~\eqref{eq.tower} and using $\bar{\rho}_\alpha(\theta) =  {E} \rho_\alpha(\theta)$, we obtain
\[
\eta_\alpha(A) = \frac{1}{ {E}} \sum_{k=0}^\infty \int_A \sum_{\theta \in f_\alpha^{-k}(\omega) \cap \bar{\Omega}} \frac{ {E} \rho_\alpha(\theta)}{|(f_\alpha^k)'(\theta)|} \mathbf{1}_{\{\tau_\alpha > k\}}(\theta) \, dm(\omega).
\]
Hence,
\[
\int_A \rho_\alpha(\omega) \, dm(\omega) = \int_A \sum_{k=0}^\infty \sum_{\theta \in f_\alpha^{-k}(\omega) \cap \bar{\Omega}} \frac{\rho_\alpha(\theta)}{|(f_\alpha^k)'(\theta)|} \mathbf{1}_{\{\tau_\alpha > k\}}(\theta) \, dm(\omega).
\]
By the Radon–Nikodym theorem, the integrands must coincide  $\nu_\alpha$-almost everywhere:
\[
\rho_\alpha(\omega) = \sum_{k=0}^\infty \sum_{\theta \in f_\alpha^{-k}(\omega) \cap \bar{\Omega}} \frac{\rho_\alpha(\theta)}{|(f_\alpha^k)'(\theta)|} \mathbf{1}_{\{\tau_\alpha > k\}}(\theta).
\]
Finally, substituting this  into \eqref{eq:fiber_mass} gives ${\nu}_\omega(X) =1$, for $\omega$ almost every $\omega\in\Omega$.
%
\end{proof}

We are now in conditions to conclude the proof of Theorem~\ref{thm:inducedLR}.  Since conditions~\ref{as.full}--\ref{as.p} for $\bar T_\alpha$, together with~\ref{as.induce}, 
imply that~\ref{as.base} holds for $\bar T_\alpha$, we  apply Theorem~\ref{th.mainD} to deduce:
\begin{itemize}
\item \em $\bar{\mathcal K}_\alpha$
has a  unique fixed point $\bar{\bm\nu}_\alpha$ in $\bar{\mathbf P}_0$;
\item the map 
 $\alpha\mapsto\bar{\bm\nu}_\alpha$
 is differentiable in $\bar{\mathbf E}_3^0$.
\end{itemize}
It follows from Lemma~\ref{le.unfoldfixed} and Lemma~\ref{lem:generalunfold} that
\begin{equation}\label{unfolfix}
\bm\nu_\alpha=\mathcal U_\alpha(\bar{\bm\nu}_\alpha).
\end{equation}
is a fixed point for $\mathcal K_\alpha$ in $\mathbf P$. In the next result we establish the last conclusion of Theorem~\ref{thm:inducedLR}.

\begin{proposition}\label{thm:unfoldresp}
The map
$
\alpha \mapsto \bm{\nu}_\alpha
$
is differentiable in $\mathbf{L}_3^0$ at $\alpha_0$.
\end{proposition}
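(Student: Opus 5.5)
We aim to prove differentiability of $\alpha\mapsto\bm\nu_\alpha=\mathcal U_\alpha\bar{\bm\nu}_\alpha$ in $\mathbf L_3^0$ at $\alpha_0$. The plan is a product-rule argument that combines the differentiability of $\alpha\mapsto\bar{\bm\nu}_\alpha$ in $\bar{\mathbf E}_3^0$, given by Theorem~\ref{th.mainD} applied to $\bar T_\alpha$, with assumptions~\ref{as.unfolda}--\ref{as.unfoldc}. A first observation is that the difference $\bar{\bm\nu}_\alpha-\bar{\bm\nu}_{\alpha_0}$ lies in $\bar{\mathbf E}_{\mathrm L}^0\subset\bar{\mathbf E}_3^0$, since both are continuous probability-valued sections; likewise $\bm\nu_\alpha-\bm\nu_{\alpha_0}$ takes values in $C^3_0(X)^*$. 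Let $\dot{\bar{\bm\nu}}_{\alpha_0}\in\bar{\mathbf E}_3^0$ denote the derivative of the induced sample measures. Let $\bm\zeta\in\mathbf L_3^0$ denote the derivative of $\alpha\mapsto\mathcal U_\alpha\bar{\bm\nu}_{\alpha_0}$ at $\alpha_0$ supplied by~\ref{as.unfoldc}. The candidate derivative is then
\[
\dot{\bm\nu}_{\alpha_0}=\mathcal U_{\alpha_0}\dot{\bar{\bm\nu}}_{\alpha_0}+\bm\zeta .
\]

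By linearity of each $\mathcal U_\alpha$ we decompose
\[
\frac{\bm\nu_\alpha-\bm\nu_{\alpha_0}}{\alpha-\alpha_0}-\dot{\bm\nu}_{\alpha_0}
=\mathcal U_\alpha\Big(\frac{\bar{\bm\nu}_\alpha-\bar{\bm\nu}_{\alpha_0}}{\alpha-\alpha_0}-\dot{\bar{\bm\nu}}_{\alpha_0}\Big)
+(\mathcal U_\alpha-\mathcal U_{\alpha_0})\dot{\bar{\bm\nu}}_{\alpha_0}
+\Big(\frac{\mathcal U_\alpha\bar{\bm\nu}_{\alpha_0}-\mathcal U_{\alpha_0}\bar{\bm\nu}_{\alpha_0}}{\alpha-\alpha_0}-\bm\zeta\Big).
\]
The three terms are handled in turn. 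The first has $\mathbf L_3$-norm at most $C$ times the $\bar{\mathbf E}_3^0$-norm of the difference quotient error, by the uniform bound~\ref{as.unfolda}; this tends to zero by Theorem~\ref{th.mainD}(4) for the induced system. The third term tends to zero directly by~\ref{as.unfoldc}. The second term tends to zero by the continuity assumption~\ref{as.unfoldb} applied to the fixed element $\dot{\bar{\bm\nu}}_{\alpha_0}\in\bar{\mathbf E}_3^0$.

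The main point to check is the bookkeeping of the spaces. It must be confirmed that $\mathcal U_\alpha$ is genuinely linear on $\bar{\mathbf E}_3^0$, since each $\mathcal K_\alpha^k$ and each multiplication by $\mathbf 1_{\{\tau_\alpha>k\}}$ is linear and the sum is pointwise finite, as in Lemma~\ref{lem:pointwise}. It must also be confirmed that $\bm\nu_\alpha=\mathcal U_\alpha\bar{\bm\nu}_\alpha$ is the object whose differentiability is asked, which follows from~\eqref{unfolfix}.

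A subtle point is that $\bar{\bm\nu}_\alpha$ and $\bar{\bm\nu}_{\alpha_0}$ themselves are not in $\bar{\mathbf E}_3^0$, since they do not vanish on constants. Hence $\mathcal U_\alpha$ should only be applied through its linear extension to differences. To handle this, I would write $\mathcal U_\alpha\bar{\bm\nu}_\alpha-\mathcal U_{\alpha_0}\bar{\bm\nu}_{\alpha_0}=\mathcal U_\alpha(\bar{\bm\nu}_\alpha-\bar{\bm\nu}_{\alpha_0})+(\mathcal U_\alpha-\mathcal U_{\alpha_0})\bar{\bm\nu}_{\alpha_0}$. Here the last difference is understood pointwise, and it lies in $\mathbf L_3^0$ because both unfoldings are probability sections by Lemma~\ref{lem:generalunfold}. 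Assumption~\ref{as.unfoldc} is read in exactly this sense.

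Apart from this interpretation, no estimate beyond the triangle inequality is needed.
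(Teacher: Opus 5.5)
Your proposal is correct and follows essentially the same route as the paper. It uses the same candidate derivative $\mathcal U_{\alpha_0}\dot{\bar{\bm\nu}}_{\alpha_0}$ plus the derivative supplied by \ref{as.unfoldc}, and the same three-term triangle-inequality split, controlled respectively by \ref{as.unfolda} together with the differentiability of $\bar{\bm\nu}_\alpha$ in $\bar{\mathbf E}_3^0$ from Theorem~\ref{th.mainD}, by \ref{as.unfoldb} applied to $\dot{\bar{\bm\nu}}_{\alpha_0}$, and by \ref{as.unfoldc}. Your remark that $\mathcal U_\alpha$ should only be applied linearly to differences of probability sections, which lie in $\bar{\mathbf E}_{\mathrm L}^0\subset\bar{\mathbf E}_3^0$, makes explicit a bookkeeping point the paper leaves implicit, and does not change the argument.
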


\begin{proof}
 By~\ref{as.unfoldc}   the map $\alpha \mapsto \mathcal{U}_\alpha(\bar{\bm{\nu}}_{\alpha_0})$ is differentiable in $\mathbf{L}_3^0 $ at~$\alpha_0$. Let $\dot{\mathcal{U}}_{\alpha_0}(\bar{\bm{\nu}}_{\alpha_0})\in \mathbf{L}_3^0$ denote this derivative.
 We are going to show that 
\begin{equation}\label{eq.candidate}
\dot{\bm{\nu}}_{\alpha_0} = \dot{\mathcal{U}}_{\alpha_0}(\bar{\bm{\nu}}_{\alpha_0}) + \mathcal{U}_{\alpha_0}(\dot{\bar{\bm{\nu}}}_{\alpha_0})
\end{equation}
 is the derivative of $\alpha \mapsto \bm{\nu}_\alpha$ at $\alpha_0$. 
Since~\eqref{unfolfix} holds for every $\alpha$, we may write
\begin{align*}
\frac{\bm{\nu}_\alpha - \bm{\nu}_{\alpha_0}}{\alpha - \alpha_0} 
&= 
  \frac{\mathcal{U}_\alpha(\bar{\bm{\nu}}_\alpha) - \mathcal{U}_{\alpha_0}(\bar{\bm{\nu}}_{\alpha_0})}{\alpha - \alpha_0}\\
&=
\frac{\mathcal{U}_\alpha(\bar{\bm{\nu}}_\alpha) - \mathcal{U}_\alpha(\bar{\bm{\nu}}_{\alpha_0}) + \mathcal{U}_\alpha(\bar{\bm{\nu}}_{\alpha_0}) - \mathcal{U}_{\alpha_0}(\bar{\bm{\nu}}_{\alpha_0})}{\alpha - \alpha_0} \\
&= \mathcal{U}_\alpha \left( \frac{\bar{\bm{\nu}}_\alpha - \bar{\bm{\nu}}_{\alpha_0}}{\alpha - \alpha_0} \right) + \frac{\mathcal{U}_\alpha(\bar{\bm{\nu}}_{\alpha_0}) - \mathcal{U}_{\alpha_0}(\bar{\bm{\nu}}_{\alpha_0})}{\alpha - \alpha_0}.
\end{align*}
To show differentiability, we estimate the distance to the candidate derivative in~\eqref{eq.candidate}.  By the triangle inequality,
\begin{align*}
\left\| \frac{\bm{\nu}_\alpha - \bm{\nu}_{\alpha_0}}{\alpha - \alpha_0} - \dot{\bm{\nu}}_{\alpha_0} \right\|_{\mathbf{L}_3} &\le \left\| \mathcal{U}_\alpha \left( \frac{\bar{\bm{\nu}}_\alpha - \bar{\bm{\nu}}_{\alpha_0}}{\alpha - \alpha_0} \right) - \mathcal{U}_{\alpha_0}(\dot{\bar{\bm{\nu}}}_{\alpha_0}) \right\|_{\mathbf{L}_3} \\
&\quad + \left\| \frac{\mathcal{U}_\alpha(\bar{\bm{\nu}}_{\alpha_0}) - \mathcal{U}_{\alpha_0}(\bar{\bm{\nu}}_{\alpha_0})}{\alpha - \alpha_0} - \dot{\mathcal{U}}_{\alpha_0}(\bar{\bm{\nu}}_{\alpha_0}) \right\|_{\mathbf{L}_3}.
\end{align*}
We split the first term on the right-hand side
\begin{align*}
\left\| \mathcal{U}_\alpha \left( \frac{\bar{\bm{\nu}}_\alpha - \bar{\bm{\nu}}_{\alpha_0}}{\alpha - \alpha_0} \right) - \mathcal{U}_{\alpha_0}(\dot{\bar{\bm{\nu}}}_{\alpha_0}) \right\|_{\mathbf{L}_3} &\le \left\| \mathcal{U}_\alpha \left( \frac{\bar{\bm{\nu}}_\alpha - \bar{\bm{\nu}}_{\alpha_0}}{\alpha - \alpha_0} - \dot{\bar{\bm{\nu}}}_{\alpha_0} \right) \right\|_{\mathbf{L}_3} \\
&\quad + \left\| \mathcal{U}_\alpha(\dot{\bar{\bm{\nu}}}_{\alpha_0}) - \mathcal{U}_{\alpha_0}(\dot{\bar{\bm{\nu}}}_{\alpha_0}) \right\|_{\mathbf{L}_3}.
\end{align*}
Combining these and using~\ref{as.unfolda}, we get
\begin{align*}
\left\| \frac{\bm{\nu}_\alpha - \bm{\nu}_{\alpha_0}}{\alpha - \alpha_0} - \dot{\bm{\nu}}_{\alpha_0} \right\|_{\mathbf{L}_3} &\le C \left\| \frac{\bar{\bm{\nu}}_\alpha - \bar{\bm{\nu}}_{\alpha_0}}{\alpha - \alpha_0} - \dot{\bar{\bm{\nu}}}_{\alpha_0} \right\|_3 \\
&\quad + \left\| \mathcal{U}_\alpha(\dot{\bar{\bm{\nu}}}_{\alpha_0}) - \mathcal{U}_{\alpha_0}(\dot{\bar{\bm{\nu}}}_{\alpha_0}) \right\|_{\mathbf{L}_3} \\
&\quad + \left\| \frac{\mathcal{U}_\alpha(\bar{\bm{\nu}}_{\alpha_0}) - \mathcal{U}_{\alpha_0}(\bar{\bm{\nu}}_{\alpha_0})}{\alpha - \alpha_0} - \dot{\mathcal{U}}_{\alpha_0}(\bar{\bm{\nu}}_{\alpha_0}) \right\|_{\mathbf{L}_3}.
\end{align*}
Taking the limit as $\alpha \to \alpha_0$, the first term vanishes because 
$\dot{\bar{\bm{\nu}}}_{\alpha_0}$ is the derivative of the map 
$\alpha \mapsto \bar{\bm{\nu}}_\alpha$ in $\bar{\mathbf{E}}_3^0$ at $\alpha_0$; 
the second term vanishes due to the continuity of the map 
$\alpha \mapsto \mathcal{U}_\alpha(\dot{\bar{\bm{\nu}}}_{\alpha_0})$ in $\mathbf{L}_3^0$, as stated in~\ref{as.unfoldb}; 
the third term vanishes because $\dot{\mathcal{U}}_{\alpha_0}(\bar{\bm{\nu}}_{\alpha_0})$ is the derivative of the map 
$\alpha \mapsto \mathcal{U}_\alpha(\bar{\bm{\nu}}_{\alpha_0})$ at $\alpha_0$. 
Hence, the difference quotient converges to~$\dot{\bm{\nu}}_{\alpha_0}$ in $\mathbf{L}_3^0$, completing the proof.
\end{proof}

\subsection{Application: Solenoid  with intermittency}  \label{sub.solenoidcheck}

Here we show that the solenoidal maps with intermittency $\{T_\alpha\}_{\alpha\in J}$, 
introduced in Subsection~\ref{sub.solenoid}, admit inducing schemes for which the conditions of Theorem~\ref{thm:inducedLR} are satisfied.

\subsubsection{Induced map}
Here we introduce the induced maps associated with and verify assumptions~\ref{as.full}--\ref{as.p} (for the induced maps) and 
\ref{as.induce}.
Let $\Omega=\Omega_0\cup \Omega_1$ be the $m$  mod 0 partition into the smoothness domains of the base map~$f_\alpha$.
For any $\alpha\in J$, consider the two  branches of $f_\alpha$ given by
$$f_{\alpha,0}(\omega)=\omega(1+2^{\alpha}\omega^{\alpha}),\quad \omega\in \Omega_0$$ and 
$$f_{\alpha,1}(\omega)= 2\omega-1, \quad \omega\in \Omega_1.$$
Given any $n\in\mathbb N_0$, let $i$ be a word $10^n$ (a single 1 followed by $n$ 0's). Let also $\ell(i)=n+1$ be the length of such a word $i$. Set $\mathbb I$ as the (countable) set of those words $i$. Define for each  $\omega\in[0,1]$ 
 \begin{equation}\label{eq:path}
 \Theta_{\alpha,i}(\omega)=f_{\alpha, 1}^{-1}\circ\underbrace{ f_{\alpha,0}^{-1}\circ\cdots\circ f_{\alpha, 0}^{-1}}_{n\text{-times}}(\omega).
 \end{equation}
The cylinder sets $[i]_\alpha=\Theta_{\alpha,i}(\bar\Omega)$ give the $m$ mod 0 partition of $\bar\Omega=[1/2,1]$ into smoothness domains of  the full branch induced map
 $${\bar f_\alpha}:\bar\Omega\to\bar\Omega,$$
 defined as
\begin{equation}\label{eq:inducedmap}
\bar f_{\alpha}\vert_{[i]_\alpha}={f^{\ell(i)}_\alpha|}_{[i]_\alpha}.
\end{equation}
Note that $\bar\theta_{\alpha,i}=\Theta_{\alpha,i}|_{\bar\Omega}$ is the inverse branch of $\bar f_\alpha|_{[i]_\alpha}$.
Now we check \ref{as.full}--\ref{as.p} for the family of induced maps.

\medskip

\noindent
\ref{as.full}: this follows from the fact that $\bar f_{\alpha}$, as defined above, is a full-branch, uniformly expanding map.

 \medskip

\noindent\ref{as.theta}: this corresponds to $C^1$ norm of the transfer operator associated with $\bar f_{\alpha}$, evaluated at the constant function $1$. It is is well known that this $C^1$-norm is uniformly bounded independent of $\alpha$, in a small neighbourhood of $\alpha_0$. This is because in a small neighbourhood of $\alpha_0$, the expansion is also uniform in $\alpha$, $\bar f_{\alpha}$ is $C^2$ with $C^2$ extensions, and bounded distortion with a bound independent of $\alpha$. 

  \medskip

\noindent\ref{as.g}: this follows from the contraction that we impose on fibres. 

 \medskip

\noindent \ref{as.p}: 
 since we are dealing with full branch systems that $\bar\rho_\alpha$, the invariant density of $\bar f_\alpha$, is uniformly bounded above and below, then to verify 
$$
\sum_{i\in\mathbb I} \sup_{\alpha\in J}\|  \bar p_{\alpha,i} \|_{C^1}<C,
$$
it is enough to verify 
$$\sum_{i\in\mathbb I} \sup_{\alpha\in J}\|\Theta_{\alpha,i}\|_{C^1}<\infty,$$ which follows as in \ref{as.full}.
Now, to verify the rest of the assumption, we need to verify 
\begin{equation}\label{eq:check}
\sum_{i\in\mathbb I} \sup_{\alpha\in J}\| \dot{\bar p}_{\alpha,i} \|_{C^0}\le C_p.
\end{equation}
Notice that $\|\partial_{\alpha}{\bar\rho_\alpha|}_{\alpha_0}\|_{C_0}<C$ by   \cite[Theorem 3.2]{BS16} and since the density $\bar\rho_\alpha$ is uniformly  bounded above and below, verifying \eqref{eq:check}, reduces to showing
$$ 
\sum_{i\in\mathbb I} \sup_{\alpha\in J}\|\partial_\alpha \Theta_{\alpha,i} \|_{C^0}<\infty,$$
which follows by   \cite[Lemma 5.1]{BS16}.

\medskip
\noindent\ref{as.induce}:
It is well known that $\bar f_{\alpha}$ satisfies~\ref{as.induce} with the first return time function $\tau_\alpha:\bar\Omega\to\mathbb R$ satisfying
 \begin{equation*}
m\{\tau_\alpha> k\}\approx k^{-1/\alpha}.
\end{equation*}
Therefore, integrable with respect to the Lebesgue measure on $\bar\Omega$.

\subsubsection{Unfolding operator}
Here we verify the three items of assumption~\ref{as.unfold}.
Using~\eqref{eq.measures} and \cite[Equation (7)]{BS16}, we obtain the following formula for the density~$\rho_\alpha$ of the absolutely continuous invariant probability measure $ \eta_\alpha$ for $ f_\alpha$:
\begin{equation}\label{eq:undensity}
\rho_\alpha=\frac1E \sum_{i\in \mathbb I} (\bar \rho_\alpha\circ \Theta_{\alpha,i})\Theta_{\alpha,i}',
\end{equation}
where $E>0$ is the normalising constant in~\eqref{eq.measures}. Noting that   $\bar\rho_\alpha =E\rho_\alpha|_{\bar\Omega}$, it follows that
\begin{equation*}
\sum_{i\in \mathbb I}\frac{  \rho_\alpha\circ\Theta_{\alpha,i}}{\rho_\alpha } \Theta_{\alpha,i}' =1.
\end{equation*}
%
In this case, the unfolding operator can be written for all section $\bar{\bm\nu}$   on $\bar\Omega$
and all  $\omega \in \Omega$ as
\begin{equation}\label{eq.unfodlingnew}
(\mathcal{U}_\alpha \bar{\bm\nu})_\omega
= \sum_{i\in \mathbb I} 
\frac{\rho_\alpha(\Theta_{\alpha,i}(\omega))}{\rho_\alpha(\omega)} \Theta'_{\alpha,i}(\omega)
\,(g^{\ell(i)}_{\Theta_{\alpha,i}(\omega)})_*\bar{\nu}_{\Theta_{\alpha,i}(\omega)}.
\end{equation}
In the next result we establish \ref{as.unfolda}. In fact, the proof shows that the constant can be taken to be $C=1$.

\begin{lemma}\label{lem:undiff1}
$\mathcal U_\alpha$ defines a uniformly bounded linear operator from $\bar{\mathbf E}_{3}^0$ to $\bm{\mathbf L}_3^0$.
\end{lemma}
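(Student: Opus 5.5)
Linearity of $\mathcal U_\alpha$ is immediate from the linearity of push-forwards and of $\mathcal K_\alpha^k$. The substance is the bound $\|\mathcal U_\alpha\bar{\bm\nu}\|_{\mathbf L_3}\le C\|\bar{\bm\nu}\|_3$ with $C$ independent of $\alpha$. I will work with the explicit formula~\eqref{eq.unfodlingnew}. Write
\[
q_{\alpha,i}(\omega)=\frac{\rho_\alpha(\Theta_{\alpha,i}(\omega))}{\rho_\alpha(\omega)}\Theta'_{\alpha,i}(\omega)\ge 0 .
\]
By the identity preceding~\eqref{eq.unfodlingnew}, $\sum_{i\in\mathbb I}q_{\alpha,i}(\omega)=1$ for almost every $\omega$; this is the reverse Markov kernel property~\eqref{eq.RMK} for the unfolding. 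A measurability remark also belongs here: each term $\omega\mapsto q_{\alpha,i}(\omega)(g^{\ell(i)}_{\Theta_{\alpha,i}(\omega)})_*\bar\nu_{\Theta_{\alpha,i}(\omega)}$ is a composition of continuous and measurable maps. Hence, once the series is shown to converge absolutely in $C^3_0(X)^*$ pointwise, $\mathcal U_\alpha\bar{\bm\nu}$ is measurable.

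The core estimate is fibrewise. Fix $\omega$. Each $\bar\nu_\theta$ lies in $C^3_0(X)^*$, so Lemma~\ref{lem:push-ck}(1) applies to every push-forward, and push-forwards preserve the property of vanishing on constants. For the fibre maps of the solenoid, $g(\omega,x)=c(\omega)+\lambda x$, so $L_1=\lambda$ and $L_2=L_3=0$. By Remark~\ref{re.bell}, $\mathcal C_3(g_\omega)=\max\{\lambda,\lambda^2,\lambda^3\}=\lambda<1$. The composition $g^{(\ell)}$ is again affine with linear part $\lambda^\ell$, so $\mathcal C_3(g^{(\ell)})=\lambda^\ell\le 1$. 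The triangle inequality then gives
\[
\|(\mathcal U_\alpha\bar{\bm\nu})_\omega\|_{3^*}\le\sum_{i\in\mathbb I}q_{\alpha,i}(\omega)\,\lambda^{\ell(i)}\|\bar\nu_{\Theta_{\alpha,i}(\omega)}\|_{3^*}\le\|\bar{\bm\nu}\|_3\sum_{i}q_{\alpha,i}(\omega)=\|\bar{\bm\nu}\|_3 .
\]
This also yields the absolute convergence of the series. Integrating against $\eta_{\alpha_0}$, a probability measure, gives $\|\mathcal U_\alpha\bar{\bm\nu}\|_{\mathbf L_3}\le\|\bar{\bm\nu}\|_3$. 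This is uniform in $\alpha$, so \ref{as.unfolda} holds with $C=1$.

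The main point needing care is that~\eqref{eq.unfodlingnew} correctly represents~\eqref{eq.unfoldopera}. Concretely, one must check that the $k$-th term $\mathcal K_\alpha^k(\bar{\bm\nu}\mathbf 1_{\{\tau_\alpha>k\}})$, expanded via~\eqref{eq.operatork}, regroups into the sum over the words $i$ with weights $q_{\alpha,i}$, with the accumulated weights $W_\alpha(\theta,k)$ matching $q_{\alpha,i}$. This is bookkeeping of inverse branches through $\bar\Omega$ and was set up just before the lemma, so I take it as given. The other subtlety is the identification of weights holding only $\eta_\alpha$-almost everywhere while the norm integrates against $\eta_{\alpha_0}$. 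Since $\rho_\alpha$ is bounded below by~\eqref{eq:behave}, both measures are equivalent to Lebesgue measure, so the pointwise bound holds $\eta_{\alpha_0}$-almost everywhere as well.
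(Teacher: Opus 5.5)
Your proof follows the paper's argument: a fibrewise triangle inequality on \eqref{eq.unfodlingnew}, a push-forward bound, and $\sum_i q_{\alpha,i}=1$ give $\|\mathcal U_\alpha\bar{\bm\nu}\|_{\mathbf L_3}\le\|\bar{\bm\nu}\|_3$, i.e.\ $C=1$. You also supply a step the paper leaves implicit, namely $|\phi\circ g^{\ell(i)}_{\Theta_{\alpha,i}(\omega)}|_{C^3}\le\lambda^{\ell(i)}|\phi|_{C^3}$ for the affine fibre maps via the first item of Lemma~\ref{lem:push-ck}, which is exactly what is needed since $\bar\nu_\theta\in C^3_0(X)^*$. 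On the point you flagged, \eqref{eq.unfodlingnew} agrees with \eqref{eq.unfoldopera} only off $\bar\Omega$: for $\omega\in\bar\Omega$ every term with $k\ge1$ vanishes, because a point of $\bar\Omega$ sent into $\bar\Omega$ by $f_\alpha^k$ has $\tau_\alpha\le k$. So there $(\mathcal U_\alpha\bar{\bm\nu})_\omega=\bar\nu_\omega$, while the right-hand side of \eqref{eq.unfodlingnew} equals $(\bar{\mathcal K}_\alpha\bar{\bm\nu})_\omega$; the bound by $\|\bar{\bm\nu}\|_3$ is trivial on $\bar\Omega$, so your conclusion stands.
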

\begin{proof}
Given any $\bar{\bm\nu}\in\bar{\mathbf E}_{3}^0$, we may write
for all $\omega\in\Omega$ and $\phi\in C^3_0(X)^*$ 
\begin{equation*}
\begin{split}
\langle (\mathcal U_\alpha\bar{\bm\nu})_\omega, \phi\rangle&=  \sum_{i\in \mathbb I} 
\frac{\rho_\alpha(\Theta_{\alpha,i}(\omega))}{\rho_\alpha(\omega)} \Theta'_{\alpha,i}(\omega)
\,\langle  \bar \nu_{\Theta_{\alpha,j}(\omega)},\phi\circ g^{\ell(i)}_{\Theta_{\alpha,i}(\omega)}\rangle \\
&\le\sum_{j=1}^{\infty}\frac{\rho_\alpha(\Theta_{\alpha,i}(\omega))}{\rho_\alpha(\omega)} \Theta'_{\alpha,i}(\omega)
\, \|\bar \nu_{\Theta_{\alpha,j}(\omega)} \|_3\|\phi\|_{C^3}.
\end{split}
\end{equation*}
Taking the supremum over all test functions $\phi \in C^3_0(X)^*$ with
$\|\phi\|_{C^3}\le 1$, we obtain
\begin{equation*}
\| (\mathcal U_\alpha\bar{\bm\nu})_\omega\|_3\le 
 \|\bar \nu_{\Theta_{\alpha,j}(\omega)} \|_3,
\end{equation*}
for all $\omega\in\Omega$. Since $\bar{\bm\nu}\in\bar{\mathbf E}_{3}^0$, we have that $  
 \|\bar \nu_{\Theta_{\alpha,j}(\omega)} \|_3\le\|\bar{\bm\nu}\|_3$, and so $ 
 \|(\mathcal U_\alpha\bar{\bm\nu})_\omega \|_3\le\|\bar{\bm\nu}\|_3$ for all $\omega\in\Omega$. This implies that $\mathcal U_\alpha\bar{\bm\nu}\in \mathbf L_3^0$ and $\|\mathcal U_\alpha\bar{\bm\nu}\|_{\mathbf L_3}\le\|\bar{\bm\nu}\|_3$, thus showing that $ \mathcal U_\alpha$ defines a bounded linear operator from $\bar{\mathbf E}_{3}^0$ to $\bm{\mathbf L}_3^0$ with $ \|\mathcal U_\alpha\|\le 1$.
\end{proof}

Before verifying the second and third conditions in~\ref{as.unfold}, we present   some preparatory material, essentially collecting a few facts from~\cite{BS16}.
Given $\alpha_0 \in (0,1)$, let $r>1$ be such that $r<1/\alpha_0$. It follows from~\eqref{eq:behave} that
   \begin{equation}\label{eq.density}
\rho_{\alpha_0}\in L^r(m).
\end{equation}
Let $s>1$ be such that 
  \begin{equation}\label{eq.conjugate}
  \frac1r+\frac1s=1.
  \end{equation}
Fix $\varepsilon>0$ so that $\varepsilon s<1$ and set
 $$J_{\alpha_0}=(\alpha_0-\epsilon, \alpha_0+\epsilon).$$ 
Define  $\mathcal B$ as the set of continuous functions  $\phi: (0,1]\to\mathbb R$ such that $|\omega ^\varepsilon  \phi(\omega)|$ is bounded, and define
\begin{equation}\label{eq.norm}
\|\phi\|_{\mathcal B}=\sup_{\omega\in (0,1]}|\omega^\varepsilon  \phi(\omega)|.
\end{equation}
It is straightforward to check that $ \|\cdot\|_{\varepsilon}$ is a norm on the vector space  $\mathcal B$ and   $\mathcal B$ endowed with this norm is   a Banach  space.

\begin{lemma}\label{lem:sB}
There exists $C>0$ such that for all $\phi\in \mathcal B$,  
$$\int |\phi| d\eta_{\alpha_0}\le C\|\phi\|_{\mathcal B}.$$
\end{lemma}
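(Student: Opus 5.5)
The plan is to reduce the claim to Hölder's inequality, combining the pointwise bound that defines $\mathcal B$ with the integrability $\rho_{\alpha_0}\in L^r(m)$ from \eqref{eq.density}. The one point to watch is that the weight $\omega^{-\varepsilon}$ must be $L^s$-integrable, which is exactly what the choice $\varepsilon s<1$ guarantees.

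First, for $\phi\in\mathcal B$ and every $\omega\in(0,1]$, the definition \eqref{eq.norm} gives the pointwise estimate
\[
|\phi(\omega)|\le \|\phi\|_{\mathcal B}\,\omega^{-\varepsilon}.
\]
Since $\eta_{\alpha_0}=\rho_{\alpha_0}\,m$ and the point $\{0\}$ is $m$-null, integrating this bound yields
\[
\int|\phi|\,d\eta_{\alpha_0}\le \|\phi\|_{\mathcal B}\int_0^1 \omega^{-\varepsilon}\rho_{\alpha_0}(\omega)\,dm(\omega).
\]

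Second, we apply Hölder's inequality with the conjugate exponents $r,s$ from \eqref{eq.conjugate}:
\[
\int_0^1 \omega^{-\varepsilon}\rho_{\alpha_0}\,dm\le \Big(\int_0^1\omega^{-\varepsilon s}\,dm\Big)^{1/s}\|\rho_{\alpha_0}\|_{L^r(m)}.
\]
The first factor equals $(1-\varepsilon s)^{-1/s}$, which is finite because $\varepsilon s<1$. The second factor is finite by \eqref{eq.density}, which follows from the upper bound $\rho_{\alpha_0}(\omega)\le C\omega^{-\alpha_0}$ in \eqref{eq:behave} together with $r\alpha_0<1$. We may therefore take
\[
C=(1-\varepsilon s)^{-1/s}\,\|\rho_{\alpha_0}\|_{L^r(m)}.
\]
This constant depends only on $\alpha_0$, $r$, $s$ and $\varepsilon$, and not on $\phi$, which is what the statement requires.
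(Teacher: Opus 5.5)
Your proof is correct and is essentially the paper's argument: both combine the pointwise bound $|\phi(\omega)|\le\|\phi\|_{\mathcal B}\,\omega^{-\varepsilon}$ with H\"older's inequality for the exponents $r,s$, using $\varepsilon s<1$ and $\rho_{\alpha_0}\in L^r(m)$. The only difference is the order: the paper applies H\"older to $|\phi|\rho_{\alpha_0}$ first and then bounds $\|\phi\|_{L^s}$, whereas you insert the pointwise bound first. Your version also gives the explicit constant $C=(1-\varepsilon s)^{-1/s}\|\rho_{\alpha_0}\|_{L^r(m)}$.
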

\begin{proof}
Consider $s>1$ as in~\eqref{eq.conjugate}. 
Given $\phi\in\mathcal B$, we may write
\begin{equation}\label{eq.ls}
\int_{[0,1]} |\phi(\omega)|^s dm(\omega)=\int_{[0,1]} \frac{|\omega^\epsilon\phi(\omega)|^s}{\omega^{ \epsilon s}}dm(\omega)\le \|\phi\|_{\mathcal B}^s\int_{[0,1]}\frac{1}{\omega^{ \epsilon s}}dm(\omega).
\end{equation}
 Since we take  $\epsilon s<1$, this last integral is finite. On the other hand, using~\eqref{eq.density}, \eqref{eq.conjugate} and Hölder inequality, we get
 $$\int |\phi| d\eta_{\alpha_0} =\int |\phi| \rho_{\alpha_0} dm\le 
\left( \int |\phi|^s dm\right)^{1/s}\left( \int \rho_{\alpha_0}^r dm\right)^{1/r},
 $$
which together with~\eqref{eq.ls}, yields the conclusion.
\end{proof}

Set 
$$\alpha_*=\alpha_0-\frac12 \varepsilon\qand \gamma=\alpha_0+\frac12\epsilon.$$
%
%
For each $i\in\mathbb I$, define $h_{\alpha,i}=\rho_\alpha\circ\Theta_{\alpha,i}$. Notice that by \cite[Theorem 3.1]{BS16} and the chain rule, the derivatives $\dot{\rho}_{\alpha}$ and $\dot{h}_{\alpha,i}$ exist in the following sense:
for $\alpha\in(\alpha_{0}-\varepsilon/4,\alpha_{0}+\varepsilon/4)$ 

\begin{equation}\label{eq:der1}
\lim_{\alpha\to\alpha_0}\sup_{\omega\in(0,1]}\omega^{\alpha_{*}+\varepsilon}\Big|\frac{1}{\alpha-\alpha_0}(h_{\alpha,j}-h_{\alpha_0,j})-\dot{h}_{\alpha_0,j}\Big| =0
\end{equation}
and 
\begin{equation}\label{eq:der2}
\lim_{\alpha\to\alpha_0}\sup_{\omega\in(0,1]}\omega^{\alpha_{*}+\varepsilon}\Big|\frac{1}{\alpha-\alpha_0}(\rho_\alpha(\omega)-\rho_{\alpha_0}(\omega))-\dot{\rho}_{\alpha_0}(\omega)\Big| =0.
\end{equation}
\begin{lemma}\label{lem:qoutient}
For all $\alpha\in(\alpha_{0}-\varepsilon/4,\alpha_{0}+\varepsilon/4)$ and $i\in \mathbb I$, we have
$$\alpha\longmapsto \frac{{\rho}_\alpha ( \Theta_{\alpha,i}(\omega))}{ \rho_\alpha(\omega)} \Theta'_{\alpha, i}(\omega)$$
 is differentiable  in $\mathcal B$ at $\alpha_0$. 
\end{lemma}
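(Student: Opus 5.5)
We write the quotient as a product $q_{\alpha,i}=h_{\alpha,i}\cdot(1/\rho_\alpha)\cdot\Theta'_{\alpha,i}$, with $h_{\alpha,i}=\rho_\alpha\circ\Theta_{\alpha,i}$. We then prove differentiability in $\mathcal B$ by a product rule in which two of the three factors are treated in the sup-norm and only one carries the singular weight. The candidate derivative is
\[
\dot q_{\alpha_0,i}=\frac{\dot h_{\alpha_0,i}\,\Theta'_{\alpha_0,i}}{\rho_{\alpha_0}}-\frac{h_{\alpha_0,i}\,\Theta'_{\alpha_0,i}\,\dot\rho_{\alpha_0}}{\rho_{\alpha_0}^2}+\frac{h_{\alpha_0,i}\,\dot\Theta'_{\alpha_0,i}}{\rho_{\alpha_0}} .
\]
Telescoping the difference quotient of $q_{\alpha,i}$ minus this candidate produces three groups of terms. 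Each group is one factor's difference quotient minus its derivative, multiplied by the remaining factors evaluated at $\alpha$ or $\alpha_0$. The remaining cross terms are products of two increments and are therefore of order $|\alpha-\alpha_0|$.

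\textbf{Factor $1/\rho_\alpha$.} By \eqref{eq:behave}, $C^{-1}\omega^{-\alpha}\le\rho_\alpha\le C\omega^{-\alpha}$ uniformly for $\alpha\in J_{\alpha_0}$. Hence $1/\rho_\alpha\le C\omega^{\alpha}$, and $\rho_{\alpha}\rho_{\alpha_0}\ge C^{-2}\omega^{-\alpha-\alpha_0}$. We write
\[
\frac1{\rho_\alpha}-\frac1{\rho_{\alpha_0}}=-\frac{\rho_\alpha-\rho_{\alpha_0}}{\rho_\alpha\rho_{\alpha_0}} .
\]
By \eqref{eq:der2}, dividing by $\alpha-\alpha_0$ gives a bracket whose weighted error is $o(1)\,\omega^{-\alpha_*-\varepsilon}$. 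Multiplying by $\omega^{\alpha+\alpha_0}$ yields $o(1)\,\omega^{\alpha+\alpha_0-\alpha_*-\varepsilon}$. Since $\alpha+\alpha_0-\alpha_*-\varepsilon\ge\alpha_0-\varepsilon$ for $|\alpha-\alpha_0|<\varepsilon/4$, this is even better than the weight $\omega^{-\varepsilon}$ that the norm \eqref{eq.norm} allows. The same bounds give that $\alpha\mapsto 1/\rho_\alpha$ is continuous in the weighted sup-norm, which is needed for the cross terms.

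\textbf{Factor $h_{\alpha,i}$.} Since $\Theta_{\alpha,i}$ takes values in $\bar\Omega=[1/2,1]$, where $\rho_\alpha$ is bounded above and below uniformly in $\alpha$, $h_{\alpha,i}$ is bounded. Moreover, \eqref{eq:der1} holds with weight $\omega^{\alpha_*+\varepsilon}\le 1$, so the difference-quotient error of $h$ is $o(1)\,\omega^{-\alpha_*-\varepsilon}$. Multiplying by $1/\rho\lesssim\omega^{\alpha}$ leaves $o(1)\,\omega^{\alpha-\alpha_*-\varepsilon}$, which is $o(1)$ in $\mathcal B$ because $\alpha-\alpha_*\ge\varepsilon/4>0$. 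A sharper route, if one is needed, is to apply \cite[Theorem~3.2]{BS16} for the induced density, which gives $C^0$ differentiability of $\bar\rho_\alpha\circ\Theta_{\alpha,i}$ directly.

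\textbf{Factor $\Theta'_{\alpha,i}$.} Its differentiability in $C^0$ with respect to $\alpha$ follows from the smooth dependence of the inverse branches, as in \cite[Lemma~5.1]{BS16}. Each branch is a composition of $f_{\alpha,0}^{-1}$ with $f_{\alpha,1}^{-1}$; both depend smoothly on $\alpha$ away from $\omega=0$, and $f_{\alpha,0}^{-1}$ has a uniformly controlled derivative in $\alpha$ on $[0,1]$.

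\textbf{Main obstacle.} The hardest step is the singular behaviour at $\omega=0$. The whole argument rests on the exponent bookkeeping: the loss $\omega^{-\alpha_*-\varepsilon}$ coming from \eqref{eq:der1}--\eqref{eq:der2} must be compensated by the gain $\omega^{\alpha}$ from $1/\rho_\alpha$, leaving at worst $\omega^{-\varepsilon}$. This is exactly why $\alpha_*=\alpha_0-\varepsilon/2$ and the restriction $|\alpha-\alpha_0|<\varepsilon/4$ were chosen. The remaining work, bounding the cross terms $O(|\alpha-\alpha_0|)$ with the same weights, is routine.
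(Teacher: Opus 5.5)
Your proposal is correct and follows essentially the same route as the paper. Both arguments are a product/quotient rule in $\|\cdot\|_{\mathcal B}$: the gain $\omega^{\alpha}$ from $1/\rho_\alpha$ (via \eqref{eq:behave}) absorbs the loss $\omega^{-\alpha_*-\varepsilon}$ coming from \eqref{eq:der1}--\eqref{eq:der2}, and $\Theta'_{\alpha,i}$ is handled in $C^0$. The only difference is grouping: the paper treats $h_{\alpha,i}/\rho_\alpha$ first and multiplies by $\Theta'_{\alpha,i}$ afterwards.

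One small caution about the cross terms you call routine. If you bound $\dot h_{\alpha_0,i}$ only through the weighted estimate \eqref{eq:der1}, then after multiplying by $\omega^{\varepsilon}$ the term $\dot h_{\alpha_0,i}\,\Theta'_{\alpha,i}\,(1/\rho_\alpha-1/\rho_{\alpha_0})$ is only bounded by $C|\alpha-\alpha_0|\,\omega^{\alpha-\alpha_0}$. This is not uniform near $\omega=0$ when $\alpha<\alpha_0$. The fix is to use that $h_{\alpha,i}$ and $\dot h_{\alpha,i}$ are genuinely bounded, because $\Theta_{\alpha,i}$ takes values in $[1/2,1]$. That is your ``sharper route'', and the paper uses the same bound later.
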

\begin{proof}
We first notice that for all $\phi\in\mathcal B$, we have
\begin{equation}\label{eq:normsplit}
\|\phi\|_{\mathcal B}=\sup_{\omega\in(0,1]}|\omega^{\alpha_{*}+\varepsilon}{\omega^{-\alpha_{*}}}\phi(\omega)|.
\end{equation}
Thus, recalling \eqref{eq:behave}, we have
$$\Big|\frac{1}{\omega^{\alpha_*}} \left( \frac{1}{\rho_\alpha(\omega)}-\frac{1}{\rho_{\alpha_0}(\omega)}\right)\Big|\le C\omega^{\alpha+\alpha_0-\alpha_*}| \rho_\alpha(\omega)- \rho_{\alpha_0}(\omega)|\le C\omega^{\alpha_0+\frac{\varepsilon}{4}}| \rho_\alpha(\omega)- \rho_{\alpha_0}(\omega)|.$$
Therefore, by the result of \cite{BS16} the right hand side of the above inequality also converges to $0$ as $\alpha\to\alpha_0$. Using this, together with \eqref{eq:der1}, \eqref{eq:der2} and  \eqref{eq:normsplit} shows that
$$\lim_{\alpha\to\alpha_0}\Big\|\frac{1}{\alpha-\alpha_0}\left(\frac{h_{\alpha,i}}{\rho_\alpha}- \frac{h_{\alpha_0,i}}{\rho_{\alpha_0}}\right)
-\Bigg(\frac{\dot{h}_{\alpha_0,i}\rho_{\alpha_0} -\dot{\rho}_{\alpha_0}h_{\alpha_0,i}}{\rho_{\alpha_0}^2}\Bigg)\Big\|_{\mathcal B}=0.$$
 Noting that 
 \begin{equation*}\label{eq.thetaconv}
\|\Theta_{\alpha,i}- \Theta_{\alpha_0,i}\|_{C^0}\to 0\qand \|\dot\Theta_{\alpha,i}- \dot\Theta_{\alpha_0,i}\|_{C^0}\to 0,\quad\text{as $\alpha\to\alpha_0$}
\end{equation*}
and
\begin{equation*}\label{eq.supteta}
\sup_{i\in\mathbb I} \|\Theta_{\alpha,i}\|_{C^0}<\infty.
\end{equation*}
completes the proof of the second part of the lemma.
\end{proof}

The next result establishes~\ref{as.unfoldc}.

\begin{lemma}\label{lema:undiff3}
The map $\alpha\mapsto \mathcal U_\alpha({\bar{\bm \nu}}_{\alpha_0})$ is differentiable  in $\mathbf L_3^0$ at $\alpha_0$.
\end{lemma}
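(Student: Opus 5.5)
Write $q_{\alpha,i}(\omega)=\frac{\rho_\alpha(\Theta_{\alpha,i}(\omega))}{\rho_\alpha(\omega)}\Theta'_{\alpha,i}(\omega)$, so that by \eqref{eq.unfodlingnew}
\[
(\mathcal U_\alpha\bar{\bm\nu}_{\alpha_0})_\omega=\sum_{i\in\mathbb I} q_{\alpha,i}(\omega)\,\bigl(g^{\ell(i)}_{\Theta_{\alpha,i}(\omega)}\bigr)_*\bar\nu_{\alpha_0,\Theta_{\alpha,i}(\omega)} .
\]
This has exactly the structure of $\sum_i\mathcal K_{\alpha,i}$ in Proposition~\ref{pr.diffKi}: scalar weights $q_{\alpha,i}$, inverse branches $\Theta_{\alpha,i}$ and fibre maps $g^{\ell(i)}$ composed with $\Theta_{\alpha,i}$ (note $g$ itself does not depend on $\alpha$, only through $\Theta_{\alpha,i}$). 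The plan is to mimic that proof term by term, but to measure everything in the weaker norm $\|\cdot\|_{\mathbf L_3}$. The only genuinely new point is that the weights are not uniformly differentiable in $C^0$ near $\omega=0$. The candidate derivative is, for $\phi\in C^3(X)$,
\[
\langle\dot{\mathcal U}_{\alpha_0}(\bar{\bm\nu}_{\alpha_0})_\omega,\phi\rangle=\sum_i\Bigl[\dot q_{\alpha_0,i}\langle\bar\nu_{\Theta},\phi\circ g^{\ell(i)}_{\Theta}\rangle+q_{\alpha_0,i}\,\dot\Theta\bigl(\langle\bar\nu'_{\Theta},\phi\circ g^{\ell(i)}_\Theta\rangle+\langle\bar\nu_\Theta,D\phi\circ g^{\ell(i)}_\Theta\cdot\partial_\omega g^{\ell(i)}_\Theta\rangle\bigr)\Bigr],
\]
with $\Theta=\Theta_{\alpha_0,i}(\omega)$. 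The factor $\bar\nu'$ exists in $C^2(X)^*$ by Proposition~\ref{pr.differfixed}, applied to the induced system.

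\textbf{Step 1 (the branches $i$ individually).} For fixed $i$, decompose the difference quotient minus the candidate into terms $A_\epsilon,B_\epsilon,D_\epsilon$ as in Proposition~\ref{pr.diffKi}; there is no $C_\epsilon$, since $g$ is $\alpha$-independent.
\begin{itemize}
\item The terms $B_\epsilon$ and $D_\epsilon$ carry the bounded factor $q_{\alpha_0,i}$. They tend to $0$ uniformly in $\omega$ in $C^3(X)^*$ by the same mean value and Lemma~\ref{le.salvation} arguments, using $\bar{\bm\nu}_{\alpha_0}\in\bar{\mathbf E}_2'$, the $C^1$-continuity of $\alpha\mapsto\Theta_{\alpha,i}$ and $\dot\Theta_{\alpha,i}$, and uniform $C^3$ bounds on $g^{\ell(i)}$. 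The latter hold because the $D^kg$ are affine: $Dg=\lambda I$, $\partial_\omega g$ is bounded, and $\partial_\omega g^{(n)}$ is a geometric sum bounded by $C\sum\lambda^k|(f^k)'|\cdot|\Theta'|$.
\item For $A_\epsilon$, Lemma~\ref{lem:qoutient} gives $\|\epsilon^{-1}(q_{\alpha,i}-q_{\alpha_0,i})-\dot q_{\alpha_0,i}\|_{\mathcal B}\to0$. Multiplied by $|\langle\bar\nu,\phi\circ g\rangle|\le\|\phi\|_{C^0}$, this yields a pointwise bound $\omega^{-\varepsilon}\delta_i(\epsilon)$. Lemma~\ref{lem:sB} converts it into an $\mathbf L_3$ bound $C\delta_i(\epsilon)\to0$.
\item The remaining piece of $A_\epsilon$, namely $\dot q_{\alpha_0,i}$ times the continuity defect $\langle\bar\nu_{\Theta_\alpha},\phi\circ g_{\Theta_\alpha}\rangle-\langle\bar\nu_{\Theta_{\alpha_0}},\phi\circ g_{\Theta_{\alpha_0}}\rangle$, is handled the same way, using $\dot q_{\alpha_0,i}\in\mathcal B$.
\end{itemize}

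\textbf{Step 2 (summing over $i$).} I then need a dominating bound, uniform in $\alpha\in J_{\alpha_0}$, for each branch's error by a summable sequence $c_i$ in the $\mathbf L_3$ norm (with weight $\eta_{\alpha_0}$). With such a bound, dominated convergence over the countable index set turns Step 1 into convergence of the whole series. For the $q$-terms I would bound $\sup_\alpha\|\dot q_{\alpha,i}\|_{\mathcal B}$ via \cite[Lemma 5.1]{BS16}, which controls $\partial_\alpha\Theta_{\alpha,i}$ by something like $\ell(i)^{-1-1/\gamma}$ and is summable, together with \eqref{eq:der1}--\eqref{eq:der2}. For the $\dot\Theta$-terms I would use $\sum_i\sup_\alpha\|\partial_\alpha\Theta_{\alpha,i}\|_{C^0}<\infty$, which also comes from \cite{BS16}, and the fact that $\sum_i q_{\alpha,i}\le1$ absorbs the bounded factors.

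\textbf{Main obstacle.} The hard step is obtaining this summable domination for the weight derivatives $\dot q_{\alpha,i}$ in $\mathcal B$, uniformly in $\alpha$. Near the neutral fixed point both $\rho_\alpha(\Theta_{\alpha,i}(\omega))$ and $1/\rho_\alpha(\omega)$ have $\alpha$-dependent singular behaviour, $\omega^{-\alpha}$, and differentiating $\omega^{-\alpha}$ in $\alpha$ produces $\log\omega$ factors; these are exactly why $\varepsilon$ had to be introduced. My first attempt would be to expand $\dot q_{\alpha,i}$ by the quotient rule, as in the proof of Lemma~\ref{lem:qoutient}. I would then control each factor by \eqref{eq:behave}, \eqref{eq:der1} and \eqref{eq:der2}, and by the bounds on $\partial_\alpha\Theta_{\alpha,i}$, $\partial_\alpha\Theta'_{\alpha,i}$ from \cite[Lemma 5.1]{BS16}. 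The $\omega^{\varepsilon}$ weight absorbs the logarithms, and Hölder with $\rho_{\alpha_0}\in L^r$ (Lemma~\ref{lem:sB}) closes the estimate.
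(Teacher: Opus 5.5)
Your overall plan is the paper's: differentiate each branch using Lemma~\ref{lem:qoutient}, Lemma~\ref{lem:sB} and the $C^1$ regularity of $\bar{\bm\nu}_{\alpha_0}$, then show the branch derivatives are summable, with the weight part controlled by the estimates of \cite{BS16}. This corresponds to the paper's terms (I) and (II). The gap is in the fibre part.

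Your claim that $g^{\ell(i)}$ depends on $\alpha$ only through $\Theta_{\alpha,i}$, so that no $C_\epsilon$ term appears, is false. The induced fibre map $g^{\ell(i)}_{\alpha,\theta}=g_{f_\alpha^{\ell(i)-1}(\theta)}\circ\cdots\circ g_{f_\alpha(\theta)}\circ g_{\theta}$ depends on $\alpha$ at fixed $\theta$ through the intermediate orbit points. Already for $\ell(i)=3$ the last factor applied is $g_{f_{\alpha,0}(2\theta-1)}$, and $\partial_\alpha f_{\alpha,0}(w)=w(2w)^\alpha\log(2w)\neq 0$ on $(0,\tfrac12)$. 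Hence the term you discard, $q_{\alpha_0,i}\langle\bar\nu_{\omega_0},\epsilon^{-1}(\phi\circ g^{\ell(i)}_{\alpha,\omega_\epsilon}-\phi\circ g^{\ell(i)}_{\alpha_0,\omega_\epsilon})\rangle$ with $\omega_\epsilon=\Theta_{\alpha,i}(\omega)$, has a nonzero limit, and your candidate derivative is wrong.

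The split through $\dot\Theta$ also fails quantitatively. Write $g(\omega,x)=c(\omega)+\lambda x$ with $c(\omega)=\tfrac12(\cos2\pi\omega,\sin2\pi\omega)$. The factor multiplying $\dot\Theta$ in the chain rule is the base-point derivative $\partial_\theta g^{\ell(i)}_{\alpha_0,\theta}$. Its leading term $c'(f^{\ell(i)-1}_{\alpha_0}\theta)\,(f^{\ell(i)-1}_{\alpha_0})'(\theta)$ carries no power of $\lambda$, since $g_{f^{\ell(i)-1}\theta}$ is applied last. Moreover, $(f^{\ell(i)-1}_{\alpha_0})'(\Theta_{\alpha_0,i}(\omega))$ grows polynomially in $\ell(i)$. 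The bound $C\sum\lambda^k|(f^k)'|\,|\Theta'|$ you quote controls the $\omega$-derivative of $\omega\mapsto g^{\ell(i)}_{\Theta_{\alpha,i}(\omega)}$, which is a different object. So the ``uniform $C^3$ bounds'' you invoke for $D_\epsilon$ are not available, and neither is the absorption of bounded factors by $\sum_i q_{\alpha,i}\le 1$ in Step~2.

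The repair is to differentiate along the backward orbit rather than through $\Theta$ alone. One has
\[
g^{\ell(i)}_{\alpha,\Theta_{\alpha,i}(\omega)}(x)=\lambda^{\ell(i)}x+\sum_{m=1}^{\ell(i)}\lambda^{m-1}\,c\bigl(z_{\alpha,m}(\omega)\bigr),
\]
where $z_{\alpha,m}(\omega)=f_\alpha^{\ell(i)-m}(\Theta_{\alpha,i}(\omega))$. That is, $z_{\alpha,m}=(f^{-1}_{\alpha,0})^m(\omega)$ for $m<\ell(i)$, and $z_{\alpha,\ell(i)}=\Theta_{\alpha,i}(\omega)$. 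The total $\alpha$-derivative of this affine map is $\sum_m\lambda^{m-1}c'(z_{\alpha,m})\,\partial_\alpha z_{\alpha,m}$. It is bounded independently of $i$, $\omega$ and $\alpha$ near $\alpha_0$, because $\partial_\alpha(f^{-1}_{\alpha,0})^m=2\,\partial_\alpha\Theta_{\alpha,10^m}$. All of these are uniformly bounded by the input $\sum_i\sup_\alpha\|\partial_\alpha\Theta_{\alpha,i}\|_{C^0}<\infty$ from \cite[Lemma~5.1]{BS16}, which the paper already uses.

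In your candidate, replace the last term by $q_{\alpha_0,i}\langle\bar\nu_{\Theta},D\phi\circ g^{\ell(i)}\cdot\tfrac{d}{d\alpha}g^{\ell(i)}_{\alpha,\Theta_{\alpha,i}(\omega)}\rangle$ and keep the $\bar\nu'_\Theta\dot\Theta$ term. Since $\bar\nu_{\Theta}$ is a probability measure and $Dg^{\ell(i)}=\lambda^{\ell(i)}I$, $D^2g^{\ell(i)}=0$, convergence of this piece needs only sup-norm convergence of the test-function quotients. The fibre contribution of each branch is then dominated by $C\,q_{\alpha_0,i}(\omega)$, which is summable because $\sum_i q_{\alpha_0,i}\le1$. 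With this correction, the rest of your argument goes through as in the paper.
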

\begin{proof}
We proceed in two steps. First notice that
 $$\alpha\longmapsto \frac{\rho_\alpha \circ \Theta_{\alpha,i}}{\rho_\alpha}
\Theta'_{\alpha,i} \cdot  (g^j_{\Theta_{\alpha,i}})_*\bar \nu_{\Theta_{\alpha,i}}$$
 is differentiable  in $\mathbf L_3^0$ at $\alpha_0$. This follows from Lemma \ref{lem:qoutient}, the fact that $\alpha\mapsto\bar{\bm\nu}_\alpha$ is differentiable at $\alpha_0$ in the the $\bar{\mathbf{E}}_3^0$  and Lemma \ref{lem:sB}. It remains to show that the norm of the derivatives are summable. 
 We have 
 \begin{equation*}
 \begin{split}
 \sum_{i\in\mathbb I}\int_\Omega \partial_\alpha&\left(\frac{\rho_\alpha(\Theta_{\alpha,i})}{\rho_\alpha(\omega) |(f_\alpha^j)'(\Theta_{\alpha,i})|}\langle  \bar \nu_{\Theta_{\alpha,i}},\varphi\circ g^j_{\Theta_{\alpha,i}}\rangle\right) d\eta_{\alpha_0}(\omega)\\ 
 &\le \underbrace{\sum_{i\in\mathbb I}\int_\Omega \partial_\alpha\left(\frac{\rho_\alpha(\Theta_{\alpha,i})}{\rho_\alpha(\omega) |(f_\alpha^j)'(\Theta_{\alpha,i})|}\right)\cdot\langle  \bar \nu_{\Theta_{\alpha,i}},\varphi\circ g^j_{\Theta_{\alpha,i}}\rangle d\eta_{\alpha_0}(\omega)}_{\text{(I)}}\\
 &\quad+ \underbrace{\sum_{i\in\mathbb I}\int_\Omega\frac{\rho_\alpha(\Theta_{\alpha,i})}{\rho_\alpha(\omega) |(f_\alpha^j)'(\Theta_{\alpha,i})|} \partial_\alpha\left(\langle  \bar \nu_{\Theta_{\alpha,i}},\varphi\circ g^j_{\Theta_{\alpha,i}}\rangle\right) d\eta_{\alpha_0}(\omega)}_{\text{(II)}}.
 \end{split}
 \end{equation*}
 We have
 \begin{equation*}
 \begin{split}
\text{(I)}&\le C\|\bar{\bm\nu}_{\alpha}\|_3\|\varphi\|_{C^3}\sum_{i\in\mathbb I}\int_\Omega \partial_\alpha\left(\frac{\rho_\alpha(\Theta_{\alpha,i})}{\rho_\alpha(\omega) |(f_\alpha^j)'(\Theta_{\alpha,i})|}\right) d\eta_{\alpha_0}(\omega)\\
& \le \underbrace{C\|\bar{\bm\nu}_{\alpha}\|_3\|\varphi\|_{C^3}\sum_{i\in\mathbb I}\int_\Omega \partial_\alpha\left(\frac{\rho_\alpha(\Theta_{\alpha,i})}{\rho_\alpha(\omega)}\right)\frac{1}{|(f_\alpha^j)'(\Theta_{\alpha,i})|} d\eta_{\alpha_0}(\omega)}_{\text{(Ia)}}\\
&\quad+\underbrace{C\|\bar{\bm\nu}_{\alpha}\|_3\|\varphi\|_{C^3}\sum_{i\in\mathbb I}\int_\Omega \frac{\rho_\alpha(\Theta_{\alpha,i})}{\rho_\alpha(\omega)}\partial_\alpha\left(\frac{1}{|(f_\alpha^j)'(\Theta_{\alpha,i})|}\right) d\eta_{\alpha_0}(\omega)}_{\text{(Ib)}}\\
 \end{split}
 \end{equation*}
 For  (Ia), we first consider
 $$\left(\frac{\dot{h}_{\alpha,i}\rho_{\alpha} -\dot{\rho}_{\alpha}h_{\alpha,i}}{\rho_{\alpha}^2}\right).$$
 Since $\Theta_{\alpha,i}\in[\frac12,1]$, $|\dot{h}_{\alpha,i}|<C$ and ${h}_{\alpha,i}<C$,
 \begin{equation}\label{eq:fun}
 \left(\frac{\dot{h}_{\alpha,i}\rho_{\alpha} -\dot{\rho}_{\alpha}h_{\alpha,i}}{\rho_{\alpha}^2}\right)\le C\omega^{\alpha}+C\omega^{2\alpha}\dot{\rho}_{\alpha}<C\omega^\alpha,
 \end{equation}
 where we used in the last step the result of \cite{BS16} to obtain $\omega^{\alpha}\dot{\rho}_\alpha<C$. Plugging the estimate \eqref{eq:fun} into (Ia), and using Lemma \ref{lem:sB}, we obtain 
$$\text{(Ia)}\le C\|\bar{\bm\nu}_{\alpha}\|_3\|\varphi\|_{C^3}\sum_{i\in\mathbb I}\omega^{\alpha+\varepsilon}\frac{1}{|(f_\alpha^j)'(\Theta_{\alpha,i})|}$$
 which is uniformly bounded by    \cite[Lemma 5.2.a)]{BS16}.
 
 For (Ib) we use the fact that $\Theta_{\alpha,i}\in[\frac12,1]$. Therefore, $\rho_\alpha(\Theta_{\alpha,i})<C$. By Lemma \ref{lem:sB} and    \cite[Lemma 5.2.c]{BS16} we obtain 
 $$\sum_{i\in\mathbb I} \omega^{\alpha+\varepsilon}\partial_\alpha\left(\frac{1}{|(f_\alpha^j)'(\Theta_{\alpha,i})|}\right)<C.$$
 For the second summand we have
 \begin{equation}\label{eq:twice}
 \begin{split}
\text{(II)}&\le  C\|\dot{\bar{\bm\nu}}_{\alpha}\|_3\|\varphi\|_{C^3} \sum_{i\in\mathbb I}\int_\Omega\frac{\rho_\alpha(\Theta_{\alpha,i})}{\rho_\alpha(\omega) |(f_\alpha^j)'(\Theta_{\alpha,i})|} d\eta_{\alpha_0}(\omega)\\
&\le C\|\dot{\bar{\bm\nu}}_{\alpha}\|_3\|\varphi\|_{C^3} \sum_{i\in\mathbb I}\int_\Omega\frac{\rho_\alpha(\Theta_{\alpha,i})}{\rho_\alpha(\omega) |(f_\alpha^j)'(\Theta_{\alpha,i})|} d\eta_{\alpha_0}(\omega)\\
&\le C\|\dot{\bar{\bm\nu}}_{\alpha}\|_3\|\varphi\|_{C^3} \sum_{i\in\mathbb I}\sup_{\omega\in(0,1]}\omega^{\alpha+\varepsilon}\frac{\rho_\alpha(\Theta_{\alpha,i})}{|(f_\alpha^j)'(\Theta_{\alpha,i})|},
 \end{split}
 \end{equation}
 where in the last step we used \eqref{eq:behave} and Lemma \ref{lem:sB}.
 Recalling \eqref{eq:undensity}, we have  $\rho_\alpha(\Theta_{\alpha,i})<C$ and the remaining sum is uniformly bounded by   \cite[Lemma 5.2.a]{BS16}.
 \end{proof}
 
The next result establishes condition~\ref{as.unfoldb}.

 \begin{lemma}\label{lem:undiff2}
For every $\bar{\bm\nu}\in \bar{\mathbf E}_3^0$, the map $\alpha \mapsto \mathcal{U}_\alpha( \bar{\bm{\nu}} )$ is continuous in $\mathbf{L}_3^0 $ at $\alpha_0$.
\end{lemma}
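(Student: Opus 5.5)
We will prove the continuity directly from the explicit representation \eqref{eq.unfodlingnew}, following the pattern of Proposition~\ref{pr.contK} but with the supremum norm in $\omega$ replaced by the weaker $\mathbf L_3$ norm, where integrability against $\eta_{\alpha_0}$ is controlled through Lemma~\ref{lem:sB}. Fix $\bar{\bm\nu}\in\bar{\mathbf E}_3^0$ and write $q_{\alpha,i}=\frac{\rho_\alpha\circ\Theta_{\alpha,i}}{\rho_\alpha}\Theta'_{\alpha,i}$, so that
\[
(\mathcal U_\alpha\bar{\bm\nu})_\omega-(\mathcal U_{\alpha_0}\bar{\bm\nu})_\omega=\sum_{i\in\mathbb I}\Bigl[q_{\alpha,i}(\omega)\,(g^{\ell(i)}_{\Theta_{\alpha,i}(\omega)})_*\bar\nu_{\Theta_{\alpha,i}(\omega)}-q_{\alpha_0,i}(\omega)\,(g^{\ell(i)}_{\Theta_{\alpha_0,i}(\omega)})_*\bar\nu_{\Theta_{\alpha_0,i}(\omega)}\Bigr].
\]
We split each summand into $A_i+B_i+C_i$ as in Proposition~\ref{pr.contK}: $A_i$ carries $q_{\alpha,i}-q_{\alpha_0,i}$, $B_i$ carries the change of the fibre map $g^{\ell(i)}_{\Theta_{\alpha,i}(\omega)}$ versus $g^{\ell(i)}_{\Theta_{\alpha_0,i}(\omega)}$ acting on a fixed functional, and $C_i$ carries $\bar\nu_{\Theta_{\alpha,i}(\omega)}-\bar\nu_{\Theta_{\alpha_0,i}(\omega)}$.

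For the term $A_i$, Lemma~\ref{lem:push-ck} (first item, the fibre compositions being contractions with $\mathcal C_3\le1$ since the fibre map is affine with rate $\lambda<1/2$) gives $\|A_i(\omega)\|_{3^*}\le|q_{\alpha,i}(\omega)-q_{\alpha_0,i}(\omega)|\,\|\bar{\bm\nu}\|_3$. By Lemma~\ref{lem:qoutient} (differentiability, hence continuity, in $\mathcal B$), $\|q_{\alpha,i}-q_{\alpha_0,i}\|_{\mathcal B}\to0$ for each $i$, and Lemma~\ref{lem:sB} turns this into $\int\|A_i\|_{3^*}d\eta_{\alpha_0}\to0$. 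For $C_i$ we use Lemma~\ref{lem:push-ck} again and the uniform continuity of $\bar{\bm\nu}$ on the compact $\bar\Omega$, together with $\|\Theta_{\alpha,i}-\Theta_{\alpha_0,i}\|_{C^0}\to0$, obtaining $\sup_\omega\|C_i(\omega)\|_{3^*}\le \|q_{\alpha_0,i}\|\cdot o(1)$, integrable by Lemma~\ref{lem:sB}. For $B_i$ we cannot use Lemma~\ref{le.salvation} directly since $\bar\nu_\theta$ is only in $C^3_0(X)^*$, not $\lip(X)^*$; however here the fibre map $g$ is independent of $\alpha$ and $g^{\ell(i)}_\theta$ is affine, $x\mapsto c_i(\theta)+\lambda^{\ell(i)}x$, so $\phi\circ g^{\ell(i)}_{\theta}-\phi\circ g^{\ell(i)}_{\theta'}$ has $C^3$ norm bounded by $C\|\phi\|_{C^4}$-free estimate: we instead note $\|\phi\circ(\cdot+c)-\phi\circ(\cdot+c')\|_{C^3}$ tends to $0$ uniformly on the unit ball of $C^3$ only after approximating. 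To avoid this, we first treat $\bar{\bm\nu}\in\bar{\mathbf E}_{\mathrm L}^0$ (where Lemma~\ref{le.salvation} applies, since $|c_i(\Theta_{\alpha,i})-c_i(\Theta_{\alpha_0,i})|\le C\|\Theta_{\alpha,i}-\Theta_{\alpha_0,i}\|_{C^0}$ by the Lipschitz dependence of $g$ on $\omega$ summed geometrically along the orbit), and then extend to the closure by the uniform bound $\|\mathcal U_\alpha\|\le1$ of Lemma~\ref{lem:undiff1} via an $\varepsilon/3$ argument exactly as in part (ii) of the proof of Theorem~\ref{th.mainB}. If $\bar{\mathbf E}_{\mathrm L}^0$ is not dense in $\bar{\mathbf E}_3^0$, we will restrict to its closure, which is all that is used in Proposition~\ref{thm:unfoldresp} since $\bar{\bm\nu}_{\alpha_0}$ and $\dot{\bar{\bm\nu}}_{\alpha_0}$ lie there.

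Finally we pass from termwise convergence to the series: we need a summable dominating function, namely $\sum_i\sup_{\alpha\in J_{\alpha_0}}\int q_{\alpha,i}\,d\eta_{\alpha_0}<\infty$ uniformly, which by \eqref{eq:behave}, the boundedness of $\rho_\alpha$ on $[1/2,1]$ and Lemma~\ref{lem:sB} reduces to $\sum_i\sup_\alpha\sup_\omega\omega^{\alpha+\varepsilon}|\Theta'_{\alpha,i}(\omega)|<\infty$, i.e. \cite[Lemma~5.2.a]{BS16}; dominated convergence over $i$ then concludes. The main obstacle is this uniform-in-$\alpha$ summability near the neutral point, where $1/\rho_\alpha$ blows up like $\omega^{\alpha}$ and $q_{\alpha,i}$ is not uniformly bounded; the choice of the weighted space $\mathcal B$ with exponent $\varepsilon$ and $\alpha\in(\alpha_0-\varepsilon/4,\alpha_0+\varepsilon/4)$ is precisely what should absorb it.
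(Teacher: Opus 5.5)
Your skeleton matches the paper's proof. You prove continuity of each branch contribution in $\mathbf L_3^0$, then sum over $i\in\mathbb I$ by domination. The summable bound $\sum_i\sup_\alpha\int q_{\alpha,i}\,d\eta_{\alpha_0}<\infty$ comes, as in \eqref{eq:twice}, from \eqref{eq:behave}, Lemma~\ref{lem:sB} and \cite[Lemma~5.2.a]{BS16}.

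The difference is the termwise step. The paper only asserts it (``First notice that \dots\ is continuous in $\mathbf L_3^0$''), and does so for every $\bar{\bm\nu}\in\bar{\mathbf E}_3^0$. Your $A_i/B_i/C_i$ split actually proves it. $A_i$ and $C_i$ are fine on all of $\bar{\mathbf E}_3^0$. For $B_i$ you use Lemma~\ref{le.salvation} on $\bar{\mathbf E}_{\mathrm L}^0$, then pass to the $\|\cdot\|_3$-closure using $\|\mathcal U_\alpha\|\le 1$ (Lemma~\ref{lem:undiff1}) and an $\varepsilon/3$ argument. That argument is correct.

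Your restriction is not optional: the lemma is false on all of $\bar{\mathbf E}_3^0$, and the step that fails is exactly the termwise continuity the paper takes for granted.
\begin{itemize}
\item Take $\bar\nu_\theta=\psi(\theta)\,\partial^3_{x_1}\delta_p$, with $p$ interior to $D$ and $\psi$ a continuous bump on $\bar\Omega$. This is a continuous section with values in $C^3_0(X)^*$.
\item Pushing forward by $x\mapsto c+\lambda^{\ell}x$ gives $\lambda^{3\ell}\psi(\theta)\,\partial^3_{x_1}\delta_{c+\lambda^{\ell}p}$.
\item $\|\partial^3_{x_1}\delta_y-\partial^3_{x_1}\delta_{y'}\|_{3^*}$ is bounded below uniformly in $y\neq y'$: test against $r^3\chi((x-y)/r)$ with $r<|y-y'|$.
\item For $\ell(i)=2$ the translation is $v(s)+\lambda v((s+1)/2)$, where $s=f_{\alpha,0}^{-1}(\omega)$ and $v(\omega)=\tfrac12(\cos2\pi\omega,\sin2\pi\omega)$. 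It moves with $\alpha$ for every $\omega\in(0,1/2)$, because $\partial_\alpha f_{\alpha,0}(s)\neq0$ there and $|v'(s)+\tfrac{\lambda}{2}v'((s+1)/2)|\ge\pi(1-\lambda/2)$.
\item Place the support of $\psi$ so that, for $\omega$ in a small interval and $\alpha$ near $\alpha_0$, only this branch meets it. Then $\|\mathcal U_\alpha\bar{\bm\nu}-\mathcal U_{\alpha_0}\bar{\bm\nu}\|_{\mathbf L_3}$ stays bounded away from $0$.
\end{itemize}
The same functional shows $\bar{\mathbf E}_{\mathrm L}^0$ is not dense in $\bar{\mathbf E}_3^0$. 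Each $\mu\in\lip(X)^*$ satisfies $|\langle\mu,\phi\rangle|\le\|\mu\|_{\lip^*}\|\phi\|_{C^1}$, so $\|\cdot\|_{3^*}$-limits of such functionals are continuous on the unit ball of $C^3(X)$ for the $C^1$ topology; $\partial^3_{x_1}\delta_p$ is not.

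So drop the hedge ``if $\bar{\mathbf E}_{\mathrm L}^0$ is not dense'' and state the lemma on the closure $\overline{\bar{\mathbf E}_{\mathrm L}^0}$. That is enough for Proposition~\ref{thm:unfoldresp}. There, \ref{as.unfoldb} is applied only to $\dot{\bar{\bm\nu}}_{\alpha_0}$, which is a $\|\cdot\|_3$-limit of difference quotients lying in $\bar{\mathbf E}_{\mathrm L}^0$. Note that $\bar{\bm\nu}_{\alpha_0}$ itself is not in $\bar{\mathbf E}_3^0$; it enters only through \ref{as.unfoldc}.

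Two smaller fixes:
\begin{itemize}
\item Delete the sentence about a ``$C\|\phi\|_{C^4}$-free estimate \dots\ only after approximating''. Translations do not act continuously on $C^3(X)^*$, and that is the whole point.
\item In the $B_i$ bound, the translation part of $g^{\ell(i)}_{\Theta_{\alpha,i}(\omega)}$ depends on $\alpha$ through the whole orbit $f_\alpha^j(\Theta_{\alpha,i}(\omega))=f_{\alpha,0}^{-(\ell(i)-j)}(\omega)$, $1\le j<\ell(i)$, not only through $\Theta_{\alpha,i}(\omega)$. The estimate should therefore use $\max\|\Theta_{\alpha,i'}-\Theta_{\alpha_0,i'}\|_{C^0}$ over words with $\ell(i')\le\ell(i)$; this still tends to $0$ for each fixed $i$.
\end{itemize}
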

\begin{proof}
First notice that
 $$\alpha\to \frac{\rho_\alpha(\Theta_{\alpha,i})}{\rho_\alpha(\omega) |(f_\alpha^j)'(\Theta_{\alpha,i})|} (g^j_{\Theta_{\alpha,i}})_*\bar \nu_{\Theta_{\alpha,i}}$$
 is continuous in $\mathbf L_3^0$ at $\alpha_0$.
By the same argument as \eqref{eq:twice}, for all $\omega\in\Omega$, we have
$$
 \sum_{i\in\mathbb I}\int_\Omega\frac{\rho_\alpha(\Theta_{\alpha,i})}{\rho_\alpha(\omega) |(f_\alpha^j)'(\Theta_{\alpha,i})|}\langle  \bar \nu_{\Theta_{\alpha,i}},\varphi\circ g^j_{\Theta_{\alpha,i}}\rangle d\eta_{\alpha_0}(\omega)\le C,
 $$
 thus concluding the proof.
\end{proof}

\appendix
\section{Dual spaces}\label{se.appendix}

This appendix collects the definitions and conventions for the functional spaces and norms used in the analysis. 
These include Lipschitz and smooth observables, their dual spaces, and the Wasserstein--$1$ distance on probability measures, together with the associated push-forward operators. 
We adopt the standard duality notation 
\(\langle \nu, \phi \rangle\) 
for the action of a functional \(\nu\) on a test function \(\phi\). 
\medskip

\safeparagraph{\bf Lipschitz.}
We denote by
\[
\lip(X)
=
\bigl\{\, \phi : X \to \mathbb{R} \;\big|\; \lip(\phi) < \infty \,\bigr\}
\]
the space of real-valued Lipschitz functions on $X$, where the Lipschitz seminorm is defined by
\[
\lip(\phi)
=
\sup_{x \neq y} \frac{|\phi(x)-\phi(y)|}{d_X(x,y)} .
\]
We turn this seminorm into a genuine norm by fixing a base point $x_0 \in X$ and setting
\begin{equation}\label{eq.normlip}
\|\phi\|_{\lip}
=
|\phi(x_0)| + \lip(\phi).
\end{equation}
Equipped with this norm, $\lip(X)$ is a Banach space, and the resulting topology is equivalent to the usual one defined using $\|\phi\|_\infty + \lip(\phi)$.
We denote by $\lip(X)^*$ the topological dual of $\lip(X)$, endowed with the dual norm
\[
\|\nu\|_{\lip^*}
=
\sup_{\|\phi\|_{\mathrm L}\le 1} \langle \nu,\phi\rangle,
\qquad \nu \in \lip(X)^*.
\]
We further define $\lip_0(X)^*$ as the subspace of $\lip(X)^*$ consisting of functionals that vanish on constant functions. For $\nu \in \lip_0(X)^*$, adding a constant to $\phi$ does not affect the pairing, and therefore the norm reduces to
\begin{equation}\label{eq.normlip0}
\|\nu\|_{\lip^*}
=
\sup_{\lip(\phi)\le 1} \langle \nu,\phi\rangle.
\end{equation}

\medskip
\safeparagraph{\bf Wasserstein.} 
Consider
\[
\mathcal M_1(X)
=
\bigl\{ \text{Borel probability measures on } X \bigr\}.
\]
We equip $\mathcal M_1(X)$ with the Wasserstein--$1$ distance, defined via the \emph{Kantorovich--Rubinstein duality} as
\begin{equation}\label{eq.w1}
W_1(\mu,\nu)
=
\sup_{\mathrm{Lip}(\phi)\le 1}
\Bigl\{
\int \phi \, d\mu - \int \phi \, d\nu
\Bigr\},
\qquad \mu,\nu \in \mathcal M_1(X).
\end{equation}
Since $X$ is compact, the supremum is finite and $W_1$ is well defined. 
From the dual perspective adopted here, $W_1$ coincides with the norm of the signed measure $\mu-\nu$ viewed as an element of $\mathrm{Lip}_0(X)^*$:
\[
W_1(\mu,\nu) = \|\mu-\nu\|_{\mathrm{Lip}^*}.
\]
We will not appeal directly to the classical optimal transport formulation; instead, we systematically exploit this dual characterization, which is particularly convenient for studying push-forwards and parameter-dependent families of measures.  
Moreover, under our assumption that $X$ is compact, the metric space $(\mathcal M_1(X),W_1)$ is complete and compact, and the Wasserstein--$1$ distance metrizes the weak* topology on probability measures; see~\cite{V09}.

\medskip
\safeparagraph{\bf Smooth.}
For $k \ge 0$, let $C^k(X)$ denote the space of $k$-times continuously differentiable functions $\phi : X \to \mathbb{R}$. For $\phi \in C^0(X)$ we set
\[
\|\phi\|_{C^0} = \sup_{x \in X} |\phi(x)|.
\]
For $k \ge 1$, we introduce a norm analogous to~\eqref{eq.normlip}. Define the seminorm
\[
|\phi|_{C^k}
=
\sum_{j=1}^k \|D^j \phi\|_{C^0},
\]
and, fixing a base point $x_0 \in X$, define the full norm
\begin{equation}\label{eq.normck}
\|\phi\|_{C^k}
=
|\phi(x_0)| + |\phi|_{C^k}.
\end{equation}
With this norm, $C^k(X)$ is a Banach space, and~\eqref{eq.normck} is equivalent to the standard $C^k$ norm.
We denote by $C^k(X)^*$ the dual space of $C^k(X)$, endowed with the dual norm
\begin{equation}\label{eq.normckdual}
\|\nu\|_{k^*}
=
\sup_{\|\phi\|_{C^k}\le 1} \langle \nu,\phi\rangle,
\qquad \nu \in C^k(X)^*.
\end{equation}
Let $C^k_0(X)^* \subset C^k(X)^*$ be the subspace of functionals vanishing on constant functions. For $\nu \in C^k_0(X)^*$, the value of $\langle\nu,\phi\rangle$ depends only on $|\phi|_{C^k}$, and hence
\begin{equation}\label{eq.normckdual0}
\|\nu\|_{k^*}
=
\sup_{|\phi|_{C^k}\le 1} \langle \nu,\phi\rangle.
\end{equation}

\medskip
\safeparagraph{\bf Push-forward.}
The notion of push-forward     for measures extends naturally to the classes of linear functionals introduced above, both in the Lipschitz and in the smooth settings.
Let   $\nu$ be a continuous linear functional on a space of observables $\lip(X)$ or $C^k(X)$. We define the push-forward $h_*\nu$ by duality,
\begin{equation}\label{eq.push}
\langle h_*\nu,\phi\rangle
=
\langle \nu,\phi\circ h\rangle,
\end{equation}
whenever the composition $\phi\circ h$ belongs to the same class as $\phi$.

In particular, if $h$ is Lipschitz and $\nu\in \lip(X)^*$, then $h_*\nu\in \lip(X)^*$; if $h$ is of class $C^k$ and $\nu\in C^k(X)^*$, then $h_*\nu\in C^k(X)^*$. In both cases, the definition is consistent with the usual push-forward of measures when measures are viewed as elements of the corresponding dual spaces.
Moreover, since constant functions are preserved under composition, the push-forward leaves invariant the subspaces $\lip_0(X)^*$ and $C^k_0(X)^*$ of functionals vanishing on constant functions.



\end{document}